\documentclass[letterpaper,11pt,reqno]{amsart} 
\usepackage[portrait,margin=3cm]{geometry}
\usepackage{csquotes}
\usepackage{systeme}
\usepackage{mathrsfs,xfrac} 
\usepackage[colorlinks=true,linkcolor=blue,citecolor=blue,urlcolor=blue]{hyperref} 
\usepackage{amsmath,amssymb,amsthm,amsfonts,amsbsy,latexsym,dsfont,color} 
\usepackage[numeric,initials,nobysame]{amsrefs} 
\usepackage[foot]{amsaddr}

\usepackage[utf8]{inputenc}
\usepackage[english]{babel}
\usepackage{comment}

\usepackage[textsize=tiny]{todonotes}
\usepackage{regexpatch}
\makeatletter
\xpatchcmd{\@todo}{\setkeys{todonotes}{#1}}{\setkeys{todonotes}{inline,#1}}{}{}
\makeatother

\usepackage{enumerate}

\newtheorem{thm}{Theorem}[section]
\newtheorem{lem}[thm]{Lemma}
\newtheorem{cor}[thm]{Corollary}
\newtheorem{prop}[thm]{Proposition}
\newtheorem{defn}[thm]{Definition}
\newtheorem{rem}[thm]{Remark}

\newtheorem{ass}[thm]{Assumption}

\renewcommand{\le}{\leqslant} 
\renewcommand{\ge}{\geqslant} 
\renewcommand{\leq}{\leqslant} 
\renewcommand{\geq}{\geqslant} 

\newcommand{\ra}{\rangle}
\newcommand{\la}{\langle}

\newcommand{\eps}{\varepsilon}

\newcommand{\norm}[1]{\left\Vert#1\right\Vert}
\newcommand{\abs}[1]{\left\vert#1\right\vert}

\newcommand{\ie}{\emph{i.e.,}}

 \let\gb=\beta \let\gc=\gamma \let\gd=\delta 
     \let\gl=\lambda             
  
 \let\gD=\Delta  \let\gL=\Lambda

\newcommand{\cA}{\mathcal{A}}\newcommand{\cB}{\mathcal{B}}\newcommand{\cC}{\mathcal{C}}
\newcommand{\cE}{\mathcal{E}}
\newcommand{\cG}{\mathcal{G}}

\newcommand{\cM}{\mathcal{M}}
\newcommand{\cP}{\mathcal{P}}\newcommand{\cR}{\mathcal{R}}

\newcommand{\cV}{\mathcal{V}}
  
\newcommand{\mv}[1]{\boldsymbol{#1}}

\newcommand{\mvt}{\boldsymbol{t}}
\newcommand{\mvy}{\boldsymbol{y}}
\newcommand{\mvz}{\boldsymbol{z}}

\newcommand{\bE}{\mathbb{E}}

\newcommand{\bP}{\mathbb{P}}\newcommand{\bR}{\mathbb{R}}

\newcommand{\sF}{\mathscr{F}}

\DeclareMathOperator{\E}{\mathds{E}}

\DeclareMathOperator{\sech}{sech}

\newcommand{\pushright}[1]{\ifmeasuring@#1\else\omit\hfill$\displaystyle#1$\fi\ignorespaces}

\newcommand{\QW}[1]{{\color{red} QW:#1}}

\begin{document}

\title[Joint parameters estimation in cubic tensor model]{Joint Parameters Estimation in Cubic Tensor Model}
\author{Sumit Mukherjee$^\star$}
\author{Arnab Sen$^\dagger$}
\author{Qiang Wu$^\ddagger$}
\address{$^\star$Department of Statistics, Columbia University, 1255 Amsterdam Ave
New York, New York 10027 \newline 
\indent $^\dagger$$^\ddagger$ School of Mathematics, University of Minnesota, 127 Vincent Hall 206 Church St. SE Min-
neapolis, MN 55455.}
\email{$^\star$sm3949@columbia.edu,$^\dagger$arnab@umn.edu, $^\ddagger$wuq@umn.edu. }
\date{}
\subjclass[2020]{Primary 62F12; secondary 60F10}
\keywords{exponential random graph models, pseudolikelihood estimation, Gibbs measures, cubic tensor models, mean-field approximation}

\begin{abstract}

We study joint parameter estimation from a single observation in high-dimensional Gibbs measures with cubic tensor interactions, motivated by dense ERGMs, arithmetic-progression models, and inhomogeneous random hypergraphs. Focusing on the maximum pseudolikelihood estimator, we give checkable conditions for joint consistency and asymptotic ill-conditioning. For the edge–triangle ERGM, pseudolikelihood is ill-conditioned in the ferromagnetic regime with nonnegative field, but consistent in a sufficiently strong antiferromagnetic regime. For the edge–three-star ERGM, it is ill-conditioned for all inverse temperatures and external fields. We also study consistency for arithmetic-progression, and inhomogeneous hypergraph models. Our proofs develop nonlinear large-deviation and mean-field approximation tools for cubic tensor Gibbs measures, which have scope for broad applications.
\end{abstract}

\maketitle

\section{Introduction and Main Results}\label{sec:intro}


In this paper we study joint estimation of parameters in a class of Gibbs measures, where one of the underlying sufficient statistics is a cubic tensor. One important class of examples of such cubic tensors models come from the so called exponential random graph models of social networks or ERGMs (see \cite{holland1981exponential,snijders2006new,wasserman1994social,lusher2013exponential}). 
In particular if the motif on the ERGM is a triangle or a three-star, then the corresponding tensor is cubic(~\ref{ssec:ERGM-triangle}). 
Another class of examples of tensors forms arising from combinatorics and probability is the number of 3-Arithmetic Progressions in a random subset of $[n]:=\{1,2,\cdots,n\}$ (see \ref{ssec:3-ap}). Large deviations for the number of 3-APs in a random subset has been studied recently in probability, using the framework of non-linear large deviations~\cite{CD16},  with sharper upper-tail estimates obtained in
\cite{BhattacharyaGangulyShaoZhao20,Warnke17}. 
A natural third class of examples comes from inhomogeneous random hypergraphs, which extend inhomogeneous random graphs to higher-order interactions \cite{bollobas2007phase,lovasz2012large}. 
For more details on this example, we refer the reader to Section~\ref{ssec:inhomogeneous-hypergraph}.

Prior to this work, there has been a significant recent research focus on parameter estimation for Gibbs measures with quadratic interaction, which focus on the so called Ising and Potts models on a weighted graph, and spin glass models. The pioneering work in this area was done in~\cite{Chat07}, where the author studies estimation of 
a single parameter, and establishes $\sqrt{N}$ consistency of the pseudo-likelihood, under very mild assumptions on the underlying matrix which controls the quadratic form. In particular, this work allows the matrix to be a (scaled) adjacency graph (which correspond to Ising models), as well as take both positive and negative values simultaneously (which correspond to the so called spin glass models in statistical physics). In a follow up work, 
\cite{bhattacharya2018inference} considers the same problem of one parameter estimation, and establishes interesting phase transitions in the rate of consistency of the pseudo-likelihood estimator. The problem of joint parameter estimation for Ising models is much more difficult task and exhibit more delicate behavior, this was first studied in \cite{GM20}, where the authors give natural sufficient conditions for $\sqrt{N}$-consistency of both the parameters simultaneously. However, the results of~\cite{GM20} do not cover spin glass models. Recently, in \cite{CSW24} the authors establish joint consistency for quadratic spin glass models using a small ball probability argument. In the more recent work \cite{mukherjee2026joint}, the authors establish sufficient conditions for $\sqrt{N}$-consistency for Potts models, which is an extension of Ising models to multiple colors. To the best of our knowledge, a rigorous study of joint parameter estimation for cubic and higher order tensors has not been carried out in the literature. The aim of the current work is to address this gap, by studying joint estimation of parameters in the cubic tensor case.

\subsection{Our Contributions}

We summarize the main contributions of this paper. We introduce a general framework for joint parameter estimation in Gibbs measures with cubic tensor interactions. This framework is motivated by several concrete models spanning in different fields, including the edge-triangle and edge-three-star ERGM, Gibbs measures tilted by three-term arithmetic progression counts, and inhomogeneous random hypergraph models.

Our first main result Theorem~\ref{thm:main-1} identifies the main statistical mechanism behind joint pseudolikelihood estimation. Roughly speaking, the joint maximum pseudolikelihood estimator is consistent with an explicit quantitative error bound when the conditional local statistics exhibit enough variability across coordinates.  Conversely, when the local statistics become nearly homogeneous, the pseudolikelihood estimator becomes ill-conditioned and estimation might be impossible. 

Building on this, Theorem~\ref{thm:positive-res} gives general and easily checkable structural conditions under which joint estimation succeeds. These conditions are expressed directly in terms of the interaction tensor, and hence can be verified in concrete examples. A key message is that structural inhomogeneity of the tensor can create enough variation in the conditional local statistics to make joint estimation possible. Under these conditions, the maximum pseudolikelihood estimator is jointly $\sqrt N$-consistent.
As a complement,  Theorem~\ref{thm:est-imposs} identifies a broad homogeneous ferromagnetic regime in which pseudolikelihood becomes asymptotically ill-conditioned. This result gives a rigorous explanation for the difficulty of joint estimation in dense ERGM-type models: in homogeneous ferromagnetic regimes, the model may behave as if it has only one effective field, making the linear and cubic parameters hard to distinguish through pseudolikelihood. 

Along the way to prove Theorem~\ref{thm:positive-res} and~\ref{thm:est-imposs}, we develop a mean-field theory for cubic tensor Gibbs measures in Theorems~\ref{thm:mean_field} and~\ref{cor:gap_implies_estimation}. In particular, Theorem~\ref{thm:mean_field} provides a variational approximation for the free energy and a low-complexity description of the conditional mean vectors. Theorem~\ref{cor:gap_implies_estimation} then shows that if the associated variational problem is separated from nearly constant profiles, the local statistics remain sufficiently inhomogeneous and joint $\sqrt N$-consistent estimation follows. Since mean-field approximation has broad applications in the study of high-dimensional probability  and statistics, we expect these results to be useful beyond the estimation problems considered in this paper.

Finally, Section~\ref{sec:examples} applies the general theory to the three families of examples. For the edge-triangle ERGM, we show that pseudolikelihood estimation is ill-conditioned in the ferromagnetic regime with nonnegative field, but that joint estimation becomes possible in a sufficiently strong antiferromagnetic regime. This is in contrast to what happens for the edge-three-star ERGM, where we show that the pseudolikelihood estimation remains ill-conditioned in both ferromagnetic and anti ferromagnetic regimes. For three-term arithmetic progression models, we obtain different conclusions in the cyclic and integer settings, reflecting the different homogeneity properties of the two tensors. For inhomogeneous random hypergraph models, we show that non-constant vertex profiles lead to consistent joint estimation, while sufficiently homogeneous ferromagnetic models fall into the ill-conditioned regime.

\subsection{Definitions and Main Results}

\begin{defn}\label{def:main-set-up}
Suppose $\kappa_-<\kappa_+$ are two real numbers. Let $\mu$ be a probability measure on $[\kappa_-,\kappa_+]$ such that $\kappa_-,\kappa_+\in {\rm supp}(\mu)$. 
For every $\lambda\in \bR$, define the tilted probability measure $\mu_\lambda$ by the Radon-Nikodym derivative
\begin{align*}
    \frac{d\mu_\lambda}{d\mu}(z)=\exp\big(\lambda z-\Lambda(\lambda)\big), \quad\text{ where }\quad \Lambda(\lambda):=\log \int e^{\lambda z}d\mu(z).
\end{align*}

\end{defn}

Throughout the following text, we will use the notation $\kappa = \max\{\abs{\kappa_-}, \abs{\kappa_+}\}$. Given the above general setting, we now introduce a probability distribution on the space $[\kappa_-,\kappa_+]^N$ by the following Radon-Nikodym derivative:
\begin{align}\label{eq:model}
    \frac{dP_{\gb,h}}{d\mu^{\otimes N}}(x) = \frac{\exp(f(x) ) }{Z_N(\gb,h)}, \quad \text{ where }\quad f(x):= \frac{\gb}{3} \cdot \la A, x^{\otimes 3}\ra + h \cdot \la x, \mv1\ra.
\end{align}
Here  $\gb,h$ are real valued parameters, and $A$ is a symmetric $3$-tensor with nonnegative entries, which vanishes along the diagonal. More precisely, the 3-tensor $A$ satisfies
\begin{align*}
    A_{ijk} = \begin{cases}
         0, \quad &\text{if $\{i,j,k\}$ are not all distinct,} \\
         A_{\pi(i)\pi(j)\pi(k)}\ge 0, \quad & \text{for all permutations $\pi \in S_3$. }
    \end{cases}
\end{align*}
Finally, 
\begin{align*}Z_N(\beta,h)=\int_{[\kappa_-,\kappa_+]^N}\exp(f(x))d\mu^{\otimes N}(x)
\end{align*}is the normalizing constant/partition function, which makes \eqref{eq:model} a probability distribution.
\\

    Suppose we have access to a sample $X$ from the ground truth distribution $P_{\gb_0,h_0}$, the goal is to estimate the ground truth $(\gb_0,h_0)$ using the sample. A natural estimator is the maximum likelihood estimate (MLE), obtained by maximizing the log likelihood function. Unfortunately, a major challenge for such models is the intractability of the normalizing constant/partition function, and so the log-likelihood function is hard to work with, both analytically and computationally. To bypass this, the focus is on studying the pseudo-likelihood estimator of Besag~\cite{besag1975statistical}, computing which does not require the knowledge of the normalizing constant, and is hence numerically feasible. 

    Before we formally define the pseudo-likelihood estimator, first note that the conditional distribution of $X_i$ given $(X_j=x_j,j\ne i)$ is $\mu_{\beta m_i(x)+h}$ which has the mean $b_i(x):=\Lambda'(\beta m_i(x)+h)$ (in the notation of Definition~\ref{def:main-set-up}),
where $m_i(x)$ is the local field, given by 
\[
m_i(x) := \sum_{j,k} A_{ijk}x_j x_k \quad \text{for each $i\in [N]$}.
\]
The pseudo-likelihood estimator is obtained by multiplying the densities of all the one-dimensional conditional distributions. Taking a log, we get the log-pseudo-likelihood function, differentiating which
with respect to $(\beta,h)$, we obtain the score functions 
\begin{align*} 
S(\gb,h|x):=\frac{\partial L}{\partial \gb}  = \sum_i m_i(x)(x_i - b_i(x)), \qquad
Q(\gb,h|x):=\frac{\partial L}{\partial h}  = \sum_i (x_i - b_i(x)).
\end{align*} 

The maximum pseudo-likelihood estimator (MPLE) $(\hat{\gb}(x), \hat{h}(x))$ is defined as the unique solution to 
\(
(S(\gb,h|x),Q(\gb,h|x)) = (0,0),
\)
provided such a unique solution exists.
For notational convenience, we will compress the dependence of the MPLE on $x$ afterwards. We further compute the negative Hessian matrix of $L(\gb,h|x)$,
\begin{align*}
H(\gb, h|x):= \begin{pmatrix}
    \sum_i m_i(x)^2 \theta_i(\gb,h|x) & \sum_i m_i(x)  \theta_i(\gb,h|x) \\
     \sum_i m_i(x)  \theta_i(\gb,h|x) & \sum_i \theta_i(\gb,h|x) 
\end{pmatrix}
\end{align*}
where 
\[
\theta_i(\gb,h|x):= \Lambda''(\gb m_i(x) + h), \quad \text{for each $i\in [N]$}.
\]
Note that the determinant of the Hessian
\begin{align}\label{eq:hess-det}
     \abs{H(\gb, h|x)} =  \frac12\sum_{i, j}\theta_i(\gb,h|x) \theta_j(\gb,h|x) (m_i(x) - m_j(x))^2 = N^2 \widetilde{T}_N(x),
\end{align}
where \[
\widetilde{T}_{N}(x):=\frac{1}{2N^2}\sum_{i, j}\theta_i(\gb,h|x) \theta_j(\gb,h|x) (m_i(x) - m_j(x))^2.
\]
Most of the time, we will work with the following related but simpler quantity.
\begin{align}\label{eq:TN}
    T_N(x):= \frac{1}{N}\sum_i (m_i(x) - \bar{m}(x))^2 = \frac{1}{2N^2}\sum_{i, j} (m_i(x) - m_j(x))^2.
\end{align}
For convenience, we will also use the following notations. Let 
\begin{align*}
    \cR_i:=\sum_{j,k}A_{ijk}, \quad \ \ \cR_{ij}:=\sum_{k}A_{ijk}, \quad \text{and} \ \ \bar{\cR} = \frac1N \sum_i \cR_i.
\end{align*}

We impose the following standing assumption that the tensor $A$ has bounded row sums.
\begin{ass}\label{ass:standing}
       Throughout the paper, we assume tensor $A$ has nonnegative entries and further there exists a finite constant $\gc>0$ such that $  \max_{i\in [N]} \sum_{j,k}A_{ijk} \le \gc$.
\end{ass}
We record a few elementary consequences of Assumption~\ref{ass:standing} that will be used throughout.
\begin{enumerate}
    \item Uniform bound on the local fields:
    \begin{equation*} 
            \max_{i } | m_i(x)| \le \gc\kappa^2<\infty.
\end{equation*}
    \item Comparison between $\widetilde T_N$ and $T_N$: Since $\theta_i(\gb,h\mid x)\ge \inf_{|\lambda|\le |\beta|\gamma\kappa^2+|h|} \Lambda''(\gl)$, we have
       \begin{equation*} 
    \widetilde T_N(x) \ge \big(\inf_{|\lambda|\le |\beta|\gamma\kappa^2+|h|} \Lambda''(\lambda)\big)^2\, T_N(x).
    \end{equation*}
    \item Operator norm bound on $\cR$ matrix: 
    $$ \|\cR\|_{\mathrm{op}} \le \max_{i } \sum_{j}\cR_{ij}\le \max_{i } \sum_{j,k}A_{ijk} \le \gc.$$ 
 \end{enumerate}


Now we state our first main result on the existence and consistency of MPLE for the 3-tensor model.

\begin{thm}\label{thm:main-1}
    Suppose $X$ is a sample from the unknown ground truth $P_{\gb_0,h_0}$ for $\gb_0 ,h_0 \in \bR$, and the symmetric tensor $A$ satisfies Assumption~\ref{ass:standing}. 
    Assume that under $P_{\gb_0,h_0}$, we have        
     \[ T_N(X) = \omega_p(N^{-2/7}), \] 
   that is,  $P_{\gb_0,h_0}( T_N(X) > c N^{-2/7}) \to 1 $  for any $c >0$.
Then, with probability tending to one, the MPLE exists. Further, under $P_{\gb_0,h_0}$ we have 
\[
\max \big( |\hat{\gb} - \gb_0|, |\hat{h} - h_0|\big) = O_p(N^{-\frac12}T_N^{-1}).
\]
\end{thm}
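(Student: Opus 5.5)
The plan is to run the standard strong-concavity argument for the log-pseudolikelihood $L(\gb,h|X)$, which is concave in $(\gb,h)$ with negative Hessian $H$. The argument has three inputs: a bound on the score at the truth, a lower bound on the curvature in a neighbourhood of the truth, and a comparison between the two.

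\emph{Score bound.} We first show
\[
\bE_{\gb_0,h_0}\big[S(\gb_0,h_0|X)^2 + Q(\gb_0,h_0|X)^2\big] = O(N).
\]
This is the usual exchangeable-pair / Chatterjee-type second moment computation. Since $\bE[X_i - b_i(X)\mid X_j, j\neq i]=0$, we expand
\[
\bE\Big(\sum_i m_i(X)(X_i-b_i(X))\Big)^2 .
\]
The diagonal terms are $O(N)$ by the bound $|m_i|\le \gc\kappa^2$. For the cross terms $i\ne j$, replace $m_j$ and $b_j$ by their values with $X_i$ set to $0$ (or resampled). This costs $|m_j(X)-m_j(X^{(i)})|\le 2\kappa\cdot 2\cR_{ij}\kappa$, and similarly for $b_j$, using that $\Lambda''$ is bounded. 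After this replacement the term becomes conditionally centered in $X_i$. The error terms are bounded by $\sum_{i,j}\cR_{ij}\le \gc N$, using Assumption~\ref{ass:standing} and the operator norm bound on $\cR$. Hence $\|(S,Q)(\gb_0,h_0)\| = O_p(\sqrt N)$.

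\emph{Curvature in a neighbourhood.} For $|\gb-\gb_0|,|h-h_0|\le 1$ we have $\theta_i\ge c_0>0$ uniformly, where $c_0$ depends only on $\gb_0,h_0,\gc,\kappa$. This holds because $\mu$ is nondegenerate, since $\kappa_\pm\in{\rm supp}(\mu)$. Item (2) of the consequences then gives $\det H\ge c_0^2N^2T_N$. The trace satisfies ${\rm tr}\,H\le CN$. The smallest eigenvalue is therefore
\[
\lambda_{\min}(H)\ge \frac{\det H}{{\rm tr}\, H}\ge cNT_N
\]
uniformly on this neighbourhood.

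\emph{Existence and the rate.} By concavity and the mean value theorem along segments from $\theta_0=(\gb_0,h_0)$,
\[
L(\theta)-L(\theta_0)\le \|\nabla L(\theta_0)\|\,r - \tfrac12 cNT_N r^2 \quad\text{for } \|\theta-\theta_0\|=r\le 1.
\]
Take $r = K N^{-1/2}T_N^{-1}$ with $K$ large. The right-hand side is negative with probability close to one provided $r\le 1$, which requires $T_N\gg N^{-1/2}$. Since $T_N=\omega_p(N^{-2/7})$, this condition holds. A concave function that is strictly smaller on a sphere than at its centre has a maximizer inside the sphere. The maximizer is unique because $H$ is positive definite there, and by concavity it is the global unique root of the score equations. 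This gives existence of the MPLE and the bound
\[
\max\big(|\hat\gb-\gb_0|,|\hat h-h_0|\big)=O_p(N^{-1/2}T_N^{-1}).
\]

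\emph{Main obstacle.} I expect the main difficulty to be the score bound, in particular the cross-term decoupling for $S$, where $m_i$ depends on $X_j$. The exponent $2/7$ indicates that the authors likely need a weaker curvature estimate than the one above. For instance, they may not have a uniform lower bound on $\theta_i$ in the neighbourhood, or they may control $\det H$ only through $\widetilde T_N$, which perturbs $T_N$ with errors of order $r$. If the curvature bound degrades, I would first try to compare $\widetilde T_N(\gb,h)$ with $\widetilde T_N(\gb_0,h_0)$ using Lipschitz bounds on $\Lambda''$. Then I would choose the radius so that the perturbation stays smaller than $T_N$; balancing these terms is presumably where the $N^{-2/7}$ threshold arises.
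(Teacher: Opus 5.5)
Your proof is correct. The score bound and the uniform curvature bound $\lambda_{\min}(H)\ge \det H/\mathrm{tr}\,H\ge cNT_N$ on a fixed ball around $(\gb_0,h_0)$ are exactly the paper's (Corollary~\ref{lem:concen} and \eqref{eq:mineig_uni_bd}). You differ from the paper in how you obtain existence.

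The paper proves existence globally and separately from the rate. Lemma~\ref{lem:suff_cond} shows $L(\gb,h|X)\to-\infty$ as $|\gb|+|h|\to\infty$ whenever there are four indices with non-extremal spins and local fields on both sides of a threshold $a$. This condition is verified via Lemma~\ref{exist-lem1}, the concentration bound \eqref{eq:concen1}, and a union bound over $O(\delta^{-1/2})$ candidate thresholds. That union bound costs $O(\delta^{-7/2}N^{-1})$, and requiring it to vanish with $\delta\asymp T_N$ is exactly where the $N^{-2/7}$ hypothesis comes from. Only after existence is established does the paper interpolate $F(t)$ between $(\gb_0,h_0)$ and $(\hat\gb,\hat h)$ to get the rate.

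Your sphere argument gives existence and the rate in one step. Once $L<L(\gb_0,h_0)$ on the sphere of radius $KN^{-1/2}T_N^{-1}\le 1$, concavity yields an interior maximizer that is the global one. Coercivity follows automatically. Uniqueness holds globally, since $\det H=N^2\widetilde T_N>0$ for all $(\gb,h)$ whenever $T_N>0$. Fixing $K$ for each $\varepsilon$ gives $\liminf P(\text{MPLE exists})\ge 1-\varepsilon$, hence probability tending to one.

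Your route needs only $T_N=\omega_p(N^{-1/2})$, so it shows the stated $N^{-2/7}$ hypothesis is stronger than necessary; the paper's own rate argument likewise uses only $\sqrt N\,T_N\to\infty$. What the paper's route buys is a deterministic, sample-level coercivity criterion that does not refer to a neighbourhood of the truth.

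For the same reason, your closing speculation is off. The $2/7$ does not come from any degradation of the curvature estimate. The paper's curvature bound is the same uniform one you prove; the exponent arises entirely in the existence step.
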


By Theorem~\ref{thm:main-1}, the key of pseudo-likelihood estimation is to study the object $T_N(X)$. However, verifying the condition on $T_N(X)$ in Theorem~\ref{thm:main-1} in general is difficult. Our next theorem gives two sufficient conditions on the tensor $A$ which guarantees $T_N(X)=\Omega_p(1)$, and hence joint $\sqrt{N}$-consistency of the pseudo-likelihood estimator.

\begin{thm}\label{thm:positive-res}
For $X \sim P_{\gb_0,h_0}$, suppose the Assumption~\ref{ass:standing} holds. Assume that either $\mathrm{Tr}(\cR^2) = \Omega(N)$ or $\sum_i (\cR_i -\bar{\cR})^2 = \Omega(N)$ holds. If $\Lambda'(h_0)\ne 0$, then we have \[
T_N(X) = \Omega_p(1).
\] 
Thus the maximum pseudo-likelihood estimator $(\hat{\gb}, \hat{h})$ is $\sqrt{N}$-consistent.

\end{thm}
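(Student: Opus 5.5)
The plan is to show that $T_N(X)$ is bounded below in probability separately under each of the two hypotheses, and then invoke Theorem~\ref{thm:main-1}. Since $T_N = \Omega_p(1)$ is much stronger than $\omega_p(N^{-2/7})$, that theorem yields existence of the MPLE and the rate $O_p(N^{-1/2}T_N^{-1}) = O_p(N^{-1/2})$. Two facts are used throughout. First, $T_N(x) = \min_{c\in\bR} \frac1N\sum_i (m_i(x)-c)^2$, with the minimizer $c=\bar m(x)\in[-\gc\kappa^2,\gc\kappa^2]$. Second, by Assumption~\ref{ass:standing} all local fields are bounded, so every conditional law $\mu_{\gb_0 m_i+h_0}$ is a tilt of $\mu$ by a bounded parameter. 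Since $\kappa_\pm\in{\rm supp}(\mu)$, these conditional laws have variance bounded below by some $\sigma^2>0$ uniformly in $i$ and $x$.

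\emph{Case $\mathrm{Tr}(\cR^2)=\Omega(N)$.} Here the aim is to show that the local fields fluctuate, i.e. an anti-concentration statement. I would introduce the linearized fields $\tilde m_i := \sum_{j,k}A_{ijk} b_j(X) b_k(X)$ and write $m_i-\tilde m_i$ to first order as $2\sum_{j}(X_j-b_j)\sum_k A_{ijk}b_k$ plus a quadratic term in the centered variables. I would then aim to show, for each fixed $c$,
\begin{align*}
\bE \frac1N\sum_i (m_i(X)-c)^2 \;\ge\; \frac{\sigma^2}{N}\sum_{i,j}\Big(\sum_k A_{ijk}X_k\Big)^2 - o(1).
\end{align*}
The route to this bound is the conditional-variance inequality $\bE[(Y-c)^2\mid X_{-j}]\ge \mathrm{Var}(Y\mid X_{-j})$ applied coordinatewise, in an Efron--Stein type lower bound. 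The right-hand side is not directly $\mathrm{Tr}(\cR^2)/N$, because of the factor $X_k$; I would handle this with a small-ball argument. Split $[N]$ into a random half $S$ and its complement, condition on $X_{S^c}$, and use that $\sum_{j\in S}\big(\sum_{k\in S^c}A_{ijk}X_k\big)^2$ has conditional expectation of order $\sum_j \cR_{ij}^2$, as in \cite{CSW24}. Finally, I would upgrade the expectation bound to a high-probability bound, uniformly over $c$ via a finite net of $[-\gc\kappa^2,\gc\kappa^2]$. For this I would use the concentration of bounded-difference Lipschitz functions under Gibbs measures with bounded row sums (Chatterjee's exchangeable pair method~\cite{Chat07}). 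The change in $\frac1N\sum_i(m_i-c)^2$ under resampling a single coordinate is $O(1/N)$ on average, which is what the exchangeable pair method requires.

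\emph{Case $\sum_i(\cR_i-\bar\cR)^2=\Omega(N)$.} By the previous case we may assume $\mathrm{Tr}(\cR^2)=o(N)$. In this regime the model is mean-field like, and I expect $m_i(X)\approx\tilde m_i$ in $\ell^2$-average, since the error has variance controlled by $\mathrm{Tr}(\cR^2)/N$. I would then argue by contradiction: suppose that on an event of non-vanishing probability $T_N(X)\le\delta$ with $\delta$ small. Then $m_i\approx\bar m$ for most $i$. By the Lipschitz property of $\Lambda'$ on compacts, $b_i\approx u:=\Lambda'(\gb_0\bar m+h_0)$ for most $i$, hence $m_i\approx \tilde m_i\approx \cR_i u^2$ for most $i$. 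This gives
\begin{align*}
T_N(X)\ \ge\ u^4\cdot\frac1N\sum_i(\cR_i-\bar\cR)^2 - o(1) - O(\delta).
\end{align*}
For this to contradict $T_N\le\delta$ we need $u$ bounded away from $0$, and this is exactly where $\Lambda'(h_0)\ne0$ enters. If $u\approx0$, then $\bar m\approx\bar\cR u^2\approx0$, so $u=\Lambda'(\gb_0\bar m+h_0)\approx\Lambda'(h_0)\neq0$, which is a contradiction. A compactness argument in $\bar m$ then makes the lower bound on $|u|$ uniform.

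The main obstacle is the first case: producing a genuine high-probability lower bound on the spread of the $m_i$ from the fluctuations of the spins. The difficulty is that $m_i$ is quadratic in $X$ and the coordinates are dependent. I would first try the conditioning-on-a-random-half argument combined with Chatterjee's concentration inequality. If that fails, I would try a Gibbs-sampler stationarity comparison between $T_N(X)$ and $T_N$ after one resampling step.
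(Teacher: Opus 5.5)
\textbf{There is a genuine gap, in your first case.} That case never uses $\Lambda'(h_0)\neq 0$, yet the conclusion is false without it.

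Already for independent spins, $\mathrm{Tr}(\cR^2)$ affects the spread of the local fields only through the \emph{mean} of the spins. With mean $t$, variance $v$ and $X_j=t+\xi_j$, one gets $\mathrm{Var}(m_i)=4t^2v\sum_j\cR_{ij}^2+2v^2\sum_{j,k}A_{ijk}^2$. When $t\approx 0$ only $\sum_{i,j,k}A_{ijk}^2$ survives, and this can be $O(1)$ while $\mathrm{Tr}(\cR^2)\asymp N$.

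Here is an explicit counterexample.
\begin{itemize}
\item Let $N=4p$ and $[N]=\{u_1,\dots,u_{2p}\}\sqcup W$ with $|W|=2p$. Put $A=1/|W|$ on every permutation of $(u_{2l-1},u_{2l},w)$, $w\in W$.
\item Then $\cR_i\in\{1,2\}$ and $\cR_{u_{2l-1}u_{2l}}=1$, so $\mathrm{Tr}(\cR^2)\ge N/2$; also $\sum_i(\cR_i-\bar\cR)^2=N/4$. But $\sum_{i,j,k}A_{ijk}^2=3$.
\item For Rademacher $\mu$ and $\gb_0=h_0=0$, the spins are i.i.d.\ centered, so $\bE\,T_N\le N^{-1}\sum_i\bE\,m_i^2=6/N$.
\end{itemize}
In this example $\sum_k A_{u_1u_2k}X_k=|W|^{-1}\sum_{w\in W}X_w=O_p(N^{-1/2})$. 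Restricting $k$ to a random half does not undo this sign cancellation. So your claim that the random-half quantity has conditional expectation of order $\sum_j\cR_{ij}^2$ fails.

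Independently, the upgrade from an expectation bound to $T_N=\Omega_p(1)$ is unsupported.
\begin{itemize}
\item Efron--Stein bounds the variance from above. A reverse inequality holds for low-degree polynomials of independent variables, but not automatically under $P_{\gb_0,h_0}$.
\item Exchangeable-pair concentration of a bounded-difference function like $T_N$ under a Gibbs measure needs a Dobrushin-type high-temperature condition. The theorem is claimed for every $\gb_0$. At low temperature the measure can be a mixture of macroscopic states, and then $\bE\,T_N\ge c$ does not bound $T_N$ below in probability.
\end{itemize}

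\textbf{How the paper avoids this.} It never proves anticoncentration under the Gibbs measure. It argues by contradiction on $\cA_N=\{T_N\le\eps_N\}$.
\begin{itemize}
\item On $\cA_N$, $f(X)=f(\bar X\mathbf 1)+o(N)$.
\item The fixed-point relation $\bar b\approx\Lambda'(\gb_0\bar\cR\,\bar b^{2}+h_0)$, together with $\Lambda'(h_0)\neq0$, keeps $\bar b(X)$, hence $\bar X$, away from $0$ (Lemma~\ref{lem:approx}(iii)). Your second case contains this computation.
\item Since $b(X)$ lies in a low-complexity set of near-constant vectors, $\log Z_N=\sup(f-I)+o(N)$. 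Tilting by $\Phi(\bar x)$ then gives $P_{\gb_0,h_0}(\cA_N)\le e^{o(N)+CN/L}\sup_{t\in J_\eta}Q_t(\cA_N)$ with $Q_t$ a product measure (Lemma~\ref{lem:measure-change}(iii)).
\item For $|t|\ge a_\eta$, the exact formula of Lemma~\ref{lem:iid-measure}(i) gives $\bE_{Q_t}T_N\ge c_0$ under either hypothesis. This comes from the $t^4\sum_i(\cR_i-\bar\cR)^2$ term or the $4t^2\Lambda''(\Phi(t))\bigl(\mathrm{Tr}(\cR^2)-N^{-1}\sum_j\cR_j^2\bigr)$ term of $N\bE_{Q_t}T_N$.
\item McDiarmid gives $Q_t(\cA_N)\le 2e^{-cN}$, which beats the $e^{o(N)}$ loss.
\end{itemize}
So you are missing two ideas. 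A nonzero magnetization is what activates $\mathrm{Tr}(\cR^2)$. Comparison with product measures replaces concentration under the Gibbs measure.

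\textbf{Your second case.} It is sound when $\mathrm{Tr}(\cR^2)=o(N)$: Lemma~\ref{lem:unify-concen} applied for each fixed $i$ gives $\bE\sum_i(m_i-\tilde m_i)^2=O(\mathrm{Tr}(\cR^2))$. But the reduction to that regime rests on the first case. It can be made self-contained by testing against the fixed vector $r_i=\cR_i-\bar\cR$ instead of requiring $\ell^2$-closeness.
\begin{itemize}
\item On $\{T_N\le\delta\}$, Cauchy--Schwarz gives $N^{-1}\sum_i r_im_i\le\gc\sqrt\delta$.
\item Also $N^{-1}\sum_i r_im_i=u^2N^{-1}\sum_i r_i^2+O(\sqrt\delta)+o_p(1)$, because the fluctuations enter only through conditionally centered sums with bounded weights.
\end{itemize}
Since $|u|$ stays bounded away from $0$ there by your fixed-point step, this gives a contradiction for small $\delta$. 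That is a more elementary proof of the second alternative, but it does not handle the first.
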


\begin{rem}
There are two natural sufficient conditions under which $\gL'(h_0)\ne 0$. 
\begin{itemize}
    \item $0\notin (\kappa_-,\kappa_+)$: This happens iff the measure $\mu$ is supported on the non-negative real line, or the non-positive real line. Examples are Bernoulli, Binomial, $U[0,1]$.

    \item $0\in(\kappa_-,\kappa_+)$ and $h_0\ne \Phi(0)$:  Since $\gL'$ is strictly increasing and
    $\gL'(\Phi(0))=0$, this implies $\gL'(h_0)\ne 0$. 
 In particular, if $\mu$ is symmetric about $0$, which includes symemtric Rademacher, then $\Phi(0)=0$, and hence
    $h_0\ne 0\Leftrightarrow \gL'(h_0)\ne 0$.
\end{itemize}
\end{rem}
\subsection{mean-field Tensors}

A natural question is whether $\sqrt{N}$-consistency fails, when both the sufficient conditions in Theorem \ref{thm:positive-res} fails. In this case the situation is a bit more delicate. To study this, we introduce the following conditions:
\begin{align}
\textit{mean-field condition}: \quad  \big \| \sum_i A_i^2 \big {\|}_{\rm{op}} = o((\log N)^{-1}) \label{cond:GW},\\
\label{cond:regu}
\textit{asymptotic regularity condition}: \quad  \sum_i (\cR_i -\bar{\cR})^2 = o(N).
\end{align}

\begin{rem}\label{rem:trace-optr-converse}
Assumption \eqref{cond:GW} may not immediately look natural, but it is indeed a slight strengthening of the more natural assumption ${\rm Tr}(\mathcal{R}^2)=o(N/\log N)$ arising from Theorem \ref{thm:positive-res} (see Lemma \ref{lem:trace-optr}{\rm (a)}). The converse implication is false in general. However, Lemma~\ref{lem:trace-optr}{\rm (b)} shows that for tensors satisfying the following two conditions
\begin{align}\label{cond:strong-pseu-regu}
\textit{strong pseudo-regularity condition}: \quad \max_{\cR_{ij}>0}\cR_{ij} \le K\min_{\cR_{ij}>0}\cR_{ij},\\
\label{cond:A_nontrivial}
  \textit{non degeneracy condition}: \bar \cR \ge \alpha > 0 \quad \text{and} \quad \cR_i > 0 \text{ for all } i \in [N].  
\end{align}
the converse does hold. Thus, for tensors satisfying the above two conditions, the mean-field condition is equivalent to the more natural condition ${\rm{Tr}(\cR^2)} = o(N/\log N)$. Condition \eqref{cond:strong-pseu-regu}
  essentially demands comparable co-degrees of the underlying hypergraph, whereas condition \eqref{cond:A_nontrivial} rules out degenerate cases (such as $A\equiv 0$). It turns out that all our examples in Section~\ref{sec:examples} satisfy both the above conditions, and so thus mean-field can be checked via ${\rm{Tr}(\cR^2)}$. 
We point out that the strong pseudo-regularity condition only requires the
\emph{nonzero} entries of the matrix $\cR$ to be comparable to one another.
This is weaker than requiring all entries of $\cR$ to be equal or comparable.
In the hypergraph setting, this corresponds to asking that the positive
pair-codegrees are comparable, rather than requiring pair-codegree regularity
over all pairs. This distinction is useful, since many natural examples have
zero pair-codegrees, which includes  the edge-triangle and edge-three-star ERGM.
\end{rem}

We will now give a structural result for the cubic Gibbs measures, where the underlying tensor $A$ satisfies \eqref{cond:GW}. To state this result, we first need to introduce some notations.

\begin{defn}
    Let $\mu, \kappa_-,\kappa_+, \Lambda,\lambda, \mu_\lambda$ be as in Definition \ref{def:main-set-up}. Then straight-forward calculus gives
\[\E_{Z\sim {\mu_\lambda}}[Z]=\Lambda'(\lambda),\qquad {\mathrm Var}_{Z\sim \mu_\lambda}(Z)=\Lambda''(\lambda). \]
Since $\mu$ is non-degenerate, this implies that $\Lambda''(\lambda)>0$ for all $\lambda\in \mathbb{R}$. Moreover, it is not hard to show that
\[ \lim_{\lambda \to -\infty} \Lambda'(\lambda) = \kappa_{-}, \quad \lim_{\lambda \to +\infty} \Lambda'(\lambda) = \kappa_{+}. \]
In particular, this implies that the function $\lambda\mapsto \Lambda'(\lambda)$ from $\bR\mapsto (\kappa_-,\kappa_+)$ is strictly increasing and maps onto $(\kappa_{-}, \kappa_{+})$, and hence has an inverse $\Phi:(\kappa_-,\kappa_+)\mapsto \bR$.
\\

For $t \in [\kappa_-, \kappa_+],$ define the Legendre transform of $\Lambda$ as
\begin{align}\label{eq:entropy} I(t) := \sup_{ \gl \in \bR} \big( \gl t  -  \Lambda (\gl) \big) \in [0, \infty]=t \Phi(t)-\Lambda(\Phi(t)) =  \mathrm{KL}(\mu_{\Phi(t)}\| \mu) ,\end{align}
 where the second equality follows from a direct computation,
along with the convention
\begin{align*}
 \Phi(\kappa_-) := -\infty, \quad \mu_{-\infty} := \delta_{\kappa_-}, \text{ and } \quad
 \Phi(\kappa_+) := +\infty, \quad \mu_{\infty} := \delta_{\kappa_+}.
\end{align*}
For $t \in [\kappa_-, \kappa_+]^c$, we set $I(t) = \infty$, which ensures that $I(\cdot)$ is lower semi-continuous on $\bR$.
\end{defn}

Next, we introduce the definition of a sequence of low-complexity sets in $[\kappa_-,\kappa_+]^N$. 
\begin{defn}[Low complexity set]\label{def:low_complex}
Let $(E_N)_{N\ge 1} \subseteq [\kappa_-, \kappa_+]^N$ be a sequence of sets. We say that $E_N$ is of low complexity, if for every $\delta>0$ there exists a $\delta\sqrt{N}$-net $\mathcal{D}_N(\delta)$ for the set $E_N$ in Euclidean metric, such that $\log |\mathcal{D}_N(\delta)|=o(N)$.
\end{defn}

Finally, we introduce the notion of mean-field cubic Gibbs measures on $[\kappa_-,\kappa_+]^N$, see the similar notions in~\cite{basak2017universality,lacker2024mean}.
\begin{defn}[Mean-field measure]
    For every $N\ge 1$, let $\mathbb{P}_N$ be a probability measure on $[\kappa_-,\kappa_+]^N$. We will say that the sequence of probability $\mathbb{P}_N$ is mean-field, if
    \begin{align*}
        \inf_{\mathbb{Q}_N\in {\rm prod}([\kappa_-,\kappa_+]^N)}{\rm{KL}}(\mathbb{Q}_N\|\mathbb{P}_N)=o(N).
    \end{align*}
    In words, the distribution $\mathbb{P}_N$ is well approximated in terms of the Kullback-Leibler divergence by product measures. In particular, if $\mathbb{P}_N=P_{\beta,h}$ from \eqref{eq:model}, then a straight-forward calculation gives
    \begin{align*}
        \inf_{\mathbb{Q}_N\in {\rm prod}([\kappa_-,\kappa_+]^N)}\mathrm{KL}(\mathbb{Q}_N\|\mathbb{P}_N)=\log Z_N(\beta,h)-\sup_{ t \in [\kappa_-,\kappa_+]^N}\big( f(t) - I( t)\big),
    \end{align*}
    where $I(t):=\sum_iI(t_i)$ with $I(t_i)$ defined in~\eqref{eq:entropy}.
    Consequently, the measure $P_{\beta,h}$ is mean-field iff
    \begin{align}\label{eq:prod_mf}
    \log Z_N(\gb,h) - \sup_{ t \in [\kappa_-,\kappa_+]^N}\big( f(t) - I( t)\big) = o(N).
    \end{align}
\end{defn}

\begin{thm}\label{thm:mean_field}
    Suppose the tensor $A$ satisfies the Assumption~\ref{ass:standing}, and the mean-field condition~\eqref{cond:GW}. 
    \begin{enumerate}
    \item[(i)]
    Then the $3$-tensor model $P_{\beta,h}$ 
    is mean-field, in the sense that \eqref{eq:prod_mf} holds.

\item[(ii)]

Moreover, the set of all conditional means 
\(\mathcal{M}_N:=\{b(x): x\in [\kappa_-, \kappa_+]^N\}\) has low complexity in the sense of Definition \ref{def:low_complex}.
\end{enumerate}
\end{thm}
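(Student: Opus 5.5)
Both parts reduce to one fact: the gradient of the cubic Hamiltonian has low complexity. This is the nonlinear large deviation framework of Chatterjee--Dembo \cite{CD16}, in the cleaner form of Eldan's Gaussian-width version \cite{basak2017universality,lacker2024mean}.

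\emph{Setting up the gradient.} Write $f(x)=\frac{\gb}{3}\la A,x^{\otimes 3}\ra+h\la x,\mv 1\ra$. Its gradient is $\nabla f(x)=\gb m(x)+h\mv1$, where $m(x)=(m_i(x))_i$, and $b(x)=\Lambda'(\nabla f(x))$ coordinatewise. Since $\Lambda'$ is $\sup\Lambda''$-Lipschitz (and $\Lambda''\le \kappa^2$), any $\delta\sqrt N$-net for $\{m(x):x\in[\kappa_-,\kappa_+]^N\}$ yields a $C\delta\sqrt N$-net for $\mathcal M_N$. So part (ii) reduces to showing that $\mathcal G_N:=\{m(x)\}$ has low complexity.

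\emph{Low complexity of $\mathcal G_N$.} I would use a Gaussian-width bound: a set $E\subset\bR^N$ with $\bE\sup_{y\in E}\la g,y\ra=o(N)$ and $\sup_{y\in E}\|y\|\le C\sqrt N$ admits, by Sudakov minoration, a $\delta\sqrt N$-net of log-size $o(N)$ for every fixed $\delta$. It then suffices to estimate
\[
\bE\sup_x \sum_i g_i m_i(x)=\bE\sup_x \Big\la \sum_i g_iA_i\, x,x\Big\ra\le \kappa^2 N\,\bE\Big\|\sum_i g_iA_i\Big\|_{\rm op},
\]
where $A_i=(A_{ijk})_{jk}$. By the matrix Khintchine (noncommutative Gaussian) inequality,
\[
\bE\Big\|\sum_i g_iA_i\Big\|_{\rm op}\lesssim \sqrt{\log N}\,\Big\|\sum_iA_i^2\Big\|_{\rm op}^{1/2}.
\]
This is $o(1)$ by \eqref{cond:GW}, which explains the $(\log N)^{-1}$ in that condition. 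The norm bound $\|m(x)\|\le \gc\kappa^2\sqrt N$ follows from Assumption~\ref{ass:standing}. Together these prove (ii). If Sudakov is not directly usable, I would instead build the net through covering numbers via the dual Sudakov inequality, which gives the same conclusion.

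\emph{Part (i).} The lower bound $\log Z_N\ge \sup_t(f(t)-I(t))$ is the Gibbs variational principle applied to product measures. For the upper bound I would use the Chatterjee--Dembo / Eldan decomposition with a non-Gaussian base measure $\mu^{\otimes N}$ on a compact set. Cover $[\kappa_-,\kappa_+]^N$ by the cells $\{x:\|\nabla f(x)-d\|\le\delta\sqrt N\}$ for $d$ in the net $\mathcal D$. On a cell, linearize $f$ around a point, using that the Hessian $2\gb\sum_k A_{\cdot\cdot k}x_k$ has operator norm at most $2|\gb|\gc\kappa$. Each cell's contribution is then bounded by $\log\int e^{\la d,x\ra}d\mu^{\otimes N}$ plus an error term, and this is matched to $\sup_t(f(t)-I(t))$ through the tilted product measure $\mu_d^{\otimes N}$ with mean $t=\Lambda'(d)$. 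Summing over cells costs only $\log|\mathcal D|=o(N)$.

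\emph{Main obstacle.} The linearization error is the hard step: for a cubic $f$, the error $f(x)-f(y)-\la\nabla f(y),x-y\ra$ is not controlled by the gradient distance alone. I would first try the exact identity for cubics, $f(x)=\frac13\la\nabla f(x)-h\mv1,x\ra+h\la x,\mv1\ra$, which expresses $f$ through the gradient. This gives $|f(x)-\frac13\la d,x\ra-\frac23h\la x,\mv1\ra|\le C\delta N$ on each cell. Then $\int_{\text{cell}}e^{f}\,d\mu^{\otimes N}$ would be bounded using a tilt $\lambda$ chosen so that the cell's typical point is the mean $t$, that is, with the tilt $d$ playing the role of $\nabla f(t)$. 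Making this self-consistency rigorous is the delicate part. I would handle it by a Chebyshev/exponential tilt on the cell with tilt $\lambda=d$, and bound
\[
f(x)-\la d,x\ra \le \sup_t\big(f(t)-\la d,t\ra\big)+O(\delta N),
\]
via the cubic identity together with the cell constraint. This yields $\log Z_N\le\sup_t(f(t)-I(t))+O(\delta N)+o(N)$; letting $\delta\to0$ completes the proof of (i).
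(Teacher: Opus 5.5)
Your part (ii) and the lower bound in (i) are the paper's argument. The gap is your hand-made upper bound in (i). The paper does not prove that bound by a cell argument; it invokes Augeri's Corollary~1.2 \cite{augeri2020nonlinear} (Theorem~\ref{thm:augeri}). That result holds for any compactly supported $\mu$, and its error $C\kappa^{2/3}N^{1/3}\mathsf{GW}(\mathcal V)^{2/3}$ is $o(N)$ by the Gaussian-width estimate you already have.

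The cell-wise tilt you describe does not close. Write $\int_{C_d}e^{f}\,d\mu^{\otimes N}=e^{\sum_i\Lambda(d_i)}\int_{C_d}e^{f(x)-\langle d,x\rangle}\,d\mu_d^{\otimes N}(x)$ and bound the integrand by its supremum. You are left with $\sup_s(\langle d,s\rangle-I(s))+\sup_t(f(t)-\langle d,t\rangle)$. This is always at least $\sup_t(f(t)-I(t))$, and it is in general larger by order $N$, whether or not you restrict $t$ to the cell. The reason is that a level set of the gradient is not self-consistent.

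A concrete case: take $\mu$ Rademacher, $h=0$, $0<\beta<3/2$, and $A_{ijk}=N^{-2}$ on distinct triples.
\begin{itemize}
\item Here $\nabla f(x)=\beta\bar x^2\mathbf{1}+O(N^{-1})$ and $\sup_t(f(t)-I(t))=O(1)$.
\item The cell around $d=\beta c^2\mathbf{1}$ also contains points with $\bar x=-c$. At those points $f(x)-\langle d,x\rangle\approx\frac23\beta c^3N$.
\item So your bound for that cell is at least $N\bigl(\log\cosh(\beta c^2)+\frac23\beta c^3\bigr)$, an overshoot of order $N$.
\end{itemize}

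The variant suggested by your cubic identity fails too. That variant bounds the cell by $e^{C\delta N}\prod_i e^{\Lambda((d_i+2h)/3)}$, which in the example overshoots by $N\log\cosh(\beta c^2/3)$. With $s=\Lambda'((d+2h\mathbf{1})/3)$ one has $\sum_i\Lambda((d_i+2h)/3)=f(s)-I(s)+\frac13\langle d-\nabla f(s),s\rangle$. Nothing forces $\nabla f(s)\approx d$, and this is not the mean-field equation $s=\Lambda'(\nabla f(s))$.

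What you discard is the $\mu_d^{\otimes N}$-probability of the part of the cell where $f-\langle d,\cdot\rangle$ is large. Its exponential smallness is exactly the self-consistency. Estimating it is a nonlinear large-deviation problem as hard as the original one.

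To repair (i), either cite Augeri's bound as the paper does, or get self-consistency probabilistically in the Chatterjee--Dembo way, as the paper itself does in Lemma~\ref{lem:measure-change}(i):
\begin{itemize}
\item Under $P_{\beta,h}$, Corollary~\ref{lem:concen} (using \eqref{eq:concen5}, where \eqref{cond:GW} is needed again) gives $f(X)=f(b(X))+o_p(N)$ and $\sum_i(X_i-b_i(X))\Phi(b_i(X))=o_p(N)$.
\item This event has probability tending to one, so $Z_N\le 2\int_{\text{event}}e^{f}\,d\mu^{\otimes N}$.
\item On the event, $e^{f(x)}\le e^{o(N)}e^{f(b(x))-I(b(x))}e^{g(x,b(x))}$ with $g(x,y)=\sum_i\bigl(x_i\Phi(y_i)-\Lambda(\Phi(y_i))\bigr)$.
\item Replacing $b(x)$ by a nearby point $y$ of an $\varepsilon\sqrt N$-net of $\mathcal M_N$ (your part (ii)) costs $O(\varepsilon N)$.
\item Since $\int e^{g(x,y)}\,d\mu^{\otimes N}(x)=1$, summing over the net gives $\log Z_N\le\sup_t(f(t)-I(t))+o(N)$.
\end{itemize}
The essential difference from your sketch is that each $x$ is tilted by its own $\Phi(b(x))=\nabla f(x)$, and only Gibbs-typical $x$ are kept. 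Atypical points of a cell, such as those with $\bar x=-c$ above, therefore never contribute.
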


We will now show that for mean-field tensors (i.e.~when \eqref{cond:GW} holds), joint estimation using pseudo-likelihood may or may not be possible, depending on the parameter $(\beta_0,h_0)$. Our first result gives a sufficient condition for joint $\sqrt{N}$ estimation for mean-field tensors.

\begin{thm}\label{cor:gap_implies_estimation}
 Let
$X \sim P_{\beta_0,h_0}$, where the underlying  tensor $A$ satisfies  Assumption \ref{ass:standing} and the mean-field condition \eqref{cond:GW}. Suppose that there exist constants
$\varepsilon>0$ and $\delta>0$ such that, for all sufficiently large $N$,
\[
\sup_{\substack{y \in[\kappa_-,\kappa_+]^N\\ \sum_i (y_i-\bar y)^2\le \varepsilon N}}
\bigl(f(y)-I(y)\bigr)
\le
\sup_{y\in[\kappa_-,\kappa_+]^N}\bigl(f(y)-I(y)\bigr)-\delta N.
\]
Then there exists $c>0$ such that 
\[
P_{\gb_0,h_0}( T_N(X) \ge c) \to 1.
\]
Consequently, the joint pseudo-likelihood estimator is $\sqrt{N}$-consistent at $(\beta_0,h_0)$.
\end{thm}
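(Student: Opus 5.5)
The plan is to transfer the variational gap to the sample via the mean-field structure of Theorem~\ref{thm:mean_field}, then convert inhomogeneity of the conditional means into a lower bound on $T_N(X)$.

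\emph{Step 1: concentration near the optimizers.} Write $M_N:=\sup_y (f(y)-I(y))$ and let $G_\varepsilon:=\{y: \sum_i (y_i-\bar y)^2\le \varepsilon N\}$. I will show that $P_{\beta_0,h_0}(b(X)\in G_{\varepsilon/4})\to 0$, where $b(X)=(\Lambda'(\beta_0 m_i(X)+h_0))_i$ is the vector of conditional means. Use the low complexity of $\mathcal M_N$ from Theorem~\ref{thm:mean_field}(ii): cover $\mathcal M_N$ by a $\eta\sqrt N$-net $\mathcal D_N(\eta)$ with $\log|\mathcal D_N(\eta)|=o(N)$. For each net point $p\in G_{\varepsilon/2}$, bound
\[
P\bigl(\|b(X)-p\|\le \eta\sqrt N\bigr).
\]
To do this, first control $\|X-b(X)\|$ against $b(X)$. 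The standard route (as in Basak--Mukherjee and the nonlinear large deviation literature) is to condition on the event that $X$ lies near the tilted product measure centred at $b(X)$. Concretely, one shows that $\frac1N\sum_i (X_i-b_i(X))g_i(X)\to 0$ for bounded $g$, using the mean-field condition \eqref{cond:GW} through a second-moment / exchangeable pair argument: changing coordinate $i$ moves $m_j$ by $O(\sum_k A_{ijk})$, and \eqref{cond:GW} makes these influences small.

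Then the probability of the set $\{x: b(x)\approx p\}$ is at most
\[
\exp\bigl(\sup_{y\approx p}(f(y)-I(y)) - \log Z_N + o(N)\bigr)
\]
by a change of measure to the product measure $\otimes_i\mu_{\Phi(p_i)}$ (a tilting with bounded Radon--Nikodym density on the event, since $f$ is Lipschitz in $\ell^2$ at scale $\sqrt N$ by Assumption~\ref{ass:standing}). Combining with $\log Z_N=M_N+o(N)$ from Theorem~\ref{thm:mean_field}(i) and the gap hypothesis (applied to $y$ near $p$, hence in $G_\varepsilon$ for $\eta$ small), each term is at most $e^{-\delta N/2}$. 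A union bound over $e^{o(N)}$ net points finishes the step.

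\emph{Step 2: from $b(X)$ to $T_N$.} On the complementary event, $\sum_i (b_i(X)-\bar b)^2\ge \varepsilon N/4$. Since $|\beta_0 m_i+h_0|\le |\beta_0|\gamma\kappa^2+|h_0|$, the map $\Lambda'$ is Lipschitz on the relevant range with constant $L=\sup\Lambda''$ there. Hence
\[
|b_i-b_j|\le L|\beta_0|\,|m_i-m_j|,
\]
so $T_N(X)\ge \varepsilon/(4L^2\beta_0^2)$. This requires $\beta_0\ne0$; if $\beta_0=0$ the measure is product and $b$ is constant, contradicting the gap hypothesis, so this case cannot occur. Finally, Theorem~\ref{thm:main-1} gives $\sqrt N$-consistency.

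\emph{Main obstacle.} The difficult step is Step 1's upper bound on $P(b(X)\approx p)$. It needs a tilting argument in which the error terms are $o(N)$ uniformly over net points, and this is exactly where \eqref{cond:GW} and the low-complexity statement enter. I would try to reuse the machinery underlying the proof of Theorem~\ref{thm:mean_field}: a nonlinear large deviation upper bound of the form $\log P(b(X)\in B)\le \sup_{y\in B^{\eta}}(f-I)-\log Z_N+o(N)$.
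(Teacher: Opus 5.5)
Your plan follows the paper's route. Step~1 is Proposition~\ref{thm:conc}, which the paper proves through Lemma~\ref{lem:measure-change}(ii) using the low-complexity set $\mathcal M_N$ from Theorem~\ref{thm:mean_field}(ii) and $\log Z_N=\sup(f-I)+o(N)$. Step~2, including the exclusion of $\beta_0=0$, is exactly how the paper concludes.

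\textbf{The gap: the per-net-point bound.} This bound is the heart of Step~1, and your justification for it does not work. The density of $P_{\beta_0,h_0}$ with respect to $\otimes_i\mu_{\Phi(p_i)}$ is $\exp\bigl(f(x)-g(x,p)-\log Z_N\bigr)$, where $g(x,p)=\sum_i x_i\Phi(p_i)-\Lambda(\Phi(p_i))$. Lipschitz continuity of $f$ cannot control this: it only gives $|f(X)-f(b(X))|\le C\sqrt N\|X-b(X)\|_2$. That bound is of order $N$, because $X_i$ does not concentrate at $b_i(X)$ coordinatewise.

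What you actually need are two averaged statements:
\begin{itemize}
\item $f(X)-f(b(X))=o_p(N)$;
\item $\sum_i(X_i-b_i(X))\Phi(b_i(X))=\sum_i(X_i-b_i(X))(\beta_0m_i(X)+h_0)=o_p(N)$.
\end{itemize}
The first is where \eqref{cond:GW} really enters. After splitting $X_iX_jX_k-b_ib_jb_k$ into three telescoping terms, the weights $\sum_{j,k}A_{ijk}b_j(X)X_k$ depend on $X_i$ itself through the $b_j$'s. This self-influence produces the $N\,\mathrm{Tr}(\cR^2)$ term in \eqref{eq:concen4}. It is \eqref{cond:GW}, via Lemma~\ref{lem:trace-optr}(a), that makes this term $o(N^2)$. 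So asking only that $\frac1N\sum_i(X_i-b_i(X))g_i(X)\to0$ for bounded $g$ is the wrong formulation; what matters is the influence of coordinate $i$ on $g_i$.

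\textbf{How this removes the uniformity obstacle.} Both statements concern quantities centred at the random vector $b(X)$, not at any net point. So they hold on a single event of probability $1-o(1)$, on which $f(x)\le f(b(x))-I(b(x))+g(x,b(x))+o(N)$. You then move from $b(x)$ to the net point $p$ deterministically:
\begin{itemize}
\item $b_i(x)\in[\kappa_-+\eta'',\kappa_++\eta'']$ fails only if $|\beta_0m_i+h_0|$ is unbounded, which it is not; so $b_i(x)\in[\kappa_-+\eta'',\kappa_+-\eta'']$ for some $\eta''>0$.
\item The net can be projected into this interval.
\item On it, $t\mapsto u\Phi(t)-\Lambda(\Phi(t))$ is uniformly Lipschitz, so $|g(x,b(x))-g(x,p)|\le C\eta N$.
\end{itemize}
Since $\int e^{g(x,p)}\,d\mu^{\otimes N}=1$ for each $p$, summing over the net costs only $|\mathcal D_N(\eta)|=e^{o(N)}$. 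The supremum that appears is over the values $b(x)$ themselves, so the gap hypothesis applies directly, with no need to compare $I(p)$ to $I(b(x))$ near the boundary.

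By contrast, suppose you recentred at each net point, for example controlling $\sum_i(X_i-b_i(X))\Phi(p_i)$ separately for every $p$. The second-moment bounds of Corollary~\ref{lem:concen} then give failure probability only $O(1/N)$ per point, which does not survive a union bound over $e^{o(N)}$ points.
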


Our second result shows that estimation may also be difficult using pseudo-likelihood under certain parameter regimes, under the following ``well-connectedness'' assumptions on the tensor $A$. We first assume that the tensor $A$ satisfies \eqref{cond:A_nontrivial}.
Then the Markov transition matrix $\cP:=D^{-1}\cR$ is well defined, where
where $D = \mathrm{diag}(\cR_1,\ldots,\cR_N)$.
Since $\cP = D^{-1/2}\big(D^{-1/2}\cR D^{-1/2}\big)D^{1/2}$
is similar to the symmetric matrix $D^{-1/2}\cR D^{-1/2}$, all its eigenvalues are real and can be listed as  
\[
1 = \gl_1 \ge \gl_2 \ge \cdots \ge \gl_N \ge -1.
\]
The well-connectedness of $A$ is then encoded in the following {\em spectral gap condition}: 
\begin{align}\label{cond:spec-gap}
   \quad 1-\gl_2\ge \gd \quad \text{for some constant $\gd>0$.}
\end{align}

Finally, we need the following technical definition on the reference measure $\mu$. 
\begin{defn}[Stochastic non-negativity]
    The reference measure $\mu$ is stochastic non-negative if $I(t) \le I(-t)$ for $t\ge 0$. Note that this definition implies $\kappa_+ \ge \abs{\kappa_-}$. 
\end{defn}
We now give some examples of stochastically non-negative measures. All the implied claims are verified in \cite[Proposition 1.3]{BDM23}.

\begin{itemize}
\item
Any measure which is supported on non-negative reals is stochastically non-negative. In particular, this includes
 Bernoulli($p$), and $U[0,1]$.
 
 \item 
 Any symmetric measure is stochastically non-negative. In particular, this includes Rademacher and $U[-1,1]$.
 
 \item 
 More generally, any measure which is a non-negative tilt of a symmetric measure is stochastically non-negative. In particular, this includes asymmetric Rademacher distributions $\mu = p \gd_{-1} + (1-p) \gd_{1}$ with $p\le 1/2$.

\end{itemize}

Our next result gives a sufficient condition for $T_N$ to be $o_p(1)$ for mean-field regular tensors, suggesting that estimation using pseudo-likelihood can be difficult in certain parameter regimes.
\begin{thm}\label{thm:est-imposs}
Suppose that the tensor $A$  satisfies Assumption~\ref{ass:standing},  as well as conditions~\eqref{cond:GW}, \eqref{cond:regu}, \eqref{cond:A_nontrivial} and \eqref{cond:spec-gap}. Assume that the reference measure $\mu$ is stochastically non-negative. Let $X \sim P_{\gb_0, h_0}$ with $\beta_0>0$ and $h_0 \ge 0$ . If either (i) $h_0>0$ or (ii) $\kappa_{-}\ge 0$, then
\[
T_N(X)=o_p(1).
\]
\end{thm}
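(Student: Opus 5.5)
The plan is to combine the mean-field structure from Theorem~\ref{thm:mean_field} with a variational analysis showing that every near-optimizer of $f(y)-I(y)$ is close to a constant profile. From this I will deduce that $m(X)$ is nearly constant, and hence that $T_N(X)=o_p(1)$.

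\emph{Step 1: reduce from $X$ to $b(X)$.} Since $m_i(x)=\sum_{j,k}A_{ijk}x_jx_k$, the aim is to replace $m(X)$ by $m(b(X))$ up to $o_p(N)$ error in $\ell^2$. Under the mean-field condition \eqref{cond:GW}, a standard concentration argument for the quadratic forms $\sum_{j,k}A_{ijk}(X_jX_k-b_jb_k)$ gives $\sum_i (m_i(X)-m_i(b(X)))^2=o_p(N)$; this is the same type of input used to prove Theorem~\ref{thm:mean_field}. It then suffices to show that $b(X)$ is close in normalized $\ell^2$ to a constant vector $c\mathbf 1$. Indeed, if $\|b(X)-c\mathbf1\|_2^2=o_p(N)$, then $m(b(X))$ is close to $c^2\cR_i$, using the bound $\|\cR\|_{\rm op}\le\gamma$ for the linear error term. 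Condition \eqref{cond:regu} then gives $\sum_i(\cR_i-\bar\cR)^2=o(N)$, so $T_N=o_p(1)$.

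\emph{Step 2: concentration near optimizers.} Using Theorem~\ref{thm:mean_field}(ii), I cover the set of conditional means by a net $\mathcal D_N(\delta)$ with $\log|\mathcal D_N(\delta)|=o(N)$. The standard nonlinear large deviation bound then gives
\[P\big(b(X)\in B(y,\delta\sqrt N)\big)\le \exp\big(f(y)-I(y)-\log Z_N+O(\delta)N+o(N)\big).\]
Combining this with \eqref{eq:prod_mf}, with high probability $b(X)$ lies within $O(\delta\sqrt N)$ of some $y$ satisfying $f(y)-I(y)\ge \sup(f-I)-\eta N$, where $\eta\to0$ as $\delta\to0$.

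\emph{Step 3: near-optimizers are near-constant.} This is the main obstacle. I expect to show that for every $\varepsilon>0$ there is $\eta>0$ such that $f(y)-I(y)\ge\sup(f-I)-\eta N$ forces $\sum_i(y_i-\bar y)^2\le\varepsilon N$.
\begin{itemize}
\item \emph{Reduction to nonnegative vectors.} Since $\beta_0>0$, $h_0\ge0$, the tensor $A$ is nonnegative, and $\mu$ is stochastically non-negative, replacing $y$ by $|y|$ does not decrease $f-I$. The inequality is strict by a margin of order $N$ when $y$ has macroscopically many negative entries, because $h_0>0$ penalizes them linearly; in case (ii) negative entries do not occur at all. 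So one may assume $y\ge0$ up to small errors.
\item \emph{Regularization.} For $y\ge 0$, I bound $\langle A,y^{\otimes3}\rangle=\sum_{i,j}\cR$-weighted terms from above. Using \eqref{cond:GW} (equivalently small ${\rm Tr}(\cR^2)$, by Remark~\ref{rem:trace-optr-converse}), the quantity $\langle A,y^{\otimes 3}\rangle$ is approximately a function of a lower-dimensional statistic. The crude candidate is $\sum_i \cR_i y_i\cdot(\text{average of } y \text{ over the } \cR\text{-neighbourhood of } i)^2$, by a Cauchy--Schwarz/Hölder argument with the weights $\cR_{ij}$.
\item \emph{Strict improvement by averaging.} Writing $g(y)=\frac{\beta_0}{3}\langle A,y^{\otimes3}\rangle+h_0\sum_i y_i-\sum_i I(y_i)$, the plan is to compare $y$ with its $\cP$-smoothed version. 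The strict convexity of $I$ and the spectral gap \eqref{cond:spec-gap}, which gives the Poincaré inequality $\sum_{i,j}\cR_{ij}(y_i-y_j)^2\ge\delta\sum_i\cR_i(y_i-\bar y_{\cR})^2$, should together yield that replacing $y$ by the constant $\bar y_{\cR}\mathbf 1$ gains at least $c\sum_i\cR_i(y_i-\bar y_\cR)^2$ minus $o(N)$. Regularity \eqref{cond:regu} and $\bar\cR\ge\alpha$ then convert this to an unweighted variance bound.
\end{itemize}
The delicate point is the cubic term, since $y\mapsto\langle A,y^{\otimes3}\rangle$ is not concave. I would first try the inequality $y_iy_jy_k\le\frac13(y_i^3+y_j^3+y_k^3)$ together with a second-order correction controlled by $(y_i-y_j)^2$ terms. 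If that fails, I would use a compactness/graphon-type argument: under the spectral gap, low-energy deviations from constants are not available.

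Finally, I let $\delta,\eta\to0$ and combine Steps 1--3 to obtain $T_N(X)=o_p(1)$.
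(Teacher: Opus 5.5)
Your Steps 1 and 2 are sound. Step 2 is essentially Proposition~\ref{thm:conc}. Step 1 can be shortened: since $\Lambda''\ge c_*>0$ on the bounded range of $\beta_0 m_i+h_0$, you have $|m_i-m_j|\le(\beta_0c_*)^{-1}|b_i-b_j|$, so near-constancy of $b(X)$ transfers directly to $m(X)$, without controlling $m(X)-m(b(X))$.

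The gap is in Step 3, which is the whole content of the theorem, and the mechanism you propose there is false. You plan to compare $y$ with its own average $\bar y_{\cR}\mathbf 1$ and to get the gain from strict convexity of $I$ plus the Poincar\'e inequality. But the convexity of $I$ is a fixed, $\beta_0$-independent effect. On nonnegative profiles, by contrast, a non-constant $y$ can carry cubic energy exceeding that of $\bar y\mathbf 1$ by order $\beta_0 N$.

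Concretely, take the edge--triangle tensor with $\mu=\frac12(\delta_0+\delta_1)$ and $h_0=0$, which is covered by case (ii). Let $y$ be the indicator of the edges inside a vertex set of size $n/2$. Then $\bar y\approx 1/4$ and $\sum_e(y_e-\bar y)^2\approx 3N/16$, while
\[
\tfrac1N\bigl(f(y)-I(y)\bigr)\approx \tfrac{\beta_0}{12}-\log 2,\qquad \tfrac1N\bigl(f(\bar y\mathbf 1)-I(\bar y\mathbf 1)\bigr)\approx \tfrac{\beta_0}{96}-I(\tfrac14).
\]
So $y$ beats its own average once $\beta_0\gtrsim 8$. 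The theorem survives only because a constant $t\mathbf 1$ with $t$ near $1$ beats both.

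The inequality you list as a ``first try'' cannot rescue your comparison either. It bounds the cubic term by $\sum_i\cR_i y_i^3$, which by Jensen exceeds $\bar\cR N\bar y^3$. Your ``regularization'' heuristic also fails on this $y$: $\sum_e\cR_e y_e(\text{neighbourhood average})^2$ is only about half of $\langle A,y^{\otimes 3}\rangle$. Finally, the compactness/graphon fallback is not an argument for general tensors.

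The missing idea is to compare with the \emph{best} constant profile and to take the gain from the cubic term itself, not from the entropy. With $w=|y|$, Lemma~\ref{lem:aux-ineq1} gives
\[
\langle A,y^{\otimes 3}\rangle\le \sum_i \cR_i w_i^3-\tfrac12\sum_{i,j}\cR_{ij}\bigl(w_i^{3/2}-w_j^{3/2}\bigr)^2 .
\]
The argument then proceeds in four steps.
\begin{enumerate}
\item By the spectral gap and regularity (Lemma~\ref{lem:spec-help}) together with $\mathrm{Var}(w^{3/2})\ge \mathrm{Var}(w)^{3/2}$ (Lemma~\ref{lem:elementary}), the correction term is at least $c\,\varepsilon^{3/2}N$ whenever $\mathrm{Var}(w)\ge\varepsilon/2$.
\item Replacing $\cR_i$ by $\bar\cR$ costs only $o(N)$, by \eqref{cond:regu}.
\item Using $h_0\ge 0$ and stochastic non-negativity, what remains is the separable sum $\sum_i\phi(w_i)$ with $\phi(t)=\frac{\beta_0}{3}\bar\cR t^3+h_0t-I(t)$.
\item This sum is at most $N\sup_{t\in[0,\kappa_+]}\phi(t)=\sup_{t\in[0,\kappa_+]}\bigl(f(t\mathbf 1)-I(t\mathbf 1)\bigr)$.
\end{enumerate}
This gives the variational gap for every $\beta_0>0$, with no use of strict convexity of $I$. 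Combined with your negative-entries dichotomy and Step 2, it yields $\sum_i(b_i(X)-\bar b(X))^2=o_p(N)$, and hence $T_N(X)=o_p(1)$.
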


\begin{rem}
    In particular, if the tensor is either the complete tensor $A_{ijk}=\frac{1}{N^2}1_{\{i\ne j\ne k\}}$, or a scaled Erd\H{o}s-R\'enyi hypergraph with parameter $p$ fixed, then extending the arguments of \cite[Theorem 1.6]{GM20} for the quadratic case it should be possible to show that consistent joint estimation of parameters is impossible using any estimator, and not just the pseudo-likelihood estimator. Note that both these tensors are both (approximately) regular (i.e.~satisfies \eqref{cond:regu}) and mean-field (i.e.~satisfies \eqref{cond:GW}). This demonstrates that for certain parameter regimes, estimation can indeed be hard (if not impossible) for mean-field regular tensors. Since this should be a straight-forward extension of the arguments of \cite[Theorem 1.6]{GM20}, we do not repeat this here. 
\end{rem}
\begin{rem}
    We formulate the paper under the global standing assumption that
\(A_{ijk}\ge0\), since this covers the main examples considered in Section~\ref{sec:examples} and
keeps the notation simple.  
The non-negativity assumption is only essential for Theorem~\ref{thm:est-imposs}. In contrast, Theorems~\ref{thm:main-1},
\ref{thm:positive-res}, ~\ref{thm:mean_field} and~\ref{cor:gap_implies_estimation} can be extended to signed
symmetric tensors vanishing on diagonals, provided Assumption~\ref{ass:standing}
is replaced by the absolute row-sum condition $\max_i\sum_{j,k}|A_{ijk}|\le \gamma.$
In that signed extension, the bounds in the concentration results of
Section~\ref{sec:prelim} should be in terms of the absolute tensor
$A^\dagger=(|A_{ijk}|)_{i,j,k\in[N]}$. We do not state this extra generality separately. An extra caveat for Theorem~\ref{cor:gap_implies_estimation} is that a mean-field type condition for $A^\dagger$ needs be assumed.
\end{rem}

\subsection{Future Scope}


We view this paper as a first step toward a broader understanding of inference in cubic, and more generally higher-order, interaction models. Several natural directions remain open. One important problem is to establish limiting distributions for the maximum pseudolikelihood estimator in regimes where it is consistent. Such results would allow one to construct confidence sets for the unknown parameters. This question is closely related to the study of limiting distributions for conditionally centered sums of spins, or magnetizations, which are themselves objects of independent interest in probability and statistical physics. As is well known, magnetization fluctuations in tensor models may be non-Gaussian, particularly near phase-transition boundaries.
Another natural direction is to incorporate a quadratic interaction term, that is, a matrix-valued tensor, leading to a three-parameter model. Establishing sufficient conditions for estimability in this setting would require controlling local fields generated jointly by the cubic and quadratic tensors. This interaction appears difficult to handle directly with the techniques developed here.
A third direction is to extend our analysis to spin-glass tensors, allowing both positive and negative entries, in the spirit of the quadratic results of \cite{CSW24}. As emphasized there, spin-glass models require different tools, and we expect the corresponding inference theory to exhibit qualitatively different behavior.
Finally, it would be interesting to extend our results to tensors of order higher than three. Some of our arguments seem to admit relatively straightforward extensions, while others appear to require new ideas. Moreover, we expect the behavior of higher-order tensor models to depend substantially on the parity of the interaction order, even versus odd, as has been observed for complete tensors in \cite{mukherjee2021fluctuations}.

\subsection{Structure of the Paper}
The rest of the paper is organized as follows. Section~\ref{sec:examples}
discusses applications to the main examples, including the edge-triangle and edge-three-star ERGM,
the 3-term arithmetic progression model, and inhomogeneous random hypergraph models, and
deduces their specific estimation consequences from Theorems~\ref{thm:main-1},
\ref{thm:positive-res}, \ref{cor:gap_implies_estimation}, and~\ref{thm:est-imposs}.
Section~\ref{sec:prelim} collects the preliminary concentration estimates, the mean-field criterion,
the connectivity criterion, and auxiliary lemmas. Section~\ref{sec:proof of positive-res} proves
Theorem~\ref{thm:positive-res}. Section~\ref{sec:mean-field-analysis} proves
Theorems~\ref{thm:mean_field}, \ref{cor:gap_implies_estimation}, and~\ref{thm:est-imposs}.
Appendix~\ref{sec:proof of main-1} proves Theorem~\ref{thm:main-1}, and
Appendix~\ref{sec:proof of example} contains the proofs of the application results from
Section~\ref{sec:examples}. Appendix~\ref{sec:proof of auxiliary} collects the proofs of the auxiliary lemmas in Section~\ref{ssec:auxiliary}.

\section{Applications}\label{sec:examples}

The general cubic tensor model applies to several natural families of examples. In this
section we record three representative applications: edge-triangle and edge-three-star
ERGMs, Gibbs measures tilted by three-term arithmetic progression counts, and
inhomogeneous random hypergraph tensors. The proofs reduce to verifying the structural
hypotheses of the general theorems and are given in Appendix~\ref{sec:proof of example}.


\subsection{Exponential Random Graph Models}\label{ssec:ERGM-triangle}

We first consider dense ERGMs whose sufficient statistics are the edge count and a cubic
motif count. ERGMs model network data through exponential-family weights on graph
features such as edge density and local subgraph counts
\cite{holland1981exponential,lusher2013exponential,snijders2006new,wasserman1994social}.
Dense ERGMs are known to exhibit degeneracy and near Erd\H{o}s--R\'enyi behavior in
parts of parameter space \cite{chatterjee2013estimating,schweinberger2011instability}, which suggests that joint estimation of the edge and motif parameters can be difficult
in certain parts of the parameter regime. Our results make this precise for
pseudolikelihood: the edge--triangle model is ill-conditioned in the ferromagnetic
nonnegative-field regime but becomes estimable in a sufficiently strong antiferromagnetic
regime, while the edge--three-star model is ill-conditioned for all fixed parameters.
\subsubsection{Edge-Triangle ERGM}
Setting $N=\binom{n}{2}$, we consider the following probability distribution on the space of edge weights of the complete graph $K_n$, identified with the upper-triangular entries of a symmetric $n\times n$ matrix $G=(G_{ij})_{1\le i,j\le n}$ with $G_{ii}=0$ and $G_{ji}=G_{ij}$:
\begin{align}\label{eq:triangle}
\frac{dP_{\gb,h}}{d\mu^{\otimes N}}(G) = \frac{\exp(\frac{\beta}{3n}\sum_{i,j,k}G_{ij}G_{jk}G_{ki} +h\sum_{i<j} G_{ij}) }{Z_N(\gb,h)}.
\end{align}
This is a special case of the general model \eqref{eq:model}, with tensor
\begin{align}\label{eq:ergm-triangle}
A_{e_0e_1e_2}
=
n^{-1}1_{\{(e_0,e_1,e_2)\text{ form a triangle}\}}, \qquad \text{for any edges $e_0, e_1, e_2$}.
\end{align}
In particular, when $\mu=\frac12(\delta_0+\delta_1)$, the configuration space becomes $\{0,1\}^N$, and \eqref{eq:triangle} reduces to the classical edge-triangle exponential random graph model, commonly referred to as  ERGM in the statistics and social science literature, see~\cite{holland1981exponential,wasserman1994social,snijders2006new}.

\begin{thm}\label{thm:app-triangle}
Consider the edge-triangle ERGM defined in~\eqref{eq:triangle}, let $X \sim P_{\gb_0,h_0}$. Then the following holds:
\begin{enumerate}
    \item[(a)]
    Suppose $\mu$ is stochastically non-negative, $\gb_0>0, h_0\ge0$.  If either $h_0>0 $ or $ \kappa_-\ge 0$, then $T_N(X)=o_p(1)$ under $P_{\beta_0,h_0}$.
    
    \item[(b)]
    Suppose $\kappa_-=0$ and $\mu(\{0\})>0$. For any $h_0\in \bR$ there exists $L(h_0)>0$, such that for all $\beta_0\le -L(h_0)$ we have $P_{\beta_0,h_0}(T_N>c)\to 1$, for some $c>0$ depending on $(\beta_0,h_0,\mu)$. Consequently, the maximum pseudolikelihood estimator exists with probability tending to one
and is jointly $\sqrt{N}$-consistent.
\end{enumerate}
\end{thm}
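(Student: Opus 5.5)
Part (a) follows from Theorem~\ref{thm:est-imposs}; part (b) from Theorem~\ref{cor:gap_implies_estimation} together with Theorem~\ref{thm:main-1}. In both cases the work is in checking the structural hypotheses for the triangle tensor \eqref{eq:ergm-triangle}.

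For this tensor, each edge $e=\{a,b\}$ lies in $n-2$ triangles. Each triangle contributes to the ordered pairs $(e_1,e_2)$ twice, so $\cR_e = 2(n-2)/n \to 2$ for every $e$. Assumption~\ref{ass:standing} therefore holds with $\gc=2$. Moreover $\sum_e(\cR_e-\bar\cR)^2=0$, which gives \eqref{cond:regu}, and \eqref{cond:A_nontrivial} is immediate.

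Two distinct edges have $\cR_{ee'}=2/n$ if they share a vertex and $0$ otherwise. Thus all positive entries of $\cR$ are equal and \eqref{cond:strong-pseu-regu} holds with $K=1$. We also get $\mathrm{Tr}(\cR^2)=\sum_{e,e'}\cR_{ee'}^2 \asymp N\cdot n\cdot n^{-2}=O(N/n)=o(N/\log N)$. By Lemma~\ref{lem:trace-optr}(b), this yields the mean-field condition \eqref{cond:GW}.

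For the spectral gap \eqref{cond:spec-gap}, note that $\cP=D^{-1}\cR$ is the simple random walk on the line graph of $K_n$, i.e.\ the Johnson graph $J(n,2)$. Its adjacency eigenvalues are $2(n-2)$, $n-4$ and $-2$. Hence $\gl_2=(n-4)/(2(n-2))\to 1/2$, and the gap is at least $\gd=1/3$ for large $n$. With all hypotheses verified, part (a) is exactly Theorem~\ref{thm:est-imposs}.

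For part (b) we verify the gap hypothesis of Theorem~\ref{cor:gap_implies_estimation}. For $y\in[0,\kappa_+]^N$, write
\[
f(y)=\frac{\gb_0}{3}\,\frac{1}{n}\,6\,t(y)+h_0\sum_e y_e, \qquad t(y)=\sum_{\text{triangles}}y_{e_0}y_{e_1}y_{e_2}.
\]

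\textbf{Lower bound on the supremum.} Take $y$ to be a bipartite profile: $y_e=p$ on edges of a complete balanced bipartite graph and $y_e=0$ elsewhere. This profile is triangle-free. Using the convention $I(0)=-\log\mu(\{0\})<\infty$, which is finite by assumption, we get
\[
\sup(f-I)\ \ge\ \frac N2\sup_p\big(h_0p-I(p)\big)+\frac N2\big(-I(0)\big)+o(N),
\]
with no dependence on $\gb_0$.

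\textbf{Upper bound on near-constant profiles.} If $\sum_e(y_e-\bar y)^2\le\eps N$, then graphon/counting-lemma type estimates give $t(y)\ge \binom n3\bar y^3-C\sqrt{\eps}\,n^3$. Hence
\[
f(y)-I(y)\le N\Big(\tfrac{\gb_0}{3}\cdot 2\bar y^3\,(1+o(1)) + h_0\bar y-I(\bar y)\Big) + C|\gb_0|\sqrt\eps N,
\]
where we used convexity of $I$ to bound $\sum_e I(y_e)\ge N I(\bar y)$. We must show this is at most the bipartite bound minus $\delta N$ for $\gb_0\le -L(h_0)$. If $\bar y\ge \eta$, the cubic term is at most $-\tfrac{2}{3}|\gb_0|\eta^3 N$, which is very negative once $|\gb_0|$ is large. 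If $\bar y<\eta$, then the objective is close to $N(-I(0))$ up to $O(\eta)N$. The bipartite profile instead achieves roughly $N\big(\tfrac12(\sup_p(h_0p-I(p)))-\tfrac12 I(0)\big)$, and $\sup_p(h_0p-I(p))=\Lambda(h_0)>-I(0)=\log\mu(\{0\})$ strictly, since $\mu$ is not a point mass at $0$. This leaves a gap of order $N$. Choosing $\eta$, then $\eps$ small and then $L$ large closes the argument.

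\textbf{Main obstacle.} The step I expect to be hardest is the counting-type lower bound on $t(y)$ for near-constant $y$ with error $O(\sqrt\eps\, n^3)$. I would prove it by writing $y=\bar y\mathbf 1+z$ with $\|z\|_2^2\le\eps N$ and expanding the cubic form. The linear term in $z$ vanishes, since every edge lies in the same number of triangles. The quadratic and cubic terms are bounded via Cauchy--Schwarz by $O(\|z\|_2^2/n)\cdot n$ and $O(n^{3/2}\|z\|_2)$ respectively, and both are $O(\sqrt\eps\, n^3)$. Once the gap hypothesis is in hand, Theorem~\ref{cor:gap_implies_estimation} gives $T_N\ge c$ with high probability, and Theorem~\ref{thm:main-1} yields existence of the MPLE and $\sqrt N$-consistency.
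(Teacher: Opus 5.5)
Your proposal is correct in outline and follows the paper's proof essentially step for step. The structural checks are those of Lemma~\ref{lem:triangle_mean}: constant row sums $2(n-2)/n$, $K=1$, $\mathrm{Tr}(\cR^2)=O(n)$, and support graph equal to the line graph of $K_n$. Part (a) is then Theorem~\ref{thm:est-imposs}. Part (b) uses the same ingredients as Lemma~\ref{lem:triangle}: the triangle-free bipartite competitor, Jensen's inequality, and a split into small and large $\bar y$. It then concludes with Theorems~\ref{cor:gap_implies_estimation} and~\ref{thm:main-1}.

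There are a few slips to fix, none of them fatal.

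\textbf{The value of $\cR_{ee'}$.} For adjacent edges $\cR_{ee'}=1/n$, not $2/n$, since the completing edge is unique. With $2/n$ your row sums would be $4(n-2)/n$, contradicting your own computation.

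\textbf{Bounds in the expansion of the triangle count.} The quadratic term is $O(n\|z\|_2^2)$ and the cubic term is $O(n\|z\|_1)=O(n^2\|z\|_2)$. These are not the bounds you state. In particular, the claimed $O(n^{3/2}\|z\|_2)$ for the cubic term is false: a fixed small multiple of a centred half-clique indicator has $|t(z)|\asymp n^3$. Both correct bounds are still $O(\sqrt{\eps}\,n^3)$, so your conclusion stands. The paper avoids the expansion altogether by using the Lipschitz bound $\|\nabla\Pi\|_\infty\le 6\kappa_+^2$.

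\textbf{Order of choosing constants.} Your error term is $C|\gb_0|\sqrt{\eps}\,N$, so you cannot fix $\eps$ before $L$ and still handle the case $\bar y<\eta$ for arbitrarily negative $\gb_0$. There are two easy fixes:
\begin{itemize}
\item in that case, drop the cubic term using $\Pi(y)\ge 0$ and $\gb_0<0$; or
\item choose $\eps$ after $\gb_0$, as the paper does with $\eps=(\eta/(2\kappa_+^2|\gb_0|))^2$.
\end{itemize}
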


\begin{rem}
    Part (a) of the above theorem suggests a bottleneck for estimation using pseudo-likelihood in this regime. Similar findings have been reported in the literature on ERGMs~\cite{winstein2026wasserstein, bhamidi2008mixing, chatterjee2013estimating, eldan2018exponential}.
In contrast, part (b) shows that consistent estimation of both parameters is possible in the strong anti-ferromagnetic regime $\{\beta_0\le -L,\ h_0\in\mathbb{R}\}$, provided that $L = L(h_0)>0$ is chosen sufficiently large. To the best of our knowledge, this has not been established before, although \cite[Theorem 7.1]{chatterjee2013estimating} is a step in that direction. 
\end{rem}

\subsubsection{Edge-Three-Star ERGM}
We next consider the edge-three-star ERGM. A
three-star consists of three distinct edges sharing a common vertex. As
before, set \(N=\binom{n}{2}\), and identify the space of edge weights of
\(K_n\) with the upper-triangular entries of a symmetric \(n\times n\)
matrix \(G=(G_{ij})_{1\le i,j\le n}\), where \(G_{ii}=0\) and
\(G_{ji}=G_{ij}\). Now the Gibbs measure is 
\begin{equation}
\frac{dP_{\beta,h}}{d\mu^{\otimes N}}(G)
= Z_N(\beta,h)^{-1}\exp\Big(
\frac{\beta}{3n^2}
\sum_{i,j,k,\ell  
\, { \rm distinct}}
G_{ij}G_{ik}G_{i\ell}
+
h\sum_{i<j}G_{ij}
\Big)
\label{eq:three-star}
\end{equation}
This is a special case of the general model~\eqref{eq:model}, with tensor
\begin{equation}
A_{e_0e_1e_2}
=
\frac{1}{n^2}
\mathbf 1\{(e_0,e_1,e_2)\text{ form a three-star}\},
\label{eq:ergm-three-star}
\end{equation}
for any edges \(e_0,e_1,e_2\in E(K_n)\). Here, the event in
\eqref{eq:ergm-three-star} means that \(e_0,e_1,e_2\) are distinct and
share a common vertex. In particular, when
\(
\mu=\frac12(\delta_0+\delta_1),
\)
the configuration space becomes \(\{0,1\}^N\), and
\eqref{eq:three-star} reduces, up to the normalization of the three-star
parameter, to the classical edge-three-star ERGM in
\cite{chatterjee2013estimating,snijders2006new,wasserman1994social}.

\begin{thm}\label{thm:app-3-star}
Consider the edge-three-star ERGM defined in~\eqref{eq:three-star}, and let $X \sim P_{\gb_0,h_0}$. Then  for any $\gb_0, h_0\in \mathbb{R}$, we have $T_N(X)=o_p(1)$ under $P_{\beta_0,h_0}$.
\end{thm}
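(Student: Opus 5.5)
The plan is to reduce everything to the variational problem through Theorem~\ref{thm:mean_field}. After that, the ferromagnetic part of the parameter space is handled by Theorem~\ref{thm:est-imposs}, and the rest by a direct analysis of the degree profile.

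\emph{Step 1: structure of the local field.} For an edge $e=\{i,j\}$, the three-stars containing $e$ are centred at $i$ or at $j$. Writing $\delta_i(G)=\frac1n\sum_{k}G_{ik}$ for the normalized degree, a direct count gives
\[
m_e(G)=2\big(\delta_i(G)^2+\delta_j(G)^2\big)+O(1/n)
\]
uniformly in $G$. Hence, up to $o(1)$,
\[
T_N(G)\approx \frac1N\sum_{i<j}\big(2\delta_i^2+2\delta_j^2-\overline{m}\big)^2 ,
\]
which is bounded by a constant times $\frac1n\sum_i(\delta_i^2-\overline{\delta^2})^2$. So it suffices to show that the empirical variance of $(\delta_i(X)^2)_{i\le n}$ is $o_p(1)$.

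\emph{Step 2: the tensor conditions.} The matrix $\cR$ is, up to $O(1/n)$ corrections, $\frac{2}{n}$ times the adjacency matrix of the line graph of $K_n$. Consequently $\cR_i$ is constant, $\bar\cR$ is bounded below, and $\cR_{ij}\asymp 1/n$ on its support, so \eqref{cond:regu}, \eqref{cond:strong-pseu-regu} and \eqref{cond:A_nontrivial} hold. Moreover ${\rm Tr}(\cR^2)\asymp N\cdot n\cdot n^{-2}=O(N/n)=o(N/\log N)$, so by Lemma~\ref{lem:trace-optr} the mean-field condition \eqref{cond:GW} holds. The line graph of $K_n$ has eigenvalues $2n-4$, $n-4$ and $-2$, so $\lambda_2(\cP)\approx \tfrac12$ and \eqref{cond:spec-gap} holds. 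Therefore Theorem~\ref{thm:est-imposs} already gives $T_N=o_p(1)$ whenever $\mu$ is stochastically non-negative, $\beta_0>0$, and either $h_0>0$ or $\kappa_-\ge0$. The remaining cases require an argument that uses the special vertex structure of the three-star.

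\emph{Step 3: general $(\beta_0,h_0)$ via the variational problem.} By Theorem~\ref{thm:mean_field}(ii) and the concentration tools of Section~\ref{sec:prelim}, $X$ is close in $\ell^2/\sqrt N$ to its conditional mean $b(X)$. In addition, with high probability $b(X)$ lies within $o(\sqrt N)$ of the set of $\eta N$-near-maximizers of $f-I$, for any fixed $\eta>0$; this is the same mechanism as in Theorem~\ref{cor:gap_implies_estimation}. Hence it suffices to prove that every near-maximizer $t$ has an approximately constant vertex profile $s_i=\frac1n\sum_k t_{ik}$. In vertex coordinates, up to $o(N)$,
\[
f(t)-I(t)= n\sum_i \tfrac{\beta}{3}s_i^3 + h\sum_e t_e-\sum_e I(t_e).
\]
If $\beta\le0$, then $s\mapsto \beta s^3/3$ is concave on $[0,\kappa_+]$ (the nonnegative case), and $I$ is convex. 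Averaging $t$ over all vertex permutations then does not decrease the objective, and a quantitative version of Jensen's inequality (using strict convexity of $I$) forces every near-maximizer to have $\frac1n\sum_i(s_i-\bar s)^2$ small. When $\kappa_-<0$ the same argument must be run after accounting for the sign change of the cubic; I expect this to be a routine modification.

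For $\beta>0$ with $h\le 0$, near-maximizers approximately satisfy the fixed-point equations
\[
t_{ij}=\Lambda'\big(2\beta(s_i^2+s_j^2)+h\big),\qquad s_i=F(s_i),\qquad F(a):=\frac1n\sum_j\Lambda'\big(2\beta(a^2+s_j^2)+h\big).
\]
All $s_i$ therefore lie in the root set of a single scalar equation. The remaining task is to exclude profiles that mix two or more roots. For this I would use a second-variation argument: move a small fraction of vertices from one root type to another and show that the objective strictly increases, exploiting the convexity of $a\mapsto n\beta a^3/3$ against the pairwise entropy cost. I expect this exclusion of multi-type near-maximizers in the ferromagnetic case outside the scope of Theorem~\ref{thm:est-imposs} to be the main obstacle, and would first attempt it by a direct two-type computation using the explicit form of $F$.
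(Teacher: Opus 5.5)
Your overall architecture is sound: you reduce $T_N$ to the variance of the degree profile, then use Proposition~\ref{thm:conc} to place $b(X)$ among the near-maximizers of $f-I$. But the proposal does not prove the statement for all $(\beta_0,h_0)$. The one step that carries the theorem is that every $\delta N$-near-maximizer is nearly homogeneous. You establish this only for $\beta\le0$ with $\kappa_-\ge0$, plus the ferromagnetic cases you import from Theorem~\ref{thm:est-imposs}.

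\begin{itemize}
\item For $\beta<0$ and $\kappa_-<0$ the ``routine modification'' is not routine. The map $s\mapsto\beta s^3/3$ is convex for $s<0$, so the objective is no longer concave and permutation averaging gives nothing.
\item For $\beta>0$ outside Theorem~\ref{thm:est-imposs}, you explicitly leave the exclusion of multi-root profiles open. Fixed-point equations alone cannot do this; a quantitative variational comparison is needed.
\item Your case split also has a hole. The case $\beta_0>0$, $h_0>0$, with $\mu$ not stochastically non-negative, is covered neither by Step~2 nor by your ``$\beta>0$, $h\le0$'' case.
\end{itemize}

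The missing idea is that neither concavity of the cubic nor any case analysis is needed. Up to $O(n)$, the three-star energy is a sum of per-vertex functions of the row means $q_i=(n-1)^{-1}\sum_{j\ne i}y_{ij}$, namely $\frac{\beta(n-1)^3}{3n^2}\sum_i q_i^3$. Meanwhile $h\sum_e y_e-I(y)=\frac12\sum_i\sum_{j\ne i}\bigl(hy_{ij}-I(y_{ij})\bigr)$. Since $\Lambda''\le\kappa^2$ gives $I''\ge\kappa^{-2}$, Jensen's inequality within each row yields
\[
f(y)-I(y)\le \tfrac{n-1}{2}\sum_i g_n(q_i)-\tfrac{1}{4\kappa^2}D(y)+O(n)\le NM_n-\tfrac{1}{4\kappa^2}D(y)+O(n),
\]
where $g_n(t)=\frac{2\beta(n-1)^2}{3n^2}t^3+ht-I(t)$, $M_n=\sup_t g_n(t)$ and $D(y)=\sum_i\sum_{j\ne i}(y_{ij}-q_i)^2$.

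This bound holds whatever the $q_i$ are and whatever the signs of $\beta$, $h$ and $\kappa_-$. Constant vectors attain $NM_n-O(n)$. The symmetry $y_{ij}=y_{ji}$ then gives $n(q_j-\bar y)=\sum_{i\ne j}(y_{ij}-q_i)$, and by Cauchy--Schwarz $\sum_e(y_e-\bar y)^2\le D(y)$. This is exactly what excludes mixed profiles: an edge joining vertices of different types cannot be close to both row means.

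That argument is Lemma~\ref{lem:three-star-variational}. Combined with Proposition~\ref{thm:conc}, it settles every $(\beta_0,h_0)$ at once, with no appeal to Theorem~\ref{thm:est-imposs}. Your degree-based Step~1 would even absorb the case $\beta_0=0$, which the paper treats separately.

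Minor points:
\begin{itemize}
\item The local field is $m_e=\delta_i^2+\delta_j^2+O(1/n)$, with no factor $2$.
\item $\cR$ is about $n^{-1}$, not $2n^{-1}$, times the line-graph adjacency matrix.
\item $X$ is \emph{not} close to $b(X)$ in $\ell^2/\sqrt N$, since each coordinate has conditional variance bounded below. Only linear statistics such as the degrees are close, by \eqref{eq:concen2} applied row by row. That is all your Step~1 needs.
\end{itemize}
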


Both the edge-triangle and edge-three-star ERGMs satisfy the mean-field condition~\eqref{cond:GW}, the asymptotic regularity condition~\eqref{cond:regu}, and the spectral-gap condition~\eqref{cond:spec-gap}, as verified in Lemmas~\ref{lem:triangle_mean} and~\ref{lem:three-star-mean-field} in the Appendix. Despite these common structural properties, the two models have different variational behavior. For the edge-triangle model, Theorem~\ref{thm:est-imposs} gives $T_N(X)=o_p(1)$ in the ferromagnetic regime, thus pseudolikelihood estimation is asymptotically ill-conditioned. In the strongly antiferromagnetic regime, however, Lemma~\ref{lem:triangle} shows that near-maximizers of the variational problem remain a positive distance from constant vectors. Theorem~\ref{cor:gap_implies_estimation} then yields joint $\sqrt{N}$-consistent estimation. For the edge-three-star model, Lemma~\ref{lem:three-star-variational} shows that near-maximizers are asymptotically close to constant vectors for every fixed $(\beta_0,h_0)$, in both the ferromagnetic and antiferromagnetic regimes. Consequently, $T_N(X)=o_p(1)$ throughout the parameter space.

\color{black}

\subsection{Three Terms Arithmetic Progression}\label{ssec:3-ap}

Our second application concerns Gibbs measures tilted by the number of nontrivial
three-term arithmetic progressions. 
The extremal problem of finding large subsets of \([N]\) with no nontrivial
3-AP goes back to Roth's theorem~\cite{Roth53}, and quantitative refinements remain an
active topic~\cite{bourgain1999,kelley2023strong,Sanders11}. Its probabilistic counterpart
is the study of upper tails for 3-AP counts in random subsets, where nonlinear large
deviation methods were developed in~\cite{CD16} and sharpened in
\cite{BhattacharyaGangulyShaoZhao20,Warnke17}.

We consider both the integer and cyclic versions, since they lead to different tensor
geometries. In \(\mathbb Z\), a nontrivial 3-AP is a triple of the form
\((a,a+r,a+2r)\) with \(r\ne0\), equivalently a triple \((a,b,c)\) with
\(a+c=2b\) and \(a,b,c\) distinct. In \(\mathbb Z/N\mathbb Z\), the same definition is
used modulo \(N\), again requiring the three entries to be pairwise distinct. For a set
\(S\subset [N]\) or \(S\subset \mathbb Z/N\mathbb Z\), let
\(T_3^{\mathrm{int}}(S)\) and \(T_3^{\mathrm{cyc}}(S)\) denote the corresponding numbers
of nontrivial 3-APs contained in \(S\). Equivalently, a three-point set
\(\{a,b,c\}\) contributes if some ordering of its elements forms a nontrivial arithmetic
progression in the relevant ambient group.

We now define the associated Gibbs measures. For \(x\in\{0,1\}^N\), identify \(x\) with
the subset \(\{i:x_i=1\}\). Let \(P_{\beta,h}^{\mathrm{int}}\) and
\(P_{\beta,h}^{\mathrm{cyc}}\) be the Gibbs distributions of the form~\eqref{eq:model},
with reference measure \(\mu=\frac12\delta_0+\frac12\delta_1\), obtained from the tensors
\[
A^{\mathrm{int}}_{a,b,c}
:=
\frac1N
\mathbf 1_{\{\{a,b,c\}\text{ forms a nontrivial 3-AP in }\mathbb Z\}},
\quad 
A^{\mathrm{cyc}}_{a,b,c}
:=
\frac1N
\mathbf 1_{\{\{a,b,c\}\text{ forms a nontrivial 3-AP in }\mathbb Z/N\mathbb Z\}}.
\]
for \(a,b,c\in[N]\). Thus $h$ controls the density of the random set, while
\(\beta\) controls the arithmetic interaction: positive $\beta$ favors sets with many
3-APs, whereas negative $\beta$ favors progression-sparse sets. Our results on pseudolikelihood estimation for the $3$-AP models above are summarized in the following theorem.

\begin{thm}\label{thm:app-3ap}
Consider the 3-AP models defined above with \(\mu=\frac12\delta_0+\frac12\delta_1\). 
\begin{enumerate}
\item [(i)] In the cyclic case \(A=A^{\mathrm{cyc}}\), the following statements hold.
\begin{enumerate}
\item If \(\beta_0>0\) and \(h_0\ge0\), then $  T_N(X)=o_p(1)$
under \(P^{\mathrm{cyc}}_{\beta_0,h_0}\). 


\item For every \(h_0\in\bR\), there exists \(L=L(h_0)>0\) such that, whenever \(\beta_0\le -L\), there is a constant \(c>0\) with $ P^{\mathrm{cyc}}_{\beta_0,h_0}\bigl(T_N(X)\ge c\bigr)\to 1.$
Consequently, the MPLE exists with probability tending to one and is jointly \(\sqrt N\)-consistent for \((\beta_0,h_0)\).
\end{enumerate}

\item [(ii)] In the integer case \(A=A^{\mathrm{int}}\), for every fixed \((\beta_0,h_0)\in\bR^2\), $T_N(X)=\Omega_p(1)$
under \(P^{\mathrm{int}}_{\beta_0,h_0}\). Consequently, the MPLE exists with probability tending to one and is jointly \(\sqrt N\)-consistent for \((\beta_0,h_0)\).
\end{enumerate}
\end{thm}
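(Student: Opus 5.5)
We reduce each part to one of the general results: Theorem~\ref{thm:est-imposs} for (i)(a), Theorem~\ref{cor:gap_implies_estimation} for (i)(b), and Theorem~\ref{thm:positive-res} for (ii). Throughout, $\mu=\frac12(\delta_0+\delta_1)$, so $\kappa_-=0$, $\mu$ is stochastically non-negative, and $\Lambda'(h_0)=e^{h_0}/(1+e^{h_0})\in(0,1)$ is never $0$.

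\emph{Cyclic case, basic counts.} For $a\ne b$ in $\mathbb Z/N\mathbb Z$ with $N$ odd, the third point $c$ making $\{a,b,c\}$ a 3-AP can be $a$ or $b$ as an endpoint, or the midpoint. This gives at most $3$ choices. Hence $\cR_{ab}\le 6/N$ (counting orderings), and $\cR_{ab}\ge c/N$ whenever it is positive. The row sums $\cR_a=\sum_{b,c}A_{abc}$ are all equal by translation invariance, so \eqref{cond:regu} holds exactly. We also have $\cR_a\asymp 1$, which gives Assumption~\ref{ass:standing} and \eqref{cond:A_nontrivial}. Further, $\mathrm{Tr}(\cR^2)\le N^2\cdot O(N^{-2})=O(1)=o(N/\log N)$. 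Condition \eqref{cond:strong-pseu-regu} holds, so Lemma~\ref{lem:trace-optr}(b) gives \eqref{cond:GW}. (Even $N$ introduces only lower-order corrections, such as $2r=0$ degeneracies, which we handle by a direct bound.)

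\emph{Cyclic case, spectral gap.} $\cP=D^{-1}\cR$ is a circulant, namely $\cR$ divided by a constant. Its eigenvalues are therefore $\hat{\cR}(\xi)/\cR_0$ for characters $\xi$, where $\cR_{ab}$ depends only on $b-a=r$ through the sets $\{r,-r,2r,-2r,r/2,-r/2\}$ (the last pair when $N$ is odd). For $\xi\ne0$, $\hat\cR(\xi)$ is an average of $\cos(2\pi\xi r/N)$, $\cos(4\pi\xi r/N)$, and similar terms over $r$. The sum $\sum_r e^{2\pi i\xi r/N}$ vanishes, and so does the sum for $2r$ when $N$ is odd, so $\lambda_2=O(1/N)$. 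This gives \eqref{cond:spec-gap}, and (i)(a) follows from Theorem~\ref{thm:est-imposs} using $\kappa_-\ge 0$.

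\emph{Cyclic case, part (i)(b).} We verify the gap hypothesis of Theorem~\ref{cor:gap_implies_estimation} for $\beta_0\le -L$. On near-constant profiles $y\approx \bar y\mathbf 1$, we have $f(y)-I(y)\approx N\phi(\bar y)$, where
\[
\phi(t)=\tfrac{\beta_0}{3}\bar\cR t^3+h_0t-I(t).
\]
We compare this with a non-constant competitor. Take $y=t\mathbf 1_S$, where $S$ is a 3-AP-free (or nearly AP-sparse) set of positive density. For example, let $S$ be a residue set in an interval of length $N/4$ shifted to avoid progressions modulo $N$; more simply, use $S=\{a: a\in [N/3,2N/3)\}$ reduced to an AP-poor structure. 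Then the cubic term is $O(\epsilon N)$, while the linear-minus-entropy term is $|S|(h_0t-I(t))$. Maximizing, this yields a value of $N(\rho\log(1+e^{h_0})-\log 2\cdot\ldots)$ that is bounded below uniformly in $\beta_0$. The constant-profile value $\sup_t\phi(t)$, for $\beta_0\to-\infty$, is forced toward $t\to0$, where the value is $-\log 2+o(1)$, strictly worse.

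The main obstacle is making this quantitative. It requires a positive-density subset of $\mathbb Z/N\mathbb Z$ with $o(N^2)$ 3-APs. Behrend-type sets have density $o(1)$, so instead we take an interval of length $\eta N$, which has about $\eta^2N^2/4$ APs. The cubic cost is $|\beta_0|\eta^2\cdot O(N)$ against a gain of order $\eta N$. A compactness argument then produces $\varepsilon,\delta$ such that near-constant profiles lose $\delta N$ relative to $\sup_t\phi$. We would do this by a Taylor expansion of $y\mapsto f-I$ around constants, using $\sum(y_i-\bar y)^2\le\varepsilon N$ and the bounded operator norm of $\cR$.

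\emph{Integer case (ii).} The row sums are not constant. Points near the ends of $[N]$ are midpoints of fewer progressions: for $a$ with $a/N\to s$, $\cR_a\to$ a nonconstant function $g(s)$ of order $\min(s,1-s)+\text{const}$. Hence $\sum_i(\cR_i-\bar\cR)^2=\Omega(N)$. Since $\Lambda'(h_0)\ne0$, Theorem~\ref{thm:positive-res} gives $T_N(X)=\Omega_p(1)$ together with $\sqrt N$-consistency; Assumption~\ref{ass:standing} follows as in the cyclic case.
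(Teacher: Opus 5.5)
Your treatment of (i)(a) and (ii) matches the paper: verify the structural hypotheses, then apply Theorem~\ref{thm:est-imposs} for (i)(a) and Theorem~\ref{thm:positive-res} for (ii), the latter via the nonconstant row-sum profile $\cR_{\lfloor\alpha N\rfloor}\approx 1+2\min\{\alpha,1-\alpha\}$. One correction to your Fourier route to the spectral gap: for even $N$ the claim $\lambda_2=O(1/N)$ is false. There $N\cR_{ab}=2+2\cdot\mathbf 1\{b-a\text{ even}\}$ for all but $O(1)$ differences, so the character $\xi=N/2$ has eigenvalue $1/3+O(1/N)$. The gap $2/3-o(1)$ still suffices. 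The paper avoids this computation by using the support graph and Lemma~\ref{lem:trace-optr-converse}.

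The genuine gap is in (i)(b): arcs cannot produce the variational gap.
\begin{itemize}
\item You must beat $N\sup_t\phi(t)$, not $-N\log 2$. Since $-I(t)+I(0)$ is the binary entropy $H(t)\ge t\log(1/t)$, taking $t=|\beta_0|^{-1/2}$ already gives $\sup_t\phi(t)+\log 2\gtrsim|\beta_0|^{-1/2}\log|\beta_0|$.
\item An arc $S$ of density $\eta\le 1/2$ carries about $\eta^2N^2/4$ progressions. With $u=\Lambda'(h_0)$, the profile $u\mathbf 1_S$ therefore gains only $\max_\eta\bigl(\eta\Delta(h_0)-\tfrac{|\beta_0|}{2}\eta^2u^3\bigr)=O(1/|\beta_0|)$ over $-N\log 2$, where $\Delta(h_0)=\log(1+e^{h_0})$.
\item Varying $u$ does not help, because $u\mathbf 1_S$ is dominated up to $o(N)$ by the constant profile $\eta u\mathbf 1$. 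Concavity of $G_{h_0}(t)=h_0t-I(t)$ gives $G_{h_0}(\eta u)\ge\eta G_{h_0}(u)+(1-\eta)G_{h_0}(0)$. Meanwhile the cubic cost of $\eta u\mathbf 1$ is $\tfrac{|\beta_0|}{3}\bar\cR\eta^3u^3N\approx|\beta_0|\eta^3u^3N$ (as $\bar\cR\to 3$), which is at most the arc's cost $\approx\tfrac{|\beta_0|}{2}\eta^2u^3N$.
\end{itemize}
Intervals are AP-rich compared with spread-out profiles, which is the wrong direction. Moreover, no set of fixed density $\rho$ can give a gain bounded below uniformly in $\beta_0$, since Roth--Varnavides forces at least $c(\rho)N^2$ progressions. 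So the competitor's density must shrink with $|\beta_0|$, and an explicit rate comparison is unavoidable.

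The missing idea is Lemma~\ref{lem:T-3ap-upper}. For small $\rho$ there are sets of density at least $\rho$ with super-polynomially few progressions, $T_3^{\mathrm{cyc}}(S)\le\rho^{K\log(1/\rho)}N^2\ll\rho^3N^2$. Behrend is applied not at scale $N$ but at a fixed scale $M=M(\rho)$:
\begin{itemize}
\item take a 3-AP-free $B\subseteq[M]$ with $|B|\ge 12\rho M$;
\item place the translates $rL+B$, with $L=3M$, in the first half of $\mathbb Z/N\mathbb Z$;
\item then every cyclic progression in $S$ is an integer progression with equal $B$-coordinates and block indices in progression, so $T_3\ll N^2/M$.
\end{itemize}
The profile $u\mathbf 1_S$ then gains at least $\rho\Delta(h_0)-2|\beta_0|\rho^{K\log(1/\rho)}$ per site. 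Taking $\log(1/\rho)\asymp\sqrt{\log|\beta_0|}$ makes this gain $e^{-O(\sqrt{\log|\beta_0|})}$. That exceeds the constant-profile excess $O(|\beta_0|^{-1/2}\log^{3/2}|\beta_0|)$ once $|\beta_0|$ is large, and this rate comparison is what legitimately fixes $L(h_0)$. The upper bound on near-constant profiles then goes through by Jensen and the Lipschitz bound on the cubic term, as you intended (compare Lemma~\ref{lem:3AP}).
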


The integer and cyclic models exhibit different behavior. In the integer model, the number of $3$-APs containing a given integer is affected by its distance from the boundary. This creates substantial variation in the row sums, with $\sum_{i=1}^N (\cR_i -\bar \cR)^2 = \Omega(N)$ and Theorem~\ref{thm:positive-res} therefore gives consistency of the pseudolikelihood estimator. 

In the cyclic model, translation invariance makes all row sums equal. Moreover, the tensor satisfies the mean-field condition~\eqref{cond:GW} and the spectral-gap condition~\eqref{cond:spec-gap}; see Lemmas~\ref{lem:3ap_mean} and~\ref{lem:3ap_gap}. Theorem~\ref{thm:est-imposs} then implies that $T_N(X)=o_p(1)$ in the ferromagnetic regime. On the other hand, as in the edge-triangle ERGM, sufficiently strong antiferromagnetic interactions force near-maximizers of the variational problem to remain far from constant vectors. Establishing this fact for cyclic $3$-APs requires a more delicate argument, given in Lemma~\ref{lem:3AP}. It follows that pseudolikelihood estimation is still consistent in the strongly antiferromagnetic regime.

\color{black}
\subsection{Inhomogeneous Random Hypergraphs}\label{ssec:inhomogeneous-hypergraph}

Our final example comes from inhomogeneous random hypergraphs. In latent-kernel models,
edge probabilities are governed by latent vertex positions rather than by a single global
density; inhomogeneous random hypergraphs give the analogous framework for higher-order interactions, where
the probability of a hyperedge depends on the latent types of all participating vertices
\cite{bollobas2007phase,lovasz2012large,balasubramanian2021nonparametric,elek2012measure,zhao2015hypergraph}.
Here we use such a kernel only as a source of structured random cubic tensors. Conditional
on the realized tensor, the statistical problem is to estimate the two Gibbs parameters
\((\beta,h)\), not the kernel \(g\) itself.

Let \(g:[0,1]^3\to[0,1]\) be a symmetric continuous function. For each unordered triple
\(\{i,j,k\}\subset[N]\) with distinct entries, let
\[
\xi_{\{i,j,k\}}\sim {\rm Ber}\bigl(g(i/N,j/N,k/N)\bigr)
\]
independently over all such triples. Define \(G_N(i,j,k)=\xi_{\{i,j,k\}}\) whenever
\(i,j,k\) are distinct, extend this definition symmetrically over permutations of the indices,
and set \(G_N(i,j,k)=0\) whenever \(\{i,j,k\}\) are not all distinct. We then define
\(
A_{ijk}:=N^{-2}G_N(i,j,k).
\)
Conditional on \(A\), write \(P^A_{\beta,h}\) for the Gibbs measure defined by~\eqref{eq:model}.

The relevant deterministic profile is
\[
G(x):=\int_{[0,1]^2}g(x,y,z)\,dy\,dz,\qquad
\bar G:=\int_{[0,1]^3}g(x,y,z)\,dx\,dy\,dz .
\]
This profile gives the limiting row sums of the tensor. If $G$ is nonconstant, these row sums vary across coordinates, creating enough variation in the local fields for joint estimation. If $G\equiv \bar G$ and $g$ is uniformly positive, the tensor is asymptotically homogeneous and well connected. In this case, pseudolikelihood estimation becomes ill-conditioned in the ferromagnetic regime with a nonnegative external field. The precise statement is as follows.

\begin{thm}
\label{thm:app-hypergraph}
Let $\mathbb P_A$ denote the law of the random tensor $A$. The following conditional conclusions hold on an event with
$\mathbb P_A$-probability tending to one  for $X\sim P^A_{\beta_0,h_0}$.
\begin{enumerate}
\item[(a)] If $G$ is not constant and $\Lambda'(h_0)\neq0 $, then $T_N(X)=\Omega_{P_{\beta_0, h_0}^A}(1).$
Consequently, the MPLE exists with $P_{\beta_0, h_0}^A$ probability tending to one and is jointly \(\sqrt N\)-consistent for $(\beta_0,h_0)$. 

\item[(b)] Assume that \(G\equiv \bar G\), that \(\inf_{[0,1]^3}g>0\), and that \(\mu\) is stochastically nonnegative. If \(\beta_0>0\), \(h_0\ge0\), and either \(h_0>0\) or \(\kappa_-\ge0\), then $ T_N(X)=o_{P_{\beta_0, h_0}^A}(1).$
\end{enumerate}
\end{thm}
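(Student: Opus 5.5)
Both parts reduce to verifying, with $\mathbb P_A$-probability tending to one, the structural hypotheses of Theorem~\ref{thm:positive-res} (for (a)) and Theorem~\ref{thm:est-imposs} (for (b)) for the random tensor $A=N^{-2}G_N$. Since those theorems are deterministic statements about a fixed tensor, it suffices to show that the relevant conditions hold with high $\mathbb P_A$-probability and then apply them conditionally on $A$. Assumption~\ref{ass:standing} is immediate: $\cR_i=N^{-2}\sum_{j,k}G_N(i,j,k)\le 1$, so $\gamma=1$ works deterministically.

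For (a), I would compute row sums. $\cR_i = N^{-2}\sum_{j\neq k,\, j,k\neq i} \xi_{\{i,j,k\}}$. This is a sum of about $N^2/2$ independent Bernoullis, each appearing twice, so Hoeffding gives $|\cR_i-\bE\cR_i|\le C\sqrt{\log N}/N$ simultaneously for all $i$, with probability $1-o(1)$. Continuity of $g$ (hence uniform continuity on $[0,1]^3$) together with a Riemann-sum approximation gives $\bE\cR_i = G(i/N)+o(1)$ uniformly in $i$. Therefore
\[
\frac1N\sum_i(\cR_i-\bar\cR)^2 \to \int_0^1 (G(x)-\bar G)^2\,dx>0,
\]
since $G$ is continuous and nonconstant. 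This is exactly the second hypothesis of Theorem~\ref{thm:positive-res}, which together with $\Lambda'(h_0)\ne0$ yields $T_N=\Omega_p(1)$ and $\sqrt N$-consistency.

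For (b), I would check \eqref{cond:GW}, \eqref{cond:regu}, \eqref{cond:A_nontrivial} and \eqref{cond:spec-gap}.
\begin{itemize}
\item \emph{Regularity.} The same row-sum concentration gives $\max_i|\cR_i-\bar G|=o(1)$, which yields \eqref{cond:regu}. Since $\bar G\ge\inf g>0$, it also yields \eqref{cond:A_nontrivial}.
\item \emph{Codegrees.} $\cR_{ij}=N^{-2}\sum_k \xi_{\{i,j,k\}}$ is of order $N^{-1}$, and Chernoff together with a union bound over pairs gives $\max_{ij}\cR_{ij}\le C/N$. Hence $\mathrm{Tr}(\cR^2)=\sum_{ij}\cR_{ij}^2\le C$, which is $o(N/\log N)$. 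I still need the strong pseudo-regularity condition \eqref{cond:strong-pseu-regu} in order to invoke Lemma~\ref{lem:trace-optr}(b). For this I would note that $\cR_{ij}\ge c/N$ for all pairs, by Chernoff on $\mathrm{Bin}(N-2,\ge \inf g)$, so all nonzero entries are comparable. Lemma~\ref{lem:trace-optr}(b) then gives \eqref{cond:GW}. Alternatively, I would bound $\|\sum_i A_i^2\|_{\rm op}$ directly: each $A_i$ has entries at most $N^{-2}$ and row sums $\cR_{ij}\le C/N$, so $\|A_i\|_{\rm op}\le C/N$. This gives $\|\sum_i A_i^2\|_{\rm op}\le \max_j\sum_i\sum_{k}(A_i^2)_{jk}\le \max_j\sum_i \|A_i\|_{\infty\to\infty}\,\cR_{ij}\le C/N$, which is even simpler.
\item \emph{Spectral gap.} Since $c/N\le\cR_{ij}\le C/N$ off the diagonal and $\cR_i\approx\bar G$, the chain $\cP=D^{-1}\cR$ satisfies a Doeblin minorization $\cP_{ij}\ge c'/N$ for all $j\neq i$. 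This yields $1-\lambda_2\ge c''>0$, and also $\lambda_N>-1$, although only $\lambda_2$ is needed. A clean route is the comparison of Dirichlet forms with the complete graph: $\mathcal E_\cP(f)\ge (c/C)\,\mathcal E_{K_N}(f)$, and the complete graph has gap $\approx1$.
\end{itemize}
With all conditions verified on a high-probability event, Theorem~\ref{thm:est-imposs} applies conditionally and gives $T_N=o_p(1)$.

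The main obstacle is the uniform concentration over all $\binom N2$ codegrees and $N$ row sums. The lower bound $\cR_{ij}\ge c/N$ uses $\inf g>0$ crucially. Each of these is a Chernoff plus union bound with polynomially many events and exponentially small tails in $N$, so it should be routine. The mean-field operator-norm bound is the step most prone to error, so I would use the direct $\ell^\infty$ row-sum bound above rather than relying on Lemma~\ref{lem:trace-optr}.
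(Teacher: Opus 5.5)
Your proposal is correct and follows essentially the paper's route: on a $\mathbb P_A$-high-probability event you verify the row-sum variation needed for Theorem~\ref{thm:positive-res} in part (a), and in part (b) you verify regularity, nontriviality, the Chernoff lower bound on codegrees (giving strong pseudo-regularity) and a complete-graph comparison for the spectral gap, so that Theorem~\ref{thm:est-imposs} applies. The differences are cosmetic. You use Hoeffding to get uniform row-sum concentration, where the paper uses a second-moment/Markov bound. Also, the upper bound $\cR_{ij}\le 1/N$, and hence $\|\sum_i A_i^2\|_{\mathrm{op}}\le N^{-1}$, holds deterministically, so no Chernoff or union bound is needed for that direction.
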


\begin{rem}
Note that the conclusion of Theorem~\ref{thm:app-hypergraph} (b) does not directly imply that estimation using pseudo-likelihood is impossible. However, extending the contiguity arguments of \cite{GM20}, one should be able to show that consistent estimation of both parameters is impossible in the above setting  using {\it any} estimators, whenever the function $G$ is constant.  Since the arguments would be very similar, we skip it here. 
\end{rem}

\section{Preliminaries}\label{sec:prelim}
In this section, we first record and prove some necessary concentration results. Then we introduce two important lemmas which gives simpler criteria for mean-field and well-connectivity conditions. Finally, we record some auxiliary lemmas used in other proofs.
\subsection{Concentration Results}
We begin with the following definition of local perturbation of a configuration in $\bR^N$. 
\begin{defn}
Fix a constant $a_*: = \frac{\kappa_-+\kappa_+}{2}$, for a vector $x\in [\kappa_-, \kappa_+]^N$, let $ x^{(i)}\in [\kappa_-, \kappa_+]^N$ be the vector obtained by replacing the $i$-th coordinate of $x$ by $a_*$, i.e.
$x_k^{(i)} = x_k$ if $k \neq i$ and $x_k^{(k)} = a_*.$
\end{defn}
\begin{lem}\label{lem:unify-concen}
Let $\{g_i\}_{i=1}^N$ be a collection of functions defined on $[\kappa_-,\kappa_+]^N$. Assume it further satisfies the following properties,
\begin{enumerate}
    \item [(i)] For every $i \in [N]$ there exists a finite constant $d_i$, such that  $\norm{g_i}_{\infty} \le d_i.$
    \item [(ii)] For each $i,j\in [N]$, there exists finite constant $c_{ij}\ge 0$ such that 
    \[
    \max_{x \in [\kappa_-, \kappa_+]^N} \big| g_i(x) - g_i(x^{(j)})\big| \le c_{ij}.
    \]
    
\end{enumerate}
Then for $X\sim P_{\gb_0, h_0}$, under the Assumption~\ref{ass:standing}, with $b_i(X) = \gL'(\gb_0 m_i(X) +h_0) $ we have 
\begin{align*}
      \E\big[ \big(\sum_{i} g_i(X) (X_i - b_i(X)) \big)^2\big] \le 
      4\kappa^2 \Big((1+ \gamma |\beta_0| \kappa^3)\sum_i d_i^2+\sum_{i, j} (c_{ii}d_j+d_i c_{ji}) \Big),
\end{align*}

\end{lem}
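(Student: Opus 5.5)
Expand the square and use the conditional structure: since $X_i$ given $X_{-i}$ has law $\mu_{\beta_0 m_i(X)+h_0}$ with mean $b_i(X)$, and $b_i$ does not depend on $X_i$ (as $A$ vanishes on diagonals), we have $\E[(X_i-b_i(X))\,\phi(X_{-i})]=0$ for any function $\phi$ of $X_{-i}$. Write
\[
\E\Big[\Big(\sum_i g_i(X)(X_i-b_i(X))\Big)^2\Big]=\sum_{i,j}\E\big[g_i(X)g_j(X)(X_i-b_i)(X_j-b_j)\big],
\]
and split into the diagonal terms $i=j$ and the off-diagonal terms $i\neq j$. The diagonal terms are bounded directly by $|X_i-b_i|\le 2\kappa$ and $|g_i|\le d_i$, giving $4\kappa^2\sum_i d_i^2$.

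For $i\neq j$, replace $g_i(X)g_j(X)(X_j-b_j(X))$ by its version with $X_i$ replaced by $a_*$, i.e.\ evaluate at $X^{(i)}$; this replaced quantity is a function of $X_{-i}$, so its product with $X_i-b_i(X)$ has zero mean. Hence the $(i,j)$ term equals the expectation of $(X_i-b_i)$ times the difference
\[
g_i(X)g_j(X)(X_j-b_j(X))-g_i(X^{(i)})g_j(X^{(i)})(X_j-b_j(X^{(i)})).
\]
Telescope this difference into three pieces: the change in $g_i$ (at most $c_{ii}$, multiplied by $d_j\cdot 2\kappa$), the change in $g_j$ (at most $c_{ji}$, multiplied by $d_i\cdot 2\kappa$), and the change in $b_j$ (multiplied by $d_id_j$). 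Multiplying by $|X_i-b_i|\le 2\kappa$, the first two pieces yield $4\kappa^2(c_{ii}d_j+d_ic_{ji})$, which is the second sum in the claimed bound (the factor $2\kappa\cdot 2\kappa$ accounts for the constant $4\kappa^2$).

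The third piece is the main point. Since $\Lambda''\le \kappa^2$ (variance of a law on $[\kappa_-,\kappa_+]$), we get $|b_j(X)-b_j(X^{(i)})|\le |\beta_0|\kappa^2|m_j(X)-m_j(X^{(i)})|$, and
\[
|m_j(X)-m_j(X^{(i)})|=\Big|2\sum_k A_{jik}X_k(X_i-a_*)\Big|\le 2\kappa\,|X_i-a_*|\,\mathcal R_{ij}\le 2\kappa\cdot\tfrac{\kappa_+-\kappa_-}{2}\,\mathcal R_{ij}\le 2\kappa^2\mathcal R_{ij}.
\]
Bounding $|X_i-b_i|$ by $2\kappa$ as well, this contributes at most a constant times $\kappa^2|\beta_0|\kappa^3\sum_{i,j}d_id_j\mathcal R_{ij}$. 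Apply $d_id_j\le (d_i^2+d_j^2)/2$ together with the symmetry of $\mathcal R$ and $\sum_j\mathcal R_{ij}\le\gamma$ (Assumption~\ref{ass:standing}) to get at most $\gamma\sum_i d_i^2$. This produces the $\gamma|\beta_0|\kappa^3\sum_i d_i^2$ term.

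The one delicate step is the constant bookkeeping in this last piece: one must use $|X_i-a_*|\le(\kappa_+-\kappa_-)/2\le\kappa$ and extract one factor of the conditional deviation efficiently, so that the total matches $4\kappa^2\gamma|\beta_0|\kappa^3$. If the constants come out slightly larger, the bound still holds up to an absolute factor. Possibly the authors bound $|X_i-b_i|\cdot|X_i-a_*|$ jointly by $\kappa^2$-type quantities to get exactly the stated constant. Everything else is routine.
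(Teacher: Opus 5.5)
Your proposal is correct and follows essentially the same route as the paper. The paper uses the same telescoping through $X^{(i)}$ into the $c_{ii}$, $c_{ji}$ and $b_j$-change pieces, with the fully replaced term vanishing after conditioning on $X_{-i}$, and the same bound $\sum_{i,j}d_id_j\mathcal{R}_{ij}\le\gamma\sum_i d_i^2$ (stated there via $\|\mathcal{R}\|_{\mathrm{op}}\le\gamma$). Your hedging on constants is unnecessary: $2\kappa\cdot|\beta_0|\kappa^2\cdot 2\kappa^2\mathcal{R}_{ij}\,d_id_j=4|\beta_0|\kappa^5 d_id_j\mathcal{R}_{ij}$, which after summing gives exactly $4\kappa^2\gamma|\beta_0|\kappa^3\sum_i d_i^2$.
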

\begin{proof}
We expand the square as
\begin{align}\label{eq:prelim}
     \E\big[ \big(\sum_ig_i(X) (X_i - b_i(X)) \big)^2\big] & =  \sum_{i, j} \E \Big[  ( g_i(X) g_j(X) (X_i - b_i(X))(X_j - b_j(X))\Big].
     \end{align}
Now consider the cases when $i=j$ and $i\ne j$ separately.
\begin{itemize}
\item{$i=j$:} In this case, using the fact $ \abs{x_i - b_i(x)} \le 2\kappa $ for each $i\in [N]$, the expectation in the RHS of \eqref{eq:prelim} is bounded by $4\kappa^2d_i^2$.

    \item{$i\ne j$:} In this case, the expectation in the RHS of \eqref{eq:prelim} can be decomposed as follows:
\begin{align}\label{eq:prelim2}
\begin{split}
    &  
\E\left[(g_i(X)-g_i(X^{(i)}))g_j(X)(X_i - b_i(X))(X_j - b_j(X))\right]
    \\
    +&
\E\left[  g_i(X^{(i)})(g_j(X)-g_j(X^{(i)}))(X_i - b_i(X))(X_j - b_j(X))\right]\\
    +& \E\left[  g_i(X^{(i)})g_j(X^{(i)})(X_i - b_i(X))(X_j-b_j(X^{(i)}))\right]\\
    +& \E\left[  g_i(X^{(i)})g_j(X^{(i)})(X_i - b_i(X))(b_j(X^{(i)}) - b_j(X))\right].
\end{split}
\end{align}
\end{itemize}
Since $ \E[X_i \mid X^{(i)}] = b_i(X) $ and $ X^{(i)} $ does not depend on $ X_i $, it follows that, for $ i \ne j $,
\begin{align*}
\E \big[ g_i(X^{(i)}) g_{j}(X^{(i)}) (X_i - b_i(X))(X_j - b_j(X^{(i)}))\big \rvert 
X^{(i)}\big] = 0,
\end{align*}
which implies that the third term in the RHS of \eqref{eq:prelim2} equals $0$. Using the bounds stated in the lemma, the sum of the first, second and fourth terms in the RHS of \eqref{eq:prelim2} can be bounded by
\begin{align*}
4\kappa^2\big (c_{ii}d_j+d_ic_{ji} \big) + 4 |\beta_0| \kappa^5 d_id_j\mathcal{R}_{ji},
\end{align*}
where, to bound the fourth term, we use the facts that $|x_i -b_i(x)|\le 2 \kappa$ and 
\begin{align}\label{eq:bound-tanh}
  |b_j(x)-b_j(x^{(i)})|\le |\beta_0| \kappa^2 |m_j(x)-m_j(x^{(i)})| \le 
  2|\beta_0|\kappa^4\sum_k A_{jik}=2|\beta_0| \kappa^4\mathcal{R}_{ji},
\end{align}

where the first inequality in \eqref{eq:bound-tanh} follows from the fact 
\begin{align}\label{eq:lambda-lipsch}
    |\gL'(a') - \gL'(a)| \le \sup_{\gl} \gL''(\gl) \cdot |a - a'| \le \kappa^2|a - a'|.
\end{align}
The bound stated in the lemma follows on combining the above bounds, and using Assumption \ref{ass:standing} to get
 $ \sum_{i, j} d_i d_j \mathcal{R}_{ji}\le \|\mathcal{R}\|_{\mathrm{op}} \|{\bf d}\|_2^2\le \gamma \sum_i d_i^2.$
\end{proof}
Applying the above lemma gives second-moment control for specific functions of the 3-tensor Ising model.
\begin{cor}\label{lem:concen}
Let $ X \sim P_{\gb_0,h_0} $. Under Assumption~\ref{ass:standing}, for any family $(\phi_i)_{i\in[N]}$ of bounded Lipschitz functions $ \phi_i:[-\gamma \kappa^2, \gamma\kappa^2]\to\mathbb{R} $ and for any $ a \in\mathbb{R}^N $, we have
    \begin{align}
        & \mathbb E\Big[\Big(\sum_i (X_i-b_i(X))\phi_i(m_i(X))\Big)^2\Big]
\le C_1 \sum_i \|\phi_i\|_\infty^2 + C_2\sum_i \|\phi_i\|_{\mathrm{Lip}}^2, \label{eq:concen1}\\
        & \mathbb E\Big[\Big(\sum_i a_i(X_i-b_i(X))\Big)^2\Big]
\le C_1 \sum_i a_i^2 , \label{eq:concen2}\\
        & \E  \Big[ \Big( \sum_{i,k} \cR_{i k} X_k (X_i - b_i(X)) \Big)^2\Big] \le C_3 N,\label{eq:concen3}\\
   &  \E\Big[ \Big( \sum_{i,j,k} A_{ijk}(X_iX_jX_k -b_i(X) b_j(X) b_k(X)) \Big)^2 \Big] \le C_4 N + C_5 N \mathrm{Tr}(\mathcal{R}^2). \label{eq:concen4}
    \end{align}
    where $C_1, C_2, C_3,C_4$ and $C_5$ are constants that only depend on $\beta_0, \gamma, \kappa.$ In particular, one can take
\begin{align*}
C_1=  4\kappa^2\bigl(1+\gamma|\beta_0|\kappa^3+\gamma\kappa^2\bigr),  \ \ \ C_2 = 4\gamma\kappa^4,  \ \ \  C_3 = (8+4\gamma |\gb_0| \kappa^3)\gamma^2 \kappa^4, \\
C_4 = 12\gamma\kappa^4 + 48\gamma^2\kappa^6 + 12\gamma^3\kappa^8 + 108|\beta_0|\gamma^3\kappa^9, \ \ \ 
C_5 = 72|\beta_0|\gamma\kappa^9.
\end{align*}

If we further assume the mean-field condition~\eqref{cond:GW}, then 
\eqref{eq:concen4} can be bounded as
\begin{align}
    \E\Big[ \Big( \sum_{i,j,k} A_{ijk}(X_iX_jX_k -b_i(X) b_j(X) b_k(X)) \Big)^2 \Big]  = o( N^2 (\log N)^{-1}). \label{eq:concen5}
\end{align}
\end{cor}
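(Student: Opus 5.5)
The plan is to derive each of the four bounds from Lemma~\ref{lem:unify-concen}: choose the functions $g_i$, compute the constants $d_i$ and $c_{ij}$, and substitute. Throughout we use $|m_i(x)|\le\gamma\kappa^2$ and the one-coordinate perturbation bound
\[
|m_j(x)-m_j(x^{(i)})|\le 2\kappa\sum_k A_{jik}\cdot\kappa\le 2\kappa^2\cR_{ji},
\]
together with $\sum_j \cR_{ij}\le\gamma$ and $\|\cR\|_{\mathrm{op}}\le\gamma$.

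\textbf{Bound \eqref{eq:concen2}.} Take $g_i\equiv a_i$. Then $d_i=|a_i|$ and $c_{ij}=0$, so the lemma gives \eqref{eq:concen2} directly (the extra $\gamma\kappa^2$ in $C_1$ is just slack).

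\textbf{Bound \eqref{eq:concen1}.} Take $g_i=\phi_i(m_i(x))$, with $d_i=\|\phi_i\|_\infty$. Since $m_i$ does not depend on $x_i$, we have $c_{ii}=0$. For $j\ne i$, $c_{ji}\le 2\kappa^2\|\phi_j\|_{\mathrm{Lip}}\cR_{ji}$. The cross term $\sum_{i,j} d_i c_{ji}$ is bounded by Cauchy--Schwarz together with the operator norm bound on $\cR$:
\[
\sum_{i,j} d_i c_{ji}\le 2\kappa^2\gamma\|d\|\,\|\mathrm{Lip}\|\le \kappa^2\gamma\Big(\sum_i d_i^2+\sum_i\|\phi_i\|_{\mathrm{Lip}}^2\Big).
\]
This yields the stated $C_1,C_2$.

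\textbf{Bound \eqref{eq:concen3}.} Take $g_i=\sum_k\cR_{ik}x_k$. Then $d_i\le\gamma\kappa$, and $c_{ii}=0$ because $\cR_{ii}=0$ (the diagonal of $A$ vanishes). Also $c_{ji}\le 2\kappa\cR_{ji}$. Summing over $i,j$ gives $O(N)$ with the stated constant.

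\textbf{Bound \eqref{eq:concen4}.} This is the main step. Telescope
\[
x_ix_jx_k-b_ib_jb_k=(x_i-b_i)x_jx_k+b_i(x_j-b_j)x_k+b_ib_j(x_k-b_k),
\]
and use the symmetry of $A$ to write the sum as a combination of three sums of the form $\sum_i g_i(x)(x_i-b_i)$, namely with $g_i=m_i(x)$, $g_i=\sum_{j,k}A_{ijk}b_jx_k$, and $g_i=\sum_{j,k}A_{ijk}b_jb_k$. By $(a+b+c)^2\le3(a^2+b^2+c^2)$, it suffices to bound each piece with the lemma. All $d_i\le\gamma\kappa^2$.

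For the perturbation constants, changing $x_j$ affects $x_k$ directly through $A_{ijk}$, contributing $O(\cR_{ij})$. It also affects each $b_l$ through $m_l$, by \eqref{eq:bound-tanh}, contributing $O(|\beta_0|\cR_{lj})$. The diagonal constants $c_{ii}$ are nonzero only through the $b$'s:
\[
c_{ii}\lesssim|\beta_0|\sum_{j,k}A_{ijk}\cR_{ji}\le|\beta_0|\gamma\max_{j}\cR_{ji}.
\]
Hence $\sum_{i,j} c_{ii}d_j\lesssim N\sum_i c_{ii}$, and $\sum_i c_{ii}\lesssim\sum_{i,j}A\cdot\cR\lesssim \mathrm{Tr}(\cR^2)$, since $\sum_{i,j}\cR_{ij}\cR_{ji}=\mathrm{Tr}(\cR^2)$. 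This produces the $C_5N\,\mathrm{Tr}(\cR^2)$ term. The off-diagonal sums $\sum_{i,j} d_ic_{ji}$ are bounded by $O(N)$ using row sums and the operator norm.

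\textbf{Bound \eqref{eq:concen5}.} Under \eqref{cond:GW}, Lemma~\ref{lem:trace-optr}(a) gives $\mathrm{Tr}(\cR^2)=o(N/\log N)$. Substituting into \eqref{eq:concen4} gives $O(N)+o(N^2/\log N)=o(N^2/\log N)$.

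The main obstacle is the careful bookkeeping of $c_{ii}$ and $c_{ji}$ in the telescoped cubic term, in particular routing the diagonal contributions into $\mathrm{Tr}(\cR^2)$ rather than a cruder $N^2$ bound.
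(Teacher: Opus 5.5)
Your proposal is correct and follows essentially the same route as the paper. It applies Lemma~\ref{lem:unify-concen} with the same choices of $g_i$ and the same symmetrized three-term telescoping for \eqref{eq:concen4}, and, as in the paper, the $N\,\mathrm{Tr}(\cR^2)$ term comes from the diagonal constants satisfying $c_{ii}\lesssim |\beta_0|(\cR^2)_{ii}$. The only slips are in the explicit constants. For the stated $C_3$ you need $c_{ji}=\kappa\cR_{ji}$, since $|x_i-a_*|\le(\kappa_+-\kappa_-)/2\le\kappa$; your $2\kappa\cR_{ji}$ would only give $(12+4\gamma|\beta_0|\kappa^3)\gamma^2\kappa^4$. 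The specific values of $C_4$ and $C_5$ also require carrying out the bookkeeping you only sketch with $\lesssim$.
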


\begin{proof}
    Note that \eqref{eq:concen2} is an immediate consequence of 
    \eqref{eq:concen1} once we take $\phi_i \equiv a_i.$
    The proofs of~\eqref{eq:concen1} and \eqref{eq:concen3} follow from the application of Lemma~\ref{lem:unify-concen} by setting $g_i(x) = \phi_i(m_i(x))$ and $ \sum_{k} \cR_{ik} x_k$ respectively. For~\eqref{eq:concen1}, we have 
    $\abs{g_i(x)} = \abs{ \phi_i(m_i(x))} \le \norm{\phi_i}_{\infty}$.
For $i,j\in[N]$,
\[
|g_i(x)-g_i(x^{(j)})|
\le
\|\phi_i\|_{\mathrm{Lip}}\, |m_i(x)-m_i(x^{(j)})|
\le
2\kappa^2 \|\phi_i\|_{\mathrm{Lip}} \cR_{ij},
\]
so we may take $c_{ij}:=2\kappa^2 \|\phi_i\|_{\mathrm{Lip}} \cR_{ij}.$
Lemma~\ref{lem:unify-concen} therefore gives
\[
\mathbb E\Big[\Big(\sum_i (X_i-b_i(X))\phi_i(m_i(X))\Big)^2\Big]
\le
4\kappa^2(1+\gamma|\beta_0|\kappa^3)\sum_i \|\phi_i\|_\infty^2
+
8\kappa^4 \sum_{i, j} \|\phi_i\|_\infty \|\phi_j\|_{\mathrm{Lip}} \cR_{ji}.
\]
Using $2uv\le u^2+v^2$ and the fact $\sum_j \cR_{ij}\le \gamma$ for every $i$, we have 
\[
\sum_{i, j} \|\phi_i\|_\infty \|\phi_j\|_{\mathrm{Lip}} \cR_{ji}
\le
\frac{\gamma}{2}\sum_i \|\phi_i\|_\infty^2
+
\frac{\gamma}{2}\sum_i \|\phi_i\|_{\mathrm{Lip}}^2.
\]
Substituting this into the previous bound yields \eqref{eq:concen1}. 

If $g_i(x)=\sum_k \cR_{ik} x_k$, then $d_i=\kappa \gamma$ and $c_{ij}=\kappa \cR_{ij}$, which yields the bound \eqref{eq:concen3}.

Finally, we prove the bound~\eqref{eq:concen4}. Using the symmetry of the tensor $A$, we write
\begin{align*}
 \sum_{i,j,k} A_{ijk}(X_iX_jX_k &-b_i(X) b_j(X) b_k(X)) 
=  \sum_{i,j,k}A_{ijk}(X_i - b_i(X)) X_j X_k\\
&+ \sum_{i,j,k} A_{ijk}(X_i -b_i(X)) b_j(X) X_k 
 +\sum_{i,j,k} A_{ijk}(X_i -b_i(X))b_j(X) b_k(X).
\end{align*} 
Using $(u+v + w)^2 \le 3(u^2+ v^2+ w^2)$, it suffices to bound the second moment of each term on the RHS. For the first term, note that $\sum_{j,k} A_{ijk} x_j x_k = m_i(x) $,  we can apply \eqref{eq:concen1} with $\phi_i(t) = t$ with $\norm{\phi_i}_\infty = \gamma \kappa^2 $ and $\norm{\phi_i}_{\mathrm{Lip}} = 1 $ to obtain 
\begin{align}
    \E \Big[ \Big( \sum_{i,j,k} A_{ijk}X_jX_k (X_i - b_i(X)) \Big)^2\Big]  \le 4N\gamma^2\kappa^6\bigl(1+\gamma|\beta_0|\kappa^3+\gamma\kappa^2\bigr)
+ 4N\gamma\kappa^4. \label{eq:first-term}
\end{align}
For the second term, we invoke Lemma~\ref{lem:unify-concen} with $g_i(x) = \sum_{j,k} A_{ijk} b_j(x)x_k$. Observe that $\norm{g_i}_{\infty} \le \kappa^2 \gamma =: d_i$ and 
\begin{align*}
    \big| g_i(x)  - g_i(x^{(\ell)})\big|
    & \le  \sum_{j} A_{i  j \ell} |b_j(x) x_\ell|
      + \sum_{j, k}A_{ijk}\big|  b_{j}(x) - b_j(x^{(\ell)})\big| |x_k|  \\
    & \stackrel{\eqref{eq:bound-tanh}}{\leq}
      \kappa^2 \cR_{i\ell}
      + 2|\beta_0| \kappa^5\sum_{j} \cR_{ij} \cR_{j\ell} =
      \kappa^2\cR_{i\ell}
      + 2|\beta_0| \kappa^5 (\cR^2)_{i\ell}
      =: c_{i\ell}.
\end{align*}
Then we have
\begin{align}
   & \E \Big[ \Big( \sum_{i,j,k} A_{ijk}(X_i -b_i(X)) b_j(X) X_k \Big)^2 \Big] \nonumber \\
\le & 4\kappa^2\Bigg(
(1+\gamma|\beta_0|\kappa^3)N\gamma^2\kappa^4
+\sum_{i,j}\Big(2|\beta_0|\gamma\kappa^7(\cR^2)_{ii}
+\gamma\kappa^4\cR_{ji}
+2|\beta_0|\gamma\kappa^7(\cR^2)_{ji}\Big)
\Bigg) \nonumber\\
= &
4\kappa^2\Big(
(1+\gamma|\beta_0|\kappa^3)N\gamma^2\kappa^4
+2|\beta_0|\gamma\kappa^7 N \mathrm{Tr}(\cR^2)
+\gamma\kappa^4 \mathbf{1}^T \cR \mathbf{1}
+2|\beta_0|\gamma\kappa^7 \mathbf{1}^T \cR^2 \mathbf{1}
\Big).
\label{eq:second-term}
\end{align}
Similarly, we can bound the second moment of the third term. In the setting of Lemma~\ref{lem:unify-concen}, take $g_i(x) = \sum_{j, k}A_{ijk} b_j(x) b_k(x)$. Note that $\norm{g_i}_{\infty} \le \kappa^2\gamma =: d_i$
and

\begin{align*}
    \bigl|g_i(x)-g_i(x^{(\ell)})\bigr|
    &\leq
    \sum_{j,k}A_{ijk}
    \bigl|b_j(x)-b_j(x^{(\ell)})\bigr|\,|x_k|
    +
    \sum_j A_{ij\ell}
    |b_j(x^{(\ell)})|\,|x_\ell-a_*|  \\
    &\leq
    \kappa^2 \cR_{i\ell}
    +
    2|\beta_0|\kappa^5
    \sum_j \cR_{ij}\cR_{j\ell} 
    =\kappa^2\cR_{i\ell}
    +
    2|\beta_0|\kappa^5(\cR^2)_{i\ell}
    =:c_{i\ell}.
\end{align*}
Thus we have 
\begin{align}
& \E \Big[ \Big( \sum_{i,j,k} A_{ijk} (X_i - b_i(X)) b_j(X) b_k(X) \Big)^2 \Big] \nonumber \\
\le &
4\kappa^2 \Big(
(1+\gamma|\beta_0|\kappa^3)N\gamma^2 \kappa^4
+4|\beta_0|\gamma \kappa^7 \sum_{i, j} \big( (\cR^2)_{ii}+ (\cR^2)_{ji} \big)
\Big)\nonumber \\
= &
4\kappa^2 \Big(
(1+\gamma|\beta_0|\kappa^3)N\gamma^2\kappa^4
+4|\beta_0|\gamma \kappa^7 N \mathrm{Tr}(\cR^2)
+4|\beta_0|\gamma \kappa^7 \mv{1}^T \cR^2 \mv{1}
\Big).
\label{eq:third-term}
\end{align}
Combining \eqref{eq:first-term}, \eqref{eq:second-term} and \eqref{eq:third-term},  the bound \eqref{eq:concen4} now follows from the fact that $\mathbf{1}^T \mathcal{R} \mathbf{1} \le N \| \mathcal{R} \|_{\mathrm{op}} \le \gamma N$ and  $\mathbf{1}^T \mathcal{R}^2 \mathbf{1} \le N \| \mathcal{R} \|^2_{\mathrm{op}} \le \gamma^2 N.$

To obtain \eqref{eq:concen5}, note that  the mean-field condition~\eqref{cond:GW} and the inequality in Lemma~\ref{lem:trace-optr} yields $\mathrm{Tr}(\cR^2) = o(N (\log N)^{-1})$. 
\end{proof}

\subsection{Mean-Field and Connectivity Criteria}

Recall that $\cR$ is the symmetric $N\times N$ matrix with entries
$\cR_{ij}=\sum_k A_{ijk}$, and that $A_i=(A_{ijk})_{j,k\in[N]}$ denotes the $i$-th matrix slice of the tensor. The next lemma relates the mean-field quantity $\big\|\sum_i A_i^2\big\|_{\rm op}$ to the squared Frobenius norm of $\cR$. Part~{\rm (a)} gives the general lower bound $N\big\|\sum_i A_i^2\big\|_{\rm op}\ge \mathrm{Tr}(\cR^2)$. Although the reverse inequality does not hold in general, part~{\rm (b)} shows that the two quantities are comparable when the nonzero entries of $\cR$ are comparable. We call this the strong pseudo-regularity condition in \eqref{cond:strong-pseu-regu}.
\begin{lem}\label{lem:trace-optr}
Let $A$ be a symmetric $3$-tensor with non-negative entries. 
\begin{enumerate}
    \item[(a)] We have
     \[  \mathrm{Tr}(\cR^2) \le N \big \| \sum_i A^2_i \big \|_{\mathrm{op}}  \le N \| \cR\|^2_{\mathrm{op}}.\]
\item[(b)] Assume further that $A$ satisfies the row-boundedness condition in Assumption~\ref{ass:standing} and the non-degeneracy condition \eqref{cond:A_nontrivial}. Suppose there exist constant $K>0$ such that the following strong pseudo-regularity condition \eqref{cond:strong-pseu-regu} holds.
Then we have
\[
N\Big\|\sum_i A_i^2\Big\|_{\rm op}
\le
\frac{\gamma K}{\alpha}{\rm{Tr}}(\cR^2)
\]
where $\alpha$ is the lower bound of $\bar \cR$ in \eqref{cond:A_nontrivial} and $\gamma$ is the upper bound of row sum of $A$ in the Assumption~\ref{ass:standing}. In particular, if ${\rm{Tr}}(\cR^2) = o(N /\log N)$, then the mean-field condition~\eqref{cond:GW} holds.
\end{enumerate}
\end{lem}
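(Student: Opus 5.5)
Write $M:=\sum_i A_i^2$, so that $M_{jl}=\sum_{i,k}A_{ijk}A_{ikl}$. Since $A$ has non-negative entries, $M$ is a symmetric matrix with non-negative entries. The plan is to compare $\|M\|_{\rm op}$ with $\mathrm{Tr}(\cR^2)$ from below by testing against the all-ones vector, and from above by row-sum (Schur) bounds. Both directions use only the identity $\sum_j A_{ijk}=\cR_{ik}$ and non-negativity.

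For the first inequality in (a), compute the quadratic form of $M$ at $\mv 1$:
\[
\mv1^T M\mv1=\sum_{i,k}\Big(\sum_j A_{ijk}\Big)\Big(\sum_l A_{ikl}\Big)=\sum_{i,k}\cR_{ik}^2=\mathrm{Tr}(\cR^2).
\]
Since $\mv1^TM\mv1\le \|M\|_{\rm op}\|\mv1\|^2=N\|M\|_{\rm op}$, this gives $\mathrm{Tr}(\cR^2)\le N\|M\|_{\rm op}$.

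For the second inequality in (a), I use that $M$ is symmetric and entrywise non-negative. Hence $\|M\|_{\rm op}=\sup_{\|x\|=1}x^TMx$, and replacing $x$ by $|x|$ entrywise can only increase the form. So it suffices to take $x\ge0$. For such $x$,
\[
x^TMx=\sum_i\|A_ix\|^2=\sum_i\sum_j\Big(\sum_kA_{ijk}x_k\Big)^2\le\sum_i\Big(\sum_{j,k}A_{ijk}x_k\Big)^2=\sum_i(\cR x)_i^2 .
\]
The inequality holds because all summands are non-negative, so the sum of squares is at most the square of the sum. The right-hand side equals $\|\cR x\|^2\le\|\cR\|_{\rm op}^2\|x\|^2$, which yields $\|M\|_{\rm op}\le\|\cR\|_{\rm op}^2$.

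For (b), let $\cR_{\min}:=\min_{\cR_{ij}>0}\cR_{ij}$. This minimum is well defined because \eqref{cond:A_nontrivial} forces $\cR\neq0$.

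Upper bound on $\|M\|_{\rm op}$: bound it by the maximal row sum of $M$,
\[
\|M\|_{\rm op}\le\max_j\sum_l M_{jl}=\max_j\sum_{i,k}A_{ijk}\cR_{ik}.
\]
Only terms with $\cR_{ik}>0$ contribute, and by \eqref{cond:strong-pseu-regu} each such $\cR_{ik}\le K\cR_{\min}$. Therefore the row sum is at most $K\cR_{\min}\sum_{i,k}A_{ijk}=K\cR_{\min}\cR_j\le \gamma K\cR_{\min}$, using Assumption~\ref{ass:standing}.

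Lower bound on $\mathrm{Tr}(\cR^2)$: since every nonzero entry is at least $\cR_{\min}$,
\[
\mathrm{Tr}(\cR^2)=\sum_{i,j}\cR_{ij}^2\ge\cR_{\min}\sum_{i,j}\cR_{ij}=\cR_{\min}N\bar\cR\ge \alpha N\cR_{\min}.
\]
This gives $\cR_{\min}\le \mathrm{Tr}(\cR^2)/(\alpha N)$.

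Combining the two bounds gives $N\|M\|_{\rm op}\le \frac{\gamma K}{\alpha}\mathrm{Tr}(\cR^2)$. The final assertion follows immediately: if $\mathrm{Tr}(\cR^2)=o(N/\log N)$, then $\|M\|_{\rm op}=o((\log N)^{-1})$, which is \eqref{cond:GW}.

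The only slightly delicate step is the bound $\|M\|_{\rm op}\le\|\cR\|_{\rm op}^2$. It needs the reduction to non-negative test vectors together with the inequality "sum of squares $\le$ square of sum"; everything else is direct bookkeeping.
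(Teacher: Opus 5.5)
Your proof is correct and follows essentially the same route as the paper: you use the all-ones test vector for $\mathrm{Tr}(\cR^2)\le N\|\sum_i A_i^2\|_{\rm op}$, and for part (b) the max-row-sum bound $\sum_l M_{jl}=\sum_{i,k}A_{ijk}\cR_{ik}\le K\cR_{\min}\cR_j$ together with $\mathrm{Tr}(\cR^2)\ge \cR_{\min}N\bar\cR$. For $\|\sum_i A_i^2\|_{\rm op}\le\|\cR\|_{\rm op}^2$, the paper instead expands $\cR^2=\sum_{k,k'}A_kA_{k'}$ and discards the entrywise non-negative cross terms against non-negative test vectors; this is the same inequality as your per-slice ``sum of squares $\le$ square of sum'' step, written in matrix form rather than as a quadratic form.
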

 \begin{proof}
(a) We begin by proving the first inequality. Using the identity
$ A_i \mathbf{1} = \cR \mathbf{e_i}$, we have
\begin{align*}
    \mathbf{1}^T \big( \sum_i A_i^2 \big) \mathbf{1}  &= \sum_i  \mathbf{e_i}^T \cR^2 \mathbf{e_i} = \mathrm{Tr}(\cR^2).
\end{align*}
The inequality now follows from the fact that 
$\mathbf{1}^T \big( \sum_i A_i^2 \big) \mathbf{1} \le N \big \| \sum_i A^2_i \big \|_{\mathrm{op}}$.

For the second inequality, we note that since the entries of $\cR$ are non-negative,
 \[ \| \cR^2\|_{\mathrm{op}} = \sup_{ x \in \mathbb{R}_+^N: \| x\|_2=1  } \sum_{i, j} x_i x_j (\cR^2)_{i, j}.\]
 Using the decomposition $\cR = \sum_k A_k$, we then bound 
 \begin{align*}
\| \cR^2\|_{\mathrm{op}} &= \sup_{ x \in \mathbb{R}_+^N: \| x\|_2=1  } \sum_{i, j} x_i x_j \big( \sum_k  (A_k^2)_{i, j} + \sum_{k \ne k'}  (A_k A_{k'})_{i, j}  \big) \\
&\ge  \sup_{ x \in \mathbb{R}_+^N: \| x\|_2=1  } \sum_{i, j} x_i x_j \sum_k  (A_k^2)_{i, j}  = \big \| \sum_k A^2_k \big \|_{\mathrm{op}}.
 \end{align*}
 
 (b) Set $M:=\sum_i A_i^2 .$ Since $M$ has nonnegative entries, 
 $\|M\|_{\rm op}
\le
\max_j\sum_k M_{jk}.$

For each $j$,
\[
\sum_k M_{jk}
=
\sum_{i,\ell,k} A_{ij\ell}A_{i\ell k}
=
\sum_{i,\ell} A_{ji\ell}\cR_{i\ell}
\le
\left(\max_{a,b}\cR_{ab}\right)\cR_j
\le
\gamma \max_{a,b}\cR_{ab}.
\]
Let $\rho:=\min_{\cR_{ab}>0}\cR_{ab}.$
By the strong pseudo-regularity assumption \eqref{cond:strong-pseu-regu},
$\max_{a,b}\cR_{ab}\le K\rho$.
On the other hand,
\[
\mathrm{Tr}(\cR^2)
=
\sum_{a,b} \cR_{ab}^2
\ge
\rho\sum_{a,b} \cR_{ab}
=
\rho\sum_{a} \cR_a
\ge
\alpha N\rho.
\]
Therefore
\(
\max_{a,b}\cR_{ab}
\le
K\rho
\le
\frac{K}{\alpha}\frac{\mathrm{Tr}(\cR^2)}{N}.
\)
Combining the estimates gives
\[
\Big\|\sum_i A_i^2\Big\|_{\rm op}
=
\|M\|_{\rm op}
\le
\frac{\gamma K}{\alpha}
\frac{\mathrm{Tr}(\cR^2)}{N},
\]
which proves part~{\rm (b)}.
\end{proof}
Under the same the strong pseudo-regularity assumption \eqref{cond:strong-pseu-regu},
the  next lemma gives an easy-to-check sufficient condition, namely an {\em unweighted} spectral gap for the support graph of $\cR$, for the {\em weighted} spectral gap condition \eqref{cond:spec-gap}.
\begin{lem}\label{lem:trace-optr-converse}
Assume that $A$ satisfies the hypotheses of Lemma~\ref{lem:trace-optr}{\rm (b)}. Let
$B_{ij}:=\mathbf 1_{\{\cR_{ij}>0\}}$ denote the adjacency matrix of the support graph of $\cR$ on vertex set $[N]$.
Let $D_B:={\rm diag}(d_{1,B},\ldots,d_{N,B})$, where $d_{i,B}:=\sum_jB_{ij}$ is the degree of vertex $i\in[N]$. If this support graph has a spectral gap, that is, $ 1-\lambda_2(D_B^{-1}B)\ge \delta$
for some constant $\delta>0$, then $\cR$ satisfies the spectral gap condition \eqref{cond:spec-gap}. More precisely, with $D={\rm diag}(\cR_1,\ldots,\cR_N)$,
\[
    1-\lambda_2(D^{-1}\cR)\ge \frac{\delta}{K}.
\]
Moreover, if all off-diagonal entries of $\cR$ are positive and comparable in the sense that
\begin{align}\label{eq:RR}
   \max_{i\neq j}\cR_{ij} \le K \min_{i\neq j}\cR_{ij},
\end{align}
then $ 1-\lambda_2(D^{-1}\cR)\ge K^{-1}.$
\end{lem}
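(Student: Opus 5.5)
We compare Dirichlet forms and use the variational (Courant--Fischer) characterization of the second eigenvalue. Since $D^{-1}\cR$ is similar to $D^{-1/2}\cR D^{-1/2}$, we have
\[
1-\lambda_2(D^{-1}\cR)=\inf_{f\perp_{\pi}\mathbf 1,\ f\neq 0}\frac{\tfrac12\sum_{i,j}\cR_{ij}(f_i-f_j)^2}{\sum_i \cR_i f_i^2},
\]
where $\pi_i\propto\cR_i$. An equivalent formulation that avoids the orthogonality constraint is
\[
1-\lambda_2=\inf_f \frac{\tfrac12\sum_{i,j}\cR_{ij}(f_i-f_j)^2}{\min_c\sum_i\cR_i(f_i-c)^2}.
\]
The support graph admits the same formulas with weights $B_{ij}$ and degrees $d_{i,B}$. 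Our plan is to bound the numerator from below and the denominator from above by the corresponding quantities for $B$.

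Set $\rho:=\min_{\cR_{ab}>0}\cR_{ab}$. By the strong pseudo-regularity condition \eqref{cond:strong-pseu-regu}, every entry satisfies $\rho B_{ij}\le \cR_{ij}\le K\rho B_{ij}$. Summing over $j$ gives $\rho\, d_{i,B}\le \cR_i\le K\rho\, d_{i,B}$. Note that $\cR_i>0$ by \eqref{cond:A_nontrivial}, so every $d_{i,B}\ge1$ and $D_B$ is invertible. The Dirichlet form then satisfies $\cE_\cR(f)\ge \rho\,\cE_B(f)$. For the variance, fix any $f$ and let $c_B$ be the $d_B$-weighted mean. We get
\[
\min_c\sum_i\cR_i(f_i-c)^2\le \sum_i \cR_i(f_i-c_B)^2\le K\rho\sum_i d_{i,B}(f_i-c_B)^2=K\rho\,\mathrm{Var}_B(f).
\]
Taking the ratio yields $\cE_\cR(f)/\mathrm{Var}_\cR(f)\ge \rho\cE_B(f)/(K\rho\,\mathrm{Var}_B(f))\ge \delta/K$. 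Here we use the support graph's gap in the form $\cE_B(f)\ge\delta\,\mathrm{Var}_B(f)$ for all $f$, which follows from its own Courant--Fischer characterization. Taking the infimum over $f$ gives $1-\lambda_2(D^{-1}\cR)\ge\delta/K$.

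For the second statement, \eqref{eq:RR} means the support graph is the complete graph on $[N]$ (diagonal entries $\cR_{ii}=\sum_kA_{iik}=0$ since $A$ vanishes off distinct triples). Also, $\rho=\min_{i\ne j}\cR_{ij}$ satisfies the comparability with constant $K$. For $K_N$ the random-walk matrix $D_B^{-1}B=\frac1{N-1}(J-I)$ has eigenvalues $1$ and $-\frac1{N-1}$, so $1-\lambda_2=\frac{N}{N-1}\ge1$. Applying the first part with $\delta=1$ gives $1-\lambda_2(D^{-1}\cR)\ge K^{-1}$.

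The only delicate point is the normalization. The two Markov chains have different stationary measures, so it is essential to use the variational form with $\min_c$ in the denominator rather than compare orthogonality constraints directly. The possible mismatch of constants between the two measures is absorbed by choosing $c=c_B$ as above. Everything else is a direct entrywise comparison.
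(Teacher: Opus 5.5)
Your proof is correct and follows the paper's argument essentially verbatim. Both use the entrywise comparison $\rho B_{ij}\le \cR_{ij}\le K\rho B_{ij}$ together with the variational formula with $\min_c$ in the denominator to get $\delta/K$, and then the complete-graph gap $N/(N-1)\ge 1$ for the second part.
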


\begin{proof}
 By the variational characterization of the spectral gap,
\[
1-\lambda_2(D^{-1}\cR)
=
\inf_{u\not\equiv {\rm const}}
\frac{\frac12\sum_{i, j} \cR_{ij}(u_i-u_j)^2}
     {\min_{a\in\mathbb R}\sum_i \cR_i(u_i-a)^2}.
\]
It follows from \eqref{cond:strong-pseu-regu} that $\rho B_{ij}\le \cR_{ij}\le K\rho B_{ij}$ where $\rho:=\min_{\cR_{ab}>0}\cR_{ab}.$ Hence, 
we have $\cR_i\le K\rho d_{i,B}$ for every $i\in [N]$, and
\[
1-\lambda_2(D^{-1}\cR)
\ge
\frac1K
\inf_{u\not\equiv {\rm const}}
\frac{\frac12\sum_{i, j} B_{ij}(u_i-u_j)^2}
     {\min_{a\in\mathbb R}\sum_i d_{i, B}(u_i-a)^2}
\ge
\frac{\delta}{K}.
\]
 This proves the first assertion.

Finally, if \eqref{eq:RR} holds, 
then the support graph $B$ is the complete graph and the strong pseudo-regularity condition holds. The  complete graph has spectral gap  $1+\frac{1}{N-1}\ge 1=\delta$, so the previous bound gives $ 1-\lambda_2(D^{-1}\cR)\ge K^{-1},$ which completes the proof of the lemma.
\end{proof}

\subsection{Auxiliary Lemmas
}\label{ssec:auxiliary}
We include some auxiliary lemmas in this subsection. All proofs are included in the Appendix. The first one is about the entropy function for the general reference measure $\mu$ defined in Section~\ref{sec:intro}.
\begin{lem} \label{lem:rate_as_finite}
For $\mu$-a.s. every $x$, $ I(x) < \infty.$
\end{lem}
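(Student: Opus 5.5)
The claim is that $I(x)<\infty$ for $\mu$-almost every $x$. Since $I\equiv\infty$ off $[\kappa_-,\kappa_+]$ and $\mu$ is supported on $[\kappa_-,\kappa_+]$, it suffices to show two things: $I$ is finite on the open interval $(\kappa_-,\kappa_+)$, and at each endpoint either $I$ is finite or $\mu$ puts no mass there. The exceptional set is then contained in $\{\kappa_-,\kappa_+\}\cap\{x:\mu(\{x\})=0\}$, which is $\mu$-null.

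\emph{Interior points.} Fix $t\in(\kappa_-,\kappa_+)$. The map $\Lambda'$ is a bijection from $\bR$ onto $(\kappa_-,\kappa_+)$, so $\Phi(t)$ is a finite real number. The representation \eqref{eq:entropy} then gives
\[
I(t)=t\Phi(t)-\Lambda(\Phi(t)).
\]
This is finite, because $\Lambda$ is finite everywhere: $\mu$ is compactly supported, so $\int e^{\lambda z}\,d\mu(z)\in(0,\infty)$ for every $\lambda$.

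\emph{Endpoints.} Consider $t=\kappa_+$; the case $\kappa_-$ is symmetric. The goal is the identity $I(\kappa_+)=-\log\mu(\{\kappa_+\})$, with the convention $-\log 0=\infty$.
\begin{itemize}
\item \emph{Lower bound.} Since $\int e^{\lambda z}\,d\mu\ge e^{\lambda\kappa_+}\mu(\{\kappa_+\})$, we get $\lambda\kappa_+-\Lambda(\lambda)\le -\log\mu(\{\kappa_+\})$ for every $\lambda$. Taking the supremum bounds $I(\kappa_+)$ from above by $-\log\mu(\{\kappa_+\})$.
\item \emph{Upper bound (the equality).} Write
\[
\lambda\kappa_+-\Lambda(\lambda)=-\log\int e^{-\lambda(\kappa_+-z)}\,d\mu(z).
\]
As $\lambda\to\infty$, the integrand decreases to $\mathbf 1_{\{z=\kappa_+\}}$. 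Monotone (or dominated) convergence gives the limit $-\log\mu(\{\kappa_+\})$. Since $\lambda\mapsto\lambda\kappa_+-\Lambda(\lambda)$ is nondecreasing ($\Lambda'<\kappa_+$), the supremum equals this limit.
\end{itemize}
Only the lower bound is actually needed. It already shows that if $\mu(\{\kappa_+\})>0$ then $I(\kappa_+)<\infty$, and the equality merely confirms consistency with the convention $\mu_\infty=\delta_{\kappa_+}$ and $\mathrm{KL}(\delta_{\kappa_+}\|\mu)=-\log\mu(\{\kappa_+\})$.

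\emph{Conclusion.} The set $\{x: I(x)=\infty\}$ is contained in the union of $[\kappa_-,\kappa_+]^c$ and the endpoints carrying no $\mu$-atom, which is a $\mu$-null set. There is no real obstacle here. The only point needing care is the endpoint case, where one must not claim that $I$ is finite outright but only that it is finite whenever the endpoint is an atom, which is precisely what a $\mu$-a.s.\ statement requires.
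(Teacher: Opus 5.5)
Your proof is correct and has the same structure as the paper's: split into interior points and endpoints, with the endpoint step being exactly the paper's bound $\lambda c-\Lambda(\lambda)\le-\log\mu(\{c\})$ at an atom $c\in\{\kappa_-,\kappa_+\}$. The one difference is at interior points: you read finiteness off the closed form $I(t)=t\Phi(t)-\Lambda(\Phi(t))$ with $\Phi(t)\in\mathbb{R}$, whereas the paper uses $\Lambda(\lambda)/\lambda\to\kappa_\pm$ together with continuity on a compact $\lambda$-interval; both rely on $\kappa_\pm$ being the extreme points of $\mathrm{supp}(\mu)$. Two minor points: your first bullet is labeled ``Lower bound'' even though it bounds $I(\kappa_+)$ from above, and the endpoint equality in the second bullet is not needed.
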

\begin{lem}\label{lem:low_complex}
    For any sequence of non-negative reals $\{\varepsilon_N\}_{N\ge 1}$ with $\eps_N \to 0$ as $N\to \infty$, the set $E_N=\{y\in [\kappa_-,\kappa_+]^N:\sum_i(y_i-\bar{y})^2\le N\varepsilon_N\}$ is of low complexity, in the sense of Definition \ref{def:low_complex}.
\end{lem}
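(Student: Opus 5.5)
The plan is to build the net explicitly. Every $y\in E_N$ decomposes as $y=\bar y\mathbf 1+(y-\bar y\mathbf 1)$. The fluctuation part has Euclidean norm at most $\sqrt{N\varepsilon_N}$, which is $o(\sqrt N)$. So, up to an error smaller than $\delta\sqrt N/2$, each point of $E_N$ lies within $\sqrt{N\varepsilon_N}$ of the one-dimensional segment $\{a\mathbf 1: a\in[\kappa_-,\kappa_+]\}$. Since the segment is one-dimensional, it can be covered by very few balls, and the net will consist of constant vectors.

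First fix $\delta>0$. Choose $N_0$ so that $\sqrt{\varepsilon_N}\le \delta/2$ for all $N\ge N_0$. Let $a_1,\dots,a_M$ be a grid in $[\kappa_-,\kappa_+]$ with mesh $\delta/2$, so that $M\le \lceil 2(\kappa_+-\kappa_-)/\delta\rceil+1$ does not depend on $N$. Set $\mathcal D_N(\delta)=\{a_\ell\mathbf 1:\ell\le M\}$.

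Given $y\in E_N$, note that $\bar y\in[\kappa_-,\kappa_+]$. Pick $a_\ell$ with $|a_\ell-\bar y|\le\delta/2$. Then
\[
\|y-a_\ell\mathbf 1\|_2\le \|y-\bar y\mathbf 1\|_2+|\bar y-a_\ell|\sqrt N\le \sqrt{N\varepsilon_N}+\tfrac{\delta}{2}\sqrt N\le \delta\sqrt N .
\]
Hence $\mathcal D_N(\delta)$ is a $\delta\sqrt N$-net for $N\ge N_0$, and $\log|\mathcal D_N(\delta)|=\log M=O(1)=o(N)$.

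If the definition requires net points to lie in $E_N$ itself, this still holds, because $a_\ell\mathbf 1\in E_N$ ($a_\ell\in[\kappa_-,\kappa_+]$ and the empirical variance is $0$). For the finitely many $N<N_0$, any finite net works (e.g.\ a grid of mesh $\delta$ in the cube, of size at most $C(\delta)^N$); since the condition $\log|\mathcal D_N|=o(N)$ is asymptotic, these $N$ do not matter. I expect no real obstacle. The only points needing care are handling small $N$ and confirming that $\bar y$ lies in the interval.
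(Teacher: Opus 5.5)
Your proposal is correct and is essentially the paper's own proof. The paper also uses a grid of constant vectors $a\mathbf 1$ with spacing $\delta/2$, whose size does not depend on $N$. The only cosmetic difference is that it bounds the distance through the exact identity $\sum_i(y_i-a)^2=\sum_i(y_i-\bar y)^2+N(\bar y-a)^2$ rather than the triangle inequality, and your remarks on small $N$ and on the net points lying in $E_N$ are extra care the paper leaves implicit.
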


We record some elementary inequalities that will be used in later proofs.
\begin{lem}\label{lem:aux-ineq1}
    For any $a, b, c \ge 0,$
    \[ \tfrac{1}{3}\big( a^3 + b^3+ c^3 \big )  - abc \ge \tfrac{1}{6} \big [ (a^{3/2} - b^{3/2})^2 + (b^{3/2} - c^{3/2})^2+ (c^{3/2} - a^{3/2})^2\big ]. \]
\end{lem}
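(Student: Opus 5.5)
The plan is to expand the right-hand side and reduce the claim to a single application of the AM--GM inequality.

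\emph{Step 1 (expand the right-hand side).} For $a,b,c\ge 0$, expanding each square gives
\[
(a^{3/2}-b^{3/2})^2=a^3+b^3-2(ab)^{3/2}.
\]
Summing over the three pairs, the right-hand side becomes
\[
\tfrac16\Big(2(a^3+b^3+c^3)-2\big[(ab)^{3/2}+(bc)^{3/2}+(ca)^{3/2}\big]\Big)
=\tfrac13(a^3+b^3+c^3)-\tfrac13\big[(ab)^{3/2}+(bc)^{3/2}+(ca)^{3/2}\big].
\]

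\emph{Step 2 (cancel the cubic terms).} The term $\tfrac13(a^3+b^3+c^3)$ appears on both sides. After cancelling it, the claimed inequality is equivalent to
\[
\tfrac13\big[(ab)^{3/2}+(bc)^{3/2}+(ca)^{3/2}\big]\ \ge\ abc .
\]

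\emph{Step 3 (apply AM--GM).} Apply AM--GM to the three nonnegative numbers $(ab)^{3/2}$, $(bc)^{3/2}$, $(ca)^{3/2}$. Their product is
\[
(ab)^{3/2}(bc)^{3/2}(ca)^{3/2}=\big((abc)^2\big)^{3/2}=(abc)^3 .
\]
Hence their geometric mean is exactly $abc$, and the arithmetic mean dominates it. This is the inequality in Step 2, which proves the lemma.

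There is no genuine obstacle here. The only point needing care is that nonnegativity of $a,b,c$ makes the half-integer powers well defined. It also justifies the identity $\big((abc)^2\big)^{1/2}=abc$ used in computing the geometric mean.
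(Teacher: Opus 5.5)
Your proposal is correct and follows essentially the same route as the paper: you expand the right-hand side to $\tfrac13(a^3+b^3+c^3)-\tfrac13\big[(ab)^{3/2}+(bc)^{3/2}+(ca)^{3/2}\big]$, then apply AM--GM to the three cross terms, whose geometric mean is $abc$. The paper's proof is exactly this two-step argument.
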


\begin{lem}\label{lem:elementary}
 Fix $c > 1$ and $N \ge 1.$   Let $y_1, y_2, \ldots, y_N \ge 0$ and $z_i = y_i^{c}$ for all $1 \le i \le N$. Then 
    \[ N^{-1} \sum_i (y_i - \bar y)^2 \le \big( N^{-1} \sum_i (z_i - \bar z)^2\big)^{1/c}.  \]
\end{lem}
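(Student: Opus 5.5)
The plan is to write both empirical variances as pairwise-difference sums and to compare them term by term through the Hölder continuity of $t\mapsto t^{1/c}$. Jensen's inequality then collapses the averaged Hölder bound into the power $1/c$ of the variance of $z$. Throughout I use the identity already used in \eqref{eq:TN},
\[
N^{-1}\sum_i (y_i-\bar y)^2=\frac{1}{2N^2}\sum_{i,j}(y_i-y_j)^2,
\]
and its analogue for $z$.

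\emph{Step 1 (pointwise Hölder bound).} I will show that for $a\ge b\ge0$ and $c>1$,
\[
a^{1/c}-b^{1/c}\le (a-b)^{1/c}.
\]
The function $\phi(t)=t^{1/c}$ is concave on $[0,\infty)$ with $\phi(0)=0$, so it is subadditive. Hence $\phi(a)=\phi(b+(a-b))\le\phi(b)+\phi(a-b)$. Applying this with $a=z_i$, $b=z_j$, and using $y_i=z_i^{1/c}$, gives $(y_i-y_j)^2\le |z_i-z_j|^{2/c}$ for every pair $i,j$.

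\emph{Step 2 (Jensen).} Summing Step 1 over pairs gives
\[
\frac{1}{N^2}\sum_{i,j}(y_i-y_j)^2\le \frac{1}{N^2}\sum_{i,j}\big((z_i-z_j)^2\big)^{1/c}.
\]
Since $t\mapsto t^{1/c}$ is concave on $[0,\infty)$, Jensen's inequality for the uniform average over the $N^2$ pairs bounds the right-hand side by
\[
\Big(\frac{1}{N^2}\sum_{i,j}(z_i-z_j)^2\Big)^{1/c}=\Big(2N^{-1}\sum_i(z_i-\bar z)^2\Big)^{1/c}.
\]

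\emph{Step 3 (constants).} Dividing by $2$ gives
\[
N^{-1}\sum_i(y_i-\bar y)^2\le 2^{1/c-1}\Big(N^{-1}\sum_i(z_i-\bar z)^2\Big)^{1/c}.
\]
Since $c>1$, we have $2^{1/c-1}\le 1$, which is the claim.

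There is no real obstacle here. The only point needing care is the direction of the concavity arguments: subadditivity for the pointwise step and Jensen for the averaging step, both of which rely on $1/c<1$. Nonnegativity of the $y_i$ is needed so that $y_i=z_i^{1/c}$ holds exactly.
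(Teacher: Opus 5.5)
Your proof is correct and follows essentially the same argument as the paper. The paper writes the variances with an independent copy $(Y_1,Z_1)$ rather than as pairwise sums, then uses the concavity bound $|a^{1/c}-b^{1/c}|\le|a-b|^{1/c}$, Jensen's inequality for $t\mapsto t^{1/c}$, and the factor $\tfrac12\cdot 2^{1/c}\le 1$, exactly as you do.
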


\section{Proof of Theorem~\ref{thm:positive-res}}
\label{sec:proof of positive-res}

\begin{lem}\label{lem:approx}
    Under the Assumption~\ref{ass:standing}. For $X \sim P_{\gb, h}$. The following statements hold. 
    \begin{itemize}
        \item [(i)] For $z \in [\kappa_-, \kappa_+]^N, \ w \in (\kappa_-, \kappa_+)^N$, let 
        %
        \[ g(z, w):= \sum_i I(w_i) + (z_i-w_i)\Phi(w_i) = \sum_i z_i \Phi(w_i) - \gL(\Phi(w_i)). \]
        Then we have $g(X, b(X)) - I(b(X)) = o_p(N)$, where $I(x) = \sum_i I(x_i)$ with $I(x_i)$ is defined in~\eqref{eq:entropy}. 
        \item [(ii)] Let $\{\eps_N\}_{N\ge 1}$ be any sequence of non-negative reals converging to $0$ as $N \to \infty$, and let 
        \[
        \cA_N:= \{x \in [\kappa_-, \kappa_+]^N: T_N(x) \le \eps_N\}.
        \]

\noindent Then we have
        \begin{align*}
         1_{\{X \in \cA_N\}} ( f(b(X)) - f(X)) = o_p(N), \quad \text{and}
\quad         1_{\{X \in \cA_N\}} ( f(\bar{X}\mv1 ) - f(X)) = o_p(N).
         \end{align*}

         \item[(iii)]
         If $\Lambda'(h)\ne 0$, then there exists $\eta>0$ such that
         \[
\bP(\mathcal A_N,\ \bar b(X)\notin J_\eta)\to0,
\qquad J_\eta:=[\kappa_-+\eta,\kappa_+-\eta]\setminus[-\eta,\eta].
\]
        
    \end{itemize}
\end{lem}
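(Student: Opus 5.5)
The plan is to reduce all three parts to the second-moment bounds of Corollary~\ref{lem:concen}. The one structural fact used throughout is that on $\cA_N$ the local fields are nearly constant, so the conditional means $b_i(X)$ are close to a single value $b^*:=\Lambda'(\beta\bar m(X)+h)$.

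\emph{Part (i).} Since $|m_i|\le\gamma\kappa^2$, each $b_i(X)=\Lambda'(\beta m_i(X)+h)$ lies in a fixed compact subinterval of $(\kappa_-,\kappa_+)$, so $\Phi(b_i(X))=\beta m_i(X)+h$ is well defined. Then
\[
g(X,b(X))-I(b(X))=\sum_i (X_i-b_i(X))\,\Phi(b_i(X))=\sum_i (X_i-b_i(X))\big(\beta m_i(X)+h\big).
\]
I apply \eqref{eq:concen1} with $\phi_i(t)=\beta t+h$, which is bounded and Lipschitz on $[-\gamma\kappa^2,\gamma\kappa^2]$. This gives a second moment $O(N)$, so the sum is $O_p(\sqrt N)=o_p(N)$.

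\emph{Part (ii).} On $\cA_N$, write $m_i=\bar m+r_i$ with $\sum_i r_i^2\le N\varepsilon_N$. By \eqref{eq:lambda-lipsch}, $\sum_i(b_i-b^*)^2\le \beta^2\kappa^4N\varepsilon_N$.

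1. For the linear part, $h\sum_i(b_i-X_i)=O_p(\sqrt N)$ by \eqref{eq:concen2}.

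2. For the cubic term $\langle A,b^{\otimes3}\rangle$, replacing $b$ by $b^*\mathbf 1$ costs $O(N\sqrt{\varepsilon_N})$ by Cauchy--Schwarz and the row-sum bound. Hence $\langle A,b^{\otimes 3}\rangle=(b^*)^3N\bar\cR+o(N)$.

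3. For $\langle A,X^{\otimes3}\rangle=\sum_i X_i m_i(X)$, the near-constancy of $m_i$ gives
\[
\langle A,X^{\otimes3}\rangle=\bar m\sum_iX_i+O(N\sqrt{\varepsilon_N}),
\]
and $\sum_i X_i=\sum_i b_i+O_p(\sqrt N)=Nb^*+o_p(N)$.

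4. It remains to identify $\bar m$. Note $N\bar m=\sum_{j,k}\cR_{jk}X_jX_k$. I replace $X_j$ by $b_j$ using \eqref{eq:concen3}. I then replace $X_k$ by $b_k$ using Lemma~\ref{lem:unify-concen} with $g_k(x)=\sum_j\cR_{jk}b_j(x)$; this is bounded by $\gamma\kappa$, with $c_{k\ell}\lesssim(\cR^2)_{k\ell}$ by \eqref{eq:bound-tanh}. This yields $N\bar m=b^{T}\cR b+O_p(\sqrt N)=(b^*)^2N\bar\cR+o_p(N)$ on $\cA_N$.

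Combining steps 2--4 gives $\langle A,X^{\otimes3}\rangle=(b^*)^3N\bar\cR+o_p(N)=\langle A,b^{\otimes3}\rangle+o_p(N)$. For the second claim, the linear terms of $f(\bar X\mathbf 1)$ and $f(X)$ agree exactly. Moreover $\langle A,(\bar X\mathbf 1)^{\otimes3}\rangle=\bar X^3N\bar\cR$, and $\bar X=b^*+o_p(1)$ on $\cA_N$, which reduces it to the first claim.

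\emph{Part (iii).} Being away from $\kappa_\pm$ is automatic, since every $b_i$ lies in the compact interval $\Lambda'([-|\beta|\gamma\kappa^2-|h|,\,|\beta|\gamma\kappa^2+|h|])\subset(\kappa_-,\kappa_+)$. The real issue is keeping $\bar b$ away from $0$, which I handle by a self-consistency argument. On $\cA_N$ we have $\bar b=b^*+o(1)$ and, from step 4 above, $\bar m=(b^*)^2\bar\cR+o_p(1)$ with $\bar\cR\le\gamma$. Hence
\[
b^*=\Lambda'\big(\beta(b^*)^2\bar\cR+h\big)+o_p(1).
\]
If $|\bar b|\le\eta$, the right side is within $C\eta^2+o_p(1)$ of $\Lambda'(h)$, by the Lipschitz bound \eqref{eq:lambda-lipsch}. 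Choosing $\eta$ with $\eta+\kappa^2|\beta|\gamma\eta^2<|\Lambda'(h)|/2$ then gives a contradiction with probability tending to one.

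I expect the main obstacle to be step 4 of part (ii), the double replacement $X\to b$ inside the quadratic form $\sum_{j,k}\cR_{jk}X_jX_k$. A crude use of \eqref{eq:concen4} only gives an $O(N)$ error and is not enough. The fix is the two-stage replacement through Lemma~\ref{lem:unify-concen}, which uses $\|\cR\|_{\rm op},\|\cR^2\|_{\rm op}\le\gamma,\gamma^2$ to keep each error at $O_p(\sqrt N)$.
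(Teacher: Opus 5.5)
Your overall route is the paper's: part (i) is identical, steps 1--3 of (ii) match its Cauchy--Schwarz reductions, and (iii) is the same fixed-point argument with an explicit $\eta$ instead of compactness. But the step you single out as the crux, the second replacement in step 4, fails as stated.

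Applying Lemma~\ref{lem:unify-concen} to $g_k(x)=\sum_j\cR_{jk}b_j(x)$ gives $d_k=\gamma\kappa$ and $c_{k\ell}\asymp(\cR^2)_{k\ell}$. The lemma's bound then contains the \emph{diagonal} term $\sum_{k,\ell}c_{kk}d_\ell\asymp N\,\mathrm{Tr}(\cR^2)$. This term is not controlled by $\|\cR\|_{\mathrm{op}}$ or $\|\cR^2\|_{\mathrm{op}}$; it is the same $N\,\mathrm{Tr}(\cR^2)$ that appears in \eqref{eq:concen4}. Under Assumption~\ref{ass:standing} alone you only have $\mathrm{Tr}(\cR^2)\le\gamma^2N$, so you get an $O_p(N)$ error. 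This happens precisely in the regime $\mathrm{Tr}(\cR^2)=\Omega(N)$, where Theorem~\ref{thm:positive-res} invokes this lemma.

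Nor is this just a lossy bound: the claim $N\bar m=b^\top\cR b+O_p(\sqrt N)$ is false in general. Take $N=3M$ with disjoint blocks $\{3t+1,3t+2,3t+3\}$, set $A_{ijk}=1$ when $\{i,j,k\}$ is a block, and take $\mu=\frac12(\delta_0+\delta_1)$ and $\gb>0$. The blocks are i.i.d.\ under $P_{\gb,h}$. Within a block, $\E[X_1X_2-b_1b_2]=\E[b_1(X_2-b_2)]=\E[\mathrm{Cov}(b_1,X_2\mid X_1,X_3)]>0$, because $b_1=\Lambda'(2\gb X_2X_3+h)$ is increasing in $X_2$ when $X_3=1$. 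So $N\bar m-b^\top\cR b$ has mean of order $N$. The dependence of $g_k$ on $X_k$ itself, through the $b_j$ with $\cR_{jk}>0$, produces a bias that does not vanish.

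The repair is to use $\cA_N$ \emph{before} the second replacement, which is what the paper does with its events $\cA_N^{(3)}$ and $\cA_N^{(4)}$. After \eqref{eq:concen3}, write $\sum_{j,k}\cR_{jk}b_jX_k=b^*\sum_k\cR_kX_k+\sum_{j,k}\cR_{jk}(b_j-b^*)X_k$.
\begin{enumerate}
\item On $\cA_N$ the second sum is $O(N\sqrt{\eps_N})$, by Cauchy--Schwarz and $\|\cR\|_{\mathrm{op}}\le\gamma$.
\item The first sum is now linear in $X$, so \eqref{eq:concen2} with $a_k=\cR_k$ replaces $X_k$ by $b_k$ at cost $O_p(\sqrt N)$.
\item Finally, $\sum_k\cR_kb_k=b^*N\bar\cR+O(N\sqrt{\eps_N})$ on $\cA_N$.
\end{enumerate}
This gives $N\bar m=(b^*)^2N\bar\cR+o_p(N)$ on $\cA_N$. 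With that in place, your conclusions in (ii), and the self-consistency argument in (iii) that relies on it, go through unchanged.
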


\begin{proof}

\noindent (i)
Invoking Corollary \ref{lem:concen}, there exists a sequence of positive reals $(\eps'_N)_{N\ge 1}$ with $\eps_N' \to 0$ as $N\to \infty$, such that setting
\begin{align}
\begin{split}
 & \cA^{(1)}_N:= \big\{x\in [\kappa_-, \kappa_+]^N: \big|  \sum_i   (x_i  - b_i(x)) \big|  \le \eps'_N N \big \},\label{eq:a1} \\
     & \cA^{(2)}_N:=\big  \{x \in [\kappa_-, \kappa_+]^N: \big| \sum_i (x_i  - b_i(x))m_i(x) \big|  \le \eps'_N N \big  \}, 
     \\ 
     & \mathcal{A}^{(3)}_N:= \big \{x\in [\kappa_-, \kappa_+]^N: \big| \sum_{j,k=1}^N\cR_{jk} (x_j  - b_j(x))x_k \big|  \le \eps'_N N \big \}, \\ 
    &\cA^{(4)}_N:=\big  \{ x\in[\kappa_-, \kappa_+]^N: \big|  \sum_{k} \cR_k(x_k -b_k(x)) \big|  \le \eps'_N N \big  \},
\end{split}
\end{align}
we have
\begin{align}\label{eq:a0}
   \lim_N \mathbb{P}\big(X\in \cap_{i=1}^4\cA_N^{(i)}\big)=1.
\end{align}

Noting that $I(w) = g(w,w)$, by definition
\begin{align*}
    \abs{g(x,b(x)) - I(b(x))} & = \Big |\sum_i (x_i-b_i(x))\Phi(b_i(x))\Big| = \Big | \sum_i (x_i -b_i(x)) (\gb m_i(x) +h)\Big|.
\end{align*}
where the second equality uses the fact that $$\Phi(b_i(x)) = \Phi(\gL'(\beta m_i(x) +h)) = \beta m_i(x) +h.$$ On the set $x\in  \mathcal{A}^{(1)}_N \cap \cA^{(2)}_N$, the RHS above is bounded by $(|\beta|+|h|)N\varepsilon_N'$, and so the conclusion of part (i) follows.

\noindent (ii) Note that 
\begin{align}\label{eq:fbound}
| f(x) - f(b(x))| \le& \frac{|\gb|}{3} \Big| \sum_{i,j,k}A_{ijk}(x_ix_jx_k - b_i(x)b_j(x) b_k(x))\Big| +\Big| h\sum_i(x_i - b_i(x))\Big|,\\
\label{eq:fbound0}\abs{f(x) - f(\bar{x}\mv1)} \le& \frac{|\gb|}{3} \Big| \sum_{i,j,k}A_{ijk}(x_ix_jx_k - \bar{x}^3)\Big| .
\end{align}
For $x \in \cA^{(1)}_N$ we have 
\begin{align}\label{eq:fbound1}
\Big|\sum_i(x_i - b_i(x))\Big|= o(N),
 \end{align}
 so it suffices to bound the first term in the RHS of \eqref{eq:fbound} and \eqref{eq:fbound0}. To this effect, noting that $\sum_{i,j,k}A_{ijk} x_ix_jx_k = \sum_{i} x_i m_i(x)$, by Cauchy-Schwarz, for $x \in \cA_N$, we have 
\begin{align}\label{eq:f-cs}
\Big|\sum_{i} x_i (m_i(x) - \bar{m}(x)) \Big| \le \sqrt{\sum_i x_i^2 }\sqrt{\sum_i \Big(m_i(x)-\bar{m}(x)\Big)^2}\le  \kappa N \sqrt{\eps_N}  = o(N).
\end{align}
For $x \in \cA^{(1)}_N$, using \eqref{eq:fbound1} we have
\begin{align}\label{eq:red-2}
\bar{m}(x)\sum_{i} (x_i - b_i(x)) = o(N). 
\end{align}
For $x \in \cA_N$, using the bound~\eqref{eq:lambda-lipsch} and the fact that $b_i(x)=\gL'(\beta m_i(x)+h)$, we get 
\begin{align}\label{eq:b}
    \sum_i (b_i(x) -\bar{b}(x) )^2 \le \abs{\gb}^2 \kappa^4\sum_i(m_i(x) - \bar{m}(x))^2 \le \abs{\gb}^2 \kappa^4 \eps_N N = o(N).
\end{align}
Based on the above fact and $\norm{\cR} \le \gamma$, we have $N\abs{\bar{m}(x)}=\abs{\sum_{j,k}\cR_{jk} x_jx_k}\le N\kappa^2\gamma$. Combining this with the preceding display gives
\begin{align}\label{eq:red-3}
    \Big| \sum_{i} (b_i(x)-\bar{b}(x)) \bar{m}(x) \Big|  = o(N).
\end{align}
For $x \in \cA_N^{(3)}$, we have 
\begin{align}\label{eq:red-4}
\Big| N\bar{m}(x)- \sum_{j,k} \cR_{jk}  b_j(x)x_k \Big|  =\Big|  \sum_{jk}\mathcal{R}_{jk}(x_j-b_j(x))x_k \Big| = o(N).
\end{align}
Also, Cauchy-Schwarz gives 
\begin{align}\label{eq:cs-2}
     \Big| \sum_{j,k} \cR_{jk} (b_j(x) - \bar{b}(x))x_k \Big|
   \le   \sqrt{\sum_{j=1}(b_j(x) - \bar{b}(x))^2} \cdot \sqrt{\sum_j\Big(\sum_{k} \cR_{jk}x_k\Big)^2} = o(N),
\end{align}
where the last step is based on~\eqref{eq:b} and $\abs{\cR_{j}} \le \gamma$ for $j \in [N]$.
Next, for $x \in \cA^{(4)}_N$, we have
\begin{align}\label{eq:red-5}
\Big| \sum_{k} \cR_{k} x_k -  \sum_{k} \cR_{k} b_k(x) \Big| = o(N)
\end{align}
Finally, by another application of Cauchy-Schwarz inequality along with~\eqref{eq:b}, we have 
\begin{align}\label{eq:red-6}
\Big| \sum_{k} \cR_{k} b_k(x) - \bar{b}(x) \sum_{k} \cR_{k} \Big| = o(N).
\end{align}
Combining the bounds in~\eqref{eq:red-4}, \eqref{eq:cs-2}, \eqref{eq:red-5}, \eqref{eq:red-6}, for $x\in \cap_{i=1}^4 \cA^{(i)}_N \cap \cA_N$ we get
\begin{align}\label{eq:cs-3}
|N\bar{m}(x)-N \bar{R}\bar{b}(x)^2|=o(N).
\end{align}
Using~\eqref{eq:f-cs},\eqref{eq:red-2},\eqref{eq:red-3}, \eqref{eq:cs-3}, for $x \in \cap_{i=1}^4 \cA^{(i)}_N \cap \cA_N $ we have
\begin{align}\label{eq:red-7}
\Big|\sum_{i,j,k} A_{ijk} x_ix_jx_k -\bar{b}(x)^3\sum_{i,j,k} A_{ijk} \Big| = o(N).
\end{align}
Moreover, for $x \in \cA^{(1)}_N$, by \eqref{eq:fbound1} and the facts that  $\sum_{i,j,k}A_{ijk}\le \gamma N$ and $| x_i|,|b_i(x)|\le \kappa$, 
 we have
\[
\left|\bar x^3-\bar b(x)^3\right|\sum_{i,j,k}A_{ijk}
\le 3\kappa^2\gamma N|\bar x-\bar b(x)|
=o(N).
\]
The bound for the first term in the RHS of \eqref{eq:fbound0} follows from \eqref{eq:red-7} combined with the above display. For bounding the first term in the RHS of \eqref{eq:fbound}, we use the following elementary inequality for $x,y,z, w \in \bR$:
\[
\abs{xyz - w^3}\le \abs{x-w} \abs{yz} +\abs{y-w}\abs{wz} + \abs{z-w} \abs{w}^2.
\]
Using this and the fact $\| \gL' \|_\infty \le \kappa$, we have 
\begin{align*}
\begin{split}
     \Big|\sum_{i,j,k} A_{ijk} \Big(b_i(x)b_j(x)b_k(x) -(\bar{b}(x))^3\Big)\Big|
   \le  &  3\kappa^2\sum_{i,j,k} A_{ijk}\abs{b_i(x) -\bar{b}(x)} \\
     \le & 3\kappa^2 \sqrt{\sum_{i}(b_i(x) - \bar{b}(x))^2} \cdot \sqrt{\sum_{i}\Big(\sum_{j,k}A_{ijk}\Big)^2} \\
      =&  o(N),
\end{split}
\end{align*}
where the last step is based on~\eqref{eq:b} and $\sum_{j,k}A_{ijk} \le \gamma$ in Assumption~\ref{ass:standing}. The above display, along with~\eqref{eq:red-7} gives
\[\sum_{ijk}A_{ijk}(x_ix_jx_k-b_i(x)b_j(x)b_k(x))=o(N),\]
which bounds the first term in the RHS of \eqref{eq:fbound}, and thus finishes the proof of part~(ii). \\

\noindent (iii) 
By~\eqref{eq:cs-3}, 
there exists a deterministic
sequence $\eta_N\to 0$ such that if we define $\cG_N:=\cap_{i=1}^4 \cA^{(i)}_N$, then 
\begin{align}\label{eq:barb_fp_6}
x\in \mathcal{A}_N\cap \mathcal{G}_N\Rightarrow |\bar{m}(x)-\bar{\mathcal{R}}\bar{b}(x)^2\Big|\le \eta_N, \text{ and }\bP(X \in \cG_N^c )\to 0.
\end{align}

Fix $x\in \cA_N\cap \cG_N$. Since $x\in\cA_N$, by \eqref{eq:lambda-lipsch} and Cauchy-Schwarz,
\begin{align}
\abs{\bar b(x)-\gL'(\gb \bar m(x)+h)}
&\le \frac1N \sum_i \abs{\gL'(\gb m_i(x)+h)-\gL'(\gb \bar m(x)+h)} \nonumber\\
&\le \frac{|\beta| \kappa^2}{N}\sum_i \abs{m_i(x)-\bar m(x)}
\le |\beta| \kappa^2 \sqrt{\eps_N}. \label{eq:barb_fp_5}
\end{align}

Using \eqref{eq:lambda-lipsch} once more, \eqref{eq:barb_fp_6} yields
\begin{equation}\label{eq:barb_fp_7}
\abs{
\gL'(\gb \bar m(x)+h)-\gL'(\gb \bar{\cR}\,\bar b(x)^2+h)
}
\le
|\beta| \kappa^2 | \bar m(x)-\bar{\cR}\,\bar b(x)^2|
\le  |\beta| \kappa^2\eta_N.
\end{equation}
Combining \eqref{eq:barb_fp_5} and \eqref{eq:barb_fp_7}, we obtain that on
$\mathcal A_N\cap\mathcal G_N$,
\[
\abs{\bar b(X)-\gL'(\gb \bar{\cR}\,\bar b(X)^2+h)}\le \delta_N, \text{ where }\delta_N := |\beta|\kappa^2\sqrt{\eps_N}+|\beta|\kappa^2\eta_N \to 0. 
\]
 Define
$F(t,r):=t-\Lambda'(\beta r t^2+h)$. Then  $F(0,r)=-\Lambda'(h) \ne 0$ for every
$r \in [0, \gamma]$. By continuity and compactness, there exist $\eta'>0$ and $c>0$ such that
$|F(t,r)|\ge c$ whenever $|t|\le\eta'$ and $r\in[0,\gamma]$. Since $|\delta_N|<c$ for all $N$ large enough and $\bar{\cR}\in[0,\gamma]$, it follows that 
$\mathcal A_N\cap\mathcal G_N\cap\{|\bar b(x)|\le\eta'\}=\emptyset$. Hence
\[
\limsup_N \bP(X \in \mathcal A_N, |\bar b(X)|\le\eta') \le \limsup_N \bP(X \in \mathcal A_N\cap\mathcal G_N^c) = 0.
\]

Set $M:=|\beta|\gamma\kappa^2+|h|$. By Assumption~\ref{ass:standing}, $|m_i(x)|\le \gamma\kappa^2$ for all $i\in[N]$, and hence $\beta m_i(x)+h\in[-M,M]$. Since $\Lambda'$ is continuous and takes values in $(\kappa_-,\kappa_+)$, there exists $\eta''>0$ such that $\Lambda'([-M,M])\subset[\kappa_-+\eta'',\kappa_+-\eta'']$. Therefore, $\bar b(x)\in[\kappa_-+\eta'',\kappa_+-\eta'']$ for every $x\in[\kappa_-,\kappa_+]^N$. Taking $\eta=\min\{\eta',\eta''\}>0$ gives
\[
\bP( X \in \mathcal A_N,\ \bar b(X)\in J_\eta^c)\to 0.
\]
\end{proof}

\begin{defn}\label{def:Qt}
Let $\mu,\Lambda,\Phi, \kappa_-,\kappa_+$ be as in Definition \ref{def:main-set-up}.
 For $t \in [\kappa_-, \kappa_+]$, let
\[
Q_t := \mu_{\Phi(t)}^{\otimes N}.
\]
Equivalently, under $Q_t$, the coordinates
$X_1, \dots, X_N$ are i.i.d.\ with common law $\mu_{\Phi(t)}$, and hence
$\mathbb{E}_{Q_t}[X_i] = t$ and $\mathrm{Var}_{Q_t}(X_i) = v(t),$
where $v(t) = \gL''(\Phi(t))$ for $t \in (\kappa_-,\kappa_+) $ and $v(\kappa_\pm) = 0.$
\end{defn}

\begin{lem}\label{lem:measure-change}
     Under the Assumption~\ref{ass:standing}, for $X \sim P_{\gb,h}$.  Suppose there exists $\eps_N \to 0$ and a sequence of low complexity sets $(E_N)_{N\ge 1}$ in the sense of Definition \ref{def:low_complex} such that 
    \begin{align}\label{eq:non-mean-field-cond}
    \liminf_{N} P_{\gb,h}(\abs{f(X)-f(b(X
    )) }\le N \eps_N \ \text{and} \ b(X)\in E_N)> 0.
    \end{align}
    Then the following holds.

   \begin{enumerate}
       \item[(i)] We have 
       \begin{align*}
       \log Z_N(\gb,h)  = \sup_{y \in [\kappa_-,\kappa_+]^N} \big( f(y) - I(y)\big) + o(N).
  \end{align*}
   \item [(ii)] Suppose that there exists a sequence of sets $(C_N)_{N \ge 1}$ with $C_N \subseteq[\kappa_-,\kappa_+]^N$ such that 
   \[\liminf_{N}\frac{1}{N}\Big (\sup_{y \in [\kappa_-,\kappa_+]^N} \big( f(y) - I(y)\big)-\sup_{y \in C_N}\big( f(y) - I(y)\big)\Big )>0.\] Then we have 
   \[
    \lim_{N}P_{\gb,h}(|f(X)-f(b(X))|\le N\varepsilon_N, b(X) \in C_N\cap E_N)=0.
   \]
   
   \item [(iii)]  Assume that $\Lambda'(h) \ne 0$ , and there exists $\eta >0$ such that 
   \[
   \limsup_{N\to\infty} \sup_{t \in J_\eta}\frac{1}{N} \log Q_t(\mathcal{A}_N)  <0,
   \]
   where  $Q_t$ as in Definition \ref{def:Qt}, $J_\eta$ and $\cA_N$ are as defined in Lemma~\ref{lem:approx}. Then 
   \[
   \lim_{N\to\infty} P_{\gb, h} (\mathcal{A}_N) = 0.
   \]
   \end{enumerate}
\end{lem}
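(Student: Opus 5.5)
The approach is a change of measure from $P_{\beta,h}$ to product measures $\mu_{\Phi(t)}^{\otimes N}$ indexed by points $t$ of a net of $E_N$. Everything rests on the identity $f(x)-\log Z_N$ and the tilting formula
\[
\frac{dP_{\beta,h}}{d\mu_{\Phi(w)}^{\otimes N}}(x)=\exp\big(f(x)-g(x,w)-\log Z_N\big),\qquad g(x,w)=\sum_i x_i\Phi(w_i)-\Lambda(\Phi(w_i)),
\]
valid for $w\in(\kappa_-,\kappa_+)^N$. Since the coordinates of $b(x)$ lie in a compact subinterval of $(\kappa_-,\kappa_+)$, as noted in the proof of Lemma~\ref{lem:approx}(iii), we may restrict the net points $w$ to that interval, where $\Phi$ is Lipschitz. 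This makes $g(x,w)$ Lipschitz in $w$ with constant $O(\sqrt N)$ in Euclidean norm. Consequently, replacing $b(x)$ by a net point $w$ with $\|b(x)-w\|_2\le\delta\sqrt N$ changes $g$ by $O(\delta N)$. The same replacement changes $f$ and $I$ (restricted to the compact interval) by $O(\delta N)$ as well.

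\emph{Part (i).} The lower bound $\log Z_N\ge\sup_y(f(y)-I(y))$ is the Gibbs variational principle, i.e.\ the nonnegativity of $\mathrm{KL}(Q\|P_{\beta,h})$ for product $Q$. For the upper bound, let $G_N$ be the event in \eqref{eq:non-mean-field-cond}; on $G_N$ the quantity $f(X)-f(b(X))$ is $O(N\varepsilon_N)$. Fix $\delta>0$ and, using the low complexity of $E_N$, a $\delta\sqrt N$-net $\mathcal D_N$ of it. Then
\[
P(G_N)\le\sum_{w\in\mathcal D_N}P\big(G_N,\ \|b(X)-w\|\le\delta\sqrt N\big).
\]
We bound each summand by $\int \exp(f(x)-g(x,w)-\log Z_N)\,d\mu_{\Phi(w)}^{\otimes N}$ restricted to the event. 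On the event we have $f(x)\le f(b(x))+N\varepsilon_N\le f(w)+C\delta N+N\varepsilon_N$.

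For $g$, we would like to replace $g(x,w)$ by $I(w)$, but Lemma~\ref{lem:approx}(i) only gives $g(X,b(X))-I(b(X))=o_p(N)$ with high probability. The clean fix is to intersect $G_N$ with the high-probability event $\{|g(X,b(X))-I(b(X))|\le N\varepsilon'_N\}$ from Lemma~\ref{lem:approx}(i), which preserves the positive $\liminf$. On this event, the Lipschitz estimates give $g(x,w)\ge I(w)-C\delta N-o(N)$. Therefore each summand is at most
\[
\exp\big(f(w)-I(w)-\log Z_N+C\delta N+o(N)\big).
\]
Summing over the $e^{o(N)}$ net points and using $\liminf P>0$ yields
\[
\log Z_N\le\sup_y(f-I)+C\delta N+o(N).
\]
Sending $\delta\to0$ completes part (i).

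\emph{Part (ii).} We run the same computation, now with the net points restricted to those within $\delta\sqrt N$ of $C_N\cap E_N$. To compare $f(w)-I(w)$ with $\sup_{C_N}(f-I)$, we pick the net point as a point of $C_N$ itself: take a $\delta\sqrt N$-net of $E_N$ and keep one point of $C_N\cap E_N$ in each ball that meets it, so the net has doubled radius and is still of size $e^{o(N)}$. There remains a subtlety that $I$ is not Lipschitz near the endpoints; we handle it by noting that $b(x)$, and hence the relevant comparison points, lie in the compact interior interval. Then the probability in (ii) is at most
\[
\exp\big(\sup_{C_N}(f-I)-\log Z_N+C\delta N+o(N)\big).
\]
By part (i) together with the gap assumption, this is $\exp(-cN)\to0$ for $\delta$ small. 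Note that part (i) needs the hypothesis \eqref{eq:non-mean-field-cond}, which we are given.

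\emph{Part (iii).} By Lemma~\ref{lem:approx}(ii)--(iii), on $\mathcal A_N$ we have with high probability that $f(X)=f(\bar X\mathbf 1)+o(N)$ and $\bar b(X)\in J_\eta$. Moreover $\bar X-\bar b(X)=o(1)$ by \eqref{eq:concen2}. Cover $J_\eta$ by a $\delta$-grid of points $t$, and change measure to $Q_t$ on the event $\{\bar X\approx t\}$. Since $g(x,t\mathbf 1)=N(\bar x\Phi(t)-\Lambda(\Phi(t)))=NI(t)+O(\delta N)$ there, we get
\[
P(\mathcal A_N,\ \bar X\approx t)\le\exp\big(f(t\mathbf 1)-NI(t)-\log Z_N+O(\delta N)+o(N)\big)\,Q_t(\mathcal A_N).
\]
The variational lower bound gives $f(t\mathbf 1)-NI(t)\le\log Z_N$, so each term is at most $e^{O(\delta N)+o(N)}Q_t(\mathcal A_N)\le e^{(O(\delta)-c)N}$. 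Summing over the $O(1/\delta)$ grid points and choosing $\delta$ small gives $P(\mathcal A_N)\to0$.

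The main obstacle is the uniform Lipschitz control of $g$ and $I$ near the boundary of $[\kappa_-,\kappa_+]$, which is handled by the compact-interval confinement of $b(x)$ and of $J_\eta$.
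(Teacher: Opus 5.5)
Your proposal is correct and is essentially the paper's argument. For (i)--(ii) the paper also changes measure over a $\delta\sqrt N$-net of $E_N$ (projected into $[\kappa_-+\eta'',\kappa_+-\eta'']^N$), using your tilting identity in the form $\int e^{g(x,y)}\,d\mu^{\otimes N}(x)=1$; for (iii) it likewise discretizes $\bar x$ over $J_\eta$, compares with $Q_t$, and in effect uses only the lower bound $\log Z_N\ge f(t\mathbf 1)-NI(t)$. The one bookkeeping difference is that the paper keeps $f-I$ evaluated at $b(x)$ itself, bounds it directly by $\sup_{D_N}(f-I)$, and moves only $g(x,\cdot)$ to the net point. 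You move $f$ and $I$ to the net point as well, which is why in (ii) you need representatives lying in $C_N\cap E_N$ and in the compact box (e.g.\ points of the form $b(x')$) --- a detail you correctly flag, and it works.
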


\begin{proof}
We first prove a general bound, that we will use to verify both parts (i) and (ii).
By Lemma \ref{lem:approx} part (i), there exists a sequence of non-negative reals $(\varepsilon_N')_{N \ge 1}$ converging to $0$, such that
\begin{align}\label{eq:g0}P_{\gb,h}(|g(X,b(X))-I(b(X))|\le N\varepsilon_N')\to 1.
\end{align}
Let $D_N\subseteq [\kappa_-,\kappa_+]^N$ be arbitrary, and set \begin{align}
\begin{split}\label{eq:b0}
\mathcal{B}_N = \cB_N(D_N) := \big\{x\in [\kappa_-,\kappa_+]^N: & |f(x)-f(b(x))|\le N\varepsilon_N, \\
& b(x)\in  D_N\cap E_N, \ |g(x,b(x))-I(b(x))|\le N\varepsilon_N'\big\}.
\end{split}
\end{align}
Then we have
\begin{align}\label{eq:g1}
  \notag& \int_{x\in \cB_N} \exp(f(x)) d \mu^{\otimes N}  \\
   \notag & \le   \exp(N(\varepsilon_N+\varepsilon_N')) \int_{x\in \cB_N} \exp \big(f(b(x))+g(x,b(x)) - I(b(x))\big) d \mu^{\otimes N} \\
    & \le  \exp\Big(o(N) + \sup_{y\in D_N} \big( f(y) - I(y)\big) \Big)  \int_{x \in \cB_N} \exp(g(x,b(x))) d \mu^{\otimes N}.
\end{align}
Recall that from the proof of Lemma~\ref{lem:approx}(iii),
\[ b_i(x) = \gL'(\beta m_i(x) +h) \in [\kappa_{-} +\eta'', \kappa_{+} -\eta''] \quad \text{for some } \eta''>0. \]
Proceeding to estimate the last term in the RHS of \eqref{eq:g1}, note that if $x\in \cB_N$ then $b(x)\in E_N \cap [\kappa_{-} +\eta'', \kappa_{+} -\eta'']^N$, $E_N$ is a low complexity set. Thus, for every $\varepsilon>0$, there exists a set $\mathcal{D}_N(\varepsilon)$, obtained if necessary by projecting each point of a low-complexity net for $E_N$ coordinatewise onto the interval $[\kappa_{-}+\eta'',\kappa_{+}-\eta'']$, such that $\mathcal{D}_N(\varepsilon)$ is an $\varepsilon\sqrt{N}$-net of $E_N\cap[\kappa_{-}+\eta'',\kappa_{+}-\eta'']^N$ in Euclidean metric and $\log |\mathcal{D}_N(\varepsilon)|=o(N)$. Consequently, for every $x\in [\kappa_-,\kappa_+]^N$ such that $b(x)\in E_N$, there exists $y\in \mathcal{D}_N(\varepsilon)$ such that $\|b(x)-y\|_2^2\le N\varepsilon^2$.  Note that for function $g(u,t)=u\Phi(t) - \gL(\Phi(t))$, where $u \in [\kappa_-,\kappa_+], t \in [\kappa_- + \eta'',\kappa_+-\eta'']$, we have 
\[
\abs{\partial_t g (u, t)} = \abs{(u-t)\Phi'(t)} \le C.
\]
By the mean value theorem, and since $b_i(x),y_i \in [\kappa_-+\eta'',\kappa_+-\eta'']$, it follows that
\begin{align*}
    |g(x,b(x))-g(x,y)|  & =\Big| \sum_i  g(x_i,b_i(x))-g(x_i,y_i) \Big|\\
    & \le C\sum_i\big| b_i(x) - y_i \big| \le C\sqrt{N}\sqrt{\sum_i (b_i-y_i)^2}\le C N\varepsilon.
\end{align*}
This gives
\begin{align}\label{eq:g2}
   \notag \int_{x\in \cB_N} e^{g(x,b(x))} d\mu^{\otimes N}\le & \sum_{y\in \mathcal{D}_N(\varepsilon)} \int_{x: \|b(x)-y\|_2^2\le N\varepsilon^2} e^{g(x,b(x))} d\mu^{\otimes N}\\
    \notag\le &e^{CN\varepsilon}\sum_{y\in \mathcal{D}_N(\varepsilon)} \int_{x:\|b(x)-y\|_2^2\le N\varepsilon^2}e^{g(x,y)} d\mu^{\otimes N}\\
   \le &e^{CN\varepsilon}\sum_{y\in \mathcal{D}_N(\varepsilon)}  \int_{x\in [\kappa_-,\kappa_+]^N} e^{g(x,y)}d\mu^{\otimes N} = e^{CN\varepsilon}|\mathcal{D}_N(\varepsilon)|,
\end{align}
    where the last line uses the fact that for any $y \in [\kappa_{-}+\eta'',\kappa_{+}-\eta'']^N$ we have
    \begin{align*}
        \int_{x\in [\kappa_-, \kappa_+]^N}e^{g(x,y)} d \mu^{\otimes N}=1.
    \end{align*}
    Since $\log |\mathcal{D}_N(\varepsilon)|=o(N)$, combining \eqref{eq:g1} and \eqref{eq:g2} we get
    \begin{align*}\limsup_{N}\frac{1}{N}\Big[\log \int_{x\in \cB_N}\exp(f(x)) d\mu^{\otimes N}-\sup_{y\in D_N} \big( f(y)-I(y)\big)\Big]\le \varepsilon.
    \end{align*}
    Since $\varepsilon>0$ is arbitrary, this gives
    \begin{align}\label{eq:g3}
    \limsup_N \frac{1}{N}\Big[\log \int_{x\in \mathcal{B}_N}\exp(f(x)) d\mu^{\otimes N}-\sup_{y\in D_N} \big( f(y)-I(y)\big)\Big]\le 0.
    \end{align}
Armed with this estimate, we now start with the proof of part (i).

\noindent (i)
Let $\cB_N$ be as defined in~\eqref{eq:b0} with $D_N = [\kappa_-,\kappa_+]^N$. Then using \eqref{eq:non-mean-field-cond} and \eqref{eq:g0}, it follows that
\(
\delta :=\liminf_N \mathbb{P}(\mathcal{B}_N)>0.
\)
 Thus, for all $N$ large enough we have $\mathbb{P}(\mathcal{B}_N)\ge \delta/2$, which gives
\begin{align*}
   Z_N(\beta,h)=\int_{[\kappa_-,\kappa_+]^N}e^{f(x)} d\mu^{\otimes N}\le \frac{2}{\delta}\int_{x\in \mathcal{B}_N}e^{f(x)}d\mu^{\otimes N}.
\end{align*}
Since $\delta>0$ is fixed, taking $\log$, dividing by $N$, and letting $N\to\infty$ on both sides of the above equation, we get
\begin{align*}
    \limsup_N\frac{1}{N}\Big[\log Z_N(\beta,h)- \sup_{y\in [\kappa_-,\kappa_+]^N}\big( f(y)-I(y)\big)\Big]\le 0,
\end{align*}
where we use \eqref{eq:g3} with $D_N=[\kappa_-,\kappa_+]^N$. This gives the desired upper bound for the asymptotics of $\log Z_N(\beta,h).$
The lower bound follows by invoking~\cite[Theorem 1]{Yan20}, which gives
\[
\log Z_N(\gb,h) \ge \sup_{y\in[\kappa_-,\kappa_+]^N} \big( f(y) - I(y) \big).
\]
This completes the proof of part (i).

\noindent (ii)
Let $\cB_N$ be as defined in~\eqref{eq:b0} with $D_N = C_N$, where $C_N$ is in the statement of the lemma. By \eqref{eq:g0}, it suffices to show that $\mathbb{P}(\mathcal{B}_N)\to 0$. To this effect, note that
\begin{align*}
    \mathbb{P}(\mathcal{B}_N)=\frac{1}{Z_N(\beta,h)}\int_{x\in \mathcal{B}_N}e^{f(x)}d\mu^{\otimes N}.
\end{align*}
Invoking \eqref{eq:g3} with $D_N=C_N$, we obtain
\begin{align*}
 \log\Big(\int_{x\in \mathcal{B}_N}e^{f(x)}d\mu^{\otimes N}\Big) \le \sup_{y\in C_N}\big( f(y)-I(y)\big) + o(N).
\end{align*}
On the other hand, by part (i) above, we get
\begin{align*}
  \log Z_N(\beta,h) = \sup_{y\in [\kappa_-,\kappa_+]^N}\big( f(y)-I(y)\big) + o(N).
\end{align*}
Therefore, 
\[\frac{1}{N} \log \mathbb{P}(\mathcal{B}_N) \le \frac{1}{N}\Big[ \sup_{y\in C_N} \big( f(y)-I(y)\big) - \sup_{y\in [\kappa_-,\kappa_+]^N} \big( f(y)-I(y)\big) \Big] + o(1).   \]
By hypothesis of part (ii), we deduce that $ \limsup_N N^{-1} \log \mathbb{P}(\mathcal{B}_N) < 0.$ This completes the proof of part(ii).

\noindent (iii)  Assume, for contradiction, that
\(
\limsup_{N\to\infty}P_{\gb,h}(X \in \mathcal A_N)>0.
\)
Choose $\eta>0$ such that Lemma~\ref{lem:approx} part (iii) holds with $J_{2\eta}$.  Let $\mathcal A_N^{(1)}$ be as in \eqref{eq:a1}. By \eqref{eq:a0}, $\bP\big( X \not \in \mathcal A_N^{(1)}\big)\to 0.$ Also, by Lemma~\ref{lem:approx} part (ii), there exists a sequence
$\delta_N\downarrow0$ such that
\[
P_{\gb,h}\left( X \in 
\mathcal A_N, \abs{f(X)-f(\bar X\mathbf 1)}>N\delta_N
\right)\to0.
\]
Finally, by Lemma~\ref{lem:approx} part (iii), $P_{\gb,h} \left(X \in \mathcal A_N, \bar b(X)\notin J_{2\eta}\right)\to0.$
Define
\[
\cB_N
:=
\mathcal A_N
\cap \mathcal A_N^{(1)}
\cap
\left\{\abs{f(x)-f(\bar x\mathbf 1)}\le N\delta_N\right\}
\cap
\{\bar b(x)\in J_{2\eta}\}.
\]
Then, we conclude that $\limsup_{N\to\infty}\bP(X \in \cB_N)>0.$ 

We now estimate the probability of $\cB_N$ from above. If $x\in\cB_N$, then
$x\in\mathcal A_N^{(1)}$ gives
$\abs{\bar x-\bar b(x)} \le \varepsilon_N'$.
Since $\bar b(x)\in J_{2\eta}$, it follows that, for all sufficiently large $N$, $\bar x\in J_\eta.$ Moreover, on $\cB_N$,
\begin{align*}
f(x)\le f(\bar x\mathbf 1)+N\delta_N = \frac{\beta}{3}\bar x^3\sum_{i,j,k}A_{ijk}+hN\bar x + N\delta_N 
= N\beta_N\bar x^3+Nh\bar x+N\delta_N,
\end{align*}
where $\beta_N:=\beta\bar{\mathcal R}/3$. Therefore
\begin{align}\label{eq:b_int}
\notag&\int_{\cB_N}e^{f(x)}d\mu^{\otimes N}(x)
\notag\le 
e^{N\delta_N}
\int_{\cB_N}
\exp\{N\beta_N\bar x^3+Nh\bar x\}
d\mu^{\otimes N}(x)\\
\notag= &e^{N\delta_N}
\int_{\cB_N}
\exp\big(N\beta_N\bar x^3+Nh\bar x-NI(\bar{x})+\Phi(\bar{x})N\bar{x}-N\Lambda(\Phi(\bar{x})\big)
d\mu^{\otimes N}(x)\\
\le &e^{N\delta_N} \sup_{t\in J_\eta}
\exp\big( N(\beta_Nt^3+ht-I(t))\big)
\int_{\cB_N}
\exp\big(\Phi(\bar{x})N\bar{x}-N\Lambda(\Phi(\bar{x})\big)
d\mu^{\otimes N}(x).
\end{align}
We next use a discretization of $J_\eta$. Let $\mathcal P_L$ be a partition of $J_\eta$ into intervals of length at most $\frac{1}{L}$, and for each interval $K\in\mathcal P_L$ choose a point $t_K\in K$. The number of intervals is at most $2\kappa L$. Since $J_\eta$ is compactly contained in $(\kappa_-,\kappa_+)$, the functions
$\Phi$ and $I$ are bounded and Lipschitz on $J_\eta$. Hence, uniformly for $x$ with $\bar x\in K$,
\[
\Big|\Big(\Phi(\bar{x})N\bar x-N\Lambda(\Phi(\bar{x}))\Big)
-
\Big(\Phi(t_K)N\bar x-N\Lambda(\Phi(t_K))\Big)\Big|\le \frac{CN}{L}, 
\]
where the constant $C$ depends only on $\mu,\eta$.
Using the above display gives
\begin{align*}
   &\int_{\cB_N}
\exp\big(\Phi(\bar{x})N\bar{x}-N\Lambda(\Phi(\bar{x})\big)
d\mu^{\otimes N}(x)\\
\le & e^{\frac{CN}{L}} \int_{\cB_N}
\exp\big(\Phi(t_K)N\bar{x}-N\Lambda(\Phi(t_K)\big)
d\mu^{\otimes N}(x)\\
=&e^{\frac{CN}{L}} Q_{t_K}(\mathcal{B}_N)\le e^{\frac{CN}{L}}\sup_{t\in J_\eta}Q_t(\mathcal{B}_N).
\end{align*}
Combining the above bound along with \eqref{eq:b_int} gives
\begin{align}\label{eq:ub_BN}
\int_{\cB_N}e^{f(x)}d\mu^{\otimes N}(x)\le e^{o(N)+\frac{CN}{L}} \sup_{t\in J_\eta}
\exp\big( N\beta_Nt^3+Nht-NI(t)\big) \sup_{t\in J_\eta}Q_t(\mathcal{B}_N).
\end{align}


On the other hand, by part (i) of the lemma,
\[
\log Z_N(\beta,h)
=
\sup_{y\in[\kappa_-,\kappa_+]^N}\big( f(y)-I(y)\big)+o(N).
\]
In particular, by restricting the supremum to constant vectors $y=t\mathbf 1$,
\begin{equation}\label{eq:lb_ZN}
Z_N(\beta,h)
\ge
\exp\Big(
\sup_{t\in[\kappa_-,\kappa_+]}
\big(N\beta_Nt^3+Nht-NI(t)\big)+o(N)
\Big).
\end{equation}
After normalization, \eqref{eq:ub_BN} and \eqref{eq:lb_ZN} give
\begin{align*}
P_{\gb,h}(X \in \cB_N)
&=
\frac{1}{Z_N(\beta,h)}
\int_{\cB_N}e^{f(x)}d\mu^{\otimes N}(x)\le
e^{o(N)+\frac{CN}{L}}
\sup_{t\in J_\eta}Q_t(\cB_N).
\end{align*}
By the hypothesis, and since $\cB_N\subseteq \cA_N$, we have
\[
\limsup_{N\to\infty}\frac1N
\log P_{\gb,h}(X \in \mathcal{B}_N)\le \limsup_{N\to\infty}\frac1N
\log\sup_{t\in J_\eta}Q_t(\mathcal A_N)+\frac{C}{L}<0,
\]
where the last inequality holds for $L$ large enough. This contradicts $\limsup\limits_{N\to\infty}\mathbb{P}(X \in \mathcal{B}_N)>0$, and hence completes the proof.

\end{proof}

\begin{lem}\label{lem:iid-measure} 
    Assume that the tensor $A$ satisfies the Assumption~\ref{ass:standing} and for any $t\in (\kappa_-,\kappa_+)$ and $X \sim Q_t$. Then the following statements hold. 
    \begin{enumerate}
    \item [(i)]
    We have 
    \begin{align*}
    \E_{Q_t}\Big(\sum_i(m_i(X) - \bar{m}(X))^2\Big) & = t^4 \sum_i (\cR_i - \bar{\cR})^2 + 2\gL''(\Phi(t))^2  \sum_i\norm{A_i - \bar{A}}_F^2
      \\ 
      & \quad + 4t^2\gL''(\Phi(t)) \sum_{i, j}\big(  \cR_{ij} - \frac1N \cR_{j}\big)^2,
    \end{align*}
    where  the matrices $A_{i} := (A_{ijk})_{j,k\in[N]}$ and $\bar{A} := \frac1N \sum_i A_i$.
    \item [(ii)] 
    For any $\gl>0$, we have
    \[
    Q_t(\abs{T_N(X) - \E_{Q_t}(T_N(X))} \ge \gl ) \le 2 \exp(-cN\gl^2),
    \]
    where $c>0$ is some constant independent of $N$ and $t$.
    \end{enumerate}
\end{lem}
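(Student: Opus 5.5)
Part (i) is a direct second-moment computation under the product measure $Q_t$. Write $X_j=t+\xi_j$, where the $\xi_j$ are i.i.d., centered, with variance $v:=\Lambda''(\Phi(t))$. Since $A_{ijk}$ vanishes unless $i,j,k$ are distinct, we can expand
\[
m_i(X)=\sum_{j,k}A_{ijk}(t+\xi_j)(t+\xi_k)=t^2\cR_i+2t\sum_j\cR_{ij}\xi_j+\sum_{j,k}A_{ijk}\xi_j\xi_k .
\]
Averaging over $i$ gives
\[
\bar m=t^2\bar\cR+2t\sum_j \tfrac1N\cR_j\xi_j+\sum_{j,k}\bar A_{jk}\xi_j\xi_k,
\]
where we use the symmetry $\sum_i\cR_{ij}=\cR_j$. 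Hence
\[
m_i-\bar m=t^2(\cR_i-\bar\cR)+2t\sum_j(\cR_{ij}-\tfrac1N\cR_j)\xi_j+\sum_{j,k}(A_i-\bar A)_{jk}\xi_j\xi_k .
\]
The three pieces are a constant, a linear chaos and an off-diagonal quadratic chaos. The quadratic chaos is off-diagonal because $(A_i-\bar A)_{jj}=0$, as $A$ vanishes on diagonals. These pieces are orthogonal in $L^2(Q_t)$: the cross terms vanish because odd moments of independent centered variables vanish, and because $\E[\xi_j\xi_k\xi_l]=0$ for $j\ne k$. Squaring and taking expectations gives $t^4(\cR_i-\bar\cR)^2$ from the constant piece and $4t^2v\sum_j(\cR_{ij}-\cR_j/N)^2$ from the linear piece. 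For the quadratic piece, the matrix $A_i-\bar A$ is symmetric with zero diagonal, so the two orderings $(j,k)$ and $(k,j)$ pair up and give $2v^2\|A_i-\bar A\|_F^2$. Summing over $i$ yields (i).

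Part (ii) is a bounded-differences (McDiarmid) argument for the function $F(x):=T_N(x)=\frac1N\sum_i(m_i(x)-\bar m(x))^2$ of the independent coordinates $X_1,\dots,X_N$, each taking values in $[\kappa_-,\kappa_+]$. The plan is to bound the change in $F$ when only coordinate $\ell$ is modified, from $x$ to $x'$. First, $|m_i(x)-m_i(x')|\le 2\cdot 2\kappa\cdot\kappa\,\cR_{i\ell}=4\kappa^2\cR_{i\ell}$, since $m_i$ depends on $x_\ell$ only through the terms $2\sum_k A_{i\ell k}x_\ell x_k$. Writing $u_i=m_i-\bar m$, we get
\[
|u_i(x)-u_i(x')|\le 4\kappa^2\bigl(\cR_{i\ell}+\tfrac1N\cR_\ell\bigr).
\]
Then
\[
|F(x)-F(x')|\le \frac1N\sum_i|u_i(x)-u_i(x')|\,|u_i(x)+u_i(x')|\le \frac{4\gamma\kappa^2}{N}\sum_i 4\kappa^2\bigl(\cR_{i\ell}+\tfrac{\cR_\ell}{N}\bigr)\le \frac{32\gamma^2\kappa^4}{N},
\]
using $|u_i|\le 2\gamma\kappa^2$ and $\sum_i\cR_{i\ell}=\cR_\ell\le\gamma$ (Assumption~\ref{ass:standing}). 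So every bounded-difference constant is $c_\ell=C/N$ with $C$ depending only on $\gamma$ and $\kappa$. McDiarmid's inequality then gives
\[
Q_t\bigl(|F-\E_{Q_t}F|\ge\lambda\bigr)\le 2\exp\bigl(-2\lambda^2/\textstyle\sum_\ell c_\ell^2\bigr)=2\exp(-2N\lambda^2/C^2).
\]
The constant is uniform in $t$ because the bound uses only the support $[\kappa_-,\kappa_+]$.

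The only delicate point is the bookkeeping in (i): the diagonal vanishing of $A$ must be used to remove terms such as $\xi_j^2$ with $j=k$, which would otherwise add mean and fourth-moment contributions. We also need the symmetry of $A$ to get exactly the factor $2$ in the Frobenius term. Part (ii) is routine once the Lipschitz bound on $m_i$ is in place.
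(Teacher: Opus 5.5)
Your proof is correct and follows the paper's route: a direct second-moment computation under the product measure $Q_t$ for (i), and for (ii) the bounded-difference inequality with per-coordinate constant $32\gamma^2\kappa^4/N$, exactly as in the paper. The only difference is bookkeeping in (i): centering $X_j=t+\xi_j$ makes the constant, linear and off-diagonal quadratic pieces orthogonal, whereas the paper splits into mean plus variance and evaluates $\mathrm{Cov}(X_jX_k,X_{j'}X_{k'})$ by overlap cases, which requires a small cancellation between the overlap-one and overlap-two contributions that your decomposition avoids.
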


\begin{proof}
\noindent (i)   Note that $Q_t$ is an i.i.d.\ measure with marginal mean $t$, in the following proof, we use $\E$ for the expectation with respect to $Q_t$

\begin{align*}
    \E\Big( \sum_i (m_i(X) - \bar{m}(X))^2\Big) & = \sum_i \left(\text{Var}(m_i(X) - \bar{m}(X)) +[ \E(m_i(X) - \bar{m}(X)]^2 \right). 
\end{align*}
First, we note the following  
\begin{align*}
    \sum_{i} (\E(m_i(X) - \bar{m}(X)))^2 = \sum_{i} \Big(\sum_{j,k}( A_{ijk}  -\frac1N \cR_{jk} )t^2 \Big)^2 =t^4 \sum_{i} (\cR_i - \bar{\cR})^2.
\end{align*}
Now we expand the variance of $m_i(X) - \bar{m}(X)$,
\begin{align*}
    \text{Var} (m_i(X) - \bar{m}(X))& = \text{Var} \Big( \sum_{j,k}(A_{ijk} X_j X_k - \frac1N \cR_{jk} X_jX_k)\Big) \\ 
    & =  \sum_{j,k,j',k'} (A_{ijk} - \frac1N\cR_{jk})(A_{ij'k'} - \frac1N\cR_{j'k'}) \text{Cov}(X_jX_k, X_{j'} X_{k'}).
\end{align*}
The covariance term is given by
\begin{align*}
    \text{Cov}(X_jX_k, X_{j'} X_{k'}) = 
    \begin{cases}
    & 
    \Lambda''(\Phi(t))^2+2t^2  \Lambda''(\Phi(t)), \quad \text{if $\big|\{j,k\} \cap \{j',k'\}\big|=2$}, \\
    &  t^2\gL''(\Phi(t)), \quad  \text{if $\big|\{j,k\} \cap \{j',k'\}\big|=1$}, \\
    & 0, \quad \text{if $\big|\{j,k\} \cap \{j',k'\}\big|=0$}.
    \end{cases}
\end{align*}
There are two different ways to pair $j,j',k,k'$ such that $\big|\{j,k\} \cap \{j',k'\}\big|=2$, the corresponding contribution is 
\[
2(\gL''(\Phi(t))^2 + 2 t^2\gL''(\Phi(t))) \sum_{j,k} (A_{ijk} - \frac1N \cR_{jk})^2.
\]
On the other hand, there are four different ways to get $\big|\{j,k\} \cap \{j',k'\}\big|=1$, the contribution in this case is 
\[
4t^2\gL''(\Phi(t)) \Big(\sum_{j}\Big( \sum_{k} (A_{ijk} - \frac1N \cR_{jk})\Big)^2 -\sum_{k} (A_{ijk} - \frac1N \cR_{jk})^2 \Big).
\]
Putting them together and simplifying, it gives 
\begin{align*}
    \text{Var} (m_i(X) - \bar{m}(X)) & = 2\gL''(\Phi(t))^2  \sum_{j,k} (A_{ijk} - \frac1N \cR_{jk})^2 
      \\
      & \quad + 4t^2\gL''(\Phi(t)) \sum_{j}\Big( \sum_{k} (A_{ijk} - \frac1N \cR_{jk})\Big)^2\\
      & = 2\gL''(\Phi(t))^2  \sum_{j,k} (A_{ijk} - \frac1N \cR_{jk})^2
      + 4t^2\gL''(\Phi(t)) \sum_{j}\Big(  \cR_{ij} - \frac1N \cR_{j}\Big)^2 \\
      & = 2\gL''(\Phi(t))^2  \norm{A_i - \bar{A}}_F^2
      + 4t^2\gL''(\Phi(t)) \sum_{j}\Big(  \cR_{ij} - \frac1N \cR_{j}\Big)^2,
\end{align*}
where $A_i, \bar{A}$ defined in the lemma. Now summing over $i$, it gives the desired result.\\

\noindent (ii) 
For each $i\in[N]$, let $X_i'$ be an independent copy of $X_i$, independent of $X$, and define $X^{(i)} = (X_1, \ldots, X_{i-1}, X_i', X_{i+1},\ldots, X_N)$. For each $k\in [N]$, let $\Delta_k = \abs{m_k(X) - m_k(X^{(i)})}.$
Using $\abs{m_k(x)}\le \gamma\kappa^2$ for all $x$ and $k$, we get
\begin{align*}
\abs{T_N(X)-T_N(X^{(i)})}
&=
\Big|
\frac1N\sum_k\big(m_k(X)-\bar m(X)\big)^2
-
\frac1N\sum_k\big(m_k(X^{(i)})-\bar m(X^{(i)})\big)^2
\Big|  \\
&\le
\frac{4\gamma\kappa^2}{N}
\sum_k
\left(
\abs{m_k(X)-m_k(X^{(i)})}
+
\abs{\bar m(X)-\bar m(X^{(i)})}
\right).
\end{align*}
Since $\abs{\bar m(X)-\bar m(X^{(i)})}
\le
N^{-1}\sum_k\Delta_k$, 
it follows that
\[
\big| T_N(X)-T_N(X^{(i)}) \big|
\le
\frac{8\gamma\kappa^2}{N}\sum_{k}\Delta_k.
\]
Next, by the definition of $m_k$ and the fact that $\abs{X_i-X_i'}\le 2\kappa$,
\[
\Delta_k
\le
\abs{X_i-X_i'}
\Big(
\sum_{\ell} A_{ki\ell}\,|X_\ell|
+
\sum_j A_{kji}\,|X_j|
\Big)
\le
4\kappa^2\sum_{\ell} A_{ki\ell}.
\]
By Assumption~\ref{ass:standing},
\[
\sum_k\Delta_k
\le
4\kappa^2\sum_{k,\ell} A_{ki\ell}
= 4\kappa^2\sum_{k,\ell} A_{ik\ell} \le
4\gamma\kappa^2.
\]
Hence, for each $i\in[N]$,
\[
\sup_{X,X^{(i)}}\abs{T_N(X)-T_N(X^{(i)})}
\le
\frac{32\gamma^2\kappa^4}{N}.
\]
Applying the bounded difference inequality \cite[Theorem~6.2]{BLM13}, we obtain
\[
Q_t\left(\abs{T_N(X)-\E_{Q_t}T_N(X)}\ge \lambda\right)
\le
2\exp\Big(-\frac{2N\lambda^2}{1024\gamma^4\kappa^8}\Big)
\le
2\exp(-cN\lambda^2),
\]
where $c>0$ is a constant independent of $N$ and $t$.
\end{proof}

\begin{proof}[Proof of Theorem~\ref{thm:positive-res}]
Throughout the proof we write $\beta=\beta_0$ and $h=h_0$.
Assume, toward a contradiction, that $T_N(X)\ne \Omega_p(1)$. Then there exists
a deterministic sequence $\varepsilon_N\downarrow 0$ and a subsequence, still denoted
by $N$, such that
\[
\liminf_{N\to\infty}\mathbb P(\cA_N)>0,
\qquad
\cA_N:=\{x\in[\kappa_-,\kappa_+]^N:T_N(x)\le \varepsilon_N\}.
\]

We first verify the hypothesis of Lemma~\ref{lem:measure-change}. Since $\Lambda'$
is Lipschitz on bounded intervals, for $x\in \cA_N$,
\[
\sum_i (b_i(x)-\bar b(x))^2
\le C\sum_i (m_i(x)-\bar m(x))^2
\le C N\varepsilon_N.
\]
Thus, we have
$b(x)\in E_N$ on $\cA_N$, where
\[
E_N:=\Big\{y\in[\kappa_-,\kappa_+]^N:
\sum_i (y_i-\bar y)^2\le C\varepsilon_N N\Big\}.
\]
By Lemma~\ref{lem:low_complex}, $E_N$ is of low complexity.

Next, by Lemma~\ref{lem:approx}(ii), there exists a deterministic sequence $\delta_N\downarrow 0$ such that
\[
\liminf_{N\to\infty}
\mathbb P\left(\cA_N,\ |f(X)-f(b(X))|\le N\delta_N\right)>0.
\]
Since $b(X)\in E_N$ on $\cA_N$, it follows that
\[
\liminf_{N\to\infty}
\mathbb P\left(|f(X)-f(b(X))|\le N\delta_N,\ b(X)\in E_N\right)>0.
\]
Therefore, the hypothesis~\eqref{eq:non-mean-field-cond} of Lemma~\ref{lem:measure-change} is satisfied.

It remains to verify the exponential $Q_t$-bound required in Lemma~\ref{lem:measure-change}(iii). Let $\eta>0$ be the value supplied by Lemma~\ref{lem:measure-change}(iii). Since $J_\eta$
is compactly contained in $(\kappa_-,\kappa_+)$ and is bounded away from $0$, there exist constants $a_\eta,v_\eta>0$ such that, for every $t\in J_\eta$,
\[
|t|\ge a_\eta,
\qquad
\Lambda''(\Phi(t))\ge v_\eta.
\]
By Lemma~\ref{lem:iid-measure}(i),
\[
N\mathbb E_{Q_t}T_N(X)
=
t^4\sum_i(\cR_i-\bar \cR)^2
+2\Lambda''(\Phi(t))^2\sum_i\|A_i-\bar A\|_F^2
+4t^2\Lambda''(\Phi(t))
\sum_{i, j}
\big(\cR_{ij}-\frac{1}{N}\cR_j\big)^2 .
\]
All three terms on the right are nonnegative.

If $\sum_i(\cR_i-\bar \cR)^2=\Omega(N)$, then the first term gives, uniformly over $t\in J_\eta$,
\(
\mathbb E_{Q_t}T_N(X)\ge c_1>0
\)
for all sufficiently large $N$.

On the other hand, suppose that $\mathrm{Tr}(\cR^2)=\Omega(N)$. Then
\[
\sum_{i, j}
\big(\cR_{ij}-\frac{1}{N}\cR_j\big)^2
=
\mathrm{Tr}(\cR^2)-\frac{1}{N}\sum_j \cR_j^2.
\]
Since $|\cR_j|\le \gamma$ by Assumption~\ref{ass:standing}, we have
\(
\frac{1}{N}\sum_j \cR_j^2\le \gamma^2.
\)
Therefore, if $\mathrm{Tr}(\cR^2)\ge cN$, then for all sufficiently large $N$,
\[
\sum_{i, j}
\big(\cR_{ij}-\frac{1}{N}\cR_j\big)^2
\ge \frac{c}{2}N.
\]
Using $|t|\ge a_\eta$ and $\Lambda''(\Phi(t))\ge v_\eta$, the third term gives
\(
\mathbb E_{Q_t}T_N(X)\ge c_2>0
\)
uniformly over $t\in J_\eta$ for all sufficiently large $N$.

Thus, in either case, there exists $c_0>0$ such that
\(
\inf_{t\in J_\eta}\mathbb E_{Q_t}T_N(X)\ge c_0
\)
for all sufficiently large $N$. Since $\varepsilon_N\downarrow 0$, for all sufficiently large $N$,
$\varepsilon_N\le c_0/2$. Hence, by Lemma~\ref{lem:iid-measure}(ii),
\[
Q_t(\cA_N)
=
Q_t(T_N(X)\le \varepsilon_N)
\le
Q_t\left(|T_N(X)-\mathbb E_{Q_t}T_N(X)|\ge \frac{c_0}{2}\right)
\le
2\exp(-cN),
\]
uniformly over $t\in J_\eta$. Therefore,
\(
\limsup_{N\to\infty}\sup_{t\in J_\eta}\frac{1}{N}\log Q_t(\cA_N)<0.
\)
Recalling that $\Lambda'(h) \ne 0$, Lemma~\ref{lem:measure-change}(iii) now implies $\bP(\cA_N)\to 0,$
which is a contradiction. Hence $T_N(X)=\Omega_p(1)$.
Finally, Theorem~\ref{thm:main-1} implies $\max \big(|\hat\beta-\beta|, |\hat h-h| \big)= O_p(N^{-1/2}),$
so the MPLE is $\sqrt N$-consistent.
\end{proof}

\color{black}

\section{Mean-Field Analysis}\label{sec:mean-field-analysis}

In this section we prove the three mean-field results stated in Section~\ref{sec:intro}. 
Theorem~\ref{thm:mean_field} provides the main mean-field approximation: it identifies the variational limit of the free energy and shows that the conditional mean vectors form a low-complexity family. We then use this structure in Theorem~\ref{cor:gap_implies_estimation} to prove that a variational separation from nearly constant profiles yields macroscopic inhomogeneity of the local fields and hence joint \(\sqrt N\)-consistency of the pseudolikelihood estimator. 
The section concludes with Theorem~\ref{thm:est-imposs}, which gives the complementary homogeneous ferromagnetic picture: under regularity and well-connectedness assumptions, the local fields become asymptotically homogeneous and pseudolikelihood estimation becomes ill-conditioned.

\subsection{Proof of Theorem~\ref{thm:mean_field}} 

 Recall that $ f(x) =  \gb  \la A, x^{\otimes 3}\ra/3+ h \la x, \mv1\ra$.
Let $\nabla f(x) = (\partial_1 f(x), \ldots, \partial_N f(x)) $ be the gradient of $f$, where $\partial_i f(x) =  \beta m_i(x) +h$. Set
\[ \cV  = \big \{ \nabla f(x): x \in [\kappa_{-}, \kappa_{+}]^N \big \} \subseteq \mathbb{R}^N.\]
The Gaussian width of a subset $K$ of $\mathbb{R}^N$ is given by 
\[ \mathsf{GW}(K) =  \E \sup_{ x \in K} \langle x, g \rangle, \]
where $g$ is a standard Gaussian vector on $\mathbb{R}^N.$  

A crucial ingredient in our proof is the following mean-field upper bound from \cite{augeri2020nonlinear} for the log-partition function under a general spin distribution, where the error term is controlled by the Gaussian width of the image of $\nabla f$.
\begin{thm}[Corollary~1.2 of \cite{augeri2020nonlinear}, reformulated in our case] \label{thm:augeri}
There is a numerical constant $C>0$ such that 
    \[
    \log Z_N(\gb,h) \le \sup_{t \in [\kappa_{-},\kappa_{+}]^N} \Big ( \gb \la A, t^{\otimes 3}\ra/3 + h \la t, \mv1\ra - I(t)\Big) + C \kappa^{2/3} N^{1/3}  \mathsf{GW}(\cV)^{2/3}.
    \]
\end{thm}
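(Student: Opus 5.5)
The statement is a direct import of \cite[Corollary~1.2]{augeri2020nonlinear}, so the plan is not to reprove the nonlinear large deviation bound. Instead I will check that our model fits Augeri's framework and track how the constants scale. Augeri's corollary concerns a product reference measure $\nu^{\otimes N}$ with $\nu$ supported in $[-1,1]$ and a $C^1$ function $F$ on the cube. It bounds $\log\int e^{F}\,d\nu^{\otimes N}$ by $\sup_{y}\bigl(F(y)-\sum_i I_\nu(y_i)\bigr)$ plus an error of order $N^{1/3}\,\mathsf{GW}(\nabla F([-1,1]^N))^{2/3}$, where $I_\nu$ is the Legendre transform of the log-Laplace transform of $\nu$.

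First I will rescale. Since $\mu$ is supported in $[\kappa_-,\kappa_+]\subseteq[-\kappa,\kappa]$, I set $y=x/\kappa$. Let $\nu$ be the push-forward of $\mu$ under $z\mapsto z/\kappa$, which is supported in $[-1,1]$. Define $F(y):=f(\kappa y)$, which is a cubic polynomial and hence smooth on $[-1,1]^N$. Then $Z_N(\gb,h)=\int e^{F}\,d\nu^{\otimes N}$.

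Next I will match the rate functions. Note that $\Lambda_\nu(\lambda)=\Lambda(\lambda/\kappa)$, so that $I_\nu(s)=I(\kappa s)$. Substituting $t=\kappa y$ then turns Augeri's variational term into $\sup_{t\in[\kappa_-,\kappa_+]^N}\bigl(f(t)-I(t)\bigr)$. Points outside $[\kappa_-,\kappa_+]^N$ contribute $-\infty$ because $I=\infty$ there, so enlarging the cube to $[-\kappa,\kappa]^N$ is harmless.

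For the error term, compute $\nabla F(y)=\kappa\,\nabla f(\kappa y)$. The image of $[-1,1]^N$ under $\nabla F$ is therefore contained in $\kappa\cdot\{\nabla f(x):x\in[-\kappa,\kappa]^N\}$, so its Gaussian width is $\kappa$ times that of the corresponding gradient image. Raising to the power $2/3$ produces the factor $\kappa^{2/3}$ in the statement. The constant shift by $h\mathbf 1$ in the gradient does not change the Gaussian width, since $\mathbb{E}\langle h\mathbf 1,g\rangle=0$. So only the cubic part $\gb m(x)$ matters.

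The step I expect to need the most care is the domain of $\mathcal V$. As written, $\mathcal V$ is the gradient image over $[\kappa_-,\kappa_+]^N$, whereas the rescaling naturally produces the image over the symmetric cube $[-\kappa,\kappa]^N$. My first attempt is to apply Augeri's result directly on the box $[\kappa_-,\kappa_+]^N$, affinely rescaled to $[-1,1]^N$ by $z\mapsto(2z-\kappa_--\kappa_+)/(\kappa_+-\kappa_-)$, which is legitimate since the result is stated for general product measures on a cube. The Lipschitz factor of this affine map is $(\kappa_+-\kappa_-)/2\le\kappa$, so the gradient image is then exactly the stated $\mathcal V$ scaled by at most $\kappa$. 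I will also recheck Augeri's precise normalization, in particular whether her error is written as $N^{1/3}\mathsf{GW}^{2/3}$ or in an equivalent form with a different power of the diameter. Any mismatch only changes the numerical constant $C$ or the power of $\kappa$, which is fixed in the later applications and does not affect the $o(N)$ conclusions drawn from this theorem.
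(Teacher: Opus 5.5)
Your proposal is correct and follows the same route as the paper, which simply imports Augeri's Corollary~1.2 without further argument. Your affine rescaling $z\mapsto (2z-\kappa_--\kappa_+)/(\kappa_+-\kappa_-)$ is the right way to match the rate functions, $I_\nu(s)=I\bigl(a_*+\tfrac{\kappa_+-\kappa_-}{2}s\bigr)$, and to make the gradient image exactly $\tfrac{\kappa_+-\kappa_-}{2}\,\cV$; the identity $\mathsf{GW}(c\cV)=c\,\mathsf{GW}(\cV)$ with $c\le\kappa$ then gives precisely the factor $\kappa^{2/3}$, so your closing hedge about the power of $\kappa$ is unnecessary.
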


\begin{lem}[Gaussian width bound]\label{lem:GW_bound}
We have
\[ \mathsf{GW}(\cV) \le \sqrt{2} \kappa^2 |\beta| N \sqrt{\log N} \big \| \sum_i A^2_i \big \|^{1/2}_{\mathrm{op}}.  \]
\end{lem}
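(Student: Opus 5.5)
The plan is to reduce the Gaussian width of $\cV$ to a supremum of a Gaussian quadratic form over the cube, and to control that supremum with a union bound over the coordinates of $g$. Since $\nabla f(x)=\beta m(x)+h\mathbf 1$ and $\E\langle h\mathbf 1,g\rangle=0$, we have
\[
\mathsf{GW}(\cV)=\E\sup_{x}\beta\langle m(x),g\rangle = |\beta|\,\E\sup_{x}\sigma\langle m(x),g\rangle ,
\]
where $\sigma=\mathrm{sign}(\beta)$. By symmetry of $g$ the sign can be dropped. Writing $\langle m(x),g\rangle=\sum_i g_i x^T A_i x = x^T M_g x$ with $M_g:=\sum_i g_iA_i$, it therefore suffices to show
\[
\E\sup_{x\in[\kappa_-,\kappa_+]^N} x^TM_gx\le \sqrt2\,\kappa^2N\sqrt{\log N}\,\Big\|\sum_iA_i^2\Big\|_{\rm op}^{1/2}.
\]

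First, bound the quadratic form crudely. Since $\|x\|_2^2\le\kappa^2N$, we get $x^TM_gx\le \kappa^2N\|M_g\|_{\rm op}$, and hence $\mathsf{GW}(\cV)\le|\beta|\kappa^2N\,\E\|M_g\|_{\rm op}$. The problem is now to show $\E\|M_g\|_{\rm op}\le\sqrt{2\log N}\,\sigma_*$ with $\sigma_*^2:=\|\sum_iA_i^2\|_{\rm op}$. This is precisely the noncommutative Khintchine / matrix Gaussian series bound (Tropp): for a symmetric Gaussian series $\sum_i g_iA_i$ of $N\times N$ matrices,
\[
\E\Big\|\sum_i g_iA_i\Big\|_{\rm op}\le\sqrt{2\sigma_*^2\log(2N)} .
\]

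To match the stated constant exactly, I would run the matrix Laplace transform argument directly. For any $\theta>0$,
\[
\E\lambda_{\max}(M_g)\le\theta^{-1}\log\E\,\mathrm{tr}\,e^{\theta M_g}\le\theta^{-1}\log\Big(N e^{\theta^2\sigma_*^2/2}\Big),
\]
where the second inequality uses Lieb's concavity theorem together with $\E e^{\theta g_iA_i}$-type bounds (the Gaussian mgf is $e^{\theta^2A_i^2/2}$). Optimizing in $\theta$ gives $\sqrt{2\sigma_*^2\log N}$. Only $\lambda_{\max}$ is needed, not the operator norm, because we bound $\sup_x x^TM_gx\le\kappa^2N\max(\lambda_{\max}(M_g),0)$, which avoids the factor $2N$ from symmetrization. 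One subtlety is that the bound $\theta^{-1}\log(\cdot)$ bounds $\E\lambda_{\max}$, which could be negative; the positive part is then handled trivially. Another is the case $\lambda_{\max}<0$, where the supremum over $x$ is at most $0$ and the bound is immediate.

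The only genuine obstacle is the matrix concentration step. Everything else is bookkeeping: removing the $h$ term, the sign of $\beta$, and $\|x\|^2\le\kappa^2N$. If a clean citation is preferred, I would cite Tropp's Gaussian series theorem for the $\lambda_{\max}$ expectation bound, $\sqrt{2\sigma_*^2\log N}$, directly. Multiplying through then yields $\mathsf{GW}(\cV)\le\sqrt2\kappa^2|\beta|N\sqrt{\log N}\,\|\sum_iA_i^2\|_{\rm op}^{1/2}$.
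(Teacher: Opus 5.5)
Your argument has the same skeleton as the paper's proof. You drop the $h\langle \mathbf 1,g\rangle$ term, enlarge the cube to the ball of radius $\kappa\sqrt N$ to pick up the factor $\kappa^2|\beta|N$, and finish with the matrix Gaussian series bound for $M_g=\sum_i g_iA_i$. The one real difference is in that last step. The paper keeps an absolute value and bounds $\E\|M_g\|_{\rm op}$, citing Tropp's Theorem~4.1.1 with $\sqrt{2\log N}$. For $N\times N$ matrices that theorem actually gives $\sqrt{2\log(2N)}$. Your route removes the sign of $\beta$ via $g\stackrel{d}{=}-g$ and uses the Hermitian $\lambda_{\max}$ version, $\E\lambda_{\max}(M_g)\le\sqrt{2\sigma_*^2\log N}$. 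That is what genuinely produces the constant as stated.

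For this to go through you need $\E\max(\lambda_{\max}(M_g),0)=\E\lambda_{\max}(M_g)$, and your justification of that step is not adequate. Your remark that $\sup_x x^TM_gx\le 0$ on $\{\lambda_{\max}<0\}$ only reduces matters to bounding $\E\lambda_{\max}^+$. The Laplace-transform bound controls $\E\lambda_{\max}$, not $\E\lambda_{\max}^+$. For a general symmetric Gaussian series, $\E\lambda_{\max}^+\le\sqrt{2\sigma_*^2\log N}$ can fail; take $N=1$ and $M_g=g_1a$ with $a>0$.

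The one-line fix is that each slice $A_i$ has zero diagonal ($A_{ijj}=0$). Hence $\mathrm{tr}(M_g)=\sum_i g_i\,\mathrm{tr}(A_i)=0$, so $\lambda_{\max}(M_g)\ge 0$ for every realization of $g$. With this, $\sup_x x^TM_gx\le\kappa^2N\lambda_{\max}(M_g)$ holds pointwise, and the rest of your proof is complete.
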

\begin{proof}
 By definition,
\[
\mathsf{GW}(\cV)=\E \sup_{x\in[\kappa_-,\kappa_+]^N}\langle \nabla f(x),g\rangle =\E \Big [ h\langle \mv 1,g\rangle + \sup_{x\in[\kappa_-,\kappa_+]^N} \beta\sum_i g_i\langle x,A_i x\rangle \Big].
\]
Using $\E\langle \mv 1,g\rangle=0$, we obtain
\begin{align*}
\mathsf{GW}(\cV)
&\le  |\beta| \E \sup_{x\in[\kappa_-,\kappa_+]^N} \Big|    \sum_i g_i\langle x,A_i x\rangle \Big|    \le  |\beta| \E \sup_{ \| x\|_2 \le \kappa \sqrt{N}} \Big|   \big \langle x, \sum_i  g_i A_i x \big \rangle    \Big|\\
&\le   \kappa^2 |\beta| N \E \big \| \sum_i  g_i A_i \big \|_{\mathrm{op}}.
\end{align*}

Finally, by the matrix Gaussian series bound (e.g.\ Theorem~4.1.1 in \cite{tropp2015introduction}),
\[
\E\Big\|\sum_i g_i A_i\Big\|_{\mathrm{op}}
\le \sqrt{2\log N}\,\Big\|\sum_i A_i^2\Big\|_{\mathrm{op}}^{1/2}.
\]
Combining the last two displays yields the desired bound on $\mathsf{GW}(\cV)$.
\end{proof}

For a subset $K$ of $\mathbb{R}^N$ and for any $\delta>0$, let $\mathsf{N}(K, \delta)$ denote the covering number of $K$, that is, the smallest number of $\ell^2$-balls in $\mathbb{R}^N$ with radii $\delta$ needed to cover $K$. Note that $K \subseteq [-1,1]^N$ is of low complexity, in the sense of Definition~\ref{def:low_complex}, if
$\log \mathsf{N}(K,\delta \sqrt{N}) = o(N)$ for each fixed $\delta>0$.

 \begin{lem} \label{lem:GW_to_low_complexity}
 There exists a constant $C>0$ such that for any $\delta>0$ and any $K \subseteq \mathbb{R}^N$, 
    \[ \mathsf{N}(K, \delta)\le \exp \big( C \delta^{-2}\mathsf{GW}(K)^2 \big). \]
 \end{lem}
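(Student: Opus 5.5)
This is Sudakov's minoration inequality. I would cite it directly in the form $\delta\sqrt{\log \mathsf{N}(K,\delta)} \le C'\,\mathsf{GW}(K)$; squaring and rearranging gives the statement with $C=C'^2$. If a self-contained argument is preferred, there are two standard routes, and I would sketch the more elementary one.

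\emph{Dual Sudakov route.} First reduce to $K$ finite: if $K$ is infinite, take a finite $\delta$-separated subset of $K$ whose size is at least the finite quantity in question; its Gaussian width is no larger. So it suffices to take a maximal $\delta$-separated set $\{x_1,\dots,x_M\}\subseteq K$. Its size bounds $\mathsf{N}(K,\delta)$, since $\delta$-balls around the points cover $K$ by maximality.

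Let $W=\mathsf{GW}(K)$. The Gaussian width is translation invariant, because $\E\langle y,g\rangle=0$, so I would center at $x_1$. By symmetry of $g$, $\E\sup_{x,y\in K}\langle x-y,g\rangle=2W$. Now compare with the canonical process $\langle x_i,g\rangle$, whose increments have $L^2$ norm $\|x_i-x_j\|_2\ge\delta$. For $\xi_i$ i.i.d.\ standard normal, the process $\frac{\delta}{\sqrt2}\xi_i$ has increments with $L^2$ norm exactly $\delta$. Sudakov--Fernique (Slepian's comparison) then gives
\[
\frac{\delta}{\sqrt2}\,\E\max_{i\le M}\xi_i\le \E\max_i\langle x_i,g\rangle\le W.
\]
Combined with the elementary lower bound $\E\max_{i\le M}\xi_i\ge c\sqrt{\log M}$, valid for $M\ge2$, this yields $\log M\le C\delta^{-2}W^2$, which is the claim. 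When $M=1$ the bound is trivial, since $\exp(\cdot)\ge1$.

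\emph{Main obstacle.} The main obstacle is the comparison step. The comparison inequality requires both processes to be centered Gaussian, which holds here, and it needs increment domination rather than covariance domination, which is exactly the Sudakov--Fernique form. The lower bound on the expected maximum of i.i.d.\ Gaussians is routine: by a union-type argument, $P(\max_i\xi_i<\sqrt{\log M})\le(1-cM^{-1/2})^M\to0$, plus a fixed estimate for small $M$. Since Sudakov's inequality is classical (for instance Vershynin's HDP, Theorem 7.4.1), in the paper I would state that the lemma is Sudakov's minoration and give this sketch only as justification.
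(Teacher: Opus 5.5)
Your proposal is correct and takes the same approach as the paper, which simply invokes Sudakov's minoration in the form $\mathsf{GW}(K)\ge c\,\delta\sqrt{\log\mathsf{N}(K,\delta)}$ and rearranges. Your optional sketch is the standard Sudakov--Fernique proof, not the ``dual Sudakov'' route despite the label, and it works; the only imprecision is that the Gaussian tail at $\sqrt{\log M}$ is of order $M^{-1/2}(\log M)^{-1/2}$ rather than $M^{-1/2}$, which still gives $P(\max_i\xi_i<\sqrt{\log M})\to 0$.
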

 \begin{proof}
Sudakov's minoration inequality on $\mathbb{R}^N$  states that there exists a constant $c>0$ such that for any $K \subseteq \mathbb{R}^N$ and for any $\delta>0$, 
\[ \mathsf{GW}(K) \ge c \delta \sqrt{\log \mathsf{N}(K, \delta)},\]
from which the desired upper bound on the covering number of $K$ follows immediately.
 \end{proof}

\begin{proof}[Proof of Theorem~\ref{thm:mean_field}]
(i)  By the mean-field condition~\eqref{cond:GW} and Lemma~\ref{lem:GW_bound}, we obtain $\mathsf{GW}(\cV) = o(N).$   Then Lemma~\ref{lem:GW_to_low_complexity} implies that, for every fixed $\delta>0$,
\[
\log \mathsf{N}(\cV,\delta\sqrt N)
\le
C\delta^{-2}\frac{\mathsf{GW}(\cV)^2}{N}
=o(N).
\]

The upper bound in part (i) now follows directly from Theorem~\ref{thm:augeri}, whereas the lower bound follows from \cite[Theorem~1]{Yan20} and does not require the mean-field assumption.

\noindent (ii) In part (i), we already showed that $\mathsf{GW}(\cV) = o(N)$. Together with Lemma~\ref{lem:GW_to_low_complexity}, this implies that $\log \mathsf{N}(\cV,\delta\sqrt{N}) = o(N)$ for each $\delta>0$. We can express 
\[ b_i(x) = \gL'(\partial_i f(x)), \quad \phi_h(t) = \Lambda'\Big( \tfrac{t + 2h}{3}\Big).\]
By \eqref{eq:lambda-lipsch},   $\gL'$ is $\kappa^2$-Lipschitz. Therefore, we deduce that
\[ \log \mathsf{N}(\mathcal{M}_N, \delta\sqrt{N}) \le \log \mathsf{N}(\cV, \delta \kappa^{-2} \sqrt{N}) = o(N) \] for each fixed $\delta>0$. Hence, the set $\mathcal{M}_N$ has low complexity.
\end{proof}

\subsection{Proof of Theorem~\ref{cor:gap_implies_estimation}}

The proof of Theorem~\ref{cor:gap_implies_estimation} is based on the following proposition.
\begin{prop}\label{thm:conc}
    Suppose the Assumption~\ref{ass:standing} and the mean-field condition~\eqref{cond:GW} hold and  let $X\sim P_{\gb_0, h_0}$. Fix $\delta>0$, and let
    $$S_N:=\Big\{x \in [\kappa_-,\kappa_+]^N: \big(f(x) - I(x) \big) - \sup_{y\in [\kappa_-,\kappa_+]^N}\big( f(y) - I(y)\big)\le -N\delta \Big \}.$$ Then we have $\bP(b({ X})\in S_N)\to 0.$
\end{prop}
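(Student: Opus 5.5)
The plan is to reduce the proposition to Lemma~\ref{lem:measure-change}(ii), taking $C_N:=S_N$ and $E_N:=\mathcal M_N$, the set of all conditional mean vectors. By Theorem~\ref{thm:mean_field}(ii), $\mathcal M_N$ has low complexity in the sense of Definition~\ref{def:low_complex}. Trivially $b(X)\in E_N$ always holds.

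\textbf{Step 1: verify hypothesis \eqref{eq:non-mean-field-cond}.} We need a deterministic sequence $\varepsilon_N\to0$ such that $P_{\beta_0,h_0}(|f(X)-f(b(X))|\le N\varepsilon_N)\to1$. Start from the bound \eqref{eq:fbound}:
\[
|f(X)-f(b(X))|\le \frac{|\beta_0|}{3}\Big|\sum_{i,j,k}A_{ijk}\bigl(X_iX_jX_k-b_i(X)b_j(X)b_k(X)\bigr)\Big|+|h_0|\Big|\sum_i\bigl(X_i-b_i(X)\bigr)\Big| .
\]
\begin{itemize}
\item Under the mean-field condition \eqref{cond:GW}, \eqref{eq:concen5} shows that the second moment of the first term is $o(N^2/\log N)$.
\item By \eqref{eq:concen2}, the second moment of the second term is $O(N)$.
\end{itemize}
Chebyshev's inequality then gives $|f(X)-f(b(X))|=o_p(N)$. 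A standard diagonal argument converts this into a deterministic sequence $\varepsilon_N\downarrow0$ with the required probability tending to one. In particular the $\liminf$ in \eqref{eq:non-mean-field-cond} equals $1>0$.

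\textbf{Step 2: verify the variational gap in Lemma~\ref{lem:measure-change}(ii).} This holds directly from the definition of $S_N$:
\[
\sup_{y\in S_N}\bigl(f(y)-I(y)\bigr)\le \sup_{y}\bigl(f(y)-I(y)\bigr)-N\delta .
\]
So the $\liminf$ in the hypothesis is at least $\delta>0$. If $S_N$ happens to be empty, the statement is trivial.

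\textbf{Step 3: conclude.} Lemma~\ref{lem:measure-change}(ii) yields
\[
P\bigl(|f(X)-f(b(X))|\le N\varepsilon_N,\ b(X)\in S_N\cap\mathcal M_N\bigr)\to0 .
\]
Since $b(X)\in\mathcal M_N$ always, and the event $|f(X)-f(b(X))|\le N\varepsilon_N$ has probability tending to one, it follows that $P(b(X)\in S_N)\to0$.

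\textbf{Main obstacle.} The only delicate point is Step 1, namely obtaining $f(X)-f(b(X))=o_p(N)$ without any assumption on $T_N$. Lemma~\ref{lem:approx}(ii) only delivers this on the set $\mathcal A_N$. Here it must instead come from the mean-field concentration \eqref{eq:concen5}, which is exactly where \eqref{cond:GW}, through Lemma~\ref{lem:trace-optr}, is used.

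A secondary check is that Lemma~\ref{lem:measure-change}(i), invoked inside part (ii), is legitimate here. It is, since its hypothesis is exactly what Step 1 verifies; alternatively, Theorem~\ref{thm:mean_field}(i) gives part (i) directly.
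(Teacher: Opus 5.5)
Your proposal is correct and follows the paper's proof essentially step for step. The paper likewise obtains $\mathbb{E}\bigl(f(X)-f(b(X))\bigr)^2=o(N^2)$ from \eqref{eq:concen2} and \eqref{eq:concen4}, combined with \eqref{cond:GW} and Lemma~\ref{lem:trace-optr}; it then takes $E_N=\mathcal{M}_N$ via Theorem~\ref{thm:mean_field}(ii) and applies Lemma~\ref{lem:measure-change}(ii) with $C_N=S_N$. Your side checks on the empty-$S_N$ case and on using part (i) inside part (ii) are valid points that the paper leaves implicit.
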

\begin{proof}[Proof of Proposition~\ref{thm:conc}]
By \eqref{eq:concen2}, with $a_i=1$, and by \eqref{eq:concen4} of Corollary \ref{lem:concen}, together with the mean-field condition \eqref{cond:GW} and the Lemma~\ref{lem:trace-optr}, we have
\[\E\Big[\sum_i(X_i-b_i(X))\Big]^2=o(N^2),\qquad \E\Big[\sum_{ijk}A_{ijk}(X_iX_jX_k-b_i(X)b_j(X)b_k(X))\Big]^2=o(N^2).\]
This immediately implies that 
$\E \big( f(X)-f(b(X))\big)^2=o(N^2)$. Thus, there exists a sequence of non-negative reals
$(\varepsilon_N)_{N\ge 1}$ converging to $0$, such that
 \begin{align*}
 \mathbb{P}(|f(X)-f(b(X))|\le N\varepsilon_N)\to 1.
 \end{align*}
By  Theorem~\ref{thm:mean_field}(ii), the set $\cM_N:=\{b(x):x\in[\kappa_-,\kappa_+]^N\}$ has low complexity. Taking $E_N=\cM_N$, we have $\bP(b(X)\in E_N)=1$. Therefore the hypothesis~\eqref{eq:non-mean-field-cond} of Lemma~\ref{lem:measure-change} is satisfied.

Now take $C_N=S_N$ in Lemma~\ref{lem:measure-change}(ii). By the definition of $S_N$,
\[
\sup_{y\in S_N}\big( f(y)-I(y) \big)
\le
\sup_{y\in[\kappa_-,\kappa_+]^N}\big( f(y)-I(y)\big)-\delta N,
\]
so the variational gap condition in Lemma~\ref{lem:measure-change}(ii) holds. Hence
\[
\bP\big(|f(X)-f(b(X))|\le N\varepsilon_N,\ b(X)\in S_N\cap E_N\big)\to0.
\]
Since the event $\{|f(X)-f(b(X))|\le N\varepsilon_N,\ b(X)\in E_N\}$
has probability tending to one, it follows that $\bP(b(X)\in S_N)\to 0$.
This proves the proposition.
\end{proof}

We are now ready to prove Theorem~\ref{cor:gap_implies_estimation}.
\begin{proof}[Proof of Theorem~\ref{cor:gap_implies_estimation}]
If $\beta_0=0$, then $f(y)-I(y)=\sum_i \bigl(h_0 y_i-I(y_i)\bigr)$
and it is easy to see that this is maximized at a constant vector $t\mathbf{1}$. Since such a vector satisfies $\sum_i (y_i-\bar y)^2=0$,
this contradicts the assumed variational gap. Therefore, necessarily $\beta_0\neq~0$.

Let  
\[
E_N:=\bigl\{y\in[\kappa_-,\kappa_+]^N:\sum_i (y_i-\bar y)^2\le \eps N\bigr\}.
\]
By the assumed variational gap, for all sufficiently large $N$ we have
\[
E_N \subseteq
S_N:=\Bigl\{y\in[\kappa_-,\kappa_+]^N:
f(y)-I(y)-\sup_{z\in[\kappa_-,\kappa_+]^N}\bigl(f(z)-I(z)\bigr)\le -\delta N
\Bigr\},
\]
where $S_N$ is the set appearing in Proposition~\ref{thm:conc}. Therefore,  Proposition~\ref{thm:conc} yields
\begin{equation}\label{eq:b_not_constant}
\bP\bigl(b(X)\in E_N\bigr) =   \bP \Bigl(\sum_i (b_i(X)-\bar b(X))^2\le \eps N\Bigr)\to 0.  
\end{equation}

From the mean value theorem and Assumption~\ref{ass:standing}, it follows  that  for each $i,j\in[N]$, 
\[
|b_i(X)-b_j(X)|
=
\bigl|\Lambda'(\beta_0 m_i(X)+h_0)-\Lambda'(\beta_0 m_j(X)+h_0)\bigr|
\le |\beta_0| \kappa^2 |m_i(X)-m_j(X)|,
\]
Consequently, \eqref{eq:b_not_constant} implies that 
\[
\bP\bigl(T_N(X)\le \eps |\beta_0|^{-2} \kappa^{-4}\bigr)
\to 0,\]
as claimed.
\end{proof}

\subsection{Proof of Theorem~\ref{thm:est-imposs}}

\begin{lem}\label{lem:spec-help}
Assume Assumption~\ref{ass:standing} and conditions~\eqref{cond:regu}, \eqref{cond:A_nontrivial} and~\eqref{cond:spec-gap} hold. Then given $\eps>0$,  there exists a constant $\eta>0$ such that, for all sufficiently large $N$ and all $z\in[\kappa_-,\kappa_+]^N$ such that 
$\sum_i (z_i - \bar z)^2 \ge \eps N$ where $\bar z := N^{-1}\sum_i z_i$, we have 
\[
\sum_{i,j} \cR_{ij}\,(z_i - z_j)^2  \ge \eta \sum_i (z_i - \bar z)^2.
\]
.
\end{lem}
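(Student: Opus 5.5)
The plan is to use the variational characterization of the spectral gap of $\cP=D^{-1}\cR$ (as in the proof of Lemma~\ref{lem:trace-optr-converse}):
\[
\tfrac12\sum_{i,j}\cR_{ij}(z_i-z_j)^2 \;\ge\; \delta\,\min_{a\in\bR}\sum_i \cR_i (z_i-a)^2 ,
\]
and then to compare the weighted variance $\min_a\sum_i\cR_i(z_i-a)^2$ with the unweighted one $\sum_i(z_i-\bar z)^2$. The comparison will use the asymptotic regularity condition~\eqref{cond:regu} together with $\bar\cR\ge\alpha$ from \eqref{cond:A_nontrivial}.

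For the comparison, fix the minimizer $a^*$ (the $\cR$-weighted mean, which lies in $[\kappa_-,\kappa_+]$). Then
\[
\sum_i\cR_i(z_i-a^*)^2=\bar\cR\sum_i(z_i-a^*)^2+\sum_i(\cR_i-\bar\cR)(z_i-a^*)^2 .
\]
The first term is at least $\alpha\sum_i(z_i-\bar z)^2$, since the unweighted mean minimizes $\sum_i(z_i-a)^2$. For the second term, Cauchy--Schwarz and $|z_i-a^*|\le 2\kappa$ give
\[
\Bigl|\sum_i(\cR_i-\bar\cR)(z_i-a^*)^2\Bigr|\le 4\kappa^2\sqrt N\Big(\sum_i(\cR_i-\bar\cR)^2\Big)^{1/2}=o(N)
\]
by \eqref{cond:regu}. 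The bound is uniform in $z$, so there is a deterministic $\epsilon_N\to0$ with error at most $\epsilon_N N$.

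We now use the hypothesis $\sum_i(z_i-\bar z)^2\ge\eps N$. For $N$ large enough that $\epsilon_N\le\alpha\eps/2$, the error term is at most $\tfrac{\alpha}{2}\sum_i(z_i-\bar z)^2$. Hence
\[
\sum_{i,j}\cR_{ij}(z_i-z_j)^2\ge 2\delta\cdot\tfrac{\alpha}{2}\sum_i(z_i-\bar z)^2,
\]
and the statement holds with $\eta=\delta\alpha$.

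The main care point is the first step, the variational formula. One checks that for $\cP=D^{-1}\cR$, reversible with respect to $\pi_i\propto\cR_i$, the Dirichlet form is $\tfrac12\sum\cR_{ij}(u_i-u_j)^2$, and the Rayleigh quotient over $u\perp_\pi\mathbf 1$ gives $1-\lambda_2$. Positivity $\cR_i>0$ from \eqref{cond:A_nontrivial} makes $D^{-1}$ well defined. The other point is that the $o(N)$ error must be absorbed, and this is exactly why the lower bound $\eps N$ on the variance is needed.
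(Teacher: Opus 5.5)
Your proof is correct and follows the paper's argument: the spectral gap gives the Poincaré inequality for the walk reversible with respect to $\pi_i\propto\cR_i$. You then replace the $\pi$-weighted variance by the uniform one, use \eqref{cond:regu} to make the error $o(N)$ uniformly in $z$, absorb it with $\sum_i(z_i-\bar z)^2\ge\eps N$, and use $\bar\cR\ge\alpha$, which gives the same $\eta=\delta\alpha$. The only cosmetic difference is that the paper compares $\mathrm{Var}_\pi(z)$ and $\mathrm{Var}_\nu(z)$ through $\|\pi-\nu\|_1=o(1)$, whereas you split $\sum_i\cR_i(z_i-a^*)^2$ directly at the weighted mean.
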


\begin{proof}[Proof of Lemma~\ref{lem:spec-help} ]
Recall from Section~\ref{sec:intro} that
\[
\cP = D^{-1}\cR, \quad \text{where } D = \mathrm{diag}(\cR(1), \ldots, \cR(N)).
\]
This is the transition matrix of the random walk on the weighted graph with edge weights $\cR_{ij}$ for $i \ne j \in [N]$. A positive spectral gap ensures that the graph is connected, so $\cP$ is irreducible and there exists a unique stationary distribution $\pi$, given by
\[
\pi(i) = \frac{\cR_i}{N\bar{\cR}}, \ \  i  \in [N].
\]
It is easy to see that $\cP$ is reversible with respect to $\pi.$

Since $\cP$ satisfies the spectral gap condition~\eqref{cond:spec-gap}, we have the following Poincar\'e inequality: for any $f:[N] \to \bR$,
\begin{align}\label{eq:poincare}
    \cE(f,f) \ge (1 - \gl_2)\,\mathrm{Var}_{\pi}(f) \ge \gd\,\mathrm{Var}_{\pi}(f),
\end{align}
where the left-hand side is the associated Dirichlet form
\begin{align*}
    \cE(f,f) 
    := \frac12 \sum_{i,j} \pi(i)\cP(i,j)\big(f(i) - f(j)\big)^2
    = \frac{1}{2N\bar{\cR}} \sum_{i,j} \cR_{ij}\big(f(i) - f(j)\big)^2.
\end{align*}
Let $\nu$ be the uniform distribution on $[N]$. Then
\begin{align*}
\|\pi - \nu\|_1
&= \sum_i \bigl|\pi(i) - \nu(i)\bigr| = \sum_i \left|\frac{\cR_i}{N\bar{\cR}} - \frac{1}{N}\right| \\
&\le \frac{1}{N \alpha} \sum_i \bigl|\cR_i - \bar{\cR}\bigr| \le \frac{1}{\alpha} \sqrt{\frac{1}{N}\sum_i \bigl(\cR_i - \bar{\cR}\bigr)^2}.
\end{align*}
The penultimate inequality above follows from the assumption~\eqref{cond:A_nontrivial}, and the last inequality follows from the Cauchy--Schwarz inequality. Therefore, 
by the asymptotic regularity condition~\eqref{cond:regu}, we have 
\(
\norm{\pi - \nu}_1 = o(1).
\)
Since $|z_i| \le \kappa$ for all $i$, we have 
\begin{align*}
    \abs{\E_{\pi} z - \E_{\nu}z}  \le \kappa\norm{\nu- \pi}_1 = o(1), \qquad
    \abs{\E_{\pi} (z^2) - \E_{\nu}(z^2)}  \le \kappa^2 \norm{\nu- \pi}_1 = o(1),
\end{align*}
uniformly in $z \in [\kappa_{-}, \kappa_+]^N$. This implies that 
\(
\abs{\text{Var}_{\pi}(z) - \text{Var}_{\nu}(z)}  = o(1), 
\)
again uniformly in $z \in [\kappa_{-}, \kappa_+]^N$. Combining this with~\eqref{eq:poincare} applied to $f(i)=z_i$ for each $i \in [N]$, we obtain
\begin{align*}
    \frac{1}{2N\bar{\cR}} \sum_{i,j} \cR_{ij}(z_i - z_j)^2 
    \ge \gd\bigl(\text{Var}_{\nu}(z) - o(1) \bigr) = \gd \bigl(\frac{1}{N}\sum_i (z_i - \bar z)^2 - o(1)\bigr) \ge \frac{\gd}{2} \frac{1}{N}\sum_i (z_i - \bar z)^2,
\end{align*}
where the last inequality holds for sufficiently large $N$ since $N^{-1}\sum_i (z_i - \bar z)^2 \ge \eps.$ The lemma now follows from the assumption that $\bar{\cR} \ge \alpha >0.$
\end{proof}

\begin{proof}[Proof of Theorem~\ref{thm:est-imposs}]
Fix any $\eps>0$. Let 
\[ y \in \cC_N:=\{x\in [\kappa_-,\kappa_+]^N: \mathrm{Var}(x):= \frac{1}{N}\sum_i (x_i - \bar{x})^2 \ge \eps \}.\]
Throughout the proof, we will use notation $w=|y|$.  The main step is the following claim.

\noindent \textbf{Claim.} There exists a constant $c_0 > 0$ such that, for all sufficiently large $N$,
\begin{equation}\label{claim:separation_nonconstant}
   \sup_{y\in \cC_N: \mathrm{Var}(w) \ge \eps/2} \big( f(y) - I(y) \big)
    \le \sup_{y \in [\kappa_{-},\kappa_{+}]^N} \big( f(y) - I(y)\big) - c_0 N. 
\end{equation}
By Lemma~\ref{lem:aux-ineq1} and the symmetry of the tensor $A$, we obtain for every $y \in [\kappa_-,\kappa_+]^N$ that
\begin{align}\label{A_holder}
    \sum_{i,j,k} A_{ijk} y_iy_jy_k & \le   \sum_{i,j,k} A_{ijk} w_i^3 - \frac12 \sum_{i,j,k} A_{ijk} \big(w_i^{3/2} - w_{j}^{3/2}\big)^2 \nonumber \\
    & = \sum_{i,j,k} A_{ijk} w_i^3 - \frac12 \sum_{i,j} \cR_{ij} \big(w_i^{3/2} - w_j^{3/2}\big)^2.
\end{align}
 Applying Lemma~\ref{lem:elementary} with $c= 3/2$ to non-negative vector $w$ such that $\mathrm{Var}(w) \ge \eps/2$, we deduce that 
 \begin{align}\label{var_z_lb}
    \sum_i (z_i  - \bar z)^2 \ge (\eps/2)^{3/2} N,  \qquad \text{ where } z_i := w_i^{3/2} .
\end{align}
Set
\[
    a_\kappa:=\frac{\kappa_+-\kappa_-}{\kappa^{3/2}},
    \qquad
    \widetilde z_i:=\kappa_-+a_\kappa z_i .
\]
Since $0\le z_i\le \kappa^{3/2}$, we have
$\widetilde z\in[\kappa_-,\kappa_+]^N$. Moreover, by
\eqref{var_z_lb},
\[
    \sum_i
    (\widetilde z_i-\overline{\widetilde z})^2
    =
    a_\kappa^2\sum_i(z_i-\bar z)^2
    \ge
    a_\kappa^2\left(\frac{\varepsilon}{2}\right)^{3/2}N.
\]
Applying Lemma~\ref{lem:spec-help} to $\widetilde z$ and cancelling
the common factor $a_\kappa^2$, there exists $\eta>0$ such that,
for all sufficiently large $N$,
\[
    \sum_{i, j} \cR_{ij}(z_i-z_j)^2
    \ge
    \eta\sum_i(z_i-\bar z)^2.
\]
Combining this with \eqref{A_holder}, we obtain

\begin{align}\label{A_holder2}
     \sum_{i,j,k} A_{ijk} y_i y_j y_k
     &\le \sum_{i,j,k} A_{ijk} w_i^3
       - \frac{\eta}{2} \sum_i (z_i - \bar z)^2 \le \sum_{i,j,k} A_{ijk} |y_i|^3
       - 2^{-5/2}\eta \eps^{3/2} N,
\end{align}
where the second inequality follows from \eqref{var_z_lb}.

Now fix $y \in \cC_N$ with $\mathrm{Var}(w) \ge \eps/2$. By \eqref{A_holder2}, using that $\beta_0>0$, $h_0\ge 0$, and stochastic non-negativity, we obtain that for sufficiently large $N$
\begin{align*}
    f(y) - I(y) & \le \gb_0/3 \sum_{i,j,k} A_{ijk} \abs{y_i}^3 + h_0 \sum_{i} |y_i| - \sum_{i} I(|y_i|) - c_1N \\
     &= \sum_{i} \big( \gb_0/3 \cR_i  \abs{y_i}^3 + h_0  |y_i| -  I(|y_i|) \big) - c_1N,
\end{align*}
where $c_1 = \frac{\gb_0}{3} 2^{-5/2}\eta \eps^{3/2} >0$. Since $\sum_i (\cR_i - \bar \cR)^2  = o(N)$ and $|y_i| \le \kappa,$ it follows from Cauchy-Schwarz inequality that 
\[ \sup_{y \in [\kappa_-, \kappa_+]^N} \frac13\Big| \sum_{i}  \gb_0 \cR_i  \abs{y_i}^3  - \sum_{i}  \gb_0 \bar \cR  \abs{y_i}^3  \Big|  = o(N).  \]
Therefore, 
     \begin{align*}
     \big( f(y) - I(y) \big) &\le \sup_{y \in \cC_N} \Big\{ \sum_{i} \big( \frac{\gb_0}{3} \bar \cR  \abs{y_i}^3 + h_0  |y_i| -  I(|y_i|) \big)\Big\}  - c_1N + o(N) \\
    &\le N \sup_{t \in [\kappa_-, \kappa_+],  \ t \ge 0} \big( \frac{\gb_0}{3} \bar \cR  t^3 + h_0  t -  I(t) \big) - c_1N + o(N) \\
      &= \sup_{t \in [\kappa_-, \kappa_+],  \ t \ge 0} \big( f(t \cdot \mv1) - I(t\cdot \mv1)\big) - c_1N + o(N) \\
    & \le \sup_{y \in [\kappa_-,\kappa_+]^N} \big( f(y) - I(y)\big) - c_1N + o(N).
\end{align*}
Now taking the supremum of the left-hand side over all $y\in \cC_N$ with $\mathrm{Var}(w)\ge \eps/2$, and setting  $c_0 = c_1/2$, we obtain \eqref{claim:separation_nonconstant} .

Next, under the assumption that either $\kappa_{-} \ge 0$ or $h_0 > 0$, we remove the additional constraint $\mathrm{Var}(w)\ge \eps/2$ from the left-hand side of \eqref{claim:separation_nonconstant}.

Assume that $h_0 >0.$ 
We first claim that we can find $\varrho = \varrho (\kappa, \eps)>0$ such that for any $y \in \cC_N$,  
either
\begin{enumerate}
    \item [(1)]  $|\{ i \in [N] : y_i < -\varrho \}| \ge \varrho N $, or 
\item [(2)] $\mathrm{Var}(w) \ge \eps/2.$
\end{enumerate}
Indeed if (1) fails, then $|\{i:y_i<-\varrho\}|<\varrho N$. Writing $d:=w-y$, we have
\[
\sqrt{\mathrm{Var}(w)}
\ge \sqrt{\mathrm{Var}(y)}-\sqrt{\mathrm{Var}(d)}
\ge \sqrt{\eps}-\Big(\frac1N\sum_i d_i^2\Big)^{1/2}.
\]
Also, $d_i=|y_i|-y_i = 2|y_i| 1_{ \{ y_i < 0 \} }$. So, $d_i^2\le 4\varrho^2$ on $\{-\varrho<y_i<0\}$ and $d_i^2\le 4\kappa^2$ on $\{y_i<-\varrho\}$. Hence
\[
\frac1N\sum_i d_i^2
\le 4\varrho^2+4\kappa^2\frac{|\{i:y_i<-\varrho\}|}{N}
\le 4\varrho^2+4\kappa^2\varrho,
\]
and therefore
\[
\sqrt{\mathrm{Var}(w)}
\ge
\sqrt{\eps}-2\sqrt{\varrho^2+\kappa^2\varrho}.
\]
Choosing $\varrho=\varrho(\kappa,\eps)>0$ small enough so that $2\sqrt{\varrho^2+\kappa^2\varrho}\le \sqrt{\eps}-\sqrt{\eps/2},$
we get $\mathrm{Var}(w)\ge \eps/2$.

Now let $y \in \cC_N$ such that $\mathrm{Var}(w) < \eps/2$. Then there exists $\varrho>0$ such that $|\{ i \in [N] : y_i < -\varrho \}| \ge \varrho N.$ Since $A_{ijk}\ge 0$, $\beta_0>0$, $h_0>0$, and by stochastic non-negativity, we have
\begin{align*}
    f(y)  - I(y) \le f(|y|)  - I(|y|)  - h_0 \sum_{i: y_i < 0} (|y_i| - y_i) \le f(|y|)  - I(|y|)  - 2h_0 \varrho^2 N.
\end{align*}
The above inequality, combined with \eqref{claim:separation_nonconstant}, yields
\begin{equation}\label{claim:separation_nonconstant2}
   \sup_{y\in \cC_N} \big( f(y) - I(y) \big)
    \le \sup_{y \in [\kappa_{-},\kappa_{+}]^N} \big( f(y) - I(y)\big) - \vartheta N, 
\end{equation}
for some constant $\vartheta>0$.  If $\kappa_{-} \ge 0$, then $w=y$, so for every $y\in \cC_N$, we have $\mathrm{Var}(w) \ge \eps \ge \eps/2$. Hence the constraint in \eqref{claim:separation_nonconstant} is automatically satisfied, and \eqref{claim:separation_nonconstant2} follows.

Consequently, $\cC_N \subseteq S_N$, where $S_N$ is as defined in Proposition~\ref{thm:conc}. Under the mean-field condition~\eqref{cond:GW}, Proposition~\ref{thm:conc} yields
\begin{equation}\label{eq:b_conv_prob_0}
 \bP\bigl(b(X) \in \cC_N\bigr)
= \bP\Bigl(\sum_i \bigl(b_i(X) - \bar b(X)\bigr)^2 \ge \eps N\Bigr)
\to 0 \quad \text{as } N\to\infty,   
\end{equation}
which holds for any fixed $\eps>0$.
Recall that $|m_i(x)|\le \gamma \kappa^2$. Since $\Lambda''$ is continuous and strictly positive, set
\( c_* = \inf_{|t| \le \gamma \kappa^2 } \Lambda''(\beta_0 t + h_0) >0.  
\)
Then by the mean value theorem, for any $x \in [\kappa_{-}, \kappa_+]^N$,
\[ |b_i(x) - b_j(x)|  = \big|\Lambda'(\beta_0 m_i(x)+ h_0) - \Lambda'(\beta_0 m_j(x)+ h_0) \big|  \ge \beta_0 c_* | m_i(x)  -   m_j(x)|.  \]
Hence, $\sum_i \bigl(m_i(X) - \bar m(X)\bigr)^2 \le (\beta_0 c_*)^{-2} \sum_i \bigl(b_i(X) - \bar b(X)\bigr)^2 $.  Therefore, by \eqref{eq:b_conv_prob_0}, $T_N(X) = o_p(1),$ as desired.

\section{Acknowledgements}
SM gratefully acknowledges NSF for partial support during this research (DMS-2515519). The research of AS is partly supported by Simons Foundation MP-TSM-00002716.
\end{proof}


\bibliography{aap-submission/est}

\appendix

\section{Proof of Theorem~\ref{thm:main-1}}\label{sec:proof of main-1}

In this section, we prove Theorem~\ref{thm:main-1}. The pseudo-likelihood function is strongly concave whenever $T_N(X)>0$, and by hypothesis this occurs with probability tending to one. Hence, to establish the existence of the MPLE it suffices to show that, with probability tending to one,
\begin{equation}\label{pseudo_diverges}
   \lim_{\abs{\gb}+\abs{h} \to \infty} L(\gb, h|X) = -\infty.  
\end{equation}
 We first state the following sufficient condition for~\eqref{pseudo_diverges}, which is of similar nature as in~\cite[Lemma 2.3]{CSW24}.
 
\begin{lem}\label{lem:suff_cond}
Let $x \in [\kappa_-, \kappa_+]^N$ such that $I(x_i) < \infty$ for each $i$ and assume that there exist distinct $1 \le i, j, k, \ell \le N$ and $a \in \bR$ such that
    \begin{align} \label{suff_cond}
    \begin{split}
        & x_i \neq \kappa_-, m_i(x) >a, \,\, x_j \neq \kappa_+,  m_j(x) > a, \\
        & x_k \neq \kappa_-,  m_k(x) < a, \,\,x_l \neq \kappa_+,  m_l(x) < a.
        \end{split}
    \end{align}
	Then $\lim_{ |\beta| + |h| \to \infty}  L(\beta, h| x)  =  - \infty.$    
\end{lem}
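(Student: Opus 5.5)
The plan is to show directly that the pseudo-log-likelihood, as a function of $(\beta,h)$, tends to $-\infty$ along every ray. Write $L(\beta,h\mid x)=\sum_r \big[x_r(\beta m_r(x)+h)-\Lambda(\beta m_r(x)+h)\big]$ plus terms independent of $(\beta,h)$. Each summand is concave in $(\beta,h)$, since $\Lambda$ is convex. Hence every summand is bounded above by its supremum, namely $I(x_r)<\infty$. So it suffices to find, for every direction $(u,v)$ on the unit circle with $(\beta,h)=s(u,v)$ and $s\to\infty$, at least one summand that tends to $-\infty$ linearly in $s$. We also need this uniformly in direction, which compactness of the circle will give.

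For a single coordinate $r$, set $\lambda=s(u m_r+v)$. The one-dimensional function $\lambda\mapsto x_r\lambda-\Lambda(\lambda)$ behaves as follows:
\begin{itemize}
\item As $\lambda\to+\infty$, $\Lambda(\lambda)=\kappa_+\lambda+o(\lambda)$, because $\kappa_+\in\mathrm{supp}(\mu)$. So the summand is $(x_r-\kappa_+)\lambda+o(\lambda)\to-\infty$ whenever $x_r\neq\kappa_+$.
\item As $\lambda\to-\infty$, symmetrically, the summand tends to $-\infty$ whenever $x_r\neq\kappa_-$.
\end{itemize}
Thus a coordinate with $x_r\neq\kappa_+$ penalizes directions where $u m_r+v>0$, and a coordinate with $x_r\ne\kappa_-$ penalizes directions where $u m_r+v<0$.

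The case analysis then runs over the sign of $u$.
\begin{itemize}
\item If $u>0$: when $v\ge -ua$, we have $u m_j+v>u(m_j-a)>0$, so coordinate $j$ penalizes (using $x_j\neq\kappa_+$). When $v<-ua$, we have $u m_k+v<u(m_k-a)<0$, so coordinate $k$ penalizes (using $x_k\ne\kappa_-$).
\item If $u<0$: when $v\le -ua$, $u m_i+v<0$, so coordinate $i$ penalizes (using $x_i\ne\kappa_-$). When $v>-ua$, $u m_\ell+v>0$, so coordinate $\ell$ penalizes (using $x_\ell\neq\kappa_+$).
\item If $u=0$: the sign of $v$ picks out $j$ or $i$.
\end{itemize}
In every case some summand has slope coefficient $(u m_r+v)(x_r-\kappa_\pm)$ that is strictly negative, i.e. it decays at rate $c(u,v)\,s$ with $c(u,v)>0$.

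The main obstacle is uniformity over directions, since $c(u,v)$ degenerates at the boundary lines $v=-ua$. I would handle this as follows. For each of the four coordinates, $\psi_r(u,v):=\lim_{s\to\infty}s^{-1}\big[x_r s(um_r+v)-\Lambda(s(um_r+v))\big]$ is the continuous, piecewise-linear function $(x_r-\kappa_+)(um_r+v)_+ + (\kappa_- - x_r)(um_r+v)_-$, which is $\le 0$. The case analysis shows that $\max_r(-\psi_r)>0$ at every point of the circle. By continuity and compactness this maximum is bounded below by some $c>0$.

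It remains to pass from the limit $\psi_r$ to a bound uniform in direction. Since each summand is concave in $(\beta,h)$ and vanishes at the origin up to the constant $-\Lambda(0)=0$, the map $s\mapsto s^{-1}\cdot(\text{summand})$ is nonincreasing. Dini-type monotone convergence then gives uniform convergence on the circle. Hence $L\le C-\tfrac{c}{2}s$ for all large $s$, uniformly in direction, which gives $L\to-\infty$ as $|\beta|+|h|\to\infty$.
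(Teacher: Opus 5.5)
Your proof is correct. It reaches the conclusion by a different route from the paper's. (A trivial slip: in the case $u>0$, $v\ge -ua$ the first comparison is $um_j+v\ge u(m_j-a)$ rather than strict, which is harmless.)

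\textbf{The paper's route.} The paper argues by contradiction along an arbitrary sequence with $|\beta|+|h|\to\infty$. Every summand is bounded above by $I(x_r)<\infty$, so if $L$ stayed bounded below, the four summands indexed by $i,j,k,\ell$ would each be bounded below. Because $\lambda\mapsto x\lambda-\Lambda(\lambda)$ tends to $-\infty$ at the appropriate end, this gives one-sided bounds on four affine forms: $\beta m_j+h\le K_1$, $\beta m_\ell+h\le K_1$, $\beta m_i+h\ge -K_1$, $\beta m_k+h\ge -K_1$. Pairing $(j,k)$ when $\beta\ge 0$ and $(i,\ell)$ when $\beta<0$ bounds $|\beta|$, and then $|h|$. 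Since this works directly on sequences through a system of linear inequalities, uniformity over directions never comes up.

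\textbf{Your route.} You use the same pairing of coordinates, but organize the argument around the asymptotic slopes $\psi_r$ on the unit circle. You show $\max_r(-\psi_r)>0$ pointwise and get a uniform constant $c$ by compactness. You then pass from rays to arbitrary sequences using two facts. First, $s\mapsto s^{-1}\phi_r(su,sv)$ is nonincreasing, which follows from concavity and $\Lambda(0)=0$. Second, Dini's theorem upgrades this monotone pointwise convergence to uniform convergence. Equivalently, this step is the convex-analysis fact that a closed concave function whose recession function is negative in every nonzero direction has bounded superlevel sets.

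\textbf{Comparison.} Your version needs slightly more machinery: the exact recession slopes $\kappa_\pm$, and the monotone-quotient plus Dini step to obtain uniformity. In return it proves more than the paper: a quantitative linear coercivity bound $L(\beta,h\mid x)\le C-\tfrac{c}{2}\|(\beta,h)\|_2$ for all large $\|(\beta,h)\|_2$, rather than mere divergence to $-\infty$. The paper's contradiction argument is more elementary and self-contained, but yields only the qualitative statement.
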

\begin{proof}

Recall the log pseudo-likelihood function, 
\[
L(\gb, h|x) = \sum_i x_i(\gb m_i(x) + h) - \gL(\gb m_i(x) +h)=\sum_i T(\gb m_i(x)+h, x_i),
\]
where $\gL(\lambda) = \log \int_{\kappa_-}^{\kappa_+} \exp(\gl z) d\mu(z)$ for the reference measure $\mu$, and $T(\gl,x) := x \gl - \gL(\gl)$ for $\lambda\in \bR, x\in [\kappa_-, \kappa_+]$. Then we have
\(
   T(\lambda,x)\le \sup_{\lambda\in \bR} T(\lambda,x)=I(x),
\)
where the equality part is due to the fact in Definition~\ref{def:main-set-up}. Thus,  if $I(x)<\infty$, then the map $\lambda\mapsto T_i(\lambda,x)$ is bounded above.

 We now claim  that
\begin{align}\label{eq:defer}
    \lim_{\lambda\to\infty}T(\lambda,x)=-\infty\text{ if }x< \kappa_+,\quad \lim_{\lambda\to-\infty}T(\lambda,x)=-\infty\text{ if }x> \kappa_-.
\end{align}
 To see the claim, note that $\kappa_+ \in \text{supp}(\mu)$, thus $\mu([\kappa_+ - \gd, \kappa_+])>0$ for every $\gd>0$. For $\gl\ge 0$, we have 
\begin{align*}
\gL(\gl) & =\log \Big( \int_{\kappa_-}^{\kappa_+} e^{\gl z} d\mu(z)\Big)
 \ge   \log \Big( \int_{\kappa_+-\gd}^{\kappa_+} e^{\gl z} d\mu(z)\Big) \\
& \ge  \log \Big( e^{\gl (\kappa_+ - \gd)} \cdot \mu([\kappa_+ - \gd, \kappa_+]) \Big)
 = \gl(\kappa_+-\gd) + \log(\mu([\kappa_+ - \gd, \kappa_+])).
\end{align*}
For $x \in (\kappa_-, \kappa_+)$, 
taking $\gd = \frac{\kappa_+- x}{2}>0$, we have 
\begin{align*}
x\gl  - \gL(\gl) \le \frac{(x - \kappa_+)\gl}{2} - \log(\mu([\kappa_+ - \gd, \kappa_+])).
\end{align*}
As $\gl \to +\infty$, it is clear that $x\gl - \gL(\gl)\to - \infty$ since $x-\kappa_+ < 0$. The argument for the case $x > \kappa_{-}$ is similar.

Since $I(x_r)<\infty$ for all $r\in [N]$ by assumption, it suffices to show that \[\lim_{\abs{\gb} + \abs{h}\to \infty} T(\gb m_s(x) +h,x_s) = -\infty\] for some  $s\in \{i,j,k,\ell\}$ where the indices $i,j,k,\ell$ are as in \eqref{suff_cond}.
We argue by contradiction. Suppose for $i,j,k,\ell$ satisfying~\eqref{suff_cond}, we have 
\(
T(\gb m_s(x) +h,x_s) \ge -K \ \text{for $s \in \{i,j,k,\ell\}$},
\)
for a common subsequence $(\beta,h)$ satisfying $|\beta|+|h|\to \infty$.
By slight abuse of notation, we are going to restrict to this common subsequence. By~\eqref{eq:defer}, since $x_j, x_\ell \neq \kappa_+$, and $x_i, x_k\ne \kappa-$ then there exists a constant $K_1$ (depending only on $K, \mu, x_i, x_j, x_k, x_\ell$) such that 
\begin{align*}
 \gb m_j(x) +h \le K_1 \quad &\text{and} \quad  \gb m_\ell(x) +h \le K_1,\\
 \gb m_i(x) +h \ge -K_1 \quad& \text{and} \quad  \gb m_k(x) +h \ge -K_1.
\end{align*}
This confines $h$ to the interval:
\begin{equation}\label{interval_h}
 \max(-\beta m_i(x) - K_1, -\beta m_k(x) - K_1) \le h \le  \min(-\beta m_j(x) + K_1, -\beta m_l(x) + K_1).   
\end{equation}
For this interval to be non-empty, its upper bound must be greater than its lower bound.

 \textbf{Case $\beta \ge 0$}: The conditions $m_j(x) > a > m_k(x)$ imply $m_j(x) > m_k(x)$. The non-empty interval constraint implies $-\beta m_k(x) - K_1 < -\beta m_j(x) + K_1$, which simplifies to $\beta(m_j(x) - m_k(x)) < 2K_1$. Therefore, we have 
    \begin{equation}\label{positive_beta}
     \beta < 2K_1/(m_j(x) - m_k(x)).
    \end{equation}
    
 \textbf{Case $\beta < 0$}:  The conditions $m_i(x) > a > m_l(x)$ imply $m_i(x) > m_l(x)$. The constraint implies $-\beta m_i(x) - K_1 < -\beta m_l(x) + \tilde{K}$, which simplifies to $-\beta(m_i(x) - m_l(x)) < 2K_1$. 
        Therefore, we have 
            \begin{equation}\label{negative_beta}
 - \beta < 2K_1/(m_i(x) - m_l(x)).
 \end{equation}
Combining \eqref{positive_beta} and \eqref{negative_beta}, we obtain
\[ |\beta| < \max \Big(\frac{2K_1}{m_j(x) - m_k(x)}, \frac{2K_1}{m_i(x) - m_l(x)}  \Big),  \]
which, in combination with \eqref{interval_h} forces $|h|$ to be bounded. So, $|\beta| + |h|$ can not be arbitrarily large. This concludes the proof of the lemma.
\end{proof}

In proving existence, the main difficulty is verifying condition~\eqref{suff_cond} under the assumptions of Theorem~\ref{thm:main-1}. To that end, we introduce the following lemma, which quantifies the idea that if a bounded sequence exhibits nontrivial variability, then a linear fraction of its entries are separated by a fixed gap.

\begin{lem}\label{exist-lem1}
    Let $\gd>0$ and  $\gamma>0$ and define
    \begin{align}\label{def:parameters}
    \eps =  \frac{\delta}{64 \gamma^2\kappa^4},  \ \   \zeta = \min \Big( \frac{\sqrt{\delta}}{4}, \frac{\gamma\kappa^2}{4} \Big),  \,\, \text{and} \,\,  \   K =\frac{\gamma\kappa^2}{ \zeta}
    \end{align}
    For any real sequence $m_1, m_2, \ldots, m_N$ with $\abs{m_i}\le \gamma\kappa^2$ satisfying $N^{-2} \sum_{i, j} (m_i - m_j)^2 \ge \gd$,
    there exists an integer $r$ with $|r| \le K$ and $
    -\gamma \kappa^2 \le (r-1) \zeta < r \zeta \le \gamma \kappa^2$ such that 
    \begin{align*}
    |\{i \in [N]: m_i \in  [-\gamma\kappa^2,  (r-1)\zeta]  \}|  \ge \eps N \text{ and } |\{i \in [N]: m_i \in  [  r \zeta, \gamma \kappa^2 ]  \}|  \ge \eps N.
    	\end{align*}
\end{lem}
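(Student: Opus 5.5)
We will argue by a counting/pigeonhole argument on the grid of mesh $\zeta$ covering $[-\gamma\kappa^2,\gamma\kappa^2]$. Write $M:=\gamma\kappa^2$ and let $F(s):=N^{-1}|\{i: m_i\le s\}|$ be the empirical distribution function of the $m_i$. The statement asks for a grid point $r\zeta$ (with $(r-1)\zeta\ge -M$, $r\zeta\le M$) such that $F((r-1)\zeta)\ge\eps$ and $1-F(r\zeta^-)\ge \eps$. The idea is to take the lower $\eps$-quantile and the upper $\eps$-quantile of the empirical distribution and show they are separated by more than $2\zeta$, so that some full grid cell $((r-1)\zeta, r\zeta)$ lies strictly between them.

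Step 1 (quantiles are far apart). Let $q_-$ be the smallest value with $F(q_-)\ge\eps$ and $q_+$ the largest value with $|\{i:m_i\ge q_+\}|\ge\eps N$. Suppose for contradiction that $q_+-q_-< 2\zeta$ (or $q_+<q_-$). Then all but at most $2\eps N$ indices satisfy $m_i\in[q_-,q_+]$, an interval of length less than $2\zeta$. I will bound
\[
N^{-2}\sum_{i,j}(m_i-m_j)^2 \le (2\zeta)^2 + 2\cdot 2\eps\cdot(2M)^2 ,
\]
since pairs with both points inside contribute at most $(2\zeta)^2$ each, and pairs with at least one point outside number at most $2\cdot 2\eps N\cdot N$ and contribute at most $(2M)^2$ each. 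With $\zeta\le\sqrt\delta/4$ the first term is at most $\delta/4$, and with $\eps=\delta/(64M^2)$ the second is $16\eps M^2=\delta/4$. Thus the sum is at most $\delta/2<\delta$, a contradiction. (Boundary cases where $q_+<q_-$ are handled the same way, since then even more mass is concentrated.) So $q_+-q_-\ge 2\zeta$.

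Step 2 (choosing $r$). Let $r:=\lceil q_-/\zeta\rceil+1$, so that $(r-1)\zeta\ge q_-$ and $r\zeta< q_-+2\zeta\le q_+$. Then $|\{m_i\le (r-1)\zeta\}|\ge F(q_-)N\ge\eps N$ and $|\{m_i\ge r\zeta\}|\ge|\{m_i\ge q_+\}|\ge\eps N$. Since $q_-\ge -M$ we get $(r-1)\zeta\ge -M$, and since $q_+\le M$ we get $r\zeta\le M$; hence $|r|\le M/\zeta+1$. The displayed definition $K=M/\zeta$ is slightly tight here, so I would adjust the choice of $r$, for instance using $\lfloor q_+/\zeta\rfloor$ instead, so that $|r|\le M/\zeta$. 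The condition $\zeta\le M/4$ guarantees the grid is nontrivial.

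The main obstacle is purely bookkeeping: getting the constants in the variance bound of Step 1 to match exactly $\eps=\delta/(64\gamma^2\kappa^4)$ and $\zeta\le\sqrt\delta/4$, and making the ceiling/floor choice so that $|r|\le K$ holds on the nose. No earlier results of the paper are needed.
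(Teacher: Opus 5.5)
Your argument is correct and is essentially the paper's proof: the paper takes $r$ to be the largest integer with $|\{i: m_i\in[r\zeta,\gamma\kappa^2]\}|\ge\eps N$ and rules out a thin lower tail by the same bound you use (fewer than $2\eps N$ points outside a window of width $2\zeta$ gives a total of at most $\delta/4+\delta/4<\delta$), whereas you phrase that estimate as a separation $q_+-q_-\ge 2\zeta$ of the empirical $\eps$-quantiles and then choose $r$ with a ceiling. No adjustment is needed to get $|r|\le K$: your $r$ already satisfies $r\zeta\le\gamma\kappa^2$ and $(r-1)\zeta\ge-\gamma\kappa^2$, and these give $1-K\le r\le K$ directly.
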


\begin{proof}
Let  $\abs{m_i} \le \gamma \kappa^2$ for all $i\in [N]$ and
    \begin{equation} \label{eq:restricted_var}
        \sum_{i,j=1 }^N (m_i - m_j)^2 \ge  \delta N^2.
    \end{equation}
    Define $  r = \max \left\{ s \in \mathbb{Z} : \left| \left\{ i \in [N] : m_i \in [s \zeta, \gamma \kappa^2] \right\} \right| \ge \eps N \right\}.$
This set is nonempty because for integer $s\le -\gamma\kappa^2/\zeta$ we have $[s \zeta,\gamma\kappa^2]\supseteq[-\gamma\kappa^2,\gamma\kappa^2]$, hence the count is $N\ge\varepsilon N$. By definition, $r \zeta\le\gamma\kappa^2$, so $r\le \gamma\kappa^2/\zeta$.

    Suppose, for contradiction, that either $(r-1)\zeta < -\gamma\kappa^2$ or 
    \[ -\gamma\kappa^2 \le (r-1)\zeta  \ \ \text{ and } \ \ 
    \left| \left\{ i \in [N] : m_i \in [-\gamma\kappa^2, (r - 1)\zeta] \right\} \right| < \eps N.
    \]
     In either of the cases we have that at most $2\eps N$ of the $m_i$ can lie in the interval $[-\gamma\kappa^2, \gamma\kappa^2] \setminus [(r-1)\zeta, (r+1)\zeta]$, because by maximality of $r$ we have
    \( |\{ i: m_i \in [(r+1)\zeta, \gamma\kappa^2]\}|< \eps N.\)
    We now derive an upper bound on the left-hand side of \eqref{eq:restricted_var} by considering the following cases.

 \medskip
    \noindent
  $(i)$ The total contribution from pairs $i, j$ such that at least one of $m_i$ or $m_j$ lies outside $[(r-1)\zeta, (r+1)\zeta]$ is at most
    \(
    2 \cdot N \cdot 2\eps N \cdot 4\gamma^2\kappa^4 \le \frac{\delta}{4} N^2.
    \)

    \medskip
    \noindent
    $(ii)$ The total contribution from pairs $i, j$ such that both $m_i$ and $m_j$ lie within $[(r-1)\zeta, (r+1)\zeta]$ is at most
    \(
    N^2 \cdot (2\zeta)^2 \le \frac{\delta}{4} N^2.
    \)

    \medskip
    \noindent
    By the choice of $\eps$ and $\delta$, the sum of these contributions is at most $\delta N^2/2$, which contradicts \eqref{eq:restricted_var}. Therefore,
    \(
    \left| \left\{ i \in [N] : m_i \le (r - 1)\zeta \right\} \right| \ge \eps N
    \)
    and the lemma follows.

\end{proof}
\begin{proof}[Proof of Theorem~\ref{thm:main-1}: Existence of MPLE]
By hypothesis, there exists a slowly growing sequence $c_N \to \infty$ such that 
\[ P_{\gb_0,h_0} \big (N^{-2} \sum_{i, j} (m_i(X) - m_j(X))^2 \ge c_N N^{-2/7} \big) \to 1. \]
Set $\delta = \delta_N = c_N N^{-2/7}$ and define $\eps = \eps_N, \zeta = \zeta_N$ and $K = K_N$ according to \eqref{def:parameters} based on $\delta$ and $\gamma.$
For $-\infty< \ell<u< \infty,$ let
	\[ I_{\ell, u} :=  \left\{ i \in [N] : m_i(X) \in [\ell, u] \right\}.\]
By Lemma~\ref{exist-lem1}, with probability tending to one, 
\begin{equation}\label{eq:r_exists}
   \exists \text{ integer } r \in [-K, K]  \text{ such that } |I_{r \zeta, \gamma\kappa^2}| \ge  \eps N \text{ and  } |I_{-\gamma\kappa^2, (r-1) \zeta}| \ge  \eps N.
\end{equation}

We now claim that  the following events have negligible probability as $N \to \infty$:  there exists an integer  $s \in [-K, K]$ such that 
\begin{align}
	\label{exist-proof:eq2}
	|I_{s\zeta , \gamma\kappa^2}| \ge \eps N\,\, & \mbox{and}\,\,\{ X_i=\kappa_+ \text{ for all } i \in I_{(s  - 1/2) \zeta, \gamma\kappa^2} \},\\
    	\label{exist-proof:eq22}
	|I_{s\zeta , \gamma\kappa^2}| \ge \eps N\,\, & \mbox{and}\,\,\{ X_i=\kappa_- \text{ for all } i \in I_{(s  - 1/2) \zeta, \gamma\kappa^2} \},\\
    \label{exist-proof:eq1}
|I_{-\gamma\kappa^2, (s-1)\zeta}| \ge \eps N\,\,& \text{and}\,\,\{ X_i=\kappa_+  \text{ for all } i \in I_{-\gamma\kappa^2, (s-1/2)\zeta} \} ,\\
 \label{exist-proof:eq12}
|I_{-\gamma\kappa^2, (s-1)\zeta}| \ge \eps N\,\, & \text{and}\,\,\{ X_i=\kappa_-\text{ for all } i \in I_{-\gamma \kappa^2, (s-1/2)\zeta} \}.
\end{align}
Once the claim is established, we can combine with \eqref{eq:r_exists} to deduce that 
with probability tending to one, there exists an integer $r \in [-K, K]$ such that 
\begin{eqnarray*}
   |I_{(r  - 1/2) \zeta, \gamma\kappa^2}| \ge 	|I_{r\zeta , \gamma\kappa^2}| \ge \eps N \ge 2, &\text{ and}& \  \{ X_i : i \in I_{(r  - 1/2) \zeta, \gamma\kappa^2} \}\neq   \{\kappa_-\}, \{\kappa_+\} , \\
|I_{-\gamma\kappa^2, (r  - 1/2) \zeta}|  \ge 	|I_{-\gamma\kappa^2, (r-1)\zeta }| \ge \eps N \ge 2, &\text{ and }&\   \{ X_i : i \in I_{-\gamma\kappa^2, (r  - 1/2) \zeta}  \}\neq \{\kappa_-\}, \{\kappa_+\}. 
\end{eqnarray*}

With this, we have that, with probability tending to one as $N\to\infty$, condition~\eqref{suff_cond} of Lemma~\ref{lem:suff_cond} holds with $a =  (r  - 1/2) \zeta$. Besides, Lemma~\ref{lem:rate_as_finite} guarantees $I(x)<\infty$ $\mu$-a.s, thereby establishing the existence of the MPLE.

It remains to prove the claim. We will focus on the proof for the case~\eqref{exist-proof:eq2} since the other cases $\eqref{exist-proof:eq22},\eqref{exist-proof:eq1},\eqref{exist-proof:eq12}$ can be handled in a similar fashion. Note that $|m_i(x)|\le \gamma\kappa^2$, and so $$\kappa_+ - b_i(x) =\kappa_+-\Lambda'(\beta_0m_i(x)+h_0)\ge \kappa_+ - \Lambda'(|\beta_0| \gamma\kappa^2+ h_0) =: c_0 >0.$$

For $s \in \mathbb{Z}$, define the events
\begin{align*}
& \mathcal{B}^s_+  = \big \{ |I_{s\zeta , \gamma\kappa^2}| \ge \eps N, \  X_i = \kappa_+  \ \forall i \in I_{(s  - 1/2) \zeta, \gamma \kappa^2}  \big \}, \\
& \mathcal{B}^s_-  = \big \{ |I_{s\zeta , \gamma\kappa^2}| \ge \eps N, \  X_i = \kappa_-  \ \forall i \in I_{(s  - 1/2) \zeta, \gamma\kappa^2}  \big \}.
\end{align*}
On the event $\mathcal{B}^s_+$, we have 
\begin{align*}
    S^s_+(X) & := \sum_i (X_i -b_i(X))(m_i(X) - (s-1/2)\zeta)_+ \\
    & =  \sum_{i \in I_{(s-1/2)\zeta,\gamma\kappa^2}}(\kappa_+ -b_i(X))(m_i(X)  - (s-1/2)\zeta)_+ \\
    & \ge c_0\sum_{i \in I_{s \zeta,\gamma\kappa^2}}(m_i(X)  - (s-1/2)\zeta)_+ \ge \frac{c_0 \zeta \eps N}{2}.
\end{align*}
By the concentration result~\eqref{eq:concen1} in Corollary~\ref{lem:concen} applied to the $1$-Lipschitz function $\phi(t)  = (x - (s - 1/2) \zeta)_+$, we have 
\[
\bP(\mathcal{B}^s_+) \le \bP\Big(|S^s_+(X)|\ge\frac{c_0 \zeta \eps N}{2} \Big) \le \frac{c_1}{\zeta^2\eps^2 N},
\]
where the constant $c_1$ only depends on $\gb_0,h_0,\gamma, \mu$. Similarly, on $\mathcal{B}^s_-,$ we have 
\[  S^s_-(X)  := \sum_i (X_i -b_i(X))(m_i(X) - (s-1/2)\zeta)_+  \le  -  \frac{c_0 \zeta\eps N}{2},\]
which, together with the concentration result, yields  
\(
\bP(\mathcal{B}^s_-)  \le \frac{c_1}{\zeta^2\eps^2 N},
\)
By a union bound, the probability of the event 
\( \bigcup_{s: |s| \le K} \big( \mathcal{B}^s_+\cup \mathcal{B}^s_-\big) \)
is bounded above by 
\begin{equation}\label{eq:prob_bound1}
   2( 2K+1) \frac{c_1}{\zeta^2\eps^2 N}  \le \frac{c_2 K}{\zeta^2\eps^2 N},   
\end{equation}
where the constant $c_2$ again only depends on $\gb_0,h_0,\gamma,\mu$. As $N \to \infty,$
\[ \eps \asymp \delta,  \  \zeta \asymp \delta^{1/2}, \  K \asymp \delta^{-1/2}, \]
the hidden constants in $\asymp$ depends only on $\gamma.$ Therefore, the RHS of \eqref{eq:prob_bound1} can be bounded by 
\[ O_{\beta_0, h_0, \gamma,\mu} ( \delta^{-7/2} N^{-1})  = O_{\beta_0, h_0, \gamma,\mu} ( c_N^{-7/2}) \to 0.\]
This proves the claim and, therefore, concludes the proof of the existence of the MPLE.
\end{proof}

\begin{proof}[Proof of Theorem~\ref{thm:main-1}: Consistency part]

    Let $X \sim P_{\beta_0, h_0}$. For $(\beta, h ) \in \mathbb{R}^2$, let $\gl_1(\gb,h |X)\geq 0$ be the minimum eigenvalue of the negative Hessian matrix $H(\gb,h|X)$. 
Using \eqref{eq:hess-det} gives
\begin{align*}
\gl_1(\gb, h |X) & \ge \frac{\abs{H(\gb ,h |X)}}{\mathrm{Tr}(H(\gb,h|X))} =  \frac{\frac12 \sum_{i, j} \theta_{i}(\beta, h| X) \theta_{j}(\beta, h| X)  (m_i(X)-m_j(X))^2}{\mathrm{Tr}(H(\gb,h|X))},
\end{align*}
where $\theta_{i}(\beta, h| X) =\gL''(\beta m_i(X) + h)$.

To bound the right hand side above from below, if $\|\beta-\beta_0,h-h_0\|\le r$ for some fixed $r>0$, then we have
\begin{align*}
 &\sum_{i, j} \theta_{i}(\beta, h| X) \theta_{j}(\beta, h| X)  (m_i(X)-m_j(X))^2 
 \ge \theta_{\min}^2  \sum_{i, j \in [N]} (m_i(X)-m_j(X))^2,
\end{align*}
where $\theta_{\min} := \min_{|\gl|\le \gamma \kappa^2, \|\beta-\beta_0,h-h_0\|\le r} \gL''(\gl \beta +h)>0$  (see Definition~\ref{def:main-set-up}). Also we have $\gL''(\gl) \le \kappa^2$ and $\sum_im_i^2(X)\leq \gc^2\kappa^4N$, and so
\[
\mathrm{Tr}(H(\gb,h |X)) = \sum_i\theta_i(\gb ,h|X) (m_i^2(X)+1)\le N(1+\gamma^2\kappa^4) \cdot \kappa^2.
\]
Therefore, for  $\|(\beta-\beta_0,h-h_0)\|\le r$ we obtain 
\begin{align*}
    \gl_1(\gb, h |X) \ge \frac{\theta_{\min}^2}{2\kappa^2(1+\gamma^2\kappa^4) N}   \sum_{i, j =1}^N (m_i(X)-m_j(X))^2.
\end{align*}
Thus, there exists $\eta>0$ such that the following uniform lower bound for $\gl_1(\gb, h |X)$ holds:
\begin{align}\label{eq:mineig_uni_bd}
\inf_{\norm{\gb-\gb_0,h-h_0}\le r} \gl_1(\gb, h|X) \ge \frac{\eta }{N} \sum_{i, j =1}^N (m_i(X)-m_j(X))^2.
\end{align}
Next, we construct an interpolation between the pseudo-likelihood estimator and the ground truth by letting
\begin{align*}
    \gb(t)= t \hat{\gb} + (1-t)   \gb_0\,\,\mbox{and}\,\, h(t)= t  \hat{h} + (1-t)   h_0
\end{align*}
for $t\in [0,1].$
Set $F:[0,1]\to \bR$ as
\begin{align*}
 F(t)= (\hat{\gb} - \gb_0)  S(\gb(t), h(t)|X) + (\hat{h} - h_0) Q(\gb(t), h(t)|X) .
\end{align*}
Since $S(\hat{\gb},\hat{h}|X)=0$ and $Q(\hat{\gb},\hat{h}|X)=0$, it is clear
\begin{align*}
    \abs{F(1)- F(0)} = \bigl|(\hat{\gb} - \gb_0) S(\gb_0, h_0|X) + (\hat{h} - h_0) Q(\gb_0, h_0|X) \bigr|.
\end{align*}
By Corollary~\ref{lem:concen}, we have 
\(
S(\gb_0,h_0|X), Q(\gb_0,h_0|X) = O_p(\sqrt{N}).
\)
Applying the Cauchy-Schwarz inequality, we deduce 
\begin{align}\label{eq:asy-YN}
\abs{F(1) - F(0)} = O_p(\sqrt{N} Y_N),
\end{align}
where $Y_N:=\|(\hat{\gb}-\gb_0, \hat{h}-h_0 )\|_{2}$. 
Now to bound $Y_N$, we need to control $\abs{F(1) - F(0)}$.
For any $t\in[0,1]$, the derivative of $f$ is bounded from below by
\begin{align*}
    F'(t) &= -(\hat{\gb}-\gb_0, \hat{h}-h_0)  H(\gb(t),h(t)|X)   (\hat{\gb}-\gb_0, \hat{h}-h_0)^T \\
    &\le -\gl_1(\gb(t),h(t)|X) Y_N^2.
\end{align*}
From the fact $\| (\beta(t) - \beta_0,  h(t) - h_0)\|_2  = t Y_N $ and 
the uniform lower bound \eqref{eq:mineig_uni_bd}, we obtain 
\begin{align}
    F(0)-F(1) &= -\int_0^1 F'(t) dt \ge -\int_{0}^{\min ( 1, r/{Y_N}) } F'(t) dt \nonumber \\
    &\ge \min \big( 1, \frac{r}{Y_N}\big)    \frac{\eta Y_N^2}{N} \sum_{i, j } (m_i(X)-m_j(X))^2\nonumber\\
    & =2\min \big( 1, \frac{r}{Y_N}\big)    N\eta Y_N^2T_N(X)\nonumber
\end{align}
On the other hand, \eqref{eq:asy-YN} implies that
\( \bP( | F(1)  -  F(0)| \le c_N \sqrt{N} Y_N)\to 1,\)
for any sequence of non-negative reals $(c_N)_{N\ge1}$ diverging to $\infty$.
Combining the above two displays, with $P_{\beta_0, h_0}$-probability tending to $1$ we have
\begin{align*}
c_N\sqrt{N} Y_N \ge 2\min \big(1 , \frac{r}{Y_N}\big)    N\eta Y_N^2 T_N(X)\Rightarrow \min(Y_N, r) \le \frac{c_N}{2\eta \sqrt{N} T_N(X)}.
\end{align*}
Since $T_N=\omega_p(N^{-2/7})$, choosing $c_N= N^{1/5}$ we get $Y_N=\min(Y_N,r)$ with $P_{\beta_0,h_0}$ probability tending to $1$ for all $r>0$. Thus we conclude $Y_N=O_P\Big(\frac{c_N}{2\eta \sqrt{N} T_N(X)}\Big)$ for any  diverging sequence $(c_N)_{N\ge 1}$, thus giving us
$Y_N  = O_p(\frac{1}{\sqrt{N}T_N})$ as claimed.

\end{proof}

\section{Proofs for Applications}\label{sec:proof of example}
In this section we present the proofs of the main results in Section~\ref{sec:examples}. We repeatedly use
Lemmas~\ref{lem:trace-optr}{\rm (b)} and~\ref{lem:trace-optr-converse} in the following form. To verify the mean-field condition~\eqref{cond:GW},
it suffices to check Assumption~\ref{ass:standing}, the nontriviality condition~\eqref{cond:A_nontrivial},
the strong pseudo-regularity condition~\eqref{cond:strong-pseu-regu}, and
\(\operatorname{Tr}(\cR^2)=o(N/\log N)\), and then apply Lemma~\ref{lem:trace-optr}{\rm (b)}. When Theorem~\ref{thm:est-imposs} is applied, we also need
the spectral-gap condition~\eqref{cond:spec-gap}; by Lemma~\ref{lem:trace-optr-converse}, under strong
pseudo-regularity it is enough to check a uniform spectral gap for the unweighted
support graph \(B_{ij}:=\mathbf 1_{\{\cR_{ij}>0\}}\). In particular, if all off-diagonal
entries of \(\cR\) are positive and comparable, then the support graph is complete and
the second assertion of Lemma~\ref{lem:trace-optr-converse} gives~\eqref{cond:spec-gap}.

\subsection{Proofs for ERGMs} In this subsection, we prove the Theorem~\ref{thm:app-triangle} for edge-triangle ERGM and Theorem~\ref{thm:app-3-star} for edge-three-star ERGM. Note that for Theorem~\ref{thm:app-triangle}, it suffices to validate the conditions of the main results in Section~\ref{sec:intro}. For Theorem~\ref{thm:app-3-star}, we will directly prove the general ill-conditioned results. 

\subsubsection{Proofs for edge-triangle ERGM}
The proof of Theorem~\ref{thm:app-triangle} for the edge-triangle ERGM follows from the following two lemmas, which basically validate the conditions in the general results of Section~\ref{sec:intro}.

\begin{lem}\label{lem:triangle_mean}
    Suppose $A$ is the tensor defined in \eqref{eq:ergm-triangle}. Then 
    \begin{enumerate}
        \item[(a)]  $A$ satisfies Assumption~\ref{ass:standing} and is nontrivial in the sense of \eqref{cond:A_nontrivial}. Moreover, $\mathcal{R}_e$ does not depend on edge $e$ (and hence  \eqref{cond:regu} holds).
         \item[(b)] The strong pseudo-regularity condition
\eqref{cond:strong-pseu-regu} holds with $K=1$, and $\mathrm{Tr}(\cR^2) = O(n)$. Consequently, the mean-field condition \eqref{cond:GW} follows.
        \item[(c)] $A$ satisfies the spectral-gap
condition~\eqref{cond:spec-gap}. 
    \end{enumerate}
  
\end{lem}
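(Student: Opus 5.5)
The plan is to compute the matrix $\cR$ explicitly for the triangle tensor \eqref{eq:ergm-triangle}, with $N=\binom n2$ edges of $K_n$, and then read off each condition from that formula. For distinct edges $e,f$, $\cR_{ef}=\sum_g A_{efg}$. Two edges lie in a common triangle exactly when they share a vertex, and then the third edge $g$ is unique. Ordered triples are counted with the symmetric tensor convention $A_{e_0e_1e_2}=n^{-1}$ for every ordering, so $\cR_{ef}=n^{-1}\mathbf 1\{e\sim f\}$. Here $e\sim f$ means $e\neq f$ and the two edges share a vertex. Each edge $e=\{u,v\}$ is adjacent to exactly $2(n-2)$ other edges, so $\cR_e=2(n-2)/n\in[1,2]$ for $n\ge4$.

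This settles part (a). Assumption~\ref{ass:standing} holds with $\gamma=2$, and \eqref{cond:A_nontrivial} holds with $\alpha=1$. All row sums are equal, so $\sum_e(\cR_e-\bar\cR)^2=0$ and \eqref{cond:regu} is immediate.

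For part (b), every nonzero entry of $\cR$ equals $n^{-1}$, so \eqref{cond:strong-pseu-regu} holds with $K=1$. The trace is
\[
\mathrm{Tr}(\cR^2)=\sum_{e,f}\cR_{ef}^2=N\cdot 2(n-2)\cdot n^{-2}=O(n).
\]
Since $N\asymp n^2$, this is $O(\sqrt N)=o(N/\log N)$. Lemma~\ref{lem:trace-optr}(b) then gives \eqref{cond:GW}.

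For part (c), I would use Lemma~\ref{lem:trace-optr-converse}. Its hypotheses are exactly those already checked, so it reduces the weighted gap \eqref{cond:spec-gap} to a uniform spectral gap for the support graph $B$. Here $B$ is the line graph $L(K_n)$, i.e. the triangular graph, which is strongly regular of degree $2(n-2)$. Its adjacency eigenvalues are $2(n-2)$, $n-4$ (multiplicity $n-1$) and $-2$. These follow from $B=MM^T-2I$, where $M$ is the $N\times n$ edge–vertex incidence matrix, together with the fact that $M^TM=(n-2)I+J$ has eigenvalues $2(n-1)$ and $n-2$. The random-walk matrix $D_B^{-1}B$ therefore has second eigenvalue
\[
\lambda_2=\frac{n-4}{2(n-2)}\le \frac12,
\]
so the gap is at least $1/2$. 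The lemma then gives $1-\lambda_2(D^{-1}\cR)\ge 1/2$.

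The main obstacle is this spectral computation in part (c). To be safe I would write out the incidence-matrix identity explicitly rather than cite strong regularity. The only other point needing care is the ordered-versus-unordered counting convention in $\cR_{ef}$, but any constant factor there is harmless for every condition above.
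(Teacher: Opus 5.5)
Your proposal is correct and follows the paper's proof essentially step by step. It uses the same formula $\cR_{ef}=n^{-1}\mathbf 1\{e\sim f\}$, the same constants $\gamma=2$, $K=1$ and $\mathrm{Tr}(\cR^2)=O(n)$, and the same reduction of \eqref{cond:spec-gap}, via Lemma~\ref{lem:trace-optr-converse}, to the spectrum of the line graph of $K_n$. The only addition is your derivation of that spectrum from $B=MM^T-2I$ and $M^TM=(n-2)I+J$, which the paper simply asserts; that derivation is correct.
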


\begin{proof}
\noindent (a) For the triangle ERGM tensor, for each edge $e_0$ we have
\[ \sum_{e_1,e_2} A_{e_0 e_1 e_2}
=
\frac{1}{n}\,\#\{(e_1,e_2): (e_0,e_1,e_2)\text{ forms a triangle}\}
=
\frac{2(n-2)}{n}.
\]
Indeed, if $e_0=(1,2)$, then for each $i\in[3,n]$ the other two edges of the triangle are
$(1,i)$ and $(2,i)$, and they can appear in either order as $(e_1,e_2)$, giving exactly
$2(n-2)$ ordered pairs $(e_1,e_2)$. Thus $\cR_{e_0}=\sum_{e_1,e_2}A_{e_0 e_1 e_2}$
is independent of $e_0$, and hence \eqref{cond:regu} holds. Besides, Assumption~\ref{ass:standing} holds with $\gamma = 2$. Moreover, 
\(
\bar{\cR}=\frac{1}{N}\sum_{e_0} \cR_{e_0}=\frac{2(n-2)}{n} \ge 1,
\)
for all $n \ge 4$, verifying \eqref{cond:A_nontrivial}.\\

\noindent (b) 
For two edges $e_0,e_1$, we have
\(
    \cR_{e_0e_1}
    :=
    \sum_{e_2} A_{e_0e_1e_2}
    =
    \frac{1}{n}\mathbf{1}_{\{e_0\sim e_1\}},
\)
where $e_0\sim e_1$ means that $e_0$ and $e_1$ share exactly
one vertex. Indeed, if $e_0\sim e_1$, there is a unique edge
$e_2$ such that $(e_0,e_1,e_2)$ forms a triangle.

Thus every positive entry of $\cR$ equals $1/n$, and therefore the
strong pseudo-regularity condition
\eqref{cond:strong-pseu-regu} holds with $K=1$. Moreover, every
edge of $K_n$ is adjacent to exactly $2(n-2)$ other edges. Hence
\[
\begin{aligned}
    \mathrm{Tr}(\cR^2) =
    \sum_{e_0,e_1} \cR_{e_0e_1}^2 =
    N\cdot 2(n-2)\cdot \frac{1}{n^2} =
    \frac{(n-1)(n-2)}{n} = O(n).
\end{aligned}
\]
Since $N=\binom{n}{2}$, it follows that $  \mathrm{Tr}(\cR^2)=
    o\left(N /\log N\right).$ Part~(a) verifies Assumption~\ref{ass:standing} and the
nontriviality condition \eqref{cond:A_nontrivial}. Therefore
Lemma~\ref{lem:trace-optr}{\rm (b)} gives the mean-field
condition \eqref{cond:GW}.
\\

\noindent (c) 
By part (b), $R_{e_0e_1}>0$ if and only if \(e_0\) and \(e_1\) share one vertex.
Thus the support graph $B_{e_0e_1}:=\mathbf 1_{\{\cR_{e_0e_1}>0\}}$ is the line graph of the complete graph \(K_n\). This graph is
\(2(n-2)\)-regular, and its adjacency spectrum is $2(n-2), n-4,$ and $ -2$ with multiplicities $1$, $n-1$, and $n(n-3)/2$, respectively. Hence
\[
1-\lambda_2(D_B^{-1}B)
=
1-\frac{n-4}{2(n-2)}
=
\frac{n}{2(n-2)}
\ge \frac12 .
\]
Since the strong pseudo-regularity constant in part (b) is \(K=1\), Lemma~\ref{lem:trace-optr-converse}
implies
\(
1-\lambda_2(D^{-1}R)\ge \frac12 .
\)
This proves \eqref{cond:spec-gap}.
\end{proof}

On the other hand, the MPLE does yield $\sqrt{N}$-consistent
estimation in the strong antiferromagnetic regime. By
Lemma~\ref{lem:triangle_mean}{\rm (a)-(b)} and
Lemma~\ref{lem:trace-optr}{\rm (b)}, the tensor $A$ satisfies the
mean-field condition \eqref{cond:GW}. The following lemma verifies
the variational-gap hypothesis of
Theorem~\ref{cor:gap_implies_estimation}. Hence
Theorem~\ref{cor:gap_implies_estimation}, together with
Theorem~\ref{thm:main-1}, implies that for every $h_0\in\mathbb{R}$
there exists $L>0$ such that, whenever $\beta_0\leq -L$, the MPLE
is jointly $\sqrt{N}$-consistent for $(\beta_0,h_0)$.

\begin{lem}\label{lem:triangle}
Assume $\kappa_-=0$ and $\mu(\{0\})>0$. For any $h\in\mathbb R$, there exists a constant
$L=L(h)>0$ such that, whenever $\beta\le -L$, there exist constants $\varepsilon>0$ and $\delta>0$
for which the following holds: for all sufficiently large $N$,
\[
\sup_{y \in E_N} \big( f(y)  -I(y) \big)
\le
\sup_{x\in[0,\kappa_+]^N} \big( f(x) - I(x) \big) -\delta N,
\]
where $E_N=\{y\in [0,\kappa_+]^N:\sum_i(y_i-\bar{y})^2\le N\varepsilon \}$.
\end{lem}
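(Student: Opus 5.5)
The plan is to compare two explicit values of the functional $F(y):=f(y)-I(y)$: an upper bound over the near-constant set $E_N$, and a lower bound coming from a non-constant, triangle-free test configuration. Write $\beta=-L'$ with $L'>0$. In the triangle tensor, $f(y)=\frac{\beta}{3n}\sum_{e_0,e_1,e_2}1\{\text{triangle}\}y_{e_0}y_{e_1}y_{e_2}+h\sum_e y_e$, and every $y$ has nonnegative entries. Hence the cubic term is nonpositive, and in the antiferromagnetic regime it is a penalty.

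\emph{Lower bound via a bipartite profile.} Split $[n]$ into two halves $V_1,V_2$. Set $x_e=t$ for edges crossing between $V_1$ and $V_2$, and $x_e=0$ for edges inside a half. The support is bipartite and hence triangle-free, so the cubic term vanishes identically. We get
\[
F(x)=\tfrac{n^2}{4}\bigl(ht-I(t)\bigr)-\bigl(N-\tfrac{n^2}{4}\bigr)I(0).
\]
Since $\mu(\{0\})>0$, we have $I(0)=-\log\mu(\{0\})<\infty$. Maximizing over $t$ gives $\sup_t(ht-I(t))=\Lambda(h)$, so $F(x)\ge N\bigl(\tfrac12\Lambda(h)-\tfrac12 I(0)\bigr)-o(N)$. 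This lower bound is independent of $\beta$, and it yields $\sup_x F(x)\ge N c_h$ with $c_h:=\tfrac12(\Lambda(h)-I(0))$.

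\emph{Upper bound on $E_N$.} Fix $y\in E_N$ and write $s=\bar y$. First compare $F(y)$ with the constant profile $s\mathbf 1$. The entropy $I$ is convex, so $\sum_e I(y_e)\ge N I(s)$ by Jensen, and the linear term equals $Nhs$ exactly. For the cubic term, the estimates in the proof of Lemma~\ref{lem:approx}(ii) control $\bigl|\sum A_{ijk}y_iy_jy_k-s^3\sum A_{ijk}\bigr|$ via Cauchy--Schwarz by $C\sqrt{\varepsilon}N$; this uses the bounded row sums and $\sum_e(y_e-s)^2\le\varepsilon N$. Here I expect one extra step, since the bound there controlled $m_i$-variation rather than $y$-variation. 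I would expand $y=s\mathbf 1+d$ with $\|d\|_2^2\le\varepsilon N$. The linear-in-$d$ terms are $O(\sqrt{\varepsilon}N)$ by Cauchy--Schwarz and $\|\cR\|_{op}\le\gamma$, and the higher-order terms are bounded by $\kappa\,\|d\|_2^2\|\cR\|_{op}$ or similar. Since $\sum A_{ijk}=N\bar\cR\approx 2N$, this gives
\[
F(y)\le N\Bigl(-\tfrac{2L'}{3}s^3+hs-I(s)\Bigr)+L'C\sqrt{\varepsilon}N+C\varepsilon N .
\]

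\emph{Conclusion and main obstacle.} The delicate point is that the error carries a factor $L'$, so $\varepsilon$ must be chosen after $L'$. Define $\psi_{L'}(s):=-\tfrac{2L'}{3}s^3+hs-I(s)$. I then need $\sup_{s\ge0}\psi_{L'}(s)\le c_h-2\delta$ for large $L'$, and I split the range of $s$ into two parts. For $s\ge s_0$, the cubic penalty $-\tfrac{2L'}{3}s_0^3$ dominates once $L'$ is large. For $s\le s_0$, the bound reduces to $\psi\le hs-I(s)\to -I(0)$ as $s\to0$, by continuity of $I$ at $0$ (valid because $\mu(\{0\})>0$). This is strictly less than $c_h=\tfrac12(\Lambda(h)-I(0))$, since $\Lambda(h)\ge h\cdot 0-I(0)=-I(0)$ with strict inequality for non-degenerate $\mu$; here $\Lambda(h)>\log\mu(\{0\})$. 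So I first choose $s_0$ small so that $\sup_{s\le s_0}(hs-I(s))\le -I(0)+\eta$ with $-I(0)+\eta<c_h-2\delta$. Next I pick $L=L(h)$ so large that the region $s\ge s_0$ is also handled. Finally, for each $\beta\le -L$, I choose $\varepsilon$ small enough that $L'C\sqrt\varepsilon+C\varepsilon\le\delta$. Combining the two bounds gives the claimed gap $\delta N$. The main obstacle is making the cubic perturbation bound on $E_N$ rigorous and uniform in the sign structure, together with the continuity of $I$ at the endpoint $0$.
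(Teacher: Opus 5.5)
Your proposal is correct and follows essentially the same route as the paper: a bipartite, triangle-free test configuration for the lower bound, Jensen for the entropy part, a cubic perturbation bound of order $|\beta|\sqrt{\varepsilon}N$ on $E_N$, and the two-range argument ($t\le r$ via continuity of $I$ at $0$, $t\ge r$ via the cubic penalty), with $L$ chosen before $\varepsilon$. The only cosmetic difference is that the paper bounds $|\Pi(y)-\Pi(t\mathbf 1)|$ by the mean value theorem with $\|\nabla\Pi\|_\infty\le 6\kappa_+^2$, rather than by expanding $y=s\mathbf 1+d$. Note also that your $O(\varepsilon N)$ term and the $O(|\beta|N/n)$ correction from $\bar\cR=2(n-2)/n$ both carry a factor $|\beta|$. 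This is harmless: $\varepsilon$ is chosen after $\beta$, and the correction is $o(N)$.
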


\begin{proof}
Since $\mu(\{0\})>0$, we have $I(0)=\sup_{\lambda\in\mathbb R}\big(-\Lambda(\lambda)\big)=-\log\mu(\{0\})<\infty.$
Moreover, since $\mu$ is nondegenerate and assigns positive mass to $(0,\kappa_+]$,
\[
\Lambda(h)=\log\int e^{hz}\,d\mu(z)>\log\mu(\{0\})=-I(0).
\]
Set $\Delta(h):=\Lambda(h)+I(0)>0$ and 
define
\[
\Pi(x):=\sum_{e,f,g}A_{efg}x_ex_fx_g,
\qquad
G_h(t):=ht-I(t),
\]
so that
\[
f(x)  -I(x)=\frac{\beta}{3} \Pi(x) +\sum_{e} G_h(x_e),\qquad x\in[0,\kappa_+]^N.
\]
Note that $G_h$ is concave. Let $V=V_1\sqcup V_2$ be a partition of the vertex set with $|V_1| = \lfloor n/2 \rfloor.$ For $u\in[0,\kappa_+]$,
define $x^{(u)}\in[0,\kappa_+]^N$ by
\[
x^{(u)}_e=
\begin{cases}
u, & e \text{ crosses }(V_1,V_2),\\
0, & e\subset V_1 \text{ or } e\subset V_2.
\end{cases}
\]
Every triangle contains at least one within-part edge, hence $\Pi(x^{(u)})=0$. Then
\[
\frac1N f(x^{(u)})=\frac{|V_1||V_2|}{N}\big(hu-I(u)\big)-\Big(1-\frac{|V_1||V_2|}{N}\Big)I(0).
\]
Maximizing over $u$ and using $\sup_{u\in[0,\kappa_+]}(hu-I(u))=\Lambda(h)$ yields
\begin{equation}\label{eq:global-lb}
\frac1N\sup_{x\in[0,\kappa_+]^N} \big( f(x) - I(x) \big)
\ge \frac{1}{2} \Lambda(h)-\frac{1}{2}I(0)   + o(1)
\ge -I(0)+\frac12\Delta(h) + o(1).
\end{equation}

Now let $y\in[0,\kappa_+]^N$ satisfy $\sum_i (y_i-\bar y)^2\le \varepsilon N$ and set $t=\bar y$. By concavity of $G_h$ and Jensen's inequality,
\begin{equation}\label{eq:jensen}
\sum_i G_h(y_i)\le N G_h(t).
\end{equation}
 For each edge  $e$,
\[
0 \le \partial_{x_e}\Pi(x)=3\sum_{f,g}A_{efg}x_fx_g
\le 3\kappa_+^2\sum_{f,g}A_{efg} = 6 \kappa_+^2 (n-2)/n \le 6 \kappa_+^2.
\]
since the number of ordered pairs $(f,g)$ completing a triangle with $e$ equals $2(n-2)$. Therefore $\|\nabla \Pi(x) \|_\infty\le 6\kappa_+^2$ on $[0,\kappa_+]^N$.
By the mean value theorem,
\begin{equation}
|\Pi(y)-\Pi(t\mathbf 1)|
\le 6\kappa_+^2\|y-t\mathbf 1\|_1
\le 6\kappa_+^2\sqrt N\,\|y-t\mathbf 1\|_2
\le 6\kappa_+^2 N\sqrt{\varepsilon}.
\label{eq:lip}
\end{equation}
Combining \eqref{eq:jensen} and \eqref{eq:lip}, we obtain
\begin{align*}
\big( f(y) - I(y) \big) \le \frac{\beta}{3}\Pi(t\mathbf 1)+N\big(ht-I(t)\big)+2\kappa_+^2|\beta|N\sqrt{\varepsilon}.
\end{align*}
For constant vectors, one has $\Pi(t\mathbf 1)=(n-1)(n-2)t^3$, hence
\[
\frac1N \big( f(y) - I(y) \big)\le -\frac23|\beta|t^3+ht-I(t) +2\kappa_+^2|\beta|\sqrt{\varepsilon}+o(1).
\]
Taking the supremum over all such $y\in[0,\kappa_+]^N$ satisfying $\sum_i (y_i-\bar y)^2\le \varepsilon N$ yields
\begin{equation}
\frac1N \sup_{y \in E_N}  \big( f(y) - I(y) \big)
\le S(\beta,h)+2\kappa_+^2|\beta|\sqrt{\varepsilon}+o(1),
\label{eq:nearconst-bound}
\end{equation}
where $S(\beta ,h):=\sup_{t\in[0,\kappa_+]}\big(\frac23 \beta t^3+ht-I(t)\big)$.
Let $\eta:=\Delta(h)/8 >0$. Since $I(0)<\infty$ and $I$ is convex, $I$ is continuous at $0$,
so $ht-I(t)\to -I(0)$ as $t\downarrow 0$. Choose $r := r(\eta, h) \in(0,\kappa_+)$ such that $\sup_{0\le t\le r}\big(ht-I(t)\big)\le -I(0)+\eta$, yielding that for any $\beta \le 0,$
\begin{equation}
\sup_{t\in[0, r]}\big(\frac23 \beta t^3+ht-I(t)\big) \le -I(0)+\eta.
\label{eq:r-choice}
\end{equation}
Also, for any $\beta < 0$ and $t\ge r$,
\[
\frac23\beta t^3+ht-I(t)\le \frac23\beta r^3+\sup_{t\in[0,\kappa_+]}\big(ht-I(t)\big)
= \frac23\beta r^3+\Lambda(h).
\]
Since the right-hand side above tends to $- \infty$ as $\beta \to  - \infty,$ we can choose $L=L(h)>0$ such that
\begin{equation}
-\frac23 L r^3+\Lambda(h)\le -I(0)+\eta.
\label{eq:L-choice}
\end{equation}
Then \eqref{eq:r-choice} and \eqref{eq:L-choice} imply that for every $\beta \le -L$,
\begin{equation}
S(\beta,h)\le -I(0)+\eta.
\label{eq:S-bound}
\end{equation}
Now fix any $\beta\le -L$. Choose
\(
\varepsilon:=\big(\frac{\eta}{2\kappa_+^2|\beta|}\big)^2,
\)
so that $2\kappa_+^2|\beta|\sqrt{\varepsilon} =  \eta$. Using \eqref{eq:nearconst-bound} and
\eqref{eq:S-bound}, we obtain that
\begin{equation}
\sup_{y \in E_N}  \big( f(y) - I(y) \big)
\le \big(-I(0)+2\eta\big)N + o(N) =  \big(-I(0)+\tfrac14 \Delta(h) \big)N + o(N)
\label{eq:left-final}
\end{equation}

On the other hand, from \eqref{eq:global-lb}
\[
\sup_{x\in[0,\kappa_+]^N} \big(f(x) - I(x) \big) \ge \big(-I(0)+\tfrac12\Delta(h)\big)N + o(N).
\]
Combining this with
\eqref{eq:left-final} yields for all large $N$
\[
\sup_{y \in E_N}  \big( f(y) - I(y)\big)
\le
\sup_{x\in[0,\kappa_+]^N} \big(f(x)-I(x)\big)-\delta N,
\]
where one may take, for instance, $\delta:=\frac18\Delta(h)>0$.
This completes the proof.
\end{proof}


\begin{proof}[Proof of Theorem~\ref{thm:app-triangle}]
By Lemma~\ref{lem:triangle_mean}, the edge--triangle tensor satisfies Assumption~\ref{ass:standing}, the mean-field condition~\eqref{cond:GW}, asymptotic regularity~\eqref{cond:regu}, nontriviality~\eqref{cond:A_nontrivial}, and the spectral gap condition~\eqref{cond:spec-gap}. Therefore Theorem~\ref{thm:est-imposs} applies under the assumptions in part~\textup{(a)} and gives \(T_N(X)=o_p(1)\).

For part~\textup{(b)}, Lemma~\ref{lem:triangle} verifies the variational gap hypothesis of Theorem~\ref{cor:gap_implies_estimation} for every fixed \(h_0\), provided \(\beta_0\) is sufficiently negative. Thus Theorem~\ref{cor:gap_implies_estimation} gives \(P_{\beta_0,h_0}(T_N(X)\ge c)\to1\) for some \(c>0\). Applying Theorem~\ref{thm:main-1} then yields existence of the MPLE with probability tending to one and joint \(\sqrt N\)-consistency for \((\beta_0,h_0)\).
\end{proof}

\subsubsection{Proofs for edge-three-star ERGM} Recall that $N=\binom n2$, and $A$ is the three-star tensor defined in \eqref{eq:ergm-three-star}. We begin with the following lemma for validating some conditions.

\begin{lem}\label{lem:three-star-mean-field}
The edge-three-star tensor has constant row sums and satisfies
Assumption~\ref{ass:standing}, the nontriviality condition
\eqref{cond:A_nontrivial}, and the strong pseudo-regularity condition
\eqref{cond:strong-pseu-regu}. Moreover, $\mathrm{Tr}(\cR^2)=O(n)$ and 
hence, the mean-field condition \eqref{cond:GW} holds. Finally, $A$ satisfies the spectral-gap condition~\eqref{cond:spec-gap}.
\end{lem}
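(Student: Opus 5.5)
Since the three-star tensor is an explicit combinatorial object, I will mirror the proof of Lemma~\ref{lem:triangle_mean}: compute $\cR_e$ and $\cR_{ef}$ exactly, then invoke Lemma~\ref{lem:trace-optr}(b) for the mean-field condition and Lemma~\ref{lem:trace-optr-converse} for the spectral gap.

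\textbf{Row sums.} Fix an edge $e_0=\{a,b\}$. An ordered pair $(e_1,e_2)$ completes a three-star with $e_0$ exactly when $e_1,e_2$ are distinct edges sharing a common endpoint with $e_0$, other than $e_0$ itself. Centring the star at $a$ gives $(n-2)(n-3)$ ordered pairs, and centring at $b$ gives as many. Hence
\[
\cR_{e_0}=\frac{2(n-2)(n-3)}{n^2},
\]
which is independent of $e_0$. So the row sums are constant and \eqref{cond:regu} holds trivially. They are bounded by $\gamma=2$, which gives Assumption~\ref{ass:standing}. They are bounded below by, say, $1/2$ for $n$ large, which gives \eqref{cond:A_nontrivial}.

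\textbf{Pair sums.} Take $e_0\ne e_1$. If they share exactly one vertex $v$, then $\cR_{e_0e_1}$ counts the third edges at $v$ distinct from both, so $\cR_{e_0e_1}=(n-3)/n^2$. If they are disjoint, or if $e_0=e_1$, then $\cR_{e_0e_1}=0$. Thus every positive entry of $\cR$ equals $(n-3)/n^2$, and \eqref{cond:strong-pseu-regu} holds with $K=1$. Each edge is adjacent to $2(n-2)$ others, so
\[
\mathrm{Tr}(\cR^2)=N\cdot 2(n-2)\cdot\frac{(n-3)^2}{n^4}=O(n)=o(N/\log N).
\]
Lemma~\ref{lem:trace-optr}(b) then yields \eqref{cond:GW}.

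\textbf{Spectral gap.} The support graph $B$ is exactly the line graph of $K_n$, the same graph as in Lemma~\ref{lem:triangle_mean}(c). Its normalized spectral gap is $n/(2(n-2))\ge 1/2$, and with $K=1$, Lemma~\ref{lem:trace-optr-converse} gives $1-\lambda_2(D^{-1}\cR)\ge 1/2$.

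The only step needing real care is the counting. One must count ordered pairs and respect the distinctness conventions so that the constants are exact. The most delicate point is checking that $\cR_{ee}=0$, which follows because $A$ vanishes whenever indices repeat. Otherwise this is routine verification.
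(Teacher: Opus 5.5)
Your proposal is correct and follows the paper's proof essentially line by line. It uses the same exact counts $\cR_{ef}=\frac{n-3}{n^2}\mathbf 1_{\{e\sim f\}}$ and $\cR_e=\frac{2(n-2)(n-3)}{n^2}$, strong pseudo-regularity with $K=1$, and $\mathrm{Tr}(\cR^2)=\frac{(n-1)(n-2)(n-3)^2}{n^3}=O(n)$ fed into Lemma~\ref{lem:trace-optr}(b), with the spectral gap obtained from the line graph of $K_n$ via Lemma~\ref{lem:trace-optr-converse}.
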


\begin{proof}
For distinct edges $e,f$, write $e\sim f$ if they share a
vertex. Direct counting gives
\begin{equation}\label{eq:R_star}
    \cR_{ef}
    =
    \sum_g A_{efg}
    =
    \frac{n-3}{n^2}\mathbf{1}_{\{e\sim f\}}.
\end{equation}
Indeed, if $e\sim f$, there are exactly $n-3$ choices of a third
edge $g$ such that $e,f,g$ form a three-star. Since every edge shares
a vertex with exactly $2(n-2)$ other edges,
\begin{equation}
    \cR_e
    =
    \frac{2(n-2)(n-3)}{n^2},
    \label{eq:three-star-row-sum}
\end{equation}
independently of $e$. Thus Assumption~\ref{ass:standing} and
\eqref{cond:A_nontrivial} hold for all sufficiently large $n$.

All positive entries of $\cR$ are equal, so
\eqref{cond:strong-pseu-regu} holds with $K=1$. Furthermore,
\[
\begin{aligned}
    \mathrm{Tr}(\cR^2)
    =
    \sum_{e,f}\cR_{ef}^2 
    =
    N\cdot 2(n-2)
    \Big(\frac{n-3}{n^2}\Big)^2 
    =
    \frac{(n-1)(n-2)(n-3)^2}{n^3}
    =
    O(n).
\end{aligned}
\]
Since $N\asymp n^2$, this is $o(N/\log N)$. The result follows
from Lemma~\ref{lem:trace-optr}{\rm (b)}.

The spectral-gap condition follows exactly as in Lemma~\ref{lem:triangle_mean}{\rm (c)}. Indeed, by \eqref{eq:R_star}, the associated support graph $B_{e_0e_1}:=\mathbf 1_{\{\cR_{e_0e_1}>0\}}$ is again the line graph of the complete graph $K_n$. The remainder of the argument is identical.
\end{proof}

\begin{lem}\label{lem:three-star-variational}
Fix $\beta,h\in\mathbb{R}$, and let $f$ be the corresponding
edge-three-star Hamiltonian. There exist constants $c>0$ and
$C<\infty$, independent of $n$, such that
\[
    f(y)-I(y)
    \leq
    \sup_{u\in[\kappa_-,\kappa_+]^N} (f(u)-I(u) )
    -
    c\sum_e(y_e-\bar y)^2
    +
    Cn
\]
for every $y\in[\kappa_-,\kappa_+]^N$, where
$\bar y=N^{-1}\sum_e y_e$.
\end{lem}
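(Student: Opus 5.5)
The plan is to reduce the three-star Hamiltonian to a sum over vertices of cubic functions of normalized degrees, and then apply strong convexity of $I$ vertex by vertex. The loss of $\sum_e(y_e-\bar y)^2$ comes from a spectral bound on the line graph of $K_n$. For a vertex $v$ write $d_v(y):=\sum_{u\ne v}y_{vu}$ and $a_v:=d_v/(n-1)\in[\kappa_-,\kappa_+]$. The three-star sum at $v$ over ordered distinct triples equals
\[
d_v^3-3d_v\sum_u y_{vu}^2+2\sum_u y_{vu}^3 = d_v^3+O(n^2).
\]
After multiplying by $\beta/(3n^2)$ and summing over the $n$ vertices, the corrections contribute $O(n)$. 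Hence
\[
f(y)=\frac{\beta n}{3}\sum_v a_v^3+h\sum_e y_e+O(n).
\]
Every edge lies in exactly two stars, so $\sum_e(hy_e-I(y_e))=\frac12\sum_v\sum_{u\ne v}(hy_{vu}-I(y_{vu}))$. Therefore
\[
f(y)-I(y)=\sum_v\Big[\frac{\beta n}{3}a_v^3+\frac12\sum_{u\ne v}\big(hy_{vu}-I(y_{vu})\big)\Big]+O(n),
\]
with the $O(n)$ uniform in $y$. Here the fact that $\bar{\cR}$ and $(n-1)(n-2)(n-3)/n^2$ differ from their leading terms only at lower order is absorbed.

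Next I use strong concavity of $t\mapsto ht-I(t)$. On $(\kappa_-,\kappa_+)$ we have $I''(t)=1/\Lambda''(\Phi(t))\ge\kappa^{-2}$. So $I-\frac{1}{2\kappa^2}t^2$ is convex on $[\kappa_-,\kappa_+]$; this extends to the endpoints by lower semicontinuity, and only points with $I<\infty$ matter. Jensen applied to the $n-1$ entries at each vertex gives
\[
\sum_{u\ne v}\big(hy_{vu}-I(y_{vu})\big)\le(n-1)\big(ha_v-I(a_v)\big)-\frac{1}{2\kappa^2}\sum_{u\ne v}(y_{vu}-a_v)^2 .
\]
Set $\psi(a):=\frac{\beta}{3}a^3+\frac12(ha-I(a))$. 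Then
\[
f(y)-I(y)\le n^2\sup_a\psi(a)-\frac{1}{4\kappa^2}W(y)+O(n),
\qquad
W(y):=\sum_v\sum_{u\ne v}(y_{vu}-a_v)^2 .
\]
For constant vectors $t\mathbf 1$ the same expansion gives $f(t\mathbf 1)-I(t\mathbf 1)=n^2\psi(t)+O(n)$, using $N=n^2/2+O(n)$. Hence $n^2\sup\psi\le\sup_u(f(u)-I(u))+Cn$.

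It remains to show $W(y)\ge\sum_e(y_e-\bar y)^2$, and I expect this to be the main step. Put $z:=y-\bar y\mathbf 1$, so that $\sum_e z_e=0$. The within/between variance decomposition at each vertex gives
\[
W(y)=\sum_v\sum_{u\ne v}z_{vu}^2-(n-1)\sum_v(a_v-\bar y)^2=2\|z\|^2-\frac{1}{n-1}\sum_v\Big(\sum_{u\ne v}z_{vu}\Big)^2 .
\]
Since $\sum_v(\sum_u z_{vu})^2=z^T(2I+B)z$ with $B$ the adjacency matrix of the line graph of $K_n$, I use its spectrum as recorded in the proof of Lemma~\ref{lem:triangle_mean}(c). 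The top eigenvector is $\mathbf 1$, and $z\perp\mathbf 1$. So $z^T(2I+B)z\le(2+n-4)\|z\|^2=(n-2)\|z\|^2$. Hence $W(y)\ge\bigl(2-\tfrac{n-2}{n-1}\bigr)\|z\|^2\ge\|z\|^2$, and the lemma follows with $c=1/(4\kappa^2)$. The point needing care is this spectral step: a pattern $y_{uv}=s_u+s_v$ shows that the between-vertex part can carry a constant fraction of the variance, so some spectral bound is genuinely needed. The line-graph eigenvalue $n-4$ is exactly what keeps that fraction at most about one half.
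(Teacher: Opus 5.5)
Your proof is correct and is essentially the paper's argument. It uses the same vertex averages $q_i$ (your $a_v$), the same Jensen step for the convex function $I(t)-t^2/(2\kappa^2)$ giving $c=(4\kappa^2)^{-1}$, the same quantity $D(y)=W(y)$ with the same within/between decomposition, and constant vectors for the lower bound. The paper keeps the exact coefficient $N$ on $ht-I(t)$ via $NM_n$. In your version the $O(n)$ in $f(t\mathbf 1)-I(t\mathbf 1)=n^2\psi(t)+O(n)$ is not uniform in $t$ when $I$ blows up at $\kappa_\pm$, but this is harmless: $\psi$ does not depend on $n$, so you can evaluate at a fixed maximizer.

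The one genuine difference is the last step $W(y)\ge\sum_e(y_e-\bar y)^2$. The paper avoids the line-graph spectrum and instead uses the identity $n(q_j-\bar y)=\sum_{i\ne j}(y_{ij}-q_i)$ with Cauchy--Schwarz to get $n^2\sum_j(q_j-\bar y)^2\le(n-1)D(y)$. That route is more elementary, while your spectral bound gives the sharp constant $n/(n-1)$; either suffices.
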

\begin{proof}
If $I(y)=\infty$, the claim is immediate. Write $y_{ij}=y_{\{i,j\}}$ and set
$q_i=(n-1)^{-1}\sum_{j\ne i}y_{ij}$ and
$D(y)=\sum_i\sum_{j\ne i}(y_{ij}-q_i)^2$.
Since $\Lambda''\le\kappa^2$, the function
$I(t)-t^2/(2\kappa^2)$ is convex on the effective domain of $I$.
Thus Jensen's inequality, summed over $i$, gives
\begin{equation} \label{eq:star_bd1}
 h\sum_e y_e-I(y)
\le
\frac{n-1}{2}\sum_i\bigl(hq_i-I(q_i)\bigr)
-\frac{1}{4\kappa^2}D(y). 
\end{equation}
For each $i$, expanding $\big(\sum_{j\ne i}y_{ij}\big)^3$ and removing the terms with repeated indices gives
\[ \sum_{j,k,\ell\ne i\ {\rm distinct}}y_{ij}y_{ik}y_{i\ell}
=((n-1)q_i)^3-3(n-1)q_i\sum_{j\ne i}y_{ij}^2
+2\sum_{j\ne i}y_{ij}^3,\] and hence
\begin{equation} \label{eq:star_bd2}
\Big|
\la A,y^{\otimes3}\ra
-\frac{(n-1)^3}{n^2}\sum_iq_i^3
\Big|
\le
\frac1{n^2}\sum_i
\bigl(3(n-1)^2\kappa^3+2(n-1)\kappa^3\bigr)
\le 3\kappa^3n.
\end{equation}
Let
\[ M_n:=\sup_{t\in[\kappa_-,\kappa_+]}
\big( 2\beta(n-1)^2t^3/(3n^2)+ht-I(t) \big) .\]
Combining the bounds \eqref{eq:star_bd1} and \eqref{eq:star_bd2}, we obtain
\begin{equation}
f(y)-I(y)
\le NM_n-\frac{1}{4\kappa^2}D(y)+|\beta|\kappa^3n.
    \label{eq:three-star-upper}
\end{equation}
For constant vectors, direct counting gives
$\la A,(t\mv1)^{\otimes3}\ra=(n-1)(n-2)(n-3)t^3/n$. Since the cubic coefficient in $NM_n$ is $\beta(n-1)^3/(3n)$ and the absolute difference between the two cubic terms is at most $|\beta|\kappa^3n$, we deduce
 \begin{equation} \label{eq:three-star-lower}
\sup_u\bigl(f(u)-I(u)\bigr)\ge  \sup_{t \in [\kappa_{-}, \kappa_{+}] } \bigl(f(t \mv1)-I(t\mv1)\bigr) \ge   NM_n-|\beta|\kappa^3n.
\end{equation}
Expanding around
the row means, we have
\begin{equation}
    2\sum_e(y_e-\bar y)^2
    =
    D(y)+(n-1)\sum_i(q_i-\bar y)^2.
    \label{eq:three-star-variance-decomposition}
\end{equation}
Moreover,
\(
    \sum_i q_i=n\bar y,
\)
and symmetry of $y$ gives
\(
    n(q_j-\bar y)
    =
    \sum_{i\neq j}(y_{ij}-q_i).
\)
Thus, by the Cauchy-Schwarz inequality,
\[
    n^2\sum_j(q_j-\bar y)^2
    \leq
    (n-1)D(y).
\]
Substituting this into
\eqref{eq:three-star-variance-decomposition} yields
\[
    2\sum_e(y_e-\bar y)^2
    \leq
    \Big(1+\frac{(n-1)^2}{n^2}\Big)D(y)
    \leq
    2D(y),
\]
which implies  $ D(y)\geq\sum_e(y_e-\bar y)^2.$
Combining this with \eqref{eq:three-star-upper} and
\eqref{eq:three-star-lower} proves the lemma with $c=(4\kappa^2)^{-1}$ and $ C=2|\beta|\kappa^3$.
\end{proof}

Now we prove Theorem~\ref{thm:app-3-star}.
\begin{proof}[Proof of Theorem~\ref{thm:app-3-star}]
For each edge $e$, note that
\[
    b_e(X)
    =
    \Lambda'\bigl(\beta_0m_e(X)+h_0\bigr),
    \qquad
    \bar b(X)
    =
    \frac1N\sum_e b_e(X).
\]
By Lemma~\ref{lem:three-star-mean-field},
Assumption~\ref{ass:standing} and the mean-field condition
\eqref{cond:GW} hold.

Fix $\varepsilon>0$. Applying
Lemma~\ref{lem:three-star-variational} at
$(\beta,h)=(\beta_0,h_0)$, whenever
\(
    \sum_e(y_e-\bar y)^2\geq\varepsilon N
\)
we have
\[
    f(y)-I(y)
    \leq
    \sup_u ( f(u)-I(u) )
    -
    c\varepsilon N
    +
    Cn.
\]
Since $n=o(N)$, for all sufficiently large $n$ the right-hand side
is at most
\[
    \sup_u ( f(u)-I(u) )
    -
    \frac{c\varepsilon}{2}N.
\]
Proposition~\ref{thm:conc} therefore gives
\begin{equation}
    \frac1N\sum_e
    \bigl(b_e(X)-\bar b(X)\bigr)^2
    =
    o_p(1).
    \label{eq:three-star-b-homogeneous}
\end{equation}
Suppose first that $\beta_0\neq0$. By
\eqref{eq:three-star-row-sum},
\(
    |m_e(x)|
    \leq
    \kappa^2\cR_e
    \leq
    2\kappa^2
\)
uniformly in $e$ and $x$. Hence
\(
    a_0
    :=
    \inf_{|u|\leq2\kappa^2}
    \Lambda''(\beta_0u+h_0)
    >
    0.
\)
The mean value theorem gives
\[
    |b_e(X)-b_f(X)|
    \geq
    |\beta_0|a_0
    |m_e(X)-m_f(X)|.
\]
Using the pairwise variance identity \eqref{eq:TN},
\[
\begin{aligned}
    T_N(X)
    =
    \frac1{2N^2}\sum_{e,f}
    \bigl(m_e(X)-m_f(X)\bigr)^2 
    \le
    \frac1{\beta_0^2a_0^2}
    \frac1N\sum_e
    \bigl(b_e(X)-\bar b(X)\bigr)^2 
    \overset{\eqref{eq:three-star-b-homogeneous}}{=}
    o_p(1)
\end{aligned}
\]
It remains to prove the $\beta_0=0$ case. In this case,
\(
P_{0,h_0}=Q_t,
    \ \text{with} \ 
    t:=\Lambda'(h_0).
\)
Using the notation of Lemma~\ref{lem:iid-measure}, for a fixed edge
$e$ there are exactly $2(n-2)(n-3)$ ordered pairs $(f,g)$ for which
$A_{efg}>0$. Therefore
\(
    \|A_e\|_F^2
    =
    \frac{2(n-2)(n-3)}{n^4},
\)
and hence
\[
    \sum_e\|A_e-\bar A\|_F^2 = \sum_{e} \| A_e\|_F^2 - N \|\bar A\|_F^2
    \leq
    \sum_e\|A_e\|_F^2
    =
    O(1).
\]
Set
\(
    r_n
    :=
    \frac{2(n-2)(n-3)}{n^2},
\)
the row sums satisfy $\cR_e=r_n$ for every $e$, and
\[
    \sum_{e,f}
    \big(\cR_{ef}-\frac1N \cR_f\big)^2
    =
    \mathrm{Tr}(\cR^2)-r_n^2
    =
    O(n).
\]
Thus the first term in Lemma~\ref{lem:iid-measure}(i) vanishes,
while its other two terms are respectively $O(1)$ and $O(n)$.
Consequently,
\(
    N\mathbb{E}_{Q_t}T_N(X)=O(n).
\)
Since $N\asymp n^2$, then 
\(
    \mathbb{E}_{Q_t}T_N(X)
    =
    O(n^{-1})
\)
Markov's inequality now gives
\(
    T_N(X)=o_p(1).
\)
\end{proof}

\subsection{Proofs for 3-term Arithmetic Progressions}
We first introduce the following lemmas to validate the conditions of our general results.

\begin{lem}\label{lem:3ap_mean}
For both the cyclic tensor $A^{\mathrm{cyc}}$ and the integer
tensor $A^{\mathrm{int}}$, Assumption~\ref{ass:standing}, the
nontriviality condition \eqref{cond:A_nontrivial}, and the strong
pseudo-regularity condition
\eqref{cond:strong-pseu-regu} hold. Moreover, $\mathrm{Tr}(\cR^2)=O(1)$ and 
 the mean-field condition \eqref{cond:GW} follows. 
 
 In the cyclic case, the row sums are constant, and hence the
asymptotic regularity condition \eqref{cond:regu} holds. In the
integer case,
\(
    \sum_i (\cR_i-\bar \cR)^2=\Omega(N).
\)
\end{lem}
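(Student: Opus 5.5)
The proof is a direct count of the pair co-degrees $\cR_{ab}=\sum_c A_{abc}$, after which each condition is read off. Fix distinct $a,b$. A third element $c$ makes $\{a,b,c\}$ a nontrivial 3-AP exactly when it is one of
\[
2b-a,\qquad 2a-b,\qquad (a+b)/2 .
\]
In the integer case there are at most three such $c$, and a candidate counts only if it lies in $[N]$, is an integer, and is distinct from $a$ and $b$. In the cyclic case the same three residues are used modulo $N$; when $N$ is even, $(a+b)/2$ may have two solutions or none. The ordered tensor entries count each unordered triple through its permutations, so $\cR_{ab}$ is a multiple of $1/N$ bounded by a constant over $N$ (at most $4/N$, say). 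Hence every positive entry of $\cR$ lies in $[1/N,4/N]$, and the strong pseudo-regularity condition \eqref{cond:strong-pseu-regu} holds with $K=4$ in both cases.

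\textbf{Row sums.} In the cyclic case, translation invariance makes every row sum equal. Each $a$ lies in about $3N/2$ unordered nontrivial 3-APs, with minor parity corrections. Multiplying by $2/N$ for the orderings gives $\cR_a\asymp 3$, so Assumption~\ref{ass:standing}, the nontriviality condition \eqref{cond:A_nontrivial} and the regularity condition \eqref{cond:regu} all hold. In the integer case, the number of 3-APs containing $a$ is the sum of three counts: $a$ as first term ($\lfloor (N-a)/2\rfloor$), as last term ($\lfloor (a-1)/2\rfloor$), and as middle term ($\min(a-1,N-a)$). This gives $\cR_a = 2N^{-1}\big(N/2+\min(a-1,N-a)\big)+O(1/N)$, which is bounded above and at least about $1$. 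So Assumption~\ref{ass:standing} and \eqref{cond:A_nontrivial} hold again. Moreover $\cR_a\approx 1+2\min(x,1-x)$ with $x=a/N$, a nonconstant profile. Comparing Riemann sums then gives $\sum_a(\cR_a-\bar\cR)^2 = N\big(\int_0^1 (g-\bar g)^2+o(1)\big)=\Omega(N)$ for $g(x)=1+2\min(x,1-x)$.

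\textbf{Trace and mean-field.} We have $\mathrm{Tr}(\cR^2)=\sum_{a,b}\cR_{ab}^2\le (4/N)\sum_{a,b}\cR_{ab} = (4/N)\sum_a\cR_a = O(1)$, which is certainly $o(N/\log N)$. Lemma~\ref{lem:trace-optr}(b) then yields the mean-field condition \eqref{cond:GW}.

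The only step needing care is the bookkeeping of ordered versus unordered triples and of the parity and degenerate cases in $\mathbb Z/N\mathbb Z$, for instance $2b-a\equiv a$ when $2(b-a)\equiv 0$. These affect only $O(1)$ triples per pair or per row. They change the constants but not the uniform bounds $1/N\le\cR_{ab}\le C/N$ on positive entries or the order of the row sums. I would handle them by treating odd and even $N$ separately and absorbing the exceptions into the $O(1/N)$ error terms.
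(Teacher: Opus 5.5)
Your proposal is correct and follows the paper's proof essentially step for step. You bound the co-degrees $\cR_{ab}=q_{ab}/N$ with $1\le q_{ab}\le 4$ to get strong pseudo-regularity with $K=4$, and you read off the row sums: translation invariance in the cyclic case, and in the integer case the explicit first/middle/last-term count with profile $1+2\min(x,1-x)$. You then bound $\mathrm{Tr}(\cR^2)=O(1)$ and invoke Lemma~\ref{lem:trace-optr}(b), as the paper does. The only cosmetic difference is in the cyclic lower bound: the paper skips your approximate $3N/2$ count and its parity bookkeeping. It simply notes that each of the at least $N-2$ indices $b$ with $b\ne a$ and $2(a-b)\not\equiv 0$ contributes $\cR_{ab}\ge 1/N$, so $\cR_a\ge (N-2)/N$.
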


\begin{proof}
For distinct $a,b$, let
\[
    q_{ab}
    :=
    \#\bigl\{
        c:\{a,b,c\}\text{ forms a nontrivial $3$-AP}
      \bigr\}.
\]
Then, in either model,
\(
    \cR_{ab}
    =
    \sum_c A_{abc}
    =
    \frac{q_{ab}}{N}.
\) For a fixed pair $a,b$, the possible completions arise from
\[
    c=2a-b,
    \qquad
    c=2b-a,
    \qquad
    2c=a+b.
\]
In the integer model, the last equation has at most one solution,
so $q_{ab}\leq 3$. In the cyclic model, the equation
$2c=a+b$ has at most two solutions, so $q_{ab}\leq 4$. Therefore,
in both models,
\[
    \frac1N
    \leq \cR_{ab}
    \leq \frac4N
    \qquad\text{whenever }\cR_{ab}>0.
\]
Thus the strong pseudo-regularity condition
\eqref{cond:strong-pseu-regu} holds with $K=4$. Also,
\(
    \cR_a
    =
    \sum_b \cR_{ab}
    \leq 4,
\) so Assumption~\ref{ass:standing} holds, for example with
$\gamma=4$. Furthermore,
\[
    \mathrm{Tr}(R^2)
    =
    \sum_{a,b}R_{ab}^2
    \leq
    N^2 (4/N)^2
    \leq 16.
\]

We next verify nontriviality. In the cyclic model, translation
invariance shows that $\cR_a=r_N$ is independent of $a$. For fixed
$a$, consider all $b$ such that $b\neq a$ and
$2(a-b)\not\equiv0\pmod N$. Setting $c=2a-b$, the triple
$(b,a,c)$ is a nontrivial cyclic $3$-AP. Hence
\[
    \cR_a
    \geq
    \frac{N-2}{N}
    \geq \frac12 \quad \text{for $N\geq4$}.
\]
For the integer model, a direct count gives
\[
    \cR_i
    =
    \frac{2}{N}
    \Big(
        \Big\lfloor\frac{i-1}{2}\Big\rfloor
        +
        \Big\lfloor\frac{N-i}{2}\Big \rfloor
        +
        \min\{i-1,N-i\}
    \Big).
\]
In particular, $\cR_i\geq 1/2$ for every $i$ and every $N\geq4$.
Thus \eqref{cond:A_nontrivial} holds in both models. Since
\(
    \mathrm{Tr}(\cR^2)
    =
    O(1)
    =
    o\left( N / \log N\right),
\)
Lemma~\ref{lem:trace-optr}{\rm (b)} yields the mean-field
condition \eqref{cond:GW}.

In the cyclic case, translation invariance also shows that the
row sums are constant, so \eqref{cond:regu} holds. In the integer
case, uniformly for $\alpha$ in compact subsets of $(0,1)$,
\[
    \cR_{\lfloor\alpha N\rfloor}
    =
    1+2\min\{\alpha,1-\alpha\}+O(N^{-1}).
\]
Since the limiting profile is nonconstant, a Riemann-sum argument
gives
\(
    \sum_i(\cR_i-\bar  \cR)^2=\Omega(N).
\)
\end{proof}

\begin{lem}\label{lem:3ap_gap}
For the cyclic $3$-AP tensor $A^{\mathrm{cyc}}$, the spectral-gap
condition \eqref{cond:spec-gap} holds.
\end{lem}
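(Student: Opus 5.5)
By Lemma~\ref{lem:3ap_mean}, the cyclic tensor satisfies the strong pseudo-regularity condition \eqref{cond:strong-pseu-regu} with $K=4$, together with the hypotheses of Lemma~\ref{lem:trace-optr}{\rm (b)}. I will therefore apply Lemma~\ref{lem:trace-optr-converse}. This reduces the weighted spectral gap of $D^{-1}\cR$ to an unweighted spectral gap for the support graph $B_{ab}=\mathbf 1_{\{\cR_{ab}>0\}}$ on $\mathbb Z/N\mathbb Z$. It then suffices to show $1-\lambda_2(D_B^{-1}B)\ge\delta$ for a constant $\delta>0$, since this gives \eqref{cond:spec-gap} with constant $\delta/4$.

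The key observation is that $B$ is a Cayley graph of $\mathbb Z/N\mathbb Z$. For distinct $a,b$, the triple $(b,a,2a-b)$ is a nontrivial cyclic $3$-AP as soon as $2(a-b)\not\equiv 0$, as in the proof of Lemma~\ref{lem:3ap_mean}. Hence $B_{ab}=1$ for every $b\ne a$ except possibly $b=a+N/2$ when $N$ is even. That remaining pair can be completed using the midpoint equation $2c=a+b$, if it admits a solution distinct from $a,b$; but whether or not it does, $B$ contains the complete graph on $\mathbb Z/N\mathbb Z$ minus at most a perfect matching. So $B=K_N$ or $B=K_N-M$ for a perfect matching $M$. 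Both graphs are regular, with degree $d=N-1$ or $N-2$.

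The spectrum is then explicit. For $K_N$ the nontrivial eigenvalues of $B$ are all $-1$. For the cocktail-party graph $K_N-M$, the adjacency matrix is $J-I-P_M$, where $P_M$ is the matching permutation matrix. On $\mathbf 1^\perp$ its eigenvalues are $-1\mp1\in\{0,-2\}$, since the eigenvalues of $P_M$ are $\pm1$. Dividing by $d$, we get $\lambda_2(D_B^{-1}B)\le 0$, so $1-\lambda_2\ge 1$. Lemma~\ref{lem:trace-optr-converse} then yields $1-\lambda_2(D^{-1}\cR)\ge 1/4$, which is \eqref{cond:spec-gap}.

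The main obstacle is the careful verification that no other pairs $\{a,b\}$ fail to lie in a common nontrivial $3$-AP. The only completion that can fail is $c=2a-b$ (and symmetrically $c=2b-a$), and it fails exactly when $c\in\{a,b\}$, which means $2(a-b)\equiv0$. All other degeneracies are handled by the explicit count above. As a cross-check, one could instead use the second assertion of Lemma~\ref{lem:trace-optr-converse} directly when $N$ is odd, since then all off-diagonal $\cR_{ab}$ lie in $[1/N,4/N]$. The even case then needs only the cocktail-party computation.
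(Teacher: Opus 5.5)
Your proof is correct and follows the paper's argument. You identify the support graph $B$ as either $K_N$ or $K_N$ minus the perfect matching $\{a,a+N/2\}$, show $1-\lambda_2(D_B^{-1}B)\ge 1$, and conclude $1-\lambda_2(D^{-1}\cR)\ge 1/4$ from Lemma~\ref{lem:trace-optr-converse} with $K=4$. The only differences are minor: the paper determines which case occurs via $N \bmod 4$, whereas you use translation invariance to make the matching edges all-or-none, and you write out the cocktail-party spectrum that the paper merely asserts.
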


\begin{proof}
 Let \(B_{ab}:=\mathbf 1_{\{\cR_{ab}>0\}}\) be the support graph of \(\cR\) in the
cyclic model. We first identify this graph. If \(a\ne b\) and
\(2(a-b)\not\equiv 0\pmod N\), then \(c=2a-b\) is distinct from both \(a\) and \(b\),
so \(\{b,a,c\}\) is a nontrivial cyclic 3-AP. Thus \(B_{ab}=1\).

It remains only to consider the case \(2(a-b)\equiv0\pmod N\), which can occur only
when \(N\) is even and \(b=a+N/2\). If \(N\equiv0\pmod4\), then the equation
\(2c=a+b\pmod N\) has solutions, and any such solution is distinct from \(a,b\);
hence \(B_{ab}=1\). If \(N\equiv2\pmod4\), then this equation has no solution, and
the two other possible completions give \(c=a\) or \(c=b\). Hence in this case the
support graph is the complete graph with the perfect matching
\(\{a,a+N/2\}\) removed.

Therefore \(B\) is either the complete graph, or the complete graph with a perfect
matching removed. In both cases the simple random walk on \(B\) has
\(
1-\lambda_2(D_B^{-1}B)\ge 1.
\)
Indeed, for the complete graph all nontrivial eigenvalues are \(-1/(N-1)\), while
for the complete graph with a perfect matching removed the second largest eigenvalue
is \(0\).

By Lemma~\ref{lem:3ap_mean}, the strong pseudo-regularity condition holds with \(K=4\). Lemma~\ref{lem:trace-optr-converse}
therefore gives
\(
1-\lambda_2(D^{-1}R)\ge \frac14 .
\)
This proves the spectral-gap condition~\eqref{cond:spec-gap}.
\end{proof}

The next lemma supplies the additive-combinatorial input needed for showing variational gap for cyclic $3$-AP in the strong antiferromagnetic regime. It
 is a special case of \cite[Proposition~2.1]{SahSawhneyZhao21}, but we include a proof for completeness.  
 \color{black}
\begin{lem}\label{lem:T-3ap-upper}
There exist constants $\delta_0,K>0$ such that, for every $0<\delta\le \delta_0$ and every sufficiently large $N\in\mathbb N$, there exists a subset $A\subseteq \mathbb Z/N\mathbb Z$ with $|A|\ge \delta N$ such that
\[
T_3^{\mathrm{cyc}}(A) \le \delta^{K\log(1/\delta)}N^2.
\]
\end{lem}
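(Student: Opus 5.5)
We adapt Behrend's construction of large 3-AP-free sets in $[M]$ to $\mathbb Z/N\mathbb Z$. Behrend gives, for every $M$, a set $B\subseteq[M]$ with no nontrivial 3-AP and $|B|\ge M\exp(-c\sqrt{\log M})$. We then use the standard blow-up that converts a 3-AP-free set of a short interval into a set of density $\delta$ in $\mathbb Z/N\mathbb Z$ whose 3-APs are forced to be nearly degenerate. The target count $\delta^{K\log(1/\delta)}N^2=\exp(-K\log^2(1/\delta))N^2$ is exactly the Behrend-type saving, and it matches the cited \cite[Proposition~2.1]{SahSawhneyZhao21}.

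\textbf{Step 1: choosing $M$ and $B$.} Fix $\delta\le\delta_0$. We choose $M$ so that $\exp(-c\sqrt{\log M})\approx 4\delta$, that is, $\log M\asymp\log^2(1/\delta)$. Let $B\subseteq[M]$ be Behrend's set with $|B|\ge 4\delta M$, placed inside $[M/4,M/2)$ after rescaling. Shrinking $\delta_0$ absorbs the constant-factor loss from this placement.

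\textbf{Step 2: the blow-up in $\mathbb Z/N\mathbb Z$.} Embed $B$ via the map $x\mapsto \lfloor xN/M\rfloor$. More robustly, set
\[
A:=\{a\in\mathbb Z/N\mathbb Z:\ \lfloor Ma/N\rfloor\in B,\ \{Ma/N\}\in[0,1/4)\},
\]
viewing $a\in[0,N)$. Each element of $B$ then contributes an interval of length $\approx N/(4M)$, so $|A|\ge |B|N/(4M)\cdot(1-o(1))\ge\delta N$ for $N$ large.

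\textbf{Step 3: counting 3-APs.} Take a 3-AP $a,a+r,a+2r$ in $A$ modulo $N$. Since $B\subseteq[M/4,M/2)$, all three elements lie in $[N/4,N/2)$. The differences between them are less than $N/4$, so the relation $a+c\equiv 2b \pmod N$ lifts to an exact relation in $\mathbb Z$. Write each point as $(N/M)(x_i+\theta_i)$ with $x_i\in B$ and $\theta_i\in[0,1/4)$. Then $x_1+x_3-2x_2=2\theta_2-\theta_1-\theta_3\in(-1,1)$, and since the left side is an integer it equals $0$. Behrend's property then forces $x_1=x_2=x_3$, so all three points lie in a single block of length $N/(4M)$.

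The count is therefore at most $|B|\cdot(N/M)^2\le M\cdot N^2/M^2=N^2/M$. With $\log M\asymp\log^2(1/\delta)$ this gives $N^2/M\le \delta^{K\log(1/\delta)}N^2$ for a suitable constant $K$.

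\textbf{Main obstacle.} The delicate point is the bookkeeping in Step 3: the rescaling must be done so that cyclic wraparound and the fractional parts cannot create spurious progressions. This is why we use the safety margins $\theta\in[0,1/4)$ and $B\subseteq[M/4,M/2)$. We must also check that $N/(4M)\ge1$, which holds once $N$ is large relative to $\delta$. The remaining work is to quote Behrend's bound with explicit constants so that the exponent $K$ comes out uniform in $\delta$.
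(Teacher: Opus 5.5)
Your proposal is correct and is essentially the paper's proof. Both arguments combine a Behrend set $B\subseteq[M]$ with an interval at a separated scale, placed in half of $\mathbb Z/N\mathbb Z$ so that the cyclic relation lifts to $\mathbb Z$ with no carries. Both then use 3-AP-freeness of $B$ to force the $B$-coordinate to be constant, giving $T_3^{\mathrm{cyc}}(A)\lesssim |B|(N/M)^2\le N^2/M$ with $\log M\asymp\log^2(1/\delta)$. The only difference is cosmetic: the paper puts $B$ at the fine scale (translates $rL+B$ with $L=3M$, $0\le r<N/(6M)$), while you put it at the coarse scale (thickening each $x\in B$ into a block of length $N/(4M)$).

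One bookkeeping slip: $|B|\ge 4\delta M$ only gives $|A|\ge |B|\,(N/(4M)-1)\ge \delta N-|B|$, which falls short of $\delta N$. You should require $|B|\ge 8\delta M$ (the paper uses $12\delta M$), which your constant adjustment in Step 1 already allows.
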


\begin{proof}

Fix $c>0$ from Behrend's theorem, so that for every sufficiently large integer $M$ there exists a set $B\subseteq [M]$ with $T_3^{\mathrm{int}}(B)=0$ and $|B|\ge M e^{-c\sqrt{\log M}}$. Choose $\delta_0>0$ so small that for every $0<\delta\le \delta_0$, the integer
\[
M:=\lfloor \exp((4c^2)^{-1}\log^2(1/\delta))\rfloor
\]
is large enough for Behrend's theorem to apply and also satisfies $\delta^{1/2}\ge 12\delta$. With this choice, we have $ e^{-c\sqrt{\log M}} M\ge  \delta^{1/2} M \ge 12\delta M$. Hence there exists a set $B\subseteq [M]$ such that
\[T_3^{\mathrm{int}}(B)=0
\qquad\text{and}\qquad
|B|\ge 12\delta M.
\]
Also, $M\ge \delta^{-C_0\log(1/\delta)}$ for some absolute constant $C_0>0$.
Set $L:=3M$, $m:=\lfloor N/(6M)\rfloor$, and
\[
A:=\bigcup_{r=0}^{m-1}(rL+B)\subseteq \mathbb Z/N\mathbb Z.
\]
Since $B\subseteq [M]$, the translates $rL+B$ are disjoint. Moreover, for every $0\le r\le m-1$ and $b\in B$ we have $rL+b\le (m-1)L+M\le mL\le N/2$, so $A$ is contained in the first half of the cycle. Moreover, $|A|=m|B|\ge 2\delta N-12\delta M,$ which implies that $|A|\ge \delta N$ for all sufficiently large $N$.

We next bound $T_3^{\mathrm{cyc}}(A)$. Let $a,b,c\in A$ satisfy $a+c\equiv 2b\pmod N$. Since $a,b,c\in \{1,\dots,\lfloor N/2\rfloor\}$, we have $|a+c-2b|<N$, hence in fact $a+c=2b$ in $\mathbb Z$. Writing uniquely
\[
a=r_aL+s_a,\quad b=r_bL+s_b,\quad c=r_cL+s_c, \text{ \ \  with $0\le r_a,r_b,r_c\le m-1$ and $s_a,s_b,s_c\in B$,}
\]
 we obtain $(r_a+r_c-2r_b)L=2s_b-s_a-s_c.$ The right-hand side has absolute value $<2M<L$, while the left-hand side is a multiple of $L$, so both sides vanish. Hence $r_a+r_c=2r_b$ and $s_a+s_c=2s_b$. Since $T_3^{\mathrm{int}}(B)=0$, the latter implies $s_a=s_b=s_c$. Therefore every nontrivial cyclic $3$-term arithmetic progression in $A$ is obtained by fixing some $s\in B$ and choosing a nontrivial integer $3$-term arithmetic progression among the indices $r_a, r_b, r_c$ in $\{0,1,\dots,m-1\}$. Consequently,
\[
T_3^{\mathrm{cyc}}(A)\le |B|\,T_3^{\mathrm{int}}(\{0,1,\dots,m-1\})\ll |B|m^2\ll \frac{N^2}{M} \le \delta^{C_0\log(1/\delta)}N^2.
\]
where in the last inequality we used $M\ge \delta^{-C_0\log(1/\delta)}$.
This completes the proof.
\end{proof}

In the $\mathrm{Ber}(1/2)$ case, the log moment generating function is $\Lambda(\lambda)=\log\bigl((1+e^\lambda)/2\bigr)$ for $\lambda\in\mathbb R$, and the associated rate function is $I(t)=t\log(2t)+(1-t)\log(2(1-t))$ for $t\in[0,1]$. For $x\in[0,1]^N$, we write $I(x):=\sum_i I(x_i)$.

\begin{lem}\label{lem:3AP}
In the cyclic case, for any $h\in\mathbb R$, there exists a constant
$L=L(h)>0$ such that, whenever $\beta\le -L$, there exist constants $\varepsilon>0$ and $\delta>0$
for which the following holds: for all sufficiently large $N$, we have
\[
\sup_{x \in E_N } \big( f(x) - I(x) \big)
\le
\sup_{x\in[0,1]^N} \big( f(x) - I(x) \big)- \delta N,
\]
where $E_N := \{ x\in[0,1]^N: \sum_i (x_i-\bar x)^2\le \varepsilon N\}.$
Consequently, for every $K>0$ there exists $L<\infty$, such that joint estimation at rate $\sqrt{N}$ is possible in the parameter regime $(\beta\le -L, |h|\le K)$ using the pseudo-likelihood estimator.
\end{lem}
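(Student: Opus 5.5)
We follow the template of Lemma~\ref{lem:triangle}. We compare an upper bound for near-constant profiles with a lower bound coming from a well-chosen inhomogeneous profile. The new feature is that, unlike for triangles, no set of positive density in $\mathbb Z/N\mathbb Z$ is completely free of $3$-APs. We therefore use Lemma~\ref{lem:T-3ap-upper} to produce a set with very few $3$-APs.

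\textbf{Upper bound on $E_N$.} Write $\Pi(x)=\langle A^{\mathrm{cyc}},x^{\otimes3}\rangle$, so that $f(x)-I(x)=\frac{\beta}{3}\Pi(x)+\sum_iG_h(x_i)$ with $G_h(t)=ht-I(t)$ concave. For $x\in E_N$ with $t=\bar x$, Jensen gives $\sum_iG_h(x_i)\le NG_h(t)$. Since $\|\nabla\Pi\|_\infty\le 3\max_i\cR_i\le 12$ on $[0,1]^N$, we get $|\Pi(x)-\Pi(t\mathbf 1)|\le 12N\sqrt\varepsilon$. For constant profiles, $\Pi(t\mathbf 1)=N\cR_1t^3$ with $\cR_1=r_N\ge 1/2$ (Lemma~\ref{lem:3ap_mean}). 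Hence, for $\beta<0$,
\[
\frac1N\sup_{E_N}(f-I)\le \sup_{t\in[0,1]}\Big(\frac{\beta}{6}t^3+ht-I(t)\Big)+4|\beta|\sqrt\varepsilon .
\]
Take $\mu=\mathrm{Ber}(1/2)$, so $I(0)=\log 2$ and $\Lambda(h)>-\log2$. Fix any $\eta>0$. Exactly as in \eqref{eq:r-choice}--\eqref{eq:S-bound}, for $\beta\le-L(h,\eta)$ the supremum is at most $-I(0)+\eta$. Choosing $\varepsilon=(\eta/(4|\beta|))^2$, we obtain $\frac1N\sup_{E_N}(f-I)\le -I(0)+2\eta+o(1)$.

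\textbf{Lower bound.} Choose $\delta$ small; this choice will depend only on $h$. Let $S$ be the set from Lemma~\ref{lem:T-3ap-upper}, with $|S|\ge\delta N$ and $T_3^{\mathrm{cyc}}(S)\le\delta^{K\log(1/\delta)}N^2$. Fix a point $u\in(0,1)$ at which $hu-I(u)>-I(0)$; such $u$ exists since $\sup_u(hu-I(u))=\Lambda(h)>-I(0)$. Set $x_i=u\mathbf 1_{i\in S}$. Then $\Pi(x)\le 6N^{-1}T_3^{\mathrm{cyc}}(S)\le 6\delta^{K\log(1/\delta)}N$, and
\[
\frac1N(f(x)-I(x))\ge -I(0)+\delta\,\Delta_u-2|\beta|\,\delta^{K\log(1/\delta)}, \qquad \Delta_u:=hu-I(u)+I(0)>0.
\]

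\textbf{Main obstacle: ordering the constants.} The difficulty is that the gain $\delta\Delta_u$ shrinks with $\delta$, while the upper bound must still beat it; on top of that, $\beta$ enters both sides. We resolve this as follows. First fix $\delta\le\delta_0$ using only $h$. Then set $\eta:=\delta\Delta_u/8$ and take $L=L(h,\eta)$ from the upper-bound step. For $\beta\le -L$ the error term $2|\beta|\delta^{K\log(1/\delta)}$ is not small uniformly in $\beta$. To handle this, we let $\delta=\delta(\beta)$ shrink as $|\beta|\to\infty$, choosing it so that $2|\beta|\delta^{K\log(1/\delta)}\le\delta\Delta_u/4$; this is possible because $\delta^{K\log(1/\delta)}/\delta$ is superpolynomially small in $\delta$. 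The upper bound then has to be re-run at $\eta=\delta(\beta)\Delta_u/8$. This in turn requires $\sup_t(\frac\beta6t^3+ht-I(t))\le -I(0)+\eta$, i.e.\ that $r(\eta)$ satisfies $|\beta|r(\eta)^3\gtrsim \Lambda(h)+I(0)$. Since $r(\eta)\asymp\eta$ near $0$ (because $I$ has finite slope at $0^+$ after subtracting $ht$), this needs $|\beta|\delta(\beta)^3\to\infty$. That is compatible with $\delta(\beta)$ decaying like $\exp(-c\sqrt{\log|\beta|})$, since $\delta^{K\log(1/\delta)-1}\approx|\beta|^{-cK}$ beats $|\beta|^{-1}$ once $cK>1$. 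We expect this balancing, with $\delta$ chosen as an explicit function of $|\beta|$ and one single $L(h)$ beyond which all inequalities hold, to be the delicate part.

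\textbf{Conclusion.} With the constants so chosen, the two bounds give the stated gap with gap constant $\delta\Delta_u/2$. The consequence for estimation then follows from Lemmas~\ref{lem:3ap_mean}, Theorem~\ref{cor:gap_implies_estimation}, and Theorem~\ref{thm:main-1}. Uniformity over $|h|\le K$ follows from continuity of $\Lambda$ and $I$ in these choices.
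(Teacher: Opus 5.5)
Your proposal is correct and is essentially the paper's argument: Jensen plus the $\ell^1$-Lipschitz bound on $\Pi$ for near-constant profiles, compared against the profile $u\mathbf 1_S$ with $S$ from Lemma~\ref{lem:T-3ap-upper}, whose density must shrink with $|\beta|$. In fact your explicit balancing $\delta(\beta)=\exp(-c\sqrt{\log|\beta|})$ is more careful than the paper, which picks $L(h)$ from $\eta$ before setting $\eta=\rho(\beta,h)\Delta(h)/8$; your choice resolves that circularity, and two slips in it are harmless. First, for $\mathrm{Ber}(1/2)$ one has $I'(t)=\log(t/(1-t))\to-\infty$ as $t\downarrow0$, so $r(\eta)\asymp\eta/\log(1/\eta)$ rather than $\asymp\eta$; second, $\delta^{K\log(1/\delta)}=|\beta|^{-Kc^2}$ rather than $|\beta|^{-cK}$. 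Nevertheless $|\beta|\,\delta(\beta)^3/\log^3(1/\delta(\beta))\to\infty$ still holds, and $Kc^2>1$ is arranged by taking $c$ large.
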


\begin{proof}
Let
\[
\Pi(x):=\sum_{a,b,c} A^{\mathrm{cyc}}_{a,b,c}x_ax_bx_c,
\qquad
G_h(t):=ht-I(t),
\]
so that $f(x)-I(x)=\frac{\beta}{3} \Pi(x)+\sum_i G_h(x_i)$.

By Lemma~\ref{lem:3ap_mean}, $\cR_i=\bar \cR$ is independent of $i$, and $\max_i \cR_i\le C_0$ for some
absolute constant $C_0$. Moreover, $\bar \cR$ is bounded away from $0$ uniformly in $N$:
indeed, fixing $i$ and counting progressions in which $i$ is the middle term gives
$\cR_i\ge 2\lfloor (N-1)/2\rfloor/N\ge 1/2$.

Now let $x \in[0,1]^N$ and set $t:=\bar x$. Since $G_h$ is concave, Jensen gives
$\sum_i G_h(x_i)\le N G_h(t)$. Also, as in the proof of Lemma~\ref{lem:triangle},
$\|\nabla \Pi\|_\infty\le 3C_0$ on $[0,1]^N$, and therefore
$|\Pi(x)-\Pi(t\mathbf 1)|\le 3C_0\|x-t\mathbf 1\|_1$.
Hence, whenever $\sum_i (x_i-\bar x)^2\le \varepsilon N$, we have
\[
f(x)-I(x)
\le
N\bigl(\frac{\beta}{3} \bar \cR t^3+ht-I(t)\bigr)+C_1\frac{|\beta|}{3}N\sqrt{\varepsilon},
\]
for some absolute constant $C_1>0$.

Set $\Delta(h):=\Lambda(h)+I(0) = \log(1+ e^h)>0$. Exactly as in the proof of Lemma~\ref{lem:triangle}, given $\eta>0$
one can choose $r=r(h,\eta)\in(0,1)$ such that
$\sup_{0\le t\le r}(ht-I(t))\le -I(0)+\eta$.
Since $\bar \cR \ge 1/2$, we may then choose $L=L(h)>0$ so large that for every $\beta\le -L$,
\[
\sup_{t\in[0,1]}\bigl(\frac{\beta}{3} \bar \cR  t^3+ht-I(t)\bigr)\le -I(0)+\eta
\]
for all $N$. Consequently,
\[
\sup_{x \in E_N }
\bigl(f(x)-I(x)\bigr)
\le
\bigl(-I(0)+\eta+C_1|\beta|\sqrt{\varepsilon}\bigr)N.
\]

For the lower bound on the unrestricted supremum, fix $\rho>0$ to be chosen later, and let
$A\subseteq \mathbb Z/N\mathbb Z$ be given by Lemma~\ref{lem:T-3ap-upper}, so that $|A|\ge \rho N$ and
$T_3^{\mathrm{cyc}}(A)\le \rho^{K_0\log(1/\rho)}N^2$ for all sufficiently large $N$, where
$K_0>0$ is absolute. 

Let $u\in[0,1]$ maximize $hu-I(u)$, so that $hu-I(u)=\Lambda(h)$, and
set $x^{(u)}:=u\,\mathbf 1_A$. Since each cyclic $3$-AP contributes at most six ordered triples, $\Pi(x^{(u)})\le \frac{6}{N}T_3^{\mathrm{cyc}}(A).$ On the other hand, 
$\sum_i \bigl(hx_i^{(u)}-I(x_i^{(u)})\bigr)=|A|\bigl(hu-I(u)\bigr)+(N-|A|)\bigl(-I(0)\bigr)$.
As $hu-I(u)=\Lambda(h)$, this becomes
\[ \sum_i \bigl(hx_i^{(u)}-I(x_i^{(u)})\bigr)=-NI(0)+|A|\bigl(\Lambda(h)+I(0)\bigr)
=-NI(0)+|A|\Delta(h)\ge -NI(0)+\rho N\Delta(h).\]
Therefore, for all sufficiently large $N$,
\[
\frac1N\bigl(f(x^{(u)})-I(x^{(u)})\bigr)
\ge
-I(0)+\rho\Delta(h)-2|\beta|\,\rho^{K_0\log(1/\rho)}.
\]

Now choose $\rho=\rho(\beta,h)>0$ so small that
$2|\beta|\,\rho^{K_0\log(1/\rho)}\le \rho\Delta(h)/4$, and set
$\eta:=\rho\Delta(h)/8$. Then
\[
\sup_{x\in[0,1]^N}\bigl(f(x)-I(x)\bigr)\ge \bigl(-I(0)+6\eta\bigr)N
\]
for all sufficiently large $N$.

Finally choose $\varepsilon>0$ so that $C_1|\beta|\sqrt{\varepsilon}\le \eta$. Combining the
last two bounds gives, for all sufficiently large $N$,
\[
\sup_{x \in E_N }
\bigl(f(x)-I(x)\bigr)
\le
\sup_{x\in[0,1]^N}\bigl(f(x)-I(x)\bigr)-4\eta N.
\]
This proves the lemma, with $\delta:=4\eta$.
\end{proof}

We now present the proof of Theorem~\ref{thm:app-3ap}.
\begin{proof}[Proof of Theorem~\ref{thm:app-3ap}]
We first consider the cyclic model.
Lemma~\ref{lem:3ap_mean} verifies
Assumption~\ref{ass:standing}, nontriviality
\eqref{cond:A_nontrivial}, asymptotic regularity
\eqref{cond:regu}, and the
mean-field condition \eqref{cond:GW}.
Moreover, Lemma~\ref{lem:3ap_gap} gives the
spectral-gap condition \eqref{cond:spec-gap}.

Since
\(\mu=\frac12\delta_0+\frac12\delta_1
\)
is supported on the nonnegative real line, it is stochastically
nonnegative, and $\kappa_-=0$. Therefore
Theorem~\ref{thm:est-imposs} gives $   T_N(X)=o_p(1)$
when $\beta_0>0$ and $h_0\geq0$. This proves part~(i)(a).

For part~(i)(b), Lemma~\ref{lem:3AP} verifies the variational-gap
hypothesis of Theorem~\ref{cor:gap_implies_estimation} whenever
$\beta_0$ is sufficiently negative. Hence
Theorem~\ref{cor:gap_implies_estimation} gives $ P^{\mathrm{cyc}}_{\beta_0,h_0}
    \bigl(T_N(X)\geq c\bigr)
    \to 1$
for some $c>0$. The existence and joint $\sqrt{N}$-consistency
of the MPLE then follow from Theorem~\ref{thm:main-1}.

For the integer model, Lemma~\ref{lem:3ap_mean} gives
\(
    \sum_i(\cR_i-\bar \cR)^2=\Omega(N).
\)
 Moreover, for the Bernoulli reference measure,
\(
    \Lambda'(h_0)
    =
    \frac{e^{h_0}}{1+e^{h_0}}
    >0
\)
for every finite $h_0$. Therefore Theorem~\ref{thm:positive-res}
gives $T_N(X)=\Omega_p(1)$. Applying
Theorem~\ref{thm:main-1} again yields existence and joint
$\sqrt{N}$-consistency of the MPLE. This proves part~(ii).
\end{proof}

\subsection{Proofs for Inhomogeneous Random Hypergraph Model}
We present the proof of Theorem~\ref{thm:app-hypergraph} similarly as before by validating various conditions in the following lemmas.
\begin{lem}
\label{lem:hypergraph-basic-original-route}
For the random tensor \(A\) defined, the following hold.
\begin{enumerate}
\item[(a)] Assumption~\ref{ass:standing} holds deterministically, with \(\gamma=1\).
 \item[(b)] Deterministically, we have
    $\mathrm{Tr}(\cR^2)\leq 1$. Moreover,  $\big \| \sum_i A_i^2 \big {\|}_{\rm{op}} \le N^{-1} $, which implies that the mean-field condition \eqref{cond:GW} holds.

\item[(c)] With respect to the randomness of \(A\),
\[
        \frac1N\sum_i(\cR_i-\bar{\cR})^2
        \xrightarrow{\bP_A}
        \int_0^1\bigl(G(x)-\bar{G}\bigr)^2\,dx.
\]
\end{enumerate}
\end{lem}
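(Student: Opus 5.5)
Parts (a) and (b) are deterministic consequences of the bounds $0\le A_{ijk}\le N^{-2}$, and part (c) is a law of large numbers for independent bounded Bernoulli sums followed by a Riemann-sum limit.

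For (a), each row sum is $\cR_i=\sum_{j,k}A_{ijk}\le N^{-2}\cdot N^2=1$, which gives Assumption~\ref{ass:standing} with $\gamma=1$.

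For (b), I will work entrywise. We have $\cR_{ij}=\sum_k A_{ijk}\le N^{-1}$, so $\mathrm{Tr}(\cR^2)=\sum_{i,j}\cR_{ij}^2\le N^2\cdot N^{-2}=1$. For the operator norm, set $M=\sum_i A_i^2$, which has nonnegative entries. As in the proof of Lemma~\ref{lem:trace-optr}(b),
\[
\|M\|_{\rm op}\le \max_j\sum_k M_{jk}=\max_j\sum_{i,\ell}A_{ji\ell}\cR_{i\ell}\le \Big(\max_{a,b}\cR_{ab}\Big)\cR_j\le N^{-1}.
\]
Since $N^{-1}=o((\log N)^{-1})$, the mean-field condition \eqref{cond:GW} follows.

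For (c), I would write $\cR_i=N^{-2}\sum_{j\ne k,\ j,k\ne i}\xi_{\{i,j,k\}}$. This is $2N^{-2}$ times a sum of about $N^2/2$ independent Bernoulli variables, with mean $\E\cR_i=N^{-2}\sum_{j\ne k}g(i/N,j/N,k/N)$. Uniform continuity of $g$ on $[0,1]^3$ gives $\max_i|\E\cR_i-G(i/N)|\to0$. Hoeffding's inequality gives $\bP(|\cR_i-\E\cR_i|>t)\le 2\exp(-cN^2t^2)$, and a union bound over $i$ gives $\max_i|\cR_i-G(i/N)|\to 0$ in probability. (Even Chebyshev plus a Markov bound on the average squared deviation would suffice.) Since the $\cR_i$ and $G$ are bounded by $1$, this yields
\[
\frac1N\sum_i(\cR_i-\bar\cR)^2-\frac1N\sum_i\big(G(i/N)-\overline{G_N}\big)^2\to0
\]
in probability, where $\overline{G_N}=N^{-1}\sum_iG(i/N)$. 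Finally, $G$ is continuous because $g$ is, so the Riemann sums converge to $\int_0^1 G^2$ and $\bar G$, and the limit equals $\int_0^1 (G-\bar G)^2\,dx$.

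The main, though mild, obstacle is making the uniform approximation in (c) precise. I would carefully handle the excluded diagonal indices, which contribute only $O(N^{-1})$, and the uniform continuity step, then obtain the concentration from the independence of the $\xi$'s across triples. Correlation between $\cR_i$ for different $i$ is irrelevant, since only the union bound is used.
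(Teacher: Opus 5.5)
Your proposal is correct and follows essentially the paper's route. For (a) and (b) you use the deterministic entrywise bounds $A_{ijk}\le N^{-2}$ and $\cR_{ij}\le N^{-1}$, and for (c) you concentrate $\cR_i$ around $\bE_A\cR_i=G(i/N)+o(1)$ uniformly in $i$ and then pass to a Riemann-sum limit. There are two minor differences: the paper bounds $\|\sum_i A_i^2\|_{\mathrm{op}}\le\sum_i\|A_i\|_{\mathrm{op}}^2$ with $\|A_i\|_{\mathrm{op}}\le\max_j\cR_{ij}$ rather than via the max row sum of $M$, and it controls $\frac1N\sum_i(\cR_i-\bE_A\cR_i)^2$ by a variance bound plus Markov instead of your Hoeffding-plus-union-bound sup-norm estimate.
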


\begin{proof}
For part (a), since \(0\le A_{ijk}\le N^{-2}\) by definition, we have 
\(
        \sum_{j,k=1}^N A_{ijk}
        \le  1.
\)
This proves Assumption~\ref{ass:standing}.

For part~(b), for every $i,j$,
\[
    0\leq \cR_{ij}
    =
    \sum_k A_{ijk}
    \leq
    \frac{N-2}{N^2}
    \leq \frac1N.
\]
Consequently,
\[
    \mathrm{Tr}(\cR^2)
    =
    \sum_{i, j} \cR_{ij}^2
    \leq
    N^2\frac1{N^2}
    =
    1.
\]
Since entries of $A_i$ are nonnegative, $ \| A_i {\|}_{\rm{op}} \le \max_{j} \sum_k A_{ijk} = \max_j \cR_{ij} \le N^{-1}.$ Since $A_i$ is symmetric, and by triangle inequality,
\[ \big \| \sum_i A_i^2 \big {\|}_{\rm{op}} \le \sum_i \big \|  A_i^2 \big {\|}_{\rm{op}} = \sum_i \big \|  A_i \big {\|}_{\rm{op}}^2 \le N.N^{-2} = N^{-1}.  \]
For part (c), let \(r_i:=\bE_A \cR_i\) and \(\bar r:=N^{-1}\sum_i r_i\). Since each $\cR_i = 2N^{-2} \sum_{j< k, j, k \ne i }\xi_{\{i,j,k\}} $ is a normalized sum of \(O(N^2)\) independent   Bernoulli variables with weights at most \(O(N^{-2})\),
\[
\operatorname{Var}_A(\cR_i) \le \frac{4}{N^4}\binom{N-1}{2}= O(N^{-2}), \quad       \bE_A\Big[\frac1N\sum_i(\cR_i-r_i)^2\Big]=O(N^{-2}).
\]
Hence, by Markov's inequality,
\[     \frac1N\sum_i\bigl((\cR_i-\bar{\cR})-(r_i-\bar r)\bigr)^2 \le \frac1N\sum_i(\cR_i-r_i)^2
        \xrightarrow{\bP_A}0.
\]
By continuity of $g$, we have $  r_i=G(i/N)+o(1)$
uniformly in $i$. In particular, $\bar r  = \bar G + o(1)$ and  a Riemann-sum argument gives
\[\frac1N\sum_i(r_i-\bar r)^2 
        \to
        \int_0^1\bigl(G(x)-\bar{G}\bigr)^2 dx.
\]
By the Cauchy-Schwarz inequality and the uniform
boundedness of $\cR_i$ and $r_i$, we have \begin{align*}
&\Big|
\frac1N\sum_i(\cR_i-\bar{\cR})^2
-
\frac1N\sum_i(r_i-\bar r)^2
\Big|\\
&\qquad\le
\Big[
\frac1N\sum_i
\bigl((\cR_i-\bar{\cR})-(r_i-\bar r)\bigr)^2
\Big]^{1/2} \times
\Big[
\frac1N\sum_i
\bigl((\cR_i-\bar{\cR})+(r_i-\bar r)\bigr)^2
\Big]^{1/2}
\xrightarrow{\mathbb P_A}0.
\end{align*}
Combining the last two displays proves the claim.

\end{proof}

\begin{lem}
\label{lem:hypergraph-homogeneous-original-route}
Assume that $G\equiv\bar G$ and that $g$ is strictly positive
on $[0,1]^3$. Then, with probability tending to one over the
randomness of $A$, the tensor $A$ satisfies the asymptotic
regularity condition \eqref{cond:regu}, the nontriviality
condition \eqref{cond:A_nontrivial}, the strong
pseudo-regularity condition \eqref{cond:strong-pseu-regu}, and
the spectral-gap condition \eqref{cond:spec-gap}. 
\end{lem}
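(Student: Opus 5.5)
We verify the four conditions separately, working on a high-probability event for the randomness of $A$. Write $g_*:=\inf_{[0,1]^3}g>0$. The entries of $A$ are $A_{ijk}=N^{-2}\xi_{\{i,j,k\}}$ with independent $\xi_{\{i,j,k\}}\sim{\rm Ber}(g(i/N,j/N,k/N))$.

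\textbf{Regularity.} The asymptotic regularity condition~\eqref{cond:regu} is immediate from Lemma~\ref{lem:hypergraph-basic-original-route}(c). When $G\equiv\bar G$, the limit $\int_0^1(G-\bar G)^2$ vanishes, so $\frac1N\sum_i(\cR_i-\bar\cR)^2\to0$ in $\bP_A$-probability, i.e.\ $\sum_i(\cR_i-\bar\cR)^2=o(N)$ with probability tending to one.

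\textbf{Co-degrees and nontriviality.} For $i\ne j$ we have $N\cR_{ij}=N^{-1}\sum_{k\ne i,j}\xi_{\{i,j,k\}}$, an average of $N-2$ independent Bernoullis with means in $[g_*,1]$. Hoeffding's inequality gives
\[
\bP_A\bigl(|N\cR_{ij}-\bE_A N\cR_{ij}|>t\bigr)\le 2e^{-cNt^2}.
\]
Take $t=g_*/2$ and union bound over the $N^2$ pairs. With probability tending to one, every off-diagonal entry then satisfies
\[
\tfrac{g_*}{2N}\le \cR_{ij}\le \tfrac1N .
\]
Consequently all off-diagonal entries are positive and comparable in the sense of \eqref{eq:RR}, with $K=2/g_*$. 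This gives the strong pseudo-regularity condition~\eqref{cond:strong-pseu-regu} with the same $K$, since the diagonal vanishes and positive entries are only the off-diagonal ones. Summing over $j$ gives $\cR_i\ge g_*(N-1)/(2N)\ge g_*/4>0$ for every $i$, hence $\bar\cR\ge g_*/4=:\alpha$, which is the nontriviality condition~\eqref{cond:A_nontrivial}.

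\textbf{Spectral gap.} On the same event, all off-diagonal entries of $\cR$ are positive and \eqref{eq:RR} holds. The second assertion of Lemma~\ref{lem:trace-optr-converse} then yields
\[
1-\lambda_2(D^{-1}\cR)\ge K^{-1}=g_*/2,
\]
which is the spectral-gap condition~\eqref{cond:spec-gap}. Intersecting the finitely many high-probability events completes the proof.

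\textbf{Main obstacle.} The only nontrivial step is the uniform-in-pairs concentration of $\cR_{ij}$. The sub-Gaussian tail $e^{-cN}$ easily absorbs the $N^2$ union bound, so no delicate estimates are expected.
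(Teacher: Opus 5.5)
Your proposal is correct and follows essentially the same route as the paper. Both concentrate each co-degree $N^2\cR_{ij}$ (Chernoff in the paper, Hoeffding for you), union bound over pairs, and then invoke the complete-graph case of Lemma~\ref{lem:trace-optr-converse} for the spectral gap, with regularity read off from the previous lemma.

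There is one harmless constant slip. With $t=g_*/2$ you only get $N\cR_{ij}\ge g_*/2-2g_*/N$, not $g_*/2$. Take e.g.\ $t=g_*/4$, or settle for $\cR_{ij}\ge g_*/(4N)$ and $K=4/g_*$ as the paper does.
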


\begin{proof}
The asymptotic regularity condition \eqref{cond:regu} follows
from Lemma~\ref{lem:hypergraph-basic-original-route}(c), since
$G\equiv\bar G$. We next verify strong pseudo-regularity. By continuity and strict
positivity of $g$ on the compact set $[0,1]^3$, there exists
$c_*>0$ such that $g(x,y,z)\geq c_*$ on $[0,1]^3.$
For $i\neq j$,
\[
    N^2\cR_{ij}
    =
    \sum_{k\notin\{i,j\}}
    \xi_{\{i,j,k\}}
\quad \text{and} \quad 
    \mathbb{E}_A[N^2\cR_{ij}]
    \geq c_*(N-2).
\]
A Chernoff bound gives
\(
    \mathbb{P}_A\left(
        N^2\cR_{ij}
        \leq \frac{c_*}{2}(N-2)
    \right)
    \leq e^{-cN}
\)
for some $c>0$ independent of $i,j$. Taking a union bound over
the at most $N^2$ ordered pairs $(i,j)$, we obtain that, with
probability tending to one,
\(
    N^2\cR_{ij}
    \geq \frac{c_*}{2}(N-2)
    \
    \text{for every } \ i\neq j.
\)
On this event, for all sufficiently large $N$,
\(
    \frac{c_*}{4N}
    \leq \cR_{ij}
    \leq \frac1N,
    \ \text{for} \ i\neq j.
\)
It follows that the strong pseudo-regularity condition
\eqref{cond:strong-pseu-regu} holds with
\(
    K=\frac4{c_*}.
\) The same lower bound gives
\[
    \cR_i
    =
    \sum_j \cR_{ij}
    \geq
    \frac{c_*(N-1)}{4N}
    \geq \frac{c_*}{8}
\]
for all sufficiently large $N$. Therefore, $\cR_i>0$ for all $i$ and $\cR\geq c_*/8$
and hence  \eqref{cond:A_nontrivial} holds. 

On the same high-probability event, all off-diagonal entries of \(\cR\) are positive and
\(
\max_{i\ne j} \cR_{ij}\le \frac{4}{c_*}\min_{i\ne j}\cR_{ij}.
\)
Thus the support graph \(B_{ij}=\mathbf 1_{\{\cR_{ij}>0\}}\) is the complete graph and
condition~\eqref{eq:RR} holds with \(K=4/c_*\). By Lemma~\ref{lem:trace-optr-converse}, we have
\(
1-\lambda_2(D^{-1}\cR)\ge \frac{c_*}{4},
\)
so the spectral-gap condition~\eqref{cond:spec-gap} holds with probability tending
to one.

\end{proof}

\begin{proof}[Proof of Theorem~\ref{thm:app-hypergraph}]
We first prove part~(a). By
Lemma~\ref{lem:hypergraph-basic-original-route}(a),
Assumption~\ref{ass:standing} holds. If $G$ is not constant, then
\(
    \int_0^1\bigl(G(x)-\bar G\bigr)^2\,dx>0.
\)
Therefore
Lemma~\ref{lem:hypergraph-basic-original-route}(c) gives
\(
    \sum_i(\cR_i-\bar \cR)^2=\Omega(N)
\)
with $\mathbb{P}_A$-probability tending to one. On this
high-probability event, Theorem~\ref{thm:positive-res} applies
whenever $\Lambda'(h_0)\neq0$, and yields
$T_N(X) = \Omega_{P^{A}_{\beta_0,h_0}}(1).$
The existence of the MPLE and joint
$\sqrt{N}$-consistency then follow from
Theorem~\ref{thm:main-1}. This proves part~(a).

For part (b), Lemmas~\ref{lem:hypergraph-basic-original-route} and \ref{lem:hypergraph-homogeneous-original-route} show that, with $\mathbb{P}_A$-probability tending to one, the tensor satisfies Assumption~\ref{ass:standing}, the mean-field condition
\eqref{cond:GW}, asymptotic regularity~\eqref{cond:regu}, nontriviality~\eqref{cond:A_nontrivial},
and the spectral-gap condition~\eqref{cond:spec-gap}. Therefore all hypotheses of
Theorem~\ref{thm:est-imposs} are satisfied. Under the stated ferromagnetic assumptions, Theorem~\ref{thm:est-imposs}
gives $T_N(X)=o_{P^{A}_{\beta_0,h_0}}(1)$ on the same high probability event.
\end{proof}

\section{Proofs for Auxiliary lemmas}
\label{sec:proof of auxiliary}

\begin{proof}[Proof of Lemma~\ref{lem:rate_as_finite}]
Let $X \sim \mu$. Suppose for $c \in \{\kappa_-, \kappa_+\}$, we have $P(X= c)>0$. Thus for any $\gl \in \bR$, we have $\E e^{\gl X} \ge e^{\gl c}P(X=c)$.
Taking $\log$ on both sides, it gives 
\(
\gL(\gl)\ge \gl c + \log P(X=c).
\)
Equivalently $\gl c - \gL(\gl) \le - \log P(X=c) <\infty$. This shows that if the boundary points $\kappa_-, \kappa_+$ have positive mass, then $I(\kappa_-), I(\kappa_+) <\infty$. Now we turn to the interior part. 

Since $\kappa_-,\kappa_+ \in \text{supp}(\mu)$, we have 
\[
\lim_{\gl \to \infty} \frac{\gL(\gl)}{\gl} = \kappa_+ \quad \text{and} \quad \lim_{\gl \to -\infty} \frac{\gL(\gl)}{\gl} = \kappa_-.
\]
It implies that for each $x \in (\kappa_-,\kappa_+)$, there exist a $\gl_0>0$ such that
\[
 \gl x - \gL(\gl) = \gl\Big(x - \frac{\gL(\gl)}{\gl}\Big)< 0 \quad \text{for all $\abs{\gl}\ge \gl_0$}. 
\]
Since the function $\gl \mapsto \gl x - \gL(\gl)$ is continuous, we have $\sup_{\abs{\gl}\le \gl_0}(\gl x - \gL(\gl))<\infty$. Therefore, we have shown that for each $x \in (\kappa_-,\kappa_+)$, $I(x)< \infty$. The proof is complete.
\end{proof}

\begin{proof}[Proof of Lemma~\ref{lem:low_complex}]
Let $\bar{\kappa} = \frac{\kappa_-+\kappa_+}{2}$, fix $\delta>0$ and set \[
L=\big\lceil\frac{\kappa_+ - \kappa_-}{\delta}\big\rceil,\qquad  \mathcal{D}_N(\delta):=\Big\{a {\bf 1}: a\in \Big\{\bar{\kappa}, \bar{\kappa} \pm \frac{\delta}{2} , \bar{\kappa}\pm \delta, \cdots, \bar{\kappa}\pm L\frac{\delta}{2} \Big\}\Big\}.
\] 
Then $|\mathcal{D}_N(\delta)|=2L+1$, which is free of $N$, and only depends on $\delta$. Finally, for any $y\in E_N$, let $a\in \Big\{\bar{\kappa}, \bar{\kappa} \pm \frac{\delta}{2} , \bar{\kappa}\pm \delta, \cdots, \bar{\kappa}\pm L\frac{\delta}{2} \Big\}$ be such that $|a-\bar{y}|\le \frac{\delta}{2}$. Then we have
\begin{align*}
    \sum_i(y_i-a)^2= \sum_i(y_i-\bar{y})^2+N(\bar{y}-a)^2\le N\varepsilon_N+N\frac{\delta^2}{4}\le N\delta^2,
\end{align*}
where the last inequality holds for all $N$ large enough (depending on $\delta>0$). Thus we have shown that for all $\delta>0$, the set $\mathcal{D}_N(\delta)$ is a $\delta\sqrt{N}$ net for $E_N$ in Euclidean metric with bounded size, and so $E_N$ is of low complexity, as desired.
\end{proof}

\begin{proof}[Proof of Lemma~\ref{lem:aux-ineq1}]
By a direct calculation, the RHS equals
\[ \tfrac{1}{3}\big( a^3 + b^3+ c^3 \big ) - \tfrac{1}{3}\big( a^{3/2}b^{3/2}+ b^{3/2}c^{3/2} + c^{3/2}a^{3/2}\big ). \]
By the AM-GM inequality, we have 
\[ \tfrac{1}{3}\big( a^{3/2}b^{3/2}+ b^{3/2}c^{3/2} + c^{3/2}a^{3/2}\big ) \ge (a^3 b^3 c^3)^{1/3} = abc,\]
and the proof follows. 
\end{proof}

\begin{proof}[Proof of Lemma~\ref{lem:elementary}]
    Let $Y$ be a random variable such that  $\bP(Y = y_i) = N^{-1}$ for all $i$. Define $Z = Y^c$. To prove the lemma, we need to show that $\mathrm{Var}(Y) \le \mathrm{Var}(Z)^{1/c} .$ Let $Y_1$ and $Z_1$ be independent copy of $Y$ and $Z$ respectively. We write
    \begin{align*}
        \mathrm{Var}(Y)  = \tfrac{1}{2} \E (Y - Y_1)^2  = \tfrac{1}{2} \E \big (Z^{1/c} - Z_1^{1/c} \big)^2 \le 
        \tfrac{1}{2} \E [(Z - Z_1)^{2/c}],
    \end{align*}
where the last step uses the inequality $|a^{1/c} - b^{1/c}| \le |a-b|^{1/c}$ for any $a, b \ge 0,$ which is a simple consequence of the concavity of $t \mapsto t^{1/c}.$ Since $c>1,$ by Jensen's inequality, it follows that 
   \begin{align*}
               \tfrac{1}{2} \E [(Z - Z_1)^{2/c}] \le 
               \tfrac{1}{2} \big( \E [(Z - Z_1)^{2} ]\big)^{1/c} =  \tfrac{1}{2} \big( 2 \ \mathrm{Var}(Z) \big)^{1/c} \le \mathrm{Var}(Z)^{1/c}.
    \end{align*}
\end{proof}

\end{document}